\documentclass[12pt]{amsart}
\usepackage{currvita, amssymb, amsmath, amsthm, 
graphicx, subfigure, pifont, wrapfig, pinlabel, amscd,
latexsym, exscale, enumerate, amsfonts, times, color, hyphenat, pinlabel, 
mathtools, a4wide, fullpage, array, bbm, url}
\usepackage{float}
\usepackage[colorlinks=false, pdfborder={0.9 0.9 0.9}]{hyperref}
\usepackage{stmaryrd}
\usepackage{multirow}
\usepackage[all]{xy}
\usepackage{young}
\usepackage[vcentermath]{youngtab}

\newcommand{\foamt}{\textbf{Foam}_{3}}

\newcommand{\bN}{\mathbb{N}}
\newcommand{\bZ}{\mathbb{Z}}
\newcommand{\bQ}{\mathbb{Q}}

\newcommand{\bC}{\mathbb{C}}

\newcommand{\onel}{{\textbf 1}_{\lambda}}

%% Macros for sln section

\def\ui{{\text{$\underline{i}$}}}
\def\idm{\text{\bfseries 1}}

\newcommand{\refequal}[1]{\xy {\ar@{=}^{#1}
(-1,0)*{};(1,0)*{}};
\endxy}

% sl3 macros
%\newcommand{\FF}{\textbf{2Foam}_{/\ell}}

\newcommand{\U}{\dot{{\bf U}}(\mathfrak{sl}_n)}

\newcommand{\Ucat}{\mathcal{U}(\mathfrak{sl}_n)}

%kategorien

%%%%%%%%%%%%%%%%%%%%%%%%%%%%%%%%%%%%%%%%%%%%%%%%%%
%%%%%%%%%%%%%%%%%%%%%%%%%%%%%%%%%%%%%%%%%%%%%%%%%%
%%
%%    HEAD:     Margins adjustments
%%%%%%%%%%%%%%%%%%%%%%%%%%%%%%%%%%%%%%%%%%%%%%%%%%

%%%%%%%%%%%%%%%%%%%%%%%%%%%%%%%%%%%%%%%%%%%%%%%%%%
%%
%%    HEAD:     Theorem environments

\newtheorem{prop}{Proposition}[section]
\newtheorem{thm}[prop]{Theorem}
\newtheorem{lem}[prop]{Lemma}
\newtheorem{cor}[prop]{Corollary}

\theoremstyle{remark}
\newtheorem{rem}[prop]{Remark}

\theoremstyle{remark}
\newtheorem{ex}[prop]{\textbf{Example}}

\theoremstyle{remark}

\theoremstyle{definition}
\newtheorem{defn}[prop]{Definition}

\setcounter{equation}{0}
\numberwithin{equation}{subsection}

%%%%%%%%%%%%%%%%%%%%%%%%%%%%%%%%%%%%%%%%%%%%%%%
%graphics 
%%%%%%%%%%%%%%%%%%%%%%%%%%%%%%%%%%%%%%%%%%%%%%%

%insert figures: 
        % set height
\newcommand{\figins}[3] 
{\raisebox{#1pt}{\includegraphics[height=#2 in]{res/figs/basicsa/#3}}}
        % set width

        % set height AND width
\newcommand{\figwhins}[4] 
{\raisebox{#1pt}{\includegraphics[height=#2 in, width=#3 in]{res/figs/basicsa/#4}}}
        % set scale

%%%%%%%%%%%%%%%%%%%%%%%%%%%%%%%%%%%%%%%%%%%%%%%%%%
%%%%%%%%%%%%%%%%%%%%%%%%%%%%%%%%%%%%%%%%%%%%%%%%%%
%%
%%    HEAD:     Special Characters

\newcommand{\F}{\mathcal{F}}

 \newcounter{In}

\vbadness=10001

%%%%%%%%%%%%%%%%%%%%%%%%%%%%%%%%%%%%%%%%%%%%%%%%%%
%%%%%%%%%%%%%%%%%%%%%%%%%%%%%%%%%%%%%%%%%%%%%%%%%%
%%
%%    HEAD:     Title

\title{$\mathfrak{sl}_3$-web bases, intermediate crystal bases and categorification}
\author{Daniel Tubbenhauer}
\thanks{The author was supported by the Courant Research Center ``Higher Order Structures'' in G\"{o}ttingen, the Graduiertenkolleg 1493 in G\"{o}ttingen and the Mathematisches Institut, Georg-August-Universit\"{a}t G\"{o}ttingen.}
\date{Last compiled \today}

\begin{document}
\begin{abstract}
We give an explicit graded cellular basis of the $\mathfrak{sl}_3$-web algebra $K_S$.

In order to do this, we identify Kuperberg's basis for the $\mathfrak{sl}_3$-web space $W_S$ with a version of Leclerc-Toffin's intermediate crystal basis and we identify Brundan, Kleshchev and Wang's degree of tableaux with the weight of flows on webs and the $q$-degree of foams.

We use these observations to give a ``foamy'' version of Hu and Mathas graded cellular basis of the cyclotomic Hecke algebra which turns out to be a graded cellular basis of the $\mathfrak{sl}_3$-web algebra.

We restrict ourselves to the $\mathfrak{sl}_3$ case over $\bC$ here, but our approach should, up to the combinatorics of $\mathfrak{sl}_N$-webs, work for all $N>1$ or over $\bZ$.     
\end{abstract}

\maketitle
\paragraph*{Acknowledgements} I like to thank Bernard Leclerc, Lukas Lewark, Marco Mackaay, Weiwei Pan, Hoel Queffelec, A Referee, Louis-Hadrien Robert, Antonio Sartori and Catharina Stroppel for helpful discussions and comments about (intermediate) crystal bases, $q$-skew Howe duality, cyclotomic Hecke/KL-R algebras, $\mathfrak{sl}_N$-web algebras and HM-bases.

Special thanks to Marco Mackaay for a lot of helpful discussions.

The author thanks the FCT - Funda\c c\~{a}o para a 
Ci\^{e}ncia e a Tecnologia, through project number PTDC/MAT/101503/2008, 
New Geometry and Topology for sponsoring one research visit during this project.

\tableofcontents

\section{Introduction}\label{sec-intro}
After Khovanov published his groundbreaking work~\cite{kh4} on the so-called \textit{arc algebra $H_n$}, which was inspired by his categorification of the Jones polynomial~\cite{kh1}, researchers started to study these diagrammatic algebras and generalizations of it from different viewpoints and it turns out that these algebras have a beautiful combinatorial structure and representation theory. Moreover, they are related to algebraic geometry and knot theory.
\vspace*{0.25cm}

The literature about this subject is wide nowadays and many results are known. To list a few of them, the generalizations of the arc algebra in type $A_1$ are for example studied in~\cite{bs1},~\cite{bs2},~\cite{bs3}, \cite{bs4},~\cite{bs5},~\cite{chkh},~\cite{kh5},~\cite{st2} and~\cite{sw}, in type $A_2$ for example in~\cite{mpt} and~\cite{rob2} or~\cite{rob1}, in type $A_N$ case for example in~\cite{mack1} and~\cite{my}. There is also an arc algebra in type $D$ studied in~\cite{ehst1},~\cite{ehst2} and~\cite{ehst3} and one in the $\mathfrak{gl}(1|1)$ case studied in~\cite{sar}.
\vspace*{0.25cm}

These algebras in type $A$ can be seen as a \textit{categorification} of the underlying \textit{$\mathfrak{sl}_N$-web space}. In the $\mathfrak{sl}_2$ case (if one ignores orientations) one has the so-called Temperley-Lieb category, which gives a diagrammatic presentation of the representation theory of ${\mathbf U}_q(\mathfrak{sl}_2)$ (an interesting historical remark is that a first diagrammatic approach already arose in a paper of Rumer, Teller, and Weyl~\cite{rtw}). In the $\mathfrak{sl}_3$ case the underlying space consists of Kuperberg's $\mathfrak{sl}_3$-webs, introduced in~\cite{kup}. They give a diagrammatic presentation of the representation theory of ${\mathbf U}_q(\mathfrak{sl}_3)$. In the $\mathfrak{sl}_N$ case the underlying space consists of $\mathfrak{sl}_N$-webs. They give a diagrammatic presentation of the representation theory of ${\mathbf U}_q(\mathfrak{sl}_N)$, as was recently proven by Cautis, Kamnitzer and Morrison~\cite{ckm}.
\vspace*{0.15cm}

In this paper we continue the study of the so-called \textit{$\mathfrak{sl}_3$-web algebras} introduced in~\cite{mpt}. We denote them by $K_S$, where $S$ is a \textit{sign string} (string of $+$ and $-$ signs). To be more precise, we show that $K_S$ is a \textit{graded cellular algebra} in the sense of~\cite{grle} Graham and Lehrer (who introduced the notion in the ungraded setting) and Hu and Mathas~\cite{hm} (who extended Graham and Lehrer's notion to the graded setting) by giving an \textit{explicit} graded cellular basis for the $\mathfrak{sl}_3$-web algebra $K_S$. We follow a different approach than Brundan and Stroppel used in the $\mathfrak{sl}_2$ case~\cite{bs1}., since their arguments does not seem to generalize in a straightforward way to $N>2$. In fact, we claim that our approach in this paper will, up to some combinatorics of $\mathfrak{sl}_N$-webs, generalize to all $N>1$.
\vspace*{0.25cm}

It should be noted that it was known before, as the author showed together with Mackaay and Pan in~\cite{mpt} in the $\mathfrak{sl}_3$ case and Mackaay and Yonezawa~\cite{my} showed in the $\mathfrak{sl}_N$ case, that all the $\mathfrak{sl}_N$-web algebras are graded cellular algebras. The way this was proven is by an \textit{abstract Morita equivalence} - no other proof is known at the moment. An explicit cellular basis in only known in the $\mathfrak{sl}_2$ case. As mentioned, the construction is due to Brundan and Stroppel~\cite{bs1}.
\vspace*{0.25cm}  

Our approach in this paper is as follows. One main ingredient is the usage of \textit{categorified, diagrammatic quantum skew Howe duality} studied recently independently by varies authors in this framework~\cite{ckm},~\cite{lqr} and~\cite{mpt} to cite a few (the first appearance in the context of $\mathfrak{sl}_2$-webs seems to be the paper of Huerfano and Khovanov~\cite{hukh}, although they never used the notion of skew Howe duality). In the $\mathfrak{sl}_3$ case this means that there is a strong $2$-representation $\Psi\colon\Ucat\to\foamt$, called \textit{foamation}, of Khovanov-Lauda's~\cite{kl5} categorification of $\U$, denoted by $\Ucat$, to the category of $\mathfrak{sl}_3$-foams $\foamt$. This foamation functor was used in~\cite{mpt} to show that $K_S$ is Morita equivalent to a certain block of $R_{\Lambda}$, where $R_{\Lambda}$ denotes Khovanov-Lauda's~\cite{kl1} and~\cite{kl3} and Rouquier's~\cite{rou} cyclotomic quotient, called the \textit{cyclotomic KL-R algebra}.
\newpage

In a remarkable paper~\cite{bk1} Brundan and Kleshchev showed that the cyclotomic KL-R algebra is isomorphic to the so-called \textit{cyclotomic Hecke algebra}. Their isomorphism was used by them to define a \textit{non-trivial grading} on the cyclotomic Hecke algebra. Using this isomorphism, Hu and Mathas gave in~\cite{hm} a graded cellular basis for the cyclotomic KL-R algebra based on earlier work of Dipper, James and Mathas~\cite{djm} who gave a (ungraded) cellular basis of the cyclotomic Hecke algebra and Brundan and Kleshchev's (and Wang's) work on graded Specht modules for these algebras, see~\cite{bk2},~\cite{bk3} and~\cite{bkw}.
\vspace*{0.25cm}

The approach we follow here is that we give a \textit{``foamy''} version of the Hu and Mathas basis (short HM-basis) using foamation and quantum skew Howe duality. It turns out that the combinatorial structure can be easier seen within the cyclotomic Hecke algebra, while the topological structure can be easier seen within the foam setting.
\vspace*{0.25cm}

We note that the explicit construction is a non-trivial task, since bases behave really \textit{badly} under Morita equivalence and some non-trivial translation from the cyclotomic Hecke algebra to the foam framework was needed, that is, even the answers to basic questions were unknown before. It turns out, as we explain below, that this non-trivial combinatorics is in fact quite nice itself, since it gives a new perspective on the $\mathfrak{sl}_3$-web spaces.
\vspace*{0.25cm}

The second main ingredient is that we use a special basis for our underlying $\mathfrak{sl}_3$-web space $W_S$, the so-called \textit{intermediate crystal basis} $B_{\Lambda}$ of Leclerc-Toffin~\cite{leto} (short LT-basis) of the highest weight ${\mathbf U}_q(\mathfrak{sl}_n)$-module $V_{\Lambda}$, which can be translated to the $\mathfrak{sl}_3$-web setting using quantum skew Howe duality. It turns out the the LT-basis is easy to work with if one wants to categorify the results from the level of $\mathfrak{sl}_N$-webs.
\vspace*{0.25cm}

To be a little bit more precise, we \textit{``categorify''} the LT-algorithm for $\mathfrak{sl}_3$-webs by giving a \textit{growth algorithm for foams} producing a graded cellular basis.
\vspace*{0.25cm}

That is, the LT-algorithm, under $q$-skew Howe duality, turns given semi-standard tableaux into $\mathfrak{sl}_3$-webs by applying a sequence of so-called \textit{$\mathfrak{sl}_3$-ladder operators} obtained from an algorithm on semi-standard tableaux to a certain highest weight vector. On the other hand, the growth algorithm for foams, under categorified $q$-skew Howe duality, turns standard $3$-multitableaux into $\mathfrak{sl}_3$-foams by applying a sequence of certain \textit{$\mathfrak{sl}_3$-foams} (that \textit{categorify} the ladder operators) obtained from an algorithm on standard $3$-multitableaux to a certain highest weight object.
\vspace*{0.25cm}

It should be noted that, in order to formulate the growth algorithm for foams, we relate standard $3$-multitableaux to \textit{Khovanov-Kuperberg flows} (see~\cite{kk}) on $\mathfrak{sl}_3$-webs. Recall that these flows are a combinatorial way to answer the important question what a web, seen as an \textit{invariant tensor} $\mathrm{Inv}_{{\mathbf U}_q(\mathfrak{sl}_3)}(\bigotimes_kV_{s_k})$, is \textit{explicitly} in terms of the elementary tensors of $\bigotimes_kV_{s_k}$. 
\vspace*{0.25cm}

In order to prove both observations highly \textit{non-trivial} combinatorics on the $\mathfrak{sl}_3$-webs is needed (in fact this seems to be the only problem for generalizing everything to $N>3$). But this combinatorics turns out to be quite beautiful itself, i.e. we identify Kuperberg's basis of non-elliptic webs as an intermediate crystal basis (which shows ``immediately'' that it is related by an \textit{unitriangular} change-of-base matrix to the dual canonical and again demonstrates that it is somehow ``the'' basis of the $\mathfrak{sl}_3$-web space), we give a growth algorithm for $\mathfrak{sl}_3$-webs \textit{with flows} and we relate webs with flows to standard fillings of $3$-multitableaux. We use latter to identify Brundan, Kleshchev and Wang's notion of degree~\cite{bkw} in a very natural way, i.e. as the weight of flows on webs~\cite{kk} and the $q$-degree of foams, where latter is just a slight modification of the \textit{geometrical Euler characteristic} (hence, Brundan, Kleshchev and Wang's degree is an isotopy invariant).
\vspace*{0.25cm}

The reason why we think this approach should ``easily'' (up to combinatorics of $\mathfrak{sl}_N$-webs) generalize is that both of our main ingredients are known in general for $N>1$, see~\cite{mack1} and~\cite{my} (and conjecturally $\mathfrak{sl}_N$-foams~\cite{msv}). Note that the results from Section~\ref{sec-compare} can be easily generalized to $\mathfrak{sl}_N$-webs to give a \textit{growth algorithm} for such $\mathfrak{sl}_N$-webs. In fact, we tend to argue that this basis forms somehow a ``natural'' basis of the $\mathfrak{sl}_N$-web space, which is \textit{neither} the Satake basis \textit{nor} Fontaine's basis. For details about the Satake basis and about Fontaine's basis see~\cite{fon} and~\cite{fkk}. 
\vspace*{0.25cm}

This paper is organized as follows.
\begin{enumerate}
\item We start by giving our notation for $3$-multipartitions in Section~\ref{sec-combi}. Most notions from this section can be done in more generality, but we only need the case of $3$-multipartitions in this paper.
\item In Section~\ref{sec-quantum} we recall Leclerc-Toffin's basis and its relation to Kashiwara-Lusztig's crystal bases, Khovanov-Lauda's categorification $\Ucat$, the cyclotomic KL-R algebra and the HM-basis for it.
\item We recall $\mathfrak{sl}_3$-webs/foams and their connection to ${\mathbf U}_q(\mathfrak{sl}_3)$ in Section~\ref{sec-webfoam}.
\item In Section~\ref{sec-webalg} we recall the $\mathfrak{sl}_3$-web algebra $K_S$ (note that we use the conventions from~\cite{mpt}) and the foamation functor $\Psi\colon\Ucat\to\foamt$.
\item In the Section~\ref{sec-compare} we show that Kuperberg's basis is indeed a monomial basis, that is, we show that it is an intermediate crystal basis in the sense of Leclerc and Toffin~\cite{leto}. This includes that there is an unitriangular algorithm to compute the dual canonical basis of the web space $W_S$ using Kuperberg's $\mathfrak{sl}_3$-web basis and the Kuperberg bracket.
\item We show how one can relate flows on $\mathfrak{sl}_3$-webs to fillings of $3$-multipartitions. This gives rise to a \textit{growth algorithm for $\mathfrak{sl}_3$-webs with flows} in the two Sections~\ref{sec-flow} and~\ref{sec-exgrowth}. These two combinatorial sections are crucial for the rest of the paper.
\item In Section~\ref{sec-degflow} we show that Brundan, Kleshchev and Wang's degree of a tableaux~\cite{bkw} has a very natural interpretation as the weight of flows on $\mathfrak{sl}_3$-webs.
\item We give in Section~\ref{sec-catpart1} a \textit{growth algorithm for foams} that produces a homogeneous basis of the foam space based on a categorified version of Leclerc-Toffin's algorithm to compute the intermediate crystal basis.
\item We show that the growth algorithm for foams gives a homogeneous basis of the foam space by showing that it can be seen as a ``foamy'' version of Hu and Mathas graded cellular basis. Note that this includes that Brundan, Kleshchev and Wang's degree has a very natural interpretation as the $q$-degree of foams (which is just a slightly modified Euler characteristic). This is done in Section~\ref{sec-basis}.
\item And finally, in Section~\ref{sec-cell}, we show that the growth algorithm for foams produces a graded cellular basis of $K_S$.
\item It follows (almost) as a direct application (see Remark~\ref{rem-basisgrothen}) that the set of simple heads of the cell (or Specht) modules $\mathcal S$ of $K_S$, denoted by $\mathcal D$, and the set of their indecomposable, projective covers of, denoted by $\mathcal P$, give rise to the canonical and dual canonical bases of the $\mathfrak{sl}_3$-web space $W^{(*)}_S$.
\end{enumerate}
\vspace*{0.10cm}

It is worth noting that we give lots of examples in each section to show that everything can be done explicitly.
\section{Basic notions}\label{sec-basics}
\subsection{Combinatoric, partitions and tableaux}\label{sec-combi}
In this section we define/recall some combinatorial notions that we use in this paper. Note that we recall most notions only for the $\mathfrak{sl}_3$ case, although they can be done in more generality without difficulties, see for example~\cite{hm}.
\vspace*{0.25cm}

Choose arbitrary but fixed non-negative integers 
$n\geq 2$ and $k\leq n$. Let 
\[
\Lambda(n,d)=\left\{\lambda\in \mathbb{N}^n\mid \sum_{i=1}^n\lambda_i=d\right\}
\]
be the set of \textit{compositions} of $d$ of length $n$. 
By $\Lambda^+(n,d)\subset\Lambda(n,d)$ we denote the 
subset of \textit{partitions}, i.e. all $\lambda\in\Lambda(n,d)$ such that 
\[
\lambda_1\geq\lambda_2\geq \ldots\geq \lambda_n\geq 0.
\]

Let $\Lambda^{+}(n,d)_{1,2}\subset \Lambda^{+}(n,d)$ be the subset of partitions
whose entries are all $1$ or $2$. We use similar notions for $\Lambda^{+}(n,d)_{N}$ for some $N\in\bN$, that is
\[
\Lambda^{+}(n,d)_N=\left\{\lambda\in \mathbb{N}^n\mid \sum_{i=1}^n\lambda_i=d,\;\lambda_i\in\{0,\dots,N\} \right\}.
\]

Recall that we can associate to each $\lambda\in\Lambda^+(n,d)$ a \textit{diagram for $\lambda$}
\[
\lambda=\{(r,c)\mid 0\leq c\leq \lambda_r, 1\leq r\leq n\},
\]
which we, by a slight abuse of notion, denote by the same symbol $\lambda$. The elements of a diagram are called \textit{nodes $N$}. For example, if $\lambda=(3,2,1)$, that is $d=6,n=3$, then
\[
\lambda=\xy(0,0)*{\begin{Young} & & \cr & \cr \cr\end{Young}}\endxy.
\]
Hence, we use the English notation to denote our partitions/diagrams.

A \textit{tableau $T$ of shape $\lambda$} is a filling of $\lambda$ with (possible repeating) numbers from a chosen, fixed set $\{1,\dots,k\}$. Such a tableau $T$ is said to be \textit{semi-standard}, if its entries increase along its rows (weekly) and columns (strictly), and \textit{column-strict}, if its entries increase along its columns (no restriction on rows). For example
\[
T_1=\xy(0,0)*{\begin{Young} 1& 2& 3\cr 2& 3\cr 4\cr\end{Young}}\endxy\;\;\;\;T_2=\xy(0,0)*{\begin{Young} 1& 2& 2\cr 4& 3\cr 5\cr\end{Young}}\endxy
\]
the tableau $T_1$ is semi-standard, but $T_2$ is only column-strict. We denote the set of all \textit{semi-standard tableau of shape $\lambda$} by $\mathrm{Std}^s(\lambda)$ and the set of all \textit{column-strict tableau of shape $\lambda$} by $\mathrm{Col}(\lambda)$.

Moreover, we stress that we do not assume that our tableaux have \textit{only non-repeating} entries. In fact, we only assume that the number of times that an entry appears is between $0$ and $3$. For our $3$-multitableaux $\vec{\lambda}$ (see below) we assume the same with the extra restriction that repeating entries are \textit{never} in the same vector entry of $\vec{\lambda}$ and all repeating entries are of the \textit{same} residue. We note that this strange looking condition is due to the fact that we use \textit{divided powers} which do not appear in the language of cyclotomic Hecke or KL-R algebras. 
\vspace*{0.25cm}

In the same vein, a \textit{$3$-multipartition} $\vec{\lambda}\in\Lambda^+(n,d,3)$ of $d$ with length $n$ is a triple of partitions $\vec{\lambda}=(\lambda^1,\lambda^2,\lambda^3)$ with $\lambda^l=(\lambda^l_1,\dots,\lambda^l_{n_l})$ such that their total length is $n$ and their total sum is $d$. As before, we can associate to each $\vec{\lambda}\in\Lambda^+(n,d,3)$ a \textit{diagram for $\vec{\lambda}$}
\[
\vec{\lambda}=\{(r,c,l)\mid 0\leq c\leq \lambda^l_r, 1\leq r\leq n_l,l=1,\dots,3\},
\]
which we, by a slight abuse of notion, denote by the same symbol $\vec{\lambda}$. For example, if we have $\vec{\lambda}=((3,2,1),(0),(4))$, that is $d=10$, $n=4$, $\lambda^1=(3,2,1)$, $\lambda^2=(0)$ and $\lambda^3=(4)$, then
\[
\lambda=\left(\;\xy(0,0)*{\begin{Young} & & \cr & \cr \cr\end{Young}}\endxy\;,\;\emptyset\;,\;\xy(0,0)*{\begin{Young} & & &\cr\end{Young}}\endxy\;\right).
\]
At this point it is worthwhile to say that since both notations appear repeatedly in the literature: To \textit{distinguish} between the notion of partition $\lambda$ and components of multipartition $\vec{\lambda}=(\lambda^1,\lambda^2,\lambda^3)$, we write latter using \textit{superscripts} (and similar for multitableaux).

As before, a \textit{$3$-multitableau $\vec{T}$ of shape $\vec{\lambda}$} is a filling of $\vec{\lambda}$ with (possible repeating) numbers from a chosen, fixed set $\{1,\dots,k\}$. Such a tableau $\vec{T}$ is said to be \textit{standard}, if its entries increase along its rows and columns (both strictly).

We denote the set of all \textit{standard tableau $\vec{T}$ of shape $\vec{\lambda}$} by $\mathrm{Std}(\vec{\lambda})$.
\vspace*{0.25cm}

As we mentioned above, we are mostly interested in $3$-multipartitions $\Lambda^+(n,d,3)$ here. But under skew Howe-duality it is necessary to consider them as $l$-partitions, where the $l>0$ depends on the context. There are two natural embeddings $\iota^l_3,\kappa^l_3\colon\Lambda^+(n,d,3)\to\Lambda^+(n,d,l)$ for $l>2$, i.e.
\[
\iota_3^l(\vec{\lambda})=(\underbrace{(0),\dots,(0)}_{l-3},\lambda^1,\lambda^2,\lambda^3)\;\text{and}\;\kappa_3^l(\vec{\lambda})=(\lambda^1,\lambda^2,\lambda^3,\underbrace{(0),\dots,(0)}_{l-3}).
\]
We always use the first one $\iota^l_3$, since the first one fits to our other conventions. But, with a slight abuse of notation, we always think of $\iota_3^l(\vec{\lambda})$ as a $3$-multipartition $\vec{\lambda}$.

\begin{defn}\label{defn-tabcomb}
Let $\lambda\in\Lambda^+(n,d)$ be a partition. Then we associate to each node $N=(r,c)\in\lambda$ of $\lambda$ a \textit{residue} $r(N)$ by the rule $r(N)=r-c+m$ where $m$ is the number of non-zero entries of $\lambda$. It should be noted that we see $m$ as being fixed by $\lambda$, even if we speak about addable or removable nodes. 

If $\vec{\lambda}=\{(r,c,l)\mid 0\leq c\leq \lambda^l_r, 1\leq r\leq n_l, l=1,2,3\}$ is a $3$-multipartition, then we can use the same notions for each of its nodes $N=(r,n,l)\in\vec{\lambda}$. This time $m$ is the maximal number of non-zero entries of the components of $\vec{\lambda}$.

An \textit{addable node $N$ of residue $r(N)=k$} is a node $N$ that can be added to the diagram of $\lambda$ such that the new diagram is still the diagram of a partition and the residue is $r(N)=k$. We denote the \textit{set of addable nodes of residue $k$ of $\lambda$} by $\mathsf{A}^{k}(\lambda)$.

Similar, a \textit{removable node $N$ of residue $r(N)=k$} is a node that can be removed from the diagram of $\lambda$ such that the new diagram is still the diagram of a partition and the residue of $N$ is $r(N)=k$. We denote the \textit{set of removable nodes of residue $k$ of $\lambda$} by $\mathsf{R}^{k}(\lambda)$.

Again, we can use the same notions for a $3$-multipartition $\vec{\lambda}\in\Lambda^+(n,d,3)$.
\vspace*{0.25cm}

Moreover, we say a node $N=(r,c,l)$ of $\vec{\lambda}=(\lambda^l)_{l=1}^3$ comes \textit{before (or after)} another node $N^{\prime}=(r^{\prime},c^{\prime},l^{\prime})$ of $\vec{\lambda}$, denoted by $N\preceq N^{\prime}$ (or $N\succeq N^{\prime}$), iff $l<l^{\prime}$ or $l=l^{\prime}$ and $r\leq r^{\prime}$ (or $l>l^{\prime}$ or $l=l^{\prime}$ and $r\geq r^{\prime}$). We use the obvious definitions for the notions \textit{strictly} before $\prec$ and \textit{strictly} after $\succ$.

For a fixed node $N$, we denote the \textit{set of addable nodes of $\lambda$ before $N$} with the same residue $r(N)=k$ by $\mathsf{A}^{k\prec N}(\lambda)$ and we denote the \textit{set of addable nodes of $\lambda$ after $N$} with the same residue $r(N)=k$ by $\mathsf{A}^{k\succ N}(\lambda)$. In the same vein, for a fixed node $N$, we denote the \textit{set of removable nodes of $\lambda$ before $N$} with the same residue $r(N)=k$ by $\mathsf{R}^{k\prec N}(\lambda)$ and we denote the \textit{set of removable nodes of $\lambda$ after $N$} with the same residue $r(N)=k$ by $\mathsf{R}^{k\succ N}(\lambda)$.

We are mostly interested in the nodes after a given node $N$. One would have to use the nodes before if one wants to construct a ``dual'' basis of the HM-basis, see~\cite{hm}.
\end{defn}
\begin{ex}\label{ex-resi}
Let $\vec{\lambda}=(\lambda^1,\lambda^2,\lambda^3)$ be the following $3$-multipartition (we have $m=3$).
\[
\lambda^1=\xy(0,0)*{\begin{Young} 3&  4&  5&6\cr  2& 3\cr 1\cr\end{Young}}\endxy\;,\;\;\;\;\lambda^2=\xy(0,0)*{\begin{Young} 3&  4\cr  2\cr\end{Young}}\endxy\;,\;\;\;\;\lambda^3=\xy(0,0)*{\begin{Young} 3&  4&  5&6\cr  2& 3&4&5\cr 1&2 &3\cr\end{Young}}\endxy\;.
\]
We have filled the nodes of $\lambda^{1,2,3}$ with the corresponding residues. Note that the residue is constant along the diagonals.

The set of addable nodes of residue $4$ for $\vec{\lambda}$ and the set of removable nodes of residue $2$ for $\vec{\lambda}$ are given by
\[
\lambda^1=\xy(0,0)*{\begin{Young} &  &  &\cr  & &$\cdot$\cr \cr\end{Young}}\endxy\;,\;\;\;\;\lambda^2=\xy(0,0)*{\begin{Young} &  \cr  $\times$\cr\end{Young}}\endxy\;,\;\;\;\;\lambda^3=\xy(0,0)*{\begin{Young} &  &  &\cr  & & &\cr & & &$\cdot$\cr\end{Young}}\endxy\;,
\]
where we have indicated the addable nodes with a $\cdot$ and the removable with a $\times$. The removable node is after the first addable and before the second addable node. Moreover, in the following we demonstrate all nodes strictly before $\prec$ and strictly after $\succ$ a fixed node marked $-$.
\[
\lambda^1=\xy(0,0)*{\begin{Young} $\prec$&  $\prec$&  $\prec$&$\prec$\cr $\prec$ &$\prec$\cr $\prec$\cr\end{Young}}\endxy\;,\;\;\;\;\lambda^2=\xy(0,0)*{\begin{Young} $\prec$&  $\prec$\cr  $\prec$\cr\end{Young}}\endxy\;,\;\;\;\;\lambda^3=\xy(0,0)*{\begin{Young} & $-$ &  &\cr  $\succ$& $\succ$& $\succ$& $\succ$\cr $\succ$& $\succ$& $\succ$\cr\end{Young}}\endxy\;.
\]
\end{ex}
\vspace*{0.1cm}
\begin{defn}\label{defn-dominnance}
Let $\vec{\lambda}=(\lambda^1,\lambda^2,\lambda^3)$ and $\vec{\mu}=(\mu^1,\mu^2,\mu^3)$ be two $3$-multipartitions in $\Lambda^+(n,d,3)$. Recall that $\lambda_l=(\lambda^l_1,\lambda^l_2,\dots)$ and $\mu_l=(\mu^l_1,\mu^l_2,\dots)$ for $l\in\{1,2,3\}$.

We say \textit{$\vec{\lambda}$ dominates $\vec{\mu}$,} denoted by $\vec{\mu}\trianglelefteq\vec{\lambda}$, if
\[
\sum_{i=1}^{l-1}|\mu^l|+\sum_{j=1}^{s}\mu^l_j\leq \sum_{i=1}^{l-1}|\lambda^l|+\sum_{j=1}^{s}\lambda^l_j
\]
for all $1\leq l\leq 3$ and $1\leq s$. We write $\vec{\mu}\lhd\vec{\lambda}$, if $\vec{\mu}\unlhd\vec{\lambda}$ and $\vec{\mu}\neq\vec{\lambda}$. It is easy to check that $\unlhd$ is a partial ordering of the set of all $3$-multipartitions $\Lambda^+(n,d,3)$, called the \textit{dominance order}.

This order can be \textit{extended} to $3$-multitableaux in the following way. Suppose we have two standard $3$-multitableaux $\vec{T}_1\in\mathrm{Std}(\vec{\lambda})$ and $\vec{T}_2\in\mathrm{Std}(\vec{\mu})$ with $k$ nodes filled with numbers from $\{1,\dots,k\}$ (no repetitions). As in Definition~\ref{defn-combinatorics1}, we denote the corresponding $3$-multipartitions after removing all nodes with entries higher than $j\in\{1,\dots,k\}$ by $\vec{\mu}^j$ and $\vec{\lambda}^j$. Then
\[
\vec{T}_1\unlhd\vec{T}_2\Longleftrightarrow \vec{\mu}^j\unlhd\vec{\lambda}^j\;\;\text{ for all }\;\;j\in\{1,\dots,k\}.
\]
To extend it to all, i.e. allowing repetitions, $3$-multitableaux we use a special $3$-multitableaux $\vec{T}^{\prime}$ associated to $\vec{T}$ by inductively replacing repeating entries from left to right, e.g.
\begin{equation}\label{eq-divtab}
\vec{T}=\left(\;\xy(0,0)*{\begin{Young}1&2\cr 3\cr\end{Young}}\endxy\;,\;\xy(0,0)*{\begin{Young}4\cr\end{Young}}\endxy\;,\;\xy(0,0)*{\begin{Young}1&2\cr 5\cr\end{Young}}\endxy\;\right)\mapsto \vec{T}^{\prime}=\left(\;\xy(0,0)*{\begin{Young}1&3\cr 5\cr\end{Young}}\endxy\;,\;\xy(0,0)*{\begin{Young}6\cr\end{Young}}\endxy\;,\;\xy(0,0)*{\begin{Young}2&4\cr 7\cr\end{Young}}\endxy\;\right).
\end{equation}
Then we can use the same rules as before.

Given $\vec{\lambda}\in\Lambda^+(n,d,3)$ we can associate to it a \textit{unique standard $3$-multitableau $T_{\vec{\lambda}}\in\mathrm{Std}(\vec{\lambda})$} with the property
\[
\vec{T}\in\mathrm{Std}(\vec{\lambda})\Rightarrow \vec{T}\unlhd T_{\vec{\lambda}}.
\]
Note that $T_{\vec{\lambda}}$ is easily seen to be the tableau with all entries in order from top to bottom and left to right.
\end{defn}
\begin{ex}\label{ex-morecomb1}
Given the same $3$-multipartition as later in Example~\ref{ex-tabl2} part (b), i.e.
\[
\vec{\lambda}=\left(\;\xy(0,0)*{\begin{Young}&\cr\cr\end{Young}}\endxy\;,\;\xy(0,0)*{\begin{Young}\cr\end{Young}}\endxy\;,\;\xy(0,0)*{\begin{Young}&\cr\cr\end{Young}}\endxy\;\right),
\]
we see that
\[
T_{\vec{\lambda}}=\left(\;\xy(0,0)*{\begin{Young}1&2\cr3\cr\end{Young}}\endxy\;,\;\xy(0,0)*{\begin{Young}4\cr\end{Young}}\endxy\;,\;\xy(0,0)*{\begin{Young}5&6\cr7\cr\end{Young}}\endxy\;\right)
\]
It will dominate all $\vec{T}\in\mathrm{Std}(\vec{\lambda})$. For example
\[
\vec{T}=\left(\;\xy(0,0)*{\begin{Young}1&2\cr3\cr\end{Young}}\endxy\;,\;\xy(0,0)*{\begin{Young}5\cr\end{Young}}\endxy\;,\;\xy(0,0)*{\begin{Young}4&6\cr7\cr\end{Young}}\endxy\;\right)
\]
will be dominated, since
\[
\vec{T}^4=\left(\;\xy(0,0)*{\begin{Young}1&2\cr3\cr\end{Young}}\endxy\;,\;\emptyset,\;\xy(0,0)*{\begin{Young}4\cr\end{Young}}\endxy\;\right)\unlhd T_{\vec{\lambda}}^4=\left(\;\xy(0,0)*{\begin{Young}1&2\cr3\cr\end{Young}}\endxy\;,\;\xy(0,0)*{\begin{Young}4\cr\end{Young}}\endxy\;,\;\emptyset\;\right).
\]
\end{ex}
\begin{defn}\label{defn-rsequence}
Let $\vec{T}\in\mathrm{Std}(\vec{\lambda})$ be a $3$-multitableau. The \textit{residue sequence} of $\vec{T}$, denoted by $r(\vec{T})$ is the $k$-tuple whose $j\in\{1,\dots,k\}$ entry is the residues of the node with number $j$. Moreover, the \textit{residue sequence} of a $3$-multitableau $\vec{\lambda}$, denoted by $r(\vec{\lambda})$, is defined to be $r(\vec{\lambda})=r(T_{\vec{\lambda}})$. Note that, since we only use $3$-multitableaux whose repeating numbers are of the same residue, the notion makes sense for or notion of $3$-multitableaux.
\end{defn}
\subsection{Intermediate crystal bases and categorified quantum algebras}\label{sec-quantum}
In this section we shortly recall Leclerc-Toffin's definition of an \textit{intermediate} crystal basis, of which we think as a basis sitting in between the \textit{lower global} (in the sense of Kashiwara) crystal bases $b_T$ (also called \textit{canonical} basis in the sense of Lusztig) and the standard basis $e_{T^{\prime}}$. Then we recall the notion of Khovanov and Lauda's \textit{categorification} of $\dot{{\mathbf U}}_q(\mathfrak{sl}_n)$ (recall that this is Beilinson, Lusztig and MacPherson idempotented form~\cite{blm}), denoted by $\Ucat$, and the notion of \textit{Khovanov-Lauda and Rouquier algebras} (KL-R for short) and Hu-Mathas \textit{graded cellular basis} for latter.

We stress that we really only recall everything \textit{very} briefly. Much more details can be found for example in the references we give below.

Moreover, we \textit{do not} recall $q$-skew Howe duality or the representation theory of ${\mathbf U}_q(\mathfrak{gl}_n)$ and ${\mathbf U}_q(\mathfrak{sl}_n)$-tensors, because we want to keep the length of this paper in a reasonable boundary. Details concerning $q$-skew Howe duality (or even more, i.e. a nice survey about it) can for example be found in the paper by Cautis, Kamnitzer and Morrison~\cite{ckm} and, for example in Mackaay's paper~\cite{mack1}, the reader can find a good discussion how tensor products and certain bases behave under $q$-skew Howe duality. We note that  we use the conventions of~\cite{mpt} (with $E_{-i}=F_i$) and we hope that the reader does not get confused, since some authors use different conventions, e.g. a different quantum parameter $v=-q^{-1}$.
\subsubsection*{Intermediate crystal bases}
Note that a similar section can be found in~\cite{mack1}, since Mackaay also uses in his paper~\cite{mack1} the intermediate crystal basis under $q$-skew Howe duality.

Let us denote by $V_{\Lambda}$ the irreducible $\dot{{\mathbf U}}_q(\mathfrak{sl}_n)$-representation of highest weight $\Lambda=(3^\ell)$. There is a particular nice basis of $V_{\Lambda}$ called the \textit{lower global crystal basis} (or \textit{canonical basis}). Since we do not need it explicitly here, we do not recall the definition. Details can be found in~\cite{hoka} or~\cite{lu} for example. We denote the canonical basis of $V_{\Lambda}$, which is parametrized by $\mathrm{Std}^s(3^{\ell})$\footnote{Note that we write $3^{\ell}$ as a short hand for the partition $(3^{\ell})$.}, by
\[
\mathrm{can}(V_{\Lambda})=\{b_T\mid T\in\mathrm{Std}^s(3^{\ell})\}.
\]
In contrast to the standard basis $\{e_{T^{\prime}}\in \Lambda_q^{\ell}(\bC_q^n)^{\otimes 3}\mid T^{\prime}\in\mathrm{Col}(3^{\ell})\}$, which is easy to write down, but has not a very good behavior under the action, the lower global crystal basis is very hard to find, but has a very good behavior. Note that, as pointed out in~\cite{mack1}, $q$-skew Howe duality turns $b_T\mapsto b^*_T$ and vice versa, that is, the canonical basis of $V_{\Lambda}$ turns under $q$-skew Howe duality to the $\dot{{\mathbf U}}_q(\mathfrak{sl}_3)$-\textit{dual} canonical basis $\mathrm{dcan}(W_{\Lambda})=\{b^*_T\mid T\in\mathrm{Std}^s(3^{\ell})\}$ of the $\mathfrak{sl}_3$-web space $W_{\Lambda}$.
\vspace*{0.25cm} 

Leclerc and Toffin~\cite{leto} defined a different basis of $V_{\Lambda}$, denoted by $B_{\Lambda}$, parametrized by the elements in $\mathrm{Std}^s(3^{\ell})$ (as Kashiwara-Lusztig's lower global crystal basis). It is also called the \textit{intermediate crystal basis}. It is much easier to write down than the lower global crystal basis and Leclerc and Toffin~\cite{leto} gave an inductive algorithm how to compute $\mathrm{can}(V_{\Lambda})$ from $B_{\Lambda}$. We recall their definition of $B_{\Lambda}$ now very briefly (we should admit that we do not recall the details about the notation here, but they are not important for us in this paper. A good discussion can be found in~\cite{mack1}).

Given $T\in \mathrm{Std}^s(3^{\ell})$, let us recall how to obtain 
the \textit{Leclerc-Toffin (or short LT\footnote{We always mean Toffin and \textit{not} Thibon. We thank Catharina Stroppel for recognizing this confusing point.}) basis} element $A_T\in B_{\Lambda}$. 
Let $1\leq i_1\leq \ell$ be the smallest number such that the rows of 
$T=T_1$ with row number $\leq i_1$ (where we number the rows starting with $1$ from top to bottom in our convention) contain numbers equal to $i_1+1$. We denote 
the total number of such entries by $j_1>0$. Lower these entries to 
$i_1$ and denote the new tableau by $T_2\in\mathrm{Std}^s(3^{\ell})$ (this tableau will still be semi-standard). Continue until one obtains after $s$-steps a tableau $T_s$ whose entries are exactly the row numbers. The element $A_T$ is defined by ($F_{b}^{(a)}=\frac{F^a_b}{[a]!}$ is the divided power with the quantum integer $[a]=\frac{q^{a}+q^{-a}}{q+q^{-1}}$) 
\begin{equation}
\label{eq:LTdef}
A_T=F_{i_1}^{(j_1)}\cdots F_{i_s}^{(j_s)}v_{\Lambda}\;(\text{here }v_{\Lambda}\text{ is the vector of highest weight }\Lambda).
\end{equation}
These basis elements are fixed under the bar involution.

One can easily work out the expansion of 
$A_T$ on the standard basis $e_{T^{\prime}}$ of $\Lambda_q^{\ell}(\bC_q^n)^{\otimes 3}$ (the quantum exterior power - details can be found in~\cite{ckm} or~\cite{mack1}). Leclerc and Toffin showed (i.e. Lemma 9 in~\cite{leto}) that 
\begin{equation}
\label{eq:LT}
A_T=e_T+\sum_{T^{\prime}\prec T} \alpha_{T^{\prime}}(v) e_{T^{\prime}},
\end{equation}
with $T^{\prime}\in \mathrm{Col}(3^{\ell})$ and certain $\alpha_{T^{\prime}}(v)\in \bN[v,v^{-1}]$ (with $v=-q^{-1}$). Here $\prec$ denotes the lexicographical ordering on column-strict tableaux: For a column-strict tableaux $T$ we define the \textit{column-word} $co(T)=(c_1,\dots,c_{3\ell})$ to be a sequence of the entries of the columns of $T$ read from top to bottom and then from left to right. Note that this sequence has length $m\ell$. Then the set $\mathrm{Col}(3^{\ell})$ is partial order by
\[
T\leq T^{\prime}\Leftrightarrow c(T^{\prime})-c(T)\in\bN^{3\ell}\;\text{ with }\;c(T^{(\prime)})=(c^{(\prime)}_1,c^{(\prime)}_1+c^{(\prime)}_2,\dots,c^{(\prime)}_1+\dots+c^{(\prime)}_{3\ell}).
\]
Since we tend to use $3$-multipartitions and $3$-multitableaux instead let us state what this means in our notation. A column-strict tableaux $T$ of shape $(3^{\ell})$ corresponds to a $3$-multipartition $\vec{\lambda}$ by subtracting from each row its number and obtain a new column-strict tableaux $\tilde T$. Read the $k$-th column from bottom to top to obtain in this way the $k$-th partition $\lambda^k$ of $\vec{\lambda}=(\lambda^1,\lambda^2,\lambda^3)$. It is easy to see that this process is in fact invertible.

Write $\vec{\lambda}_T$ for the corresponding $3$-multipartition. Then $T\leq T^{\prime}$ iff $\vec{\lambda}_{T}\trianglelefteq\vec{\lambda}_{T^{\prime}}$, where $\trianglelefteq$ is the dominance order from Definition~\ref{defn-dominnance}. As a small example consider the following.
\[
\xy (0,0)*{\begin{Young}1& 3\cr 2 & 4 \cr\end{Young}}\endxy\leq\xy(0,0)*{\begin{Young}1& 2\cr 3 & 4\cr\end{Young}}\endxy\;\;\text{ and }\;\;
\left(\;\emptyset\;,\;\xy (0,0)*{\begin{Young}&\cr &\cr\end{Young}}\endxy\;\right)\trianglelefteq\left(\;\xy (0,0)*{\begin{Young}\cr\end{Young}}\endxy\;,\;\xy (0,0)*{\begin{Young}&\cr \cr\end{Young}}\endxy\;\right).
\]
\vspace*{0.1cm}
  
Moreover, there is an algorithm to obtain the canonical basis $\mathrm{can}(V_{\Lambda})$ which uses $B_{\Lambda}$ as an intermediate basis. Leclerc and Toffin showed (i.e. Section 
4.2 in~\cite{leto}) that 
\begin{equation}\label{eq:LT2}
b_T=A_T + \sum_{T^{\prime\prime} \prec T } \beta_{T^{\prime\prime}T}(v) A_{T^{\prime\prime}},
\end{equation}
with $T^{\prime\prime}\in \mathrm{Std}^s(3^{\ell})$ for certain bar-invariant (!) coefficients $\beta_{T^{\prime\prime}T}(v)\in \mathbb{Z}[v,v^{-1}]$.
\vspace*{0.25cm}

It is worth noting that a slight change in the definition of $B_{\Lambda}$, i.e. changing the rules which entries are to replaced, one get similar results as above. In fact, in this paper we use such a slight change. We call such these basis, by a slight abuse of notation, still \textit{LT-bases} or of \textit{LT-type}. Under $q$-skew Howe duality, as explained in Section~\ref{sec-compare}, this basis turns out to be Kuperberg's basis of $\mathfrak{sl}_3$-webs. Moreover, it changes the underlying space $\Lambda_q^{\ell}(\bC_q^n)^{\otimes 3}$ to $\Lambda_q^{\bullet}(\bC_q^3)^{\otimes n}$.
  
\subsubsection*{The special quantum 2-algebras}

Khovanov and Lauda (Rouquier introduced independently similar notions~\cite{rou}) introduced diagrammatic $2$-categories ${\mathcal{U}}(\mathfrak{g})$ which categorify the integral version of the corresponding idempotented quantum groups~\cite{kl5}.

Cautis and Lauda~\cite{cala} defined diagrammatic $2$-categories ${\mathcal{U}}_Q(\mathfrak{g})$ with implicit scalers $Q$ consisting of $t_{ij}$, $r_i$ and $s_{ij}^{pq}$ which determine certain signs in the definition of the categorified quantum groups.
\vspace*{0.25cm}

In this section, we very briefly recall ${\mathcal{U}}(\mathfrak{sl}_n)={\mathcal{U}}_Q(\mathfrak{sl}_n)$. Much more can be found in the papers cited above. The scalars $Q$ are given by 
$t_{ij}=-1$ if $j=i+1$, $t_{ij}=1$ otherwise, $r_i=1$ and $s_{ij}^{pq}=0$. This corresponds precisely to the signed version in~\cite{kl5} and~\cite{kl4}. For simplicity we work with $\bC$ as an underlying field. 

\begin{defn}(\textbf{Khovanov-Lauda})\label{defn-KL}
The $2$-category $\Ucat$ is defined as follows.
\begin{itemize}
\item The objects in $\Ucat$ are the weights $\lambda \in  \bZ^{m-1} $.
\end{itemize}
For any pair of objects $\lambda$ and $\lambda^{\prime}$ in $\Ucat$, the hom category 
$\Ucat(\lambda,\lambda^{\prime})$ is the $\bZ$-graded, additive $\bC$-linear category consisting of the following data.
\begin{itemize}
\item Objects (or $1$-morphisms in $\Ucat$), 
that is finite formal sums of the form 
$\mathcal{E}_{\ui}{\idm}_{\lambda}\{t\}$ where $t\in \bZ$ is the grading shift 
and $\ui$ is a signed sequence such that 
$\lambda^{\prime}=\lambda+\sum_{a=1}^{l}\epsilon_ai_{a}^{\prime}$.
\item The morphisms or $2$-cells are graded, $\bC$-vector spaces generated by compositions of diagrams shown below. Here $\{k\}$ denotes a degree shift by $k$. Moreover, we use the two short hand notations $\alpha^{ij}=(\alpha_i,\alpha_j)$ and $\alpha^{\lambda i}=2\frac{(\lambda,\alpha_i)}{(\alpha_i,\alpha_i)}$.
\[
\phi_1=\xy
 (0,1)*{\includegraphics[width=09px]{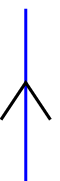}};
 (1.5,-4)*{\scriptstyle i};
 (3,1)*{\scriptstyle\lambda};
 (-5,1)*{\scriptstyle\lambda+\alpha_i};
 \endxy\,\hspace{8mm}
\,
\phi_2=\xy
 (0,1)*{\includegraphics[width=09px]{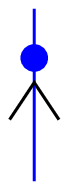}};
 (1.5,-4)*{\scriptstyle i};
 (3,1)*{\scriptstyle\lambda};
 (-5,1)*{\scriptstyle\lambda+\alpha_i};
 \endxy\,\hspace{8mm}
\,\phi_3=\xy
 (0,1)*{\includegraphics[width=25px]{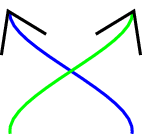}};
 (-5,-3)*{\scriptstyle i};
 (5,-3)*{\scriptstyle j};
 (5.5,0)*{\scriptstyle\lambda};
 \endxy\,\hspace{8mm}\,
 \phi_4=\xy
 (0,0)*{\includegraphics[width=25px]{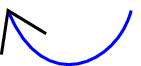}};
 (0,-3)*{\scriptstyle i};
 (5,0)*{\scriptstyle\lambda};
 \endxy\hspace{8mm}\,
 \phi_5=\xy
 (0,0)*{\includegraphics[width=25px]{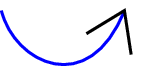}};
 (0,-3)*{\scriptstyle i};
 (5,0)*{\scriptstyle\lambda};
 \endxy
\]
with $\phi_1=\mathrm{id}_{\mathcal E_{i}\onel}$, $\phi_2\colon\mathcal E_{i}\onel\Rightarrow\mathcal E_{i}\onel\{\alpha^{ii}\}$, $\phi_3\colon\mathcal E_{i}\mathcal E_{j}\onel\Rightarrow\mathcal E_{j}\mathcal E_{i}\onel\{\alpha^{ij}\}$ and cups and caps $\phi_4\colon\onel\{\frac{1}{2}\alpha^{ii}+\alpha^{\lambda i}\}\Rightarrow\mathcal E_{i}\mathcal F_{i}\onel$ and $\phi_5\colon\onel\{\frac{1}{2}\alpha^{ii}-\alpha^{\lambda i}\}\Rightarrow\mathcal F_{i}\mathcal E_{i}\onel$. Moreover, we have diagrams of the form
\[
\psi_1=\xy
 (0,1)*{\includegraphics[width=09px]{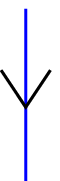}};
 (1.5,-4)*{\scriptstyle i};
 (3,1)*{\scriptstyle\lambda};
 (-5,1)*{\scriptstyle\lambda-\alpha_i};
 \endxy\,\hspace{8mm}
\,
\psi_2=\xy
 (0,1)*{\includegraphics[width=09px]{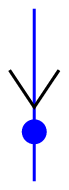}};
 (1.5,-4)*{\scriptstyle i};
 (3,1)*{\scriptstyle\lambda};
 (-5,1)*{\scriptstyle\lambda-\alpha_i};
 \endxy\,\hspace{8mm}
\,\psi_3=\xy
 (0,1)*{\includegraphics[width=25px]{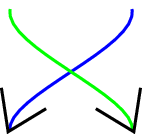}};
 (-5,-3)*{\scriptstyle i};
 (5,-3)*{\scriptstyle j};
 (5.5,0)*{\scriptstyle\lambda};
 \endxy\,\hspace{8mm}\,
 \psi_4=\xy
 (0,0)*{\includegraphics[width=25px]{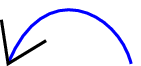}};
 (0,-3)*{\scriptstyle i};
 (5,0)*{\scriptstyle\lambda};
 \endxy\hspace{8mm}\,
 \psi_5=\xy
 (0,0)*{\includegraphics[width=25px]{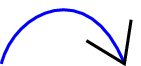}};
 (0,-3)*{\scriptstyle i};
 (5,0)*{\scriptstyle\lambda};
 \endxy
\]
with $\psi_1=\mathrm{id}_{\mathcal F_{i}\onel}$, $\psi_2\colon\mathcal F_{i}\onel\Rightarrow\mathcal F_{i}\onel\{\alpha^{ii}\}$, $\psi_3\colon\mathcal F_{i}\mathcal F_{j}\onel\Rightarrow\mathcal F_{j}\mathcal F_{i}\onel\{\alpha^{ij}\}$ and cups and caps $\psi_4\colon\mathcal F_{i}\mathcal E_{i}\onel\Rightarrow\onel\{\frac{1}{2}\alpha^{ii}+\alpha^{\lambda i}\}$ and $\psi_5\colon\mathcal E_{i}\mathcal F_{i}\onel\Rightarrow\onel\{\frac{1}{2}\alpha^{ii}-\alpha^{\lambda i}\}$.
\end{itemize}
The relations on the $2$-morphisms are 
those of the signed version in~\cite{kl5} and~\cite{kl4}. The convention for reading these diagrams is from right to left and bottom to top. The $2$-cells should satisfy several relation which we will not recall here. Details can be for example found in Cautis and Lauda's paper~\cite{cala}.
\end{defn}

\subsubsection{The cyclotomic KL-R algebras}
In this subsection, we recall the definition of the cyclotomic KL-R algebras, 
due to Khovanov and Lauda~\cite{kl1},~\cite{kl3} and, independently, 
to Rouquier~\cite{rou}. We also very shortly recall Hu and Mathas graded cellular basis~\cite{hm} for these algebras of type $A$. 

Let $\Lambda$ be a dominant $\mathfrak{sl}_n$-weight, 
$V_{\Lambda}$ the irreducible $\U$-module of highest 
weight $\Lambda$ and $P_{\Lambda}$ the set of weights in $V_{\Lambda}$.
 
\begin{defn}(\textbf{Khovanov-Lauda, Rouquier})\label{defn-cyclKLR}
The \textit{cyclotomic KL-R algebra} $R_{\Lambda}$ is 
the subquotient of $\Ucat$ 
defined by the subalgebra of all diagrams with only downward 
oriented strands and right-most region labeled $\Lambda$ and
modded out by the ideal generated by all diagrams of the form
\begin{align}\label{eq-cyclorel}
\xy
(0,0)*{\includegraphics[width=75px]{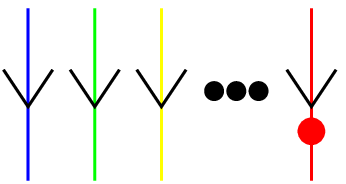}};
(-11,-8.5)*{\scriptstyle i_1};
(-5.5,-8.5)*{\scriptstyle i_2};
(0,-8.5)*{\scriptstyle i_3};
(12,-8.5)*{\scriptstyle i_n};
(14.5,-5)*{\scriptstyle \overline{\lambda}_n};
(14,0)*{\scriptstyle\lambda};
\endxy.
\end{align}
This relation is known as the \textit{cyclotomic relation}.
\end{defn}
Note that 
\[
R_{\Lambda}=\bigoplus_{\mu\in P_{\Lambda}} R_{\Lambda}(\mu),
\]
where $R_{\Lambda}(\mu)$ is the subalgebra generated by all diagrams 
whose left-most region is labeled $\mu$. It is not clear from the definition what the dimension of $R_{\Lambda}$ is. Moreover, it is not clear that $R_{\Lambda}$ is finite dimensional, but Brundan and Kleshchev proved that $R_{\Lambda}$ is indeed finite dimensional~\cite{bk1}.

Note that we mod out by relations involving dots on the last strand, rather 
than the first strand as in~\cite{kl1} to make it consistent with our other conventions in this paper.

It is worth noting that if we draw pictures for the KL-R algebra, then we do not need orientations anymore, that is pictures will look like
\[
\xy(0,0)*{\includegraphics[scale=0.75]{res/figs/basis/HM-strings}}\endxy\;\;\text{or}\;\;\xy(0,0)*{\includegraphics[scale=0.75]{res/figs/basis/HM-strings1}}\endxy
\]
\vspace*{0.15cm}

In~\cite{hm} Hu and Mathas gave a graded cellular basis of the KL-R algebra $R_{\Lambda}$ (or $R^n_{c(S)}$ in their notation). We do not recall their definition here, since it is not short and we give an alternative definition using foams later. The reader is encouraged to take a look in their great paper. We call their basis \textit{HM-basis}. We only mention that their basis is parametrized by $\vec{\lambda}\in\Lambda^+(n,c(S),l)$, i.e. all $l$-multipartitions of $c(S)$ for all suitable $n,l$, and $\vec{T},\vec{T}^{\prime}\in\mathrm{Std}(\vec{\lambda})$, i.e. standard $l$-multitableaux in the KL-R sense \textit{without} repeating entries. They denote their basis by
\[
\{\psi_{\vec{T},\vec{T}^{\prime}}\mid \vec{\lambda}\in \mathfrak{P}_{c(S)}\text{ and }\vec{T},\vec{T}^{\prime}\in\mathrm{Std}(\vec{\lambda})\},
\]
where $\mathfrak{P}_{c(S)}$ is the set of all multipartitions of $c(S)$. Moreover, it is graded by
\[
\mathrm{deg}_{\mathrm{BKW}}(\psi_{\vec{T},\vec{T}^{\prime}})=\mathrm{deg}_{\mathrm{BKW}}(\vec{T})+\mathrm{deg}_{\mathrm{BKW}}(\vec{T}^{\prime}),
\]
where the degree is Brundan, Kleshchev and Wang's degree given in~\cite{bkw}, which we recall in~\ref{defn-degpart1}. We note that $n$ in our context will be the number of strands of a given web (see next section) and $c(S)$ is as in~\ref{eq-const}. It is worth noting that we restrict to the easiest case, i.e. the linear quiver over $\bC$, but much more is known about HM-basis, see for example~\cite{hm1} or~\cite{li1} for a version over $\bZ$.
\subsection{Webs and foams}\label{sec-webfoam}
We are going to recall the notions of $\mathfrak{sl}_3$-webs and foams in this section. Nothing here is new, i.e. the whole section is literally copied from~\cite{mpt} (up to some small changes) and the results are mostly from either~\cite{kk} or~\cite{kup} in the case of webs or~\cite{kh3} and~\cite{mv1} for the foams. As before, we only briefly recall the different notions and much more can be found in the papers mentioned above.
\vspace*{0.25cm}

In~\cite{kup}, Kuperberg describes the representation theory of 
${\mathbf U}_q(\mathfrak{sl}_3)$ using oriented trivalent graphs, possibly with 
boundary, called \textit{webs} or \textit{$\mathfrak{sl}_3$-webs}. Boundaries of webs
consist of univalent vertices (the ends of oriented edges), which we will 
usually put on a horizontal line (or various horizontal lines), called the \textit{cut-line}, and that we usually picture by a dotted line, e.g. such a web is shown below.
\begin{align}
\xy
   (0,0)*{\includegraphics[width=140px]{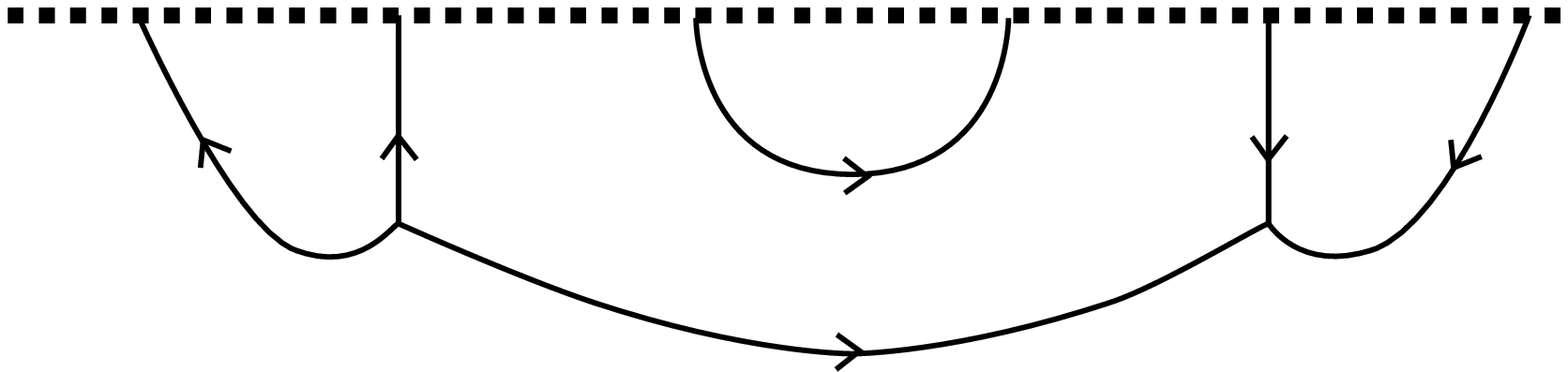}};
\endxy
\end{align}
We say that a web has $n$ free strands if the number of non-trivalent vertices 
is exactly $n$. In this way, the boundary of a web can be 
identified with a \textit{sign string} $S=(s_1,\ldots,s_n)$, with $s_i=\pm$, 
such that upward oriented boundary edges get a ``$+$'' and downward oriented 
boundary edges a ``$-$'' sign. Webs without boundary are called 
\textit{closed} webs. 

Fixing a boundary $S$, we can form the 
$\mathbb{C}(q)$-vector space $W_S$, spanned by all webs with boundary $S$, 
modulo the following set of local relations or Kuperberg relations~\cite{kup}.
\begin{align}
\label{eq:circle}
\xy(0,0)*{\includegraphics[width=20px]{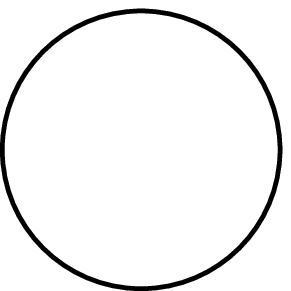}}\endxy\;\; &=\;\; [3]\\
\label{eq:digon}
\xy(0,0)*{\includegraphics[width=70px]{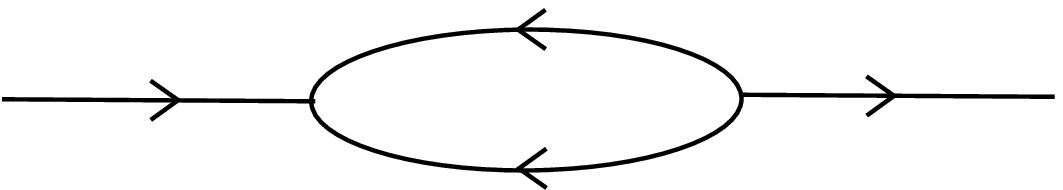}}\endxy\;\; &=\;\; [2]\;\; \xy(0,0)*{\includegraphics[width=50px]{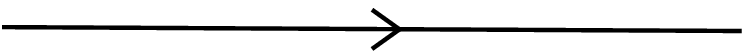}}\endxy\\
\label{eq:square}
\xy(0,0)*{\includegraphics[width=50px]{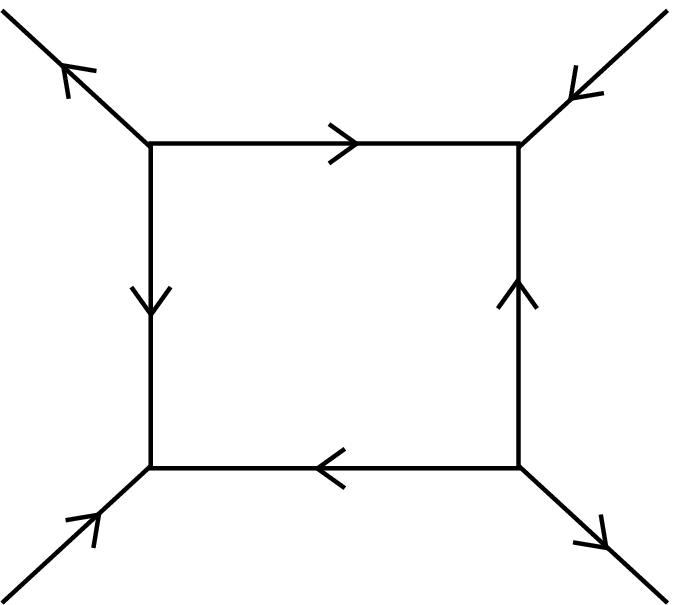}}\endxy\;\; &=\;\;\xy(0,0)*{\includegraphics[height=45px]{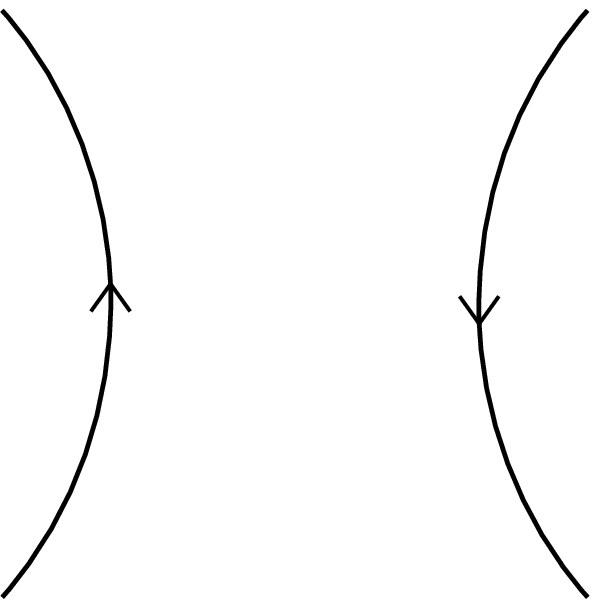}}\endxy + \;\;\xy(0,0)*{\reflectbox{\rotatebox{90}{\includegraphics[width=45px]{res/figs/basicsa/vert}}}}\endxy
\end{align}
Here 
\[
[a]=\frac{q^a-q^{-a}}{q-q^{-1}}=q^{a-1}+q^{a-3}+\cdots+q^{-(a-1)}\in\mathbb{N}[q,q^{-1}]
\]
denotes the \textit{quantum integer}. Note that we sometimes do not orient our webs. In all those cases the orientation does not matter and is therefore not pictured.

By abuse of notation, we will call all elements of $W_S$ webs. 
From relations~\eqref{eq:circle},~\eqref{eq:digon} and~\eqref{eq:square} it 
follows that any element in $W_S$ is a linear 
combination of webs with the same boundary and without circles, digons or 
squares. These are called \textit{non-elliptic webs}. 
As a matter of fact, the non-elliptic webs form a basis of $W_S$, which 
we call $B_S$. 
Therefore, we will simply call them \textit{basis webs} or \textit{Kuperberg's basis webs} or \textit{LT-basis webs} (we explain the name in Section~\ref{sec-compare}).
\vspace*{0.25cm}

Following Brundan and Stroppel's~\cite{bs1} notation for arc diagrams, 
we will write $w^*$ to denote the web obtained by reflecting a given 
web $w$ horizontally and reversing all 
orientations. Moreover, by $uv^*$, we mean the planar diagram containing the disjoint union of $u$ and 
$v^*$, where $u$ lies vertically above $v^*$. By $v^*u$, we shall mean the closed web obtained by glueing 
$v^*$ on top of $u$, when such a construction is possible (i.e. the number of free strands and orientations on the strands match). 
\begin{align}
\xy
 (0,0)*{\includegraphics[width=140px]{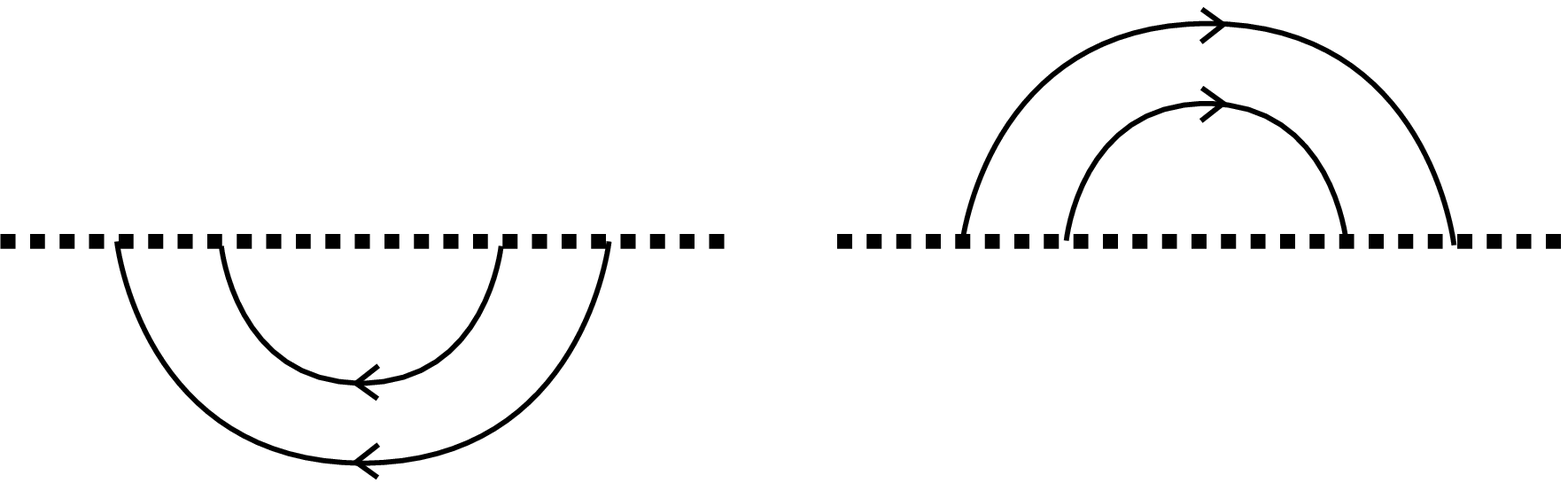}};
 (-13.5,2)*{w};
 (14,-2)*{w^*};
\endxy\;\;\;\;
\xy
 (0,0)*{\includegraphics[width=70px]{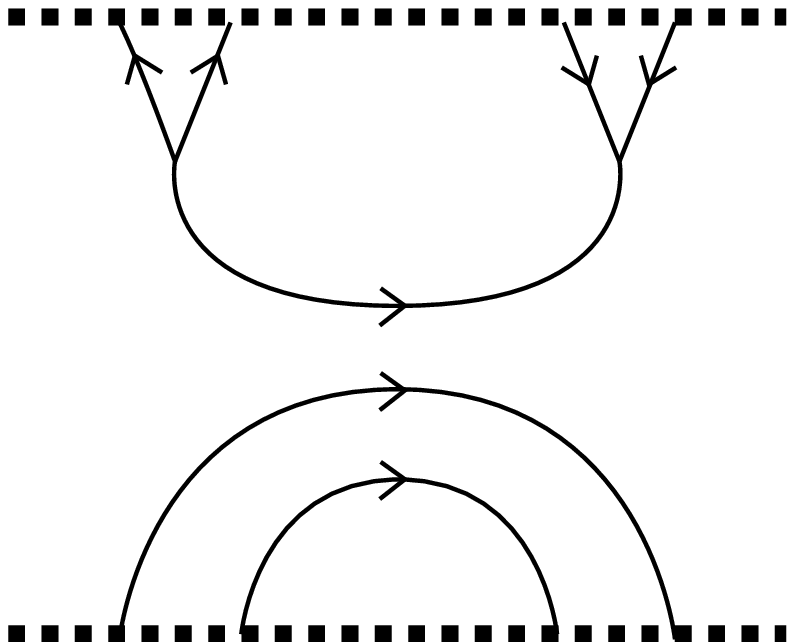}};
 (12,5)*{u};
 (12.5,-5)*{v^*};
\endxy\;\;\;\;\xy
 (0,0)*{\includegraphics[width=70px]{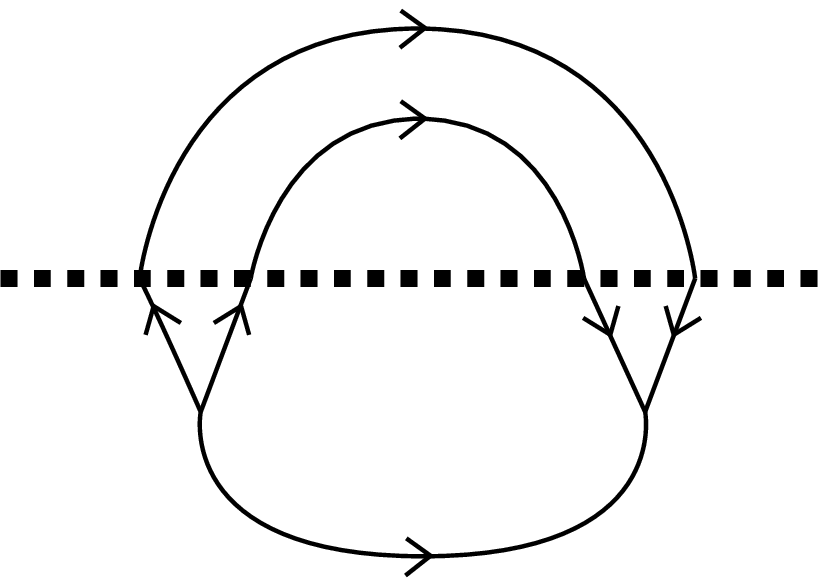}};
 (12,-4)*{u};
 (12.5,5)*{v^*};
\endxy
\end{align}
It should be noted that we usually use the symbol $w$ for any web, closed or with boundary, while we use the symbols $u,v$ for half-webs, that is $w=u^*v$ for suitable webs $u,v\in B_S$.
\vspace*{0.25cm}

To make the connection with the representation theory of 
${\mathbf U}_q(\mathfrak{sl}_3)$, we recall that 
a sign string $S=(s_1,\ldots,s_n)$ corresponds to 
\[
V_S=V_{s_1}\otimes \cdots\otimes V_{s_n},
\]
where $V_{+}$ is the fundamental representation and $V_{-}$ its dual. Both $V_+$ and $V_-$ have dimension three. In this interpretation, 
webs correspond to intertwiners and 
\[
W_S\cong \mathrm{Inv}_{{\mathbf U}_q(\mathfrak{sl}_3)}(V_S)\cong\mathrm{Hom}_{{\mathbf U}_q(\mathfrak{sl}_3)}(\bC,V_S).
\]
Therefore, the elements of $B_S$ give a basis of $\mathrm{Inv}_{{\mathbf U}_q(\mathfrak{sl}_3)}(V_S)$. 
However, this basis is not equal to the usual tensor basis nor to the dual canonical basis, see~\cite{kk}. Moreover, in~\cite{mpt} it was proved that Kuperberg's web basis and the dual canonical basis are related by a unitriangular change of basis matrix. The proof follows from categorification. We will reproduce this result without using categorification in Corollary~\ref{cor-uptri}.
\vspace*{0.25cm}

Kuperberg showed in~\cite{kup} (see also~\cite{kk}) that basis webs are indexed 
by closed weight lattice paths in the dominant Weyl chamber of 
$\mathfrak{sl}_3$. It is well-known that any path in the 
$\mathfrak{sl}_3$-weight lattice can be presented by a pair consisting of a 
sign string $S=(s_1,\ldots,s_n)$ and a 
\textit{state string} $J=(j_1,\ldots,j_n)$, with $j_i\in \{-1,0,1\}$ for all 
$1\leq i\leq n$. Given a pair $(S,J)$ representing a closed dominant path, 
a unique basis web (up to isotopy) is determined by a set of 
inductive rules called 
the \textit{growth algorithm}. We do not need it here explicitly, but it should be noted that Khovanov and Kuperberg showed in~\cite{kk}, that the growth algorithm is independent of the involved choices. A result that we need later in Section~\ref{sec-compare}. The growth algorithm gives the basis $B_S$.

Following Khovanov and Kuperberg in~\cite{kk}, we define a \textit{flow} or \textit{flow line} $f$ on a web $w$ to be an oriented subgraph that contains exactly two of the three edges 
incident to each trivalent vertex. At the boundary, the flow lines 
can be represented by a state string $J$. By convention, at the $i$-th 
boundary edge, we set $j_i=+1$ if the flow line is oriented downward, $j_i=-1$ if 
the flow line is oriented upward and $j_i=0$ there is no flow line. The same 
convention determines a state for each edge of $w$. We will also say that any flow $f$ that is compatible with a given state string $J$ on the boundary of $w$ \textit{extends} $J$. 

Given a web with a flow, denoted $w_f$, Khovanov and Kuperberg~\cite{kk} 
attribute a \textit{weight} to each trivalent vertex and each arc in $w_f$, 
as in Figures~\ref{weights} and~\ref{weights2}. 
The total weight of $w_f$ is by definition the sum of the 
weights at all trivalent vertices and arcs.
\begin{align}\label{weights}
  & \xy
 (0,0)*{\includegraphics[width=310px]{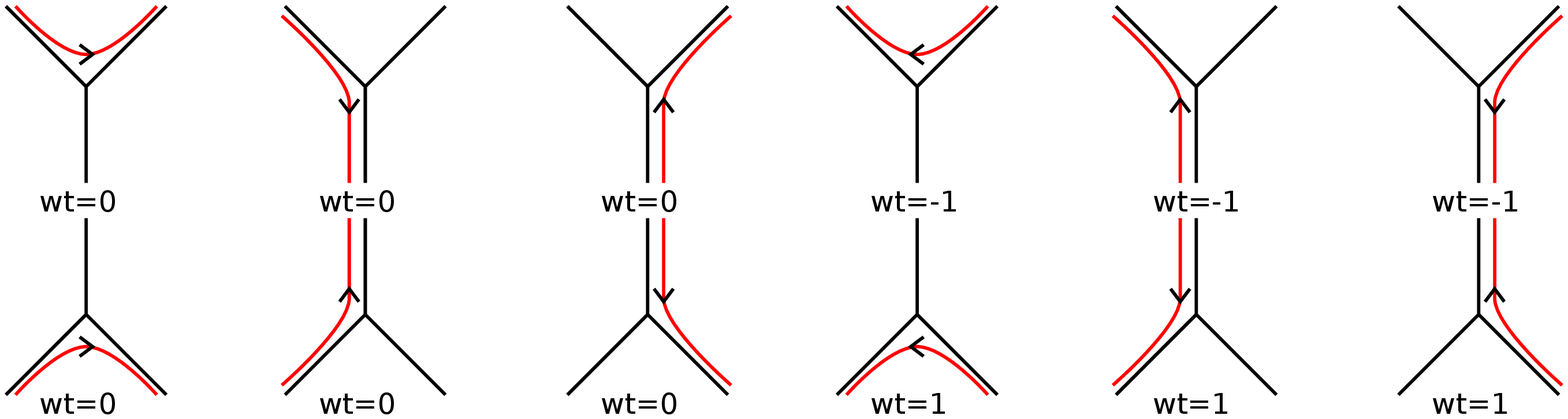}};
\endxy\\
   \label{weights2}
  & \xy
 (0,0)*{\includegraphics[width=310px]{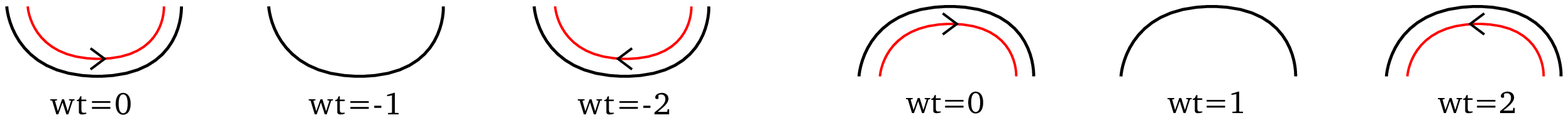}};
\endxy
\end{align}
For example, the following web has weight $-4$.
\begin{align}
\xy
 (0,0)*{\includegraphics[width=150px]{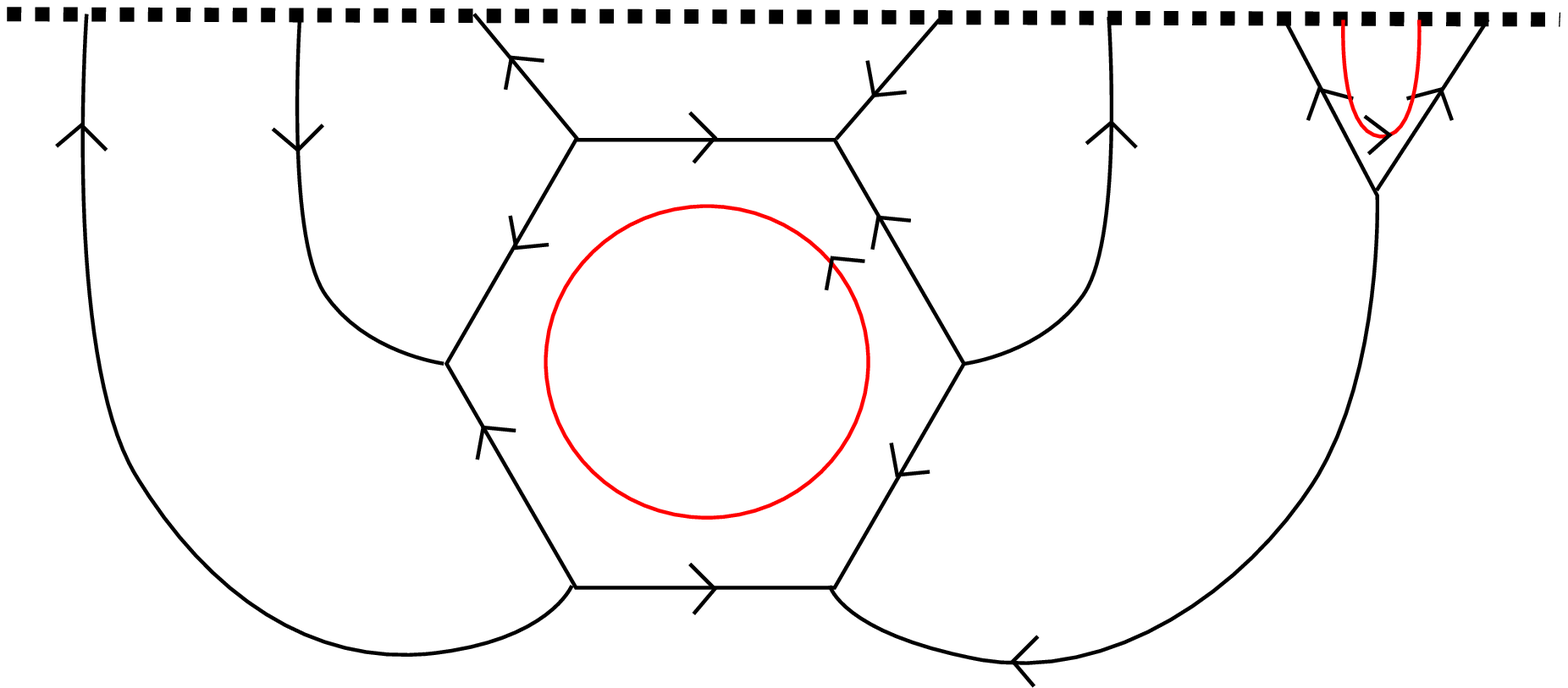}};
\endxy
\end{align}
We will show later in Section~\ref{sec-degflow} that Brundan, Kleshchev and Wang's degree of a multitableau has, after translating it using $q$-skew Howe duality, a very natural interpretation as (minus) the weight of a flow $f$ on a fixed web $w$.
\vspace*{0.25cm}

We choose arbitrary but fixed non-negative integers 
$n\geq 2$ and $\ell\leq n$, such that $d=3\ell\geq n$. Recall $\Lambda(n,d)=\left\{\lambda\in \mathbb{N}^n\mid \sum_{i=1}^n\lambda_i=d\right\}$.

Recall that a flow on the line on the boundary of a web, i.e. a pair of a sign string $S$ and a state string $J$, can be encoded using \textit{column-strict tableaux} $\mathrm{Col}(\lambda)$. Here $\lambda=(3^{\ell})\in \Lambda(n,d)$ and for any sign string $S=(s_1,\dots,s_n)$ the number $s_k$ appears with multiplicity one or two depending on the sign string $S$, see~\cite{mpt}. To be precise, in~\cite{mpt} we showed the following. We note that the restriction to the canonical flow gives semi-standard tableaux. Therefore, non-elliptic webs can be associated $1:1$ with semi-standard tableaux, a fact that was already known before (we note that some authors, e.g. Russell~\cite{ru}, use different reading conventions).
\begin{prop}\label{prop-tableauxflows}
There is a bijection between $\mathrm{Col}(\lambda)$ and the 
set of state strings $J$ such that there exists a web $w\in B_S$ and 
a flow $f$ on $w$ which extends $J$. 
\end{prop}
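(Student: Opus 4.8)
The plan is to establish the bijection by analyzing which state strings $J$ can arise on the boundary of a web $w \in B_S$ carrying a flow $f$ extending $J$, and matching this data directly against column-strict tableaux of shape $\lambda = (3^\ell)$. The key point is that the combinatorial data on the boundary of a web-with-flow — the sign string $S$ together with the state string $J$ — is precisely a weight-lattice path in the $\mathfrak{sl}_3$-weight lattice, and one translates such paths into tableaux. First I would recall, following Khovanov–Kuperberg~\cite{kk} and~\cite{mpt}, that a flow line passes through exactly two of the three edges at each trivalent vertex, so the admissible edge-states are constrained locally; reading these constraints down to the cut-line shows that $(S,J)$ is admissible (i.e. extends to some flow on some basis web) if and only if the associated path stays in the dominant Weyl chamber. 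This is the ``growth algorithm'' characterization: Kuperberg~\cite{kup} shows every closed dominant path $(S,J)$ determines a unique basis web up to isotopy, and Khovanov and Kuperberg~\cite{kk} show the algorithm is choice-independent.

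Next I would set up the correspondence with $\mathrm{Col}(\lambda)$ explicitly. Under $q$-skew Howe duality the $\mathfrak{sl}_3$-web space with boundary $S$ sits inside $\Lambda_q^\bullet(\bC_q^3)^{\otimes n}$, and a state string records, entry by entry, which exterior power (or which basis vector of which $\Lambda^j \bC^3$) is selected at each tensor factor; since the $k$-th boundary position carries the sign $s_k$, the number $k$ appears in the tableau with multiplicity $1$ or $2$ depending on $s_k$, exactly as stated. The translation sends a column-strict filling $T$ of $(3^\ell)$ to the state string obtained by reading off, in each of the $\ell$ columns, which of the three entries $1,\dots,n$ occupy the rows — equivalently, recording for each boundary index $k$ in how many columns it appears and in which rows. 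The dominant-path condition on $(S,J)$ corresponds exactly to the column-strictness of $T$ (strict increase down columns, no condition across rows), because the $\mathfrak{sl}_3$-dominance of the partial path at step $k$ translates into the inequalities governing valid column fillings. I would then check injectivity and surjectivity of this map: injectivity is immediate since distinct tableaux give distinct multisets of column data hence distinct $J$; surjectivity follows because any admissible $J$ reconstructs a column-strict filling by the inverse reading rule, and the growth algorithm then produces the required web $w \in B_S$ with a flow $f$ extending $J$.

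The main obstacle I anticipate is bookkeeping the multiplicity-one-or-two phenomenon cleanly: because $V_+$ and $V_-$ enter the tensor product, some boundary indices correspond to $\Lambda^1 \bC^3$ and some to $\Lambda^2 \bC^3$, so a given number $k$ must be allowed to appear once or twice in $T$, and one must verify that the column-strict condition interacts correctly with this — i.e. that a repeated $k$ lands in the correct pair of rows and never violates strictness within a column. Handling this uniformly is essentially the content of the translation already carried out in~\cite{mpt}, so I would lean on that: the cleanest route is to cite the precise statement from~\cite{mpt} identifying states on the cut-line with $\mathrm{Col}(\lambda)$, and then simply observe that ``extends to a flow on some $w \in B_S$'' adds no further constraint, because the growth algorithm of~\cite{kup,kk} produces such a $w$ from any dominant path. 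Thus the proof reduces to: (1) admissible $(S,J)$ $\Leftrightarrow$ dominant path $\Leftrightarrow$ element of $\mathrm{Col}(\lambda)$, via the reading conventions; and (2) dominant path $\Rightarrow$ existence of the web via the growth algorithm. I would present step (1) as the substantive combinatorial check and step (2) as a citation.
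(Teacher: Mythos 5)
There is a genuine gap. Your central reduction --- ``$(S,J)$ extends to a flow on some $w\in B_S$ if and only if the associated $\mathfrak{sl}_3$-weight-lattice path stays in the dominant chamber'' --- is false, and the rest of the argument leans on it. Dominant closed paths index the basis webs themselves, equivalently the \emph{canonical} flows, and these correspond to the semi-standard tableaux $\mathrm{Std}^s(3^{\ell})$; they do not exhaust the boundary states of arbitrary flows. Already for $S=(+,+,+)$ the state string $J=(-1,0,+1)$ leaves the dominant chamber at its first step, yet it extends to a flow on the unique trivalent basis web. If your claimed equivalence ``admissible $\Leftrightarrow$ dominant $\Leftrightarrow$ column-strict'' were correct, the proposition would yield a bijection of the admissible $J$'s with $\mathrm{Std}^s(3^{\ell})$, not with the strictly larger set $\mathrm{Col}(3^{\ell})$. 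Column-strictness encodes only the weight condition (each of the three columns receives exactly $\ell$ entries, i.e.\ the elementary tensor $e^S_J$ has the correct weight); it has nothing to do with dominance of the partial path.

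The same confusion breaks your surjectivity step: the growth algorithm of Kuperberg and Khovanov--Kuperberg takes a \emph{dominant} path as input and returns the basis web with its canonical flow, so it cannot be invoked to produce a web with a flow extending a non-dominant $J$. The substantive content of the proposition is precisely that \emph{every} column-strict filling (equivalently, every $J$ of the correct weight) is realized by some flow on some basis web, and that part is missing from your argument; it is what \cite{kk} and \cite{mpt} actually supply, and what Sections~\ref{sec-flow} and~\ref{sec-exgrowth} of this paper later upgrade to an extended growth algorithm for webs \emph{with} flows. Note also that the paper gives no internal proof of this statement --- it is recalled verbatim from \cite{mpt} --- so a fully legitimate write-up would either cite \cite{mpt} for the entire statement (not merely for the reading convention) or prove the realizability of arbitrary column-strict fillings directly; your injectivity and weight-count observations are fine, but they are the easy half.
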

\begin{ex}\label{ex-flowline}
All of the three webs with flow
\[
\xy
(0,0)*{\includegraphics[height=.075\textheight]{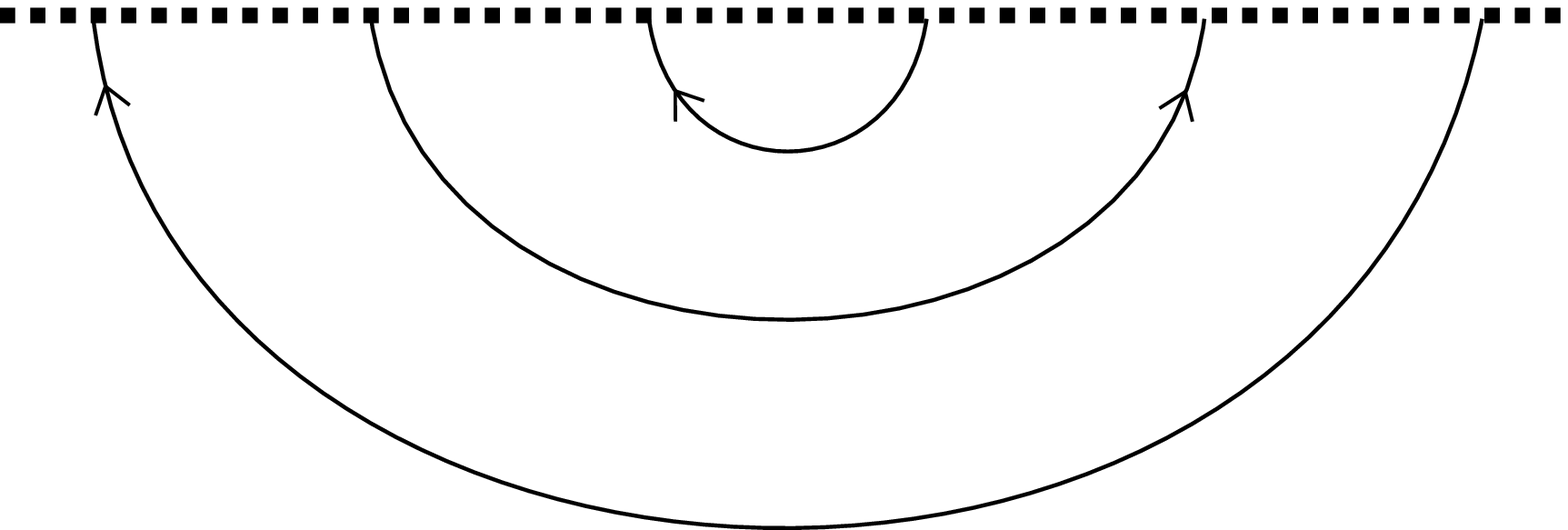}}
\endxy\;\;\;\;
\xy
(0,0)*{\includegraphics[height=.075\textheight]{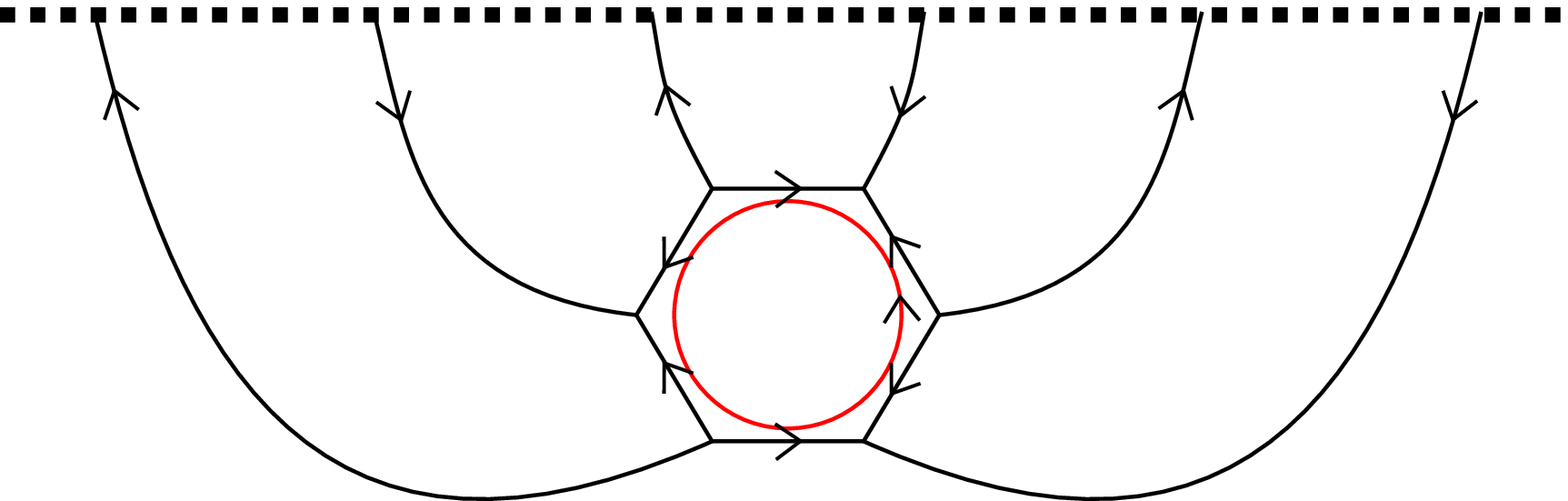}}
\endxy\;\;\;\;
\xy
(0,0)*{\includegraphics[height=.075\textheight]{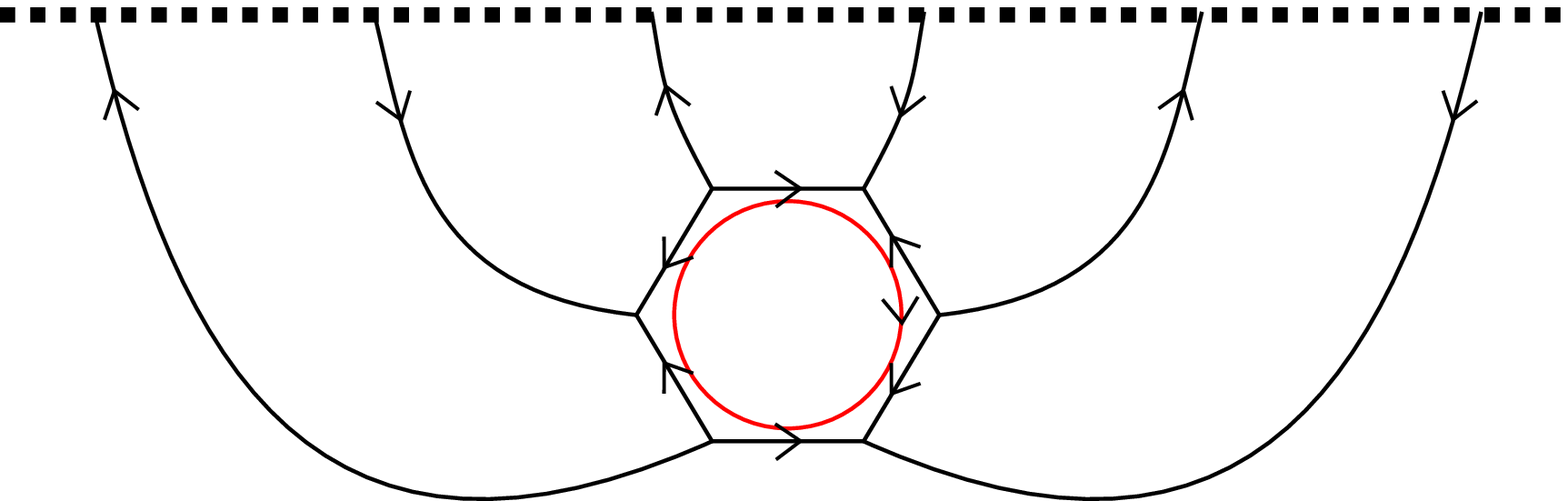}}
\endxy
\]
belong to the same tableau, i.e. the unique one for the sign string and state string pair $(S,J)$ with $S=(+,-,+,-,+,-)$ and $J=(0,0,0,0,0,0)$, that is
\[
\xy
 (0,11)*{
\begin{Young}
2&1&2 \cr
4&3&4 \cr
6&5&6 \cr
\end{Young}
}
\endxy
\]
\end{ex}
We need the following constant. For each $S=(s_1,\dots,s_n)$ with $3\ell=n_++2n_-$ define
\begin{equation}\label{eq-const}
c(s_k)=\begin{cases}k,&\text{if }s_k=+,\\2k,&\text{if }s_k=-,\end{cases}\;\text{ and }c(S)=c(s_1)+\dots+c(s_k)-\frac{3}{2}\ell(\ell+1).
\end{equation}
Moreover, Khovanov and Kuperberg defined a special flow on basis webs, called the \textit{canonical flow}. See~\cite{kk}. We do not need it here in much detail, but it is important to note that the pair of a sign string and state string $(S,J)$ for a web $w$ under the identification with its corresponding tableau gives a semi-standard tableau. See~\cite{mpt} for more details. We denote usually a web $w$ with its unique canonical flow by $w_c$.
\vspace*{0.25cm}

For another connection to representation theory, recall the following. Let $e^{\pm}_{-1,0,+1}$ be the standard basis of $V_{\pm}.$ Given $(S,J)$, let 
\[
e^S_J=e^{s_1}_{j_1}\otimes\cdots\otimes e^{s_n}_{j_n}
\] 
be the elementary tensor. Khovanov and Kuperberg proved the following  
result (Theorem 2 in~\cite{kk}) which we will reprove later. Note that we work with $q$ instead of $v=-q^{-1}$ as in~\cite{kk}.
\begin{thm}\label{thm:upptriang}(\textbf{Khovanov-Kuperberg})
Given $(S,J)$, we have 
\[ 
w^S_J=e^S_J+\sum_{J^{\prime}<J}c(S,J,J^{\prime})e^S_{J^{\prime}}
\]
for some coefficients $c(S,J,J^{\prime})\in\mathbb{N}[v,v^{-1}]$, where the 
state strings $J$ and $J^{\prime}$ are ordered lexicographically.
\end{thm}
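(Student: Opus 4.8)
The plan is to deduce the theorem from the Leclerc--Toffin expansion~\eqref{eq:LT} by transporting it through $q$-skew Howe duality; this is exactly what is carried out in Section~\ref{sec-compare}, and it is restated without categorification in Corollary~\ref{cor-uptri}. First I would fix the dictionary. Under $q$-skew Howe duality the web space $W_S\cong\mathrm{Inv}_{{\mathbf U}_q(\mathfrak{sl}_3)}(V_S)$ is realized inside $\Lambda_q^{\bullet}(\bC_q^3)^{\otimes n}$, the elementary tensor $e^S_J$ corresponds to a standard basis vector $e_{T'}$ indexed by a column-strict tableau $T'=T'(S,J)\in\mathrm{Col}(3^\ell)$ (the refinement of Proposition~\ref{prop-tableauxflows} in which one records the full state string, not just the canonical flow), and — this point has to be checked — the lexicographic order on state strings matches the order $\prec$ on $\mathrm{Col}(3^\ell)$ used in~\eqref{eq:LT}. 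The canonical flow of $w^S_J$ corresponds to the semi-standard representative $T=T(S,J)\in\mathrm{Std}^s(3^\ell)$.

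Second — and this is the heart of the matter — I would show that the basis web $w^S_J$ produced by Kuperberg's growth algorithm equals, after the substitution $v=-q^{-1}$, the Leclerc--Toffin-type basis element $A_T$ of~\eqref{eq:LTdef} attached to the semi-standard tableau $T$ from the previous step. Concretely, the growth algorithm builds $w^S_J$ from a highest weight vector by applying a sequence of $\mathfrak{sl}_3$-ladder operators read off the growth data, and one must verify that this sequence is exactly the monomial $F_{i_1}^{(j_1)}\cdots F_{i_s}^{(j_s)}$ produced by the (slightly modified) LT-algorithm run on $T$. Matching the two algorithms, together with lining up the ordering conventions, is precisely the non-elliptic web combinatorics the introduction flags as the only real obstruction, and it is the main difficulty of the proof; everything else is bookkeeping.

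Once the identification $w^S_J=A_T$ is in place the conclusion is immediate: \eqref{eq:LT} reads $w^S_J=A_T=e_T+\sum_{T'\prec T}\alpha_{T'}(v)\,e_{T'}$ with $\alpha_{T'}(v)\in\bN[v,v^{-1}]$, and translating back through the dictionary gives $e_T=e^S_J$ and turns each $T'\prec T$ into a state string $J'<J$, so that $w^S_J=e^S_J+\sum_{J'<J}c(S,J,J')\,e^S_{J'}$ with $c(S,J,J')\in\bN[v,v^{-1}]$, as claimed. A more self-contained route avoids Leclerc--Toffin altogether: expand any web as the state sum $\sum_f v^{\mathrm{wt}(f)}e^S_{J(f)}$ over all flows $f$ on it (the standard evaluation of a web as an invariant tensor, with the vertex and arc weights of Figures~\ref{weights} and~\ref{weights2}), and then prove by induction over the elementary moves of the growth algorithm that the canonical flow is the unique flow on $w^S_J$ with boundary string $J$ and has total weight $0$, while every other flow has boundary string strictly smaller than $J$ lexicographically. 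Either way, the hard part is the same combinatorial control of the growth algorithm, and the rest is a routine translation.
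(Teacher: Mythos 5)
Your proposal is correct and follows essentially the same route as the paper: the paper's proof of Theorem~\ref{thm:upptriang} also consists of identifying Kuperberg's basis webs with the modified Leclerc--Toffin monomial basis via Proposition~\ref{prop-sl3bases}, then transporting the unitriangular expansion~\eqref{eq:LT} through $q$-skew Howe duality (which changes the underlying space and hence the elementary tensors) with the substitution $v=-q^{-1}$. The state-sum-over-flows alternative you sketch is closer to Khovanov and Kuperberg's original argument, but your main line is the paper's argument.
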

\vspace*{0.15cm}

We shortly review the category called $\foamt$ of $\mathfrak{sl}_3$-foams 
introduced by Khovanov in~\cite{kh3} (these can be seen as a $\mathfrak{sl}_3$ version of Bar-Natan's $\mathfrak{sl}_2$-cobordism~\cite{bn2}). It is worth noting that Blanchet has proposed in~\cite{bla1} a slightly different foam category and it seems to be easier to work out the signs following his approach (as for example in~\cite{lqr}). But we, old-school as we are, do not use his setting here. Moreover, we only need the graded version in this paper. So we do not recall for example Gornik's filtered version here. For more details see~\cite{gornik}.

We recall the following definitions as they appear in~\cite{mv1}. We note that the diagrams accompanying these definitions are taken, also, from~\cite{mv1}.

A \textit{pre-foam} is a cobordism with singular arcs between two webs. 
A singular arc in a pre-foam $U$ is the set of points of $U$ which have a neighborhood 
homeomorphic to the letter Y 
times an interval. Note that singular arcs are disjoint. 
Interpreted as morphisms, we read pre-foams from bottom to top by convention. Thus, pre-foam composition consists of placing one 
pre-foam on top of the other. The orientation of the singular arcs is, by convention, as in 
the diagrams below (called the \textit{zip} and the 
\textit{unzip} respectively).
\begin{equation*}
\figins{-20}{0.5}{ssaddle}
\mspace{35mu}\mspace{35mu}
\figins{-20}{0.5}{ssaddle_ud}\ 
\end{equation*}
We allow pre-foams to have dots that can move freely about the facet on which they 
belong, but we do not allow a dot to cross singular arcs. 

By a \textit{foam}, we mean a formal $\mathbb{C}$-linear combination of 
isotopy classes of pre-foams modulo 
the ideal generated by the set of relations $\ell=(3D,NC,S,\Theta)$ and 
the \textit{closure relation}, 
as described below. 
\begin{gather*}
%3Dot reduction
\tag{3D}\label{eq:3d}
\figins{-7}{0.25}{planddd}
= 0
\\[1.5ex]\displaybreak[0]
%cutting neck
 \figwhins{-22}{0.65}{0.26}{cyl}=
-\figwhins{-22}{0.65}{0.26}{cneck01}
-\figwhins{-22}{0.65}{0.26}{cneck02}
-\figwhins{-22}{0.65}{0.26}{cneck03}
\tag{NC}\label{eq:cn}
\\[1.5ex]\displaybreak[0]
%S-relation
\figins{-8}{0.3}{sundot}=
\figins{-8}{0.3}{sdot}=0,\quad
\figins{-8}{0.3}{sddot}=-1
\tag{S}
\\[1.5ex]\displaybreak[0]
\labellist
\small\hair 2pt
\pinlabel $\alpha$ at 3 33
\pinlabel $\beta$ at -3 17
\pinlabel $\delta$ at 3 5
\endlabellist
\centering
\figins{-10}{0.4}{thetafoam} 
=\begin{cases}
\ \ 1, & (\alpha,\beta,\delta)=(1,2,0)\text{ or a cyclic permutation}, \\ 
-1, & (\alpha,\beta,\delta)=(2,1,0)\text{ or a cyclic permutation}, \\ 
\ \ 0, & \text{else}.
\end{cases}
\tag{$\Theta$}\label{eq:theta}
\end{gather*}
\begin{quote}
The \textit{closure relation}, i.e. any $\mathbb{C}$-linear combination of foams with the same boundary,
is equal to zero iff any way of capping off these foams with a 
common foam yields a $\mathbb{C}$-linear combination of closed foams 
whose evaluation is zero. 
\end{quote}

The relations in $\ell$ imply some identities (for detailed proofs see~\cite{kh3}). We only recall a few here that we need later on. More can be found in~\cite{kh3}.
\begin{align}
%digon removal
\figins{-20}{0.6}{digonfid-sl3}
& = 
\figins{-26}{0.75}{digon_rem1-sl3}-
\figins{-26}{0.75}{digon_rem2-sl3}
\tag{DR}\label{eq:dr}
\\[1.2ex]\displaybreak[0]
%Square removal
\figins{-28}{0.8}{square_id-sl3}
&=
-\ \figins{-28}{0.8}{square_rem1-sl3}
-\figins{-28}{0.8}{square_rem2-sl3}
\tag{SqR}\label{eq:sqr}
\end{align}
\begin{equation}\tag{Dot Migration}\label{eq:dotm}
\begin{split}
\figins{-22}{0.6}{pdots100}
\,+\,
\figins{-22}{0.6}{pdots010}
\,+\,
\figins{-22}{0.6}{pdots001}
\,= \figins{-22}{0.6}{pdots110}
\,+\,
\figins{-22}{0.6}{pdots101}
\,+\,
\figins{-22}{0.6}{pdots011}
\, = \figins{-22}{0.6}{pdots111}\, =0
\end{split}
\end{equation}
\begin{defn}
Let $\foamt$ be the category whose 
objects are webs $\Gamma$ lying inside a horizontal strip in 
$\mathbb{R}^2$, which is bounded by the lines $y=0,1$ containing the boundary points of $\Gamma$. 
The morphisms of $\foamt$ are $\mathbb{C}$-linear combinations of foams lying inside the horizontal strip bounded by $y=0,1$ times the unit interval. We require that the vertical boundary of each foam is a set (possibly empty) of vertical lines.
\end{defn}
The \textit{$q$-degree} of a foam $F$ is defined as 
\[
\mathrm{deg}_q(F) = \chi(\partial F) - 2\chi(F) + 2d+b,
\] 
where $\chi$ denotes the Euler characteristic, $d$ is 
the number of dots and $b$ is the number of vertical boundary 
components. This makes $\foamt$ into a graded category. We show later, using $q$-skew Howe duality, that the $q$-degree $\mathrm{deg}_q$ is Brundan, Kleshchev and Wang's degree $\mathrm{deg}_{BKW}$.
\begin{defn}\cite{kh3} \label{foamhom} \textbf{(Foam homology)} Given a web 
$w$ the \textit{foam homology} of $w$ is the complex vector space, 
$\F(w)$, spanned by all foams  
\[U\colon\emptyset \to w\]
in $\foamt$.
\end{defn}
\begin{rem}\label{rem-dim}
The complex vector space $\F(w)$ is graded by 
the $q$-degree on foams and has $q$-rank 
$\langle w\rangle$, where $\langle w\rangle $ is the \textit{Kuperberg bracket}. See~\cite{kh3} or~\cite{mv1} for details.
\end{rem}
\subsection{The $\mathfrak{sl}_3$-web algebra}\label{sec-webalg}
\setcounter{subsubsection}{1}
We are going to recall the definition of the \textit{$\mathfrak{sl}_3$-web algebra} $K_S$ as given in~\cite{mpt}. For the rest of the section let $S$ denote a fixed sign string of length $n$. 
\begin{defn}
\textbf{($\mathfrak{sl}_3$-web algebra)}\label{defn:webalg} For $u,v\in B_S$, we define 
\[
{}_{u}K_{v}=\F(u^*v)\{n\},
\] 
where $\{n\}$ denotes a grading shift upwards in degree by $n$.

The \textit{$\mathfrak{sl}_3$-web algebra} $K_S$ is defined by 
\[
K_S=\bigoplus_{u,v\in B_S}{}_{u}K_{v}.
\]
 
The multiplication on $K_S$ is defined by taking 
\[
{}_{u}K_{v_1}\otimes {}_{v_2}K_{w} \to 
{}_{u}K_{w}
\]
to be zero, if $v_1\ne v_2$, and by the map to be defined as in~\cite{mpt} 
if $v_1=v_2=v$.
\end{defn}
We use the following lemma throughout the whole paper.
\begin{lem}
\label{lem:webalgaltern}
For any $u,v\in B_S$, we have a 
grading preserving isomorphism 
\[
\foamt(u,v)\cong{}_{u}K_{v}.
\] 

Using this isomorphism, the multiplication  
\[
{}_{u}K_{v}\otimes {}_{v^{\prime}}K_{w}\to 
{}_{u}K_{w}
\]
corresponds to the composition 
\[
\foamt(u,v)\otimes\foamt(v^{\prime},w)\to\foamt(u,w),
\]
if $v=v^{\prime}$, and is zero otherwise. 
\end{lem}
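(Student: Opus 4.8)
The plan is to realise the isomorphism $\foamt(u,v)\cong{}_uK_v$ as the standard ``bending'' (rigidity) map of the foam category and to read off both the grading and the two multiplications from it. Recall from~\cite{kh3} (see also~\cite{mv1},~\cite{mpt}) that in $\foamt$ every web $w$ with boundary $S$ has a dual $w^*$, witnessed by a cup foam $\eta_w\colon\emptyset\to w^*w$ and a cap foam $\epsilon_w\colon ww^*\to\emptyset$ obtained by folding the identity foam on $w$ (here $w^*w$ and $ww^*$ are closed webs), satisfying the two zig-zag identities
\[
(\epsilon_w\,\mathrm{id}_w)\circ(\mathrm{id}_w\,\eta_w)=\mathrm{id}_w,\qquad(\mathrm{id}_{w^*}\,\epsilon_w)\circ(\eta_w\,\mathrm{id}_{w^*})=\mathrm{id}_{w^*},
\]
where juxtaposition of foams denotes horizontal composition and $\circ$ vertical composition; the cap foams $\epsilon_v$ are precisely the ones used in~\cite{mpt} to define the multiplication on $K_S$. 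I would then set $\Phi\colon\foamt(u,v)\to\F(u^*v)$, $\Phi(F)=(\mathrm{id}_{u^*}\,F)\circ\eta_u$, and $\Psi\colon\F(u^*v)\to\foamt(u,v)$, $\Psi(G)=(\epsilon_u\,\mathrm{id}_v)\circ(\mathrm{id}_u\,G)$, and check $\Psi\circ\Phi=\mathrm{id}$ and $\Phi\circ\Psi=\mathrm{id}$ by straightening a bent foam with the two zig-zag identities for $u$ (a routine interchange computation). Thus $\Phi$ is a $\bC$-linear isomorphism of vector spaces.

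For the grading, one computes the effect of $\Phi$ on $\mathrm{deg}_q$ from $\mathrm{deg}_q(F)=\chi(\partial F)-2\chi(F)+2d+b$. Bending $F$ around changes neither the homeomorphism type of the underlying $2$-complex nor the number of dots, so $\chi(F)$ and $d$ are unchanged; a morphism in $\foamt(u,v)$ has $b=n$ vertical boundary lines (one per boundary point of $S$), whereas $\Phi(F)\in\F(u^*v)$ has $b=0$ since $u^*v$ is closed, and a short count shows $\chi(\partial\,\Phi(F))=\chi(\partial F)$ (the $n$ vertical arcs of $\partial F$ are traded against the $n$ boundary points identified when $u^*$ is glued onto $v$). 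Hence $\Phi$ lowers the $q$-degree by exactly $n$, and since ${}_uK_v=\F(u^*v)\{n\}$ raises degrees by $n$, the resulting isomorphism $\foamt(u,v)\cong{}_uK_v$ is grading preserving; this is precisely why the shift $\{n\}$ is built into the definition of ${}_uK_v$.

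Finally, for the two products, recall that the multiplication ${}_uK_v\otimes{}_vK_w\to{}_uK_w$ of~\cite{mpt} sends $G_1\otimes G_2$, with $G_1\colon\emptyset\to u^*v$ and $G_2\colon\emptyset\to v^*w$, to the horizontal composite of $G_1$ and $G_2$ with the middle $vv^*$ capped off by $\epsilon_v$; transporting this through $\Psi$ and straightening with the zig-zag identity for $v$ yields exactly $\Psi(G_2)\circ\Psi(G_1)\in\foamt(u,w)$, i.e.\ foam composition, while for $v\ne v^{\prime}$ both products vanish by definition of $K_S$. This proves the lemma. The step I expect to need the most care is checking that the concrete merging foam of~\cite{mpt} agrees, modulo the foam relations $\ell=(3D,NC,S,\Theta)$ and the closure relation, with ``apply $\epsilon_v$ in the middle'', together with the Euler-characteristic bookkeeping used in the grading count; both are established in~\cite{mpt}, from which this lemma is taken.
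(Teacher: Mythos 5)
Your proposal is correct: the paper itself gives no argument for this lemma but simply defers to~\cite{mpt}, and the bending construction you describe (cup/cap foams from the folded identity, the zig-zag identities giving mutually inverse maps $\Phi,\Psi$, the Euler-characteristic count showing the bend drops $\mathrm{deg}_q$ by exactly $n$ so that the shift $\{n\}$ makes the identification $\foamt(u,v)\cong{}_uK_v$ degree preserving, and the identification of the multiplication foam with $\epsilon_v$ applied in the middle) is precisely the argument established there. So you have reconstructed essentially the same proof the paper relies on.
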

\begin{proof}
See~\cite{mpt}.
\end{proof}
\begin{rem}
Lemma~\ref{lem:webalgaltern} shows that $K_S$ is an associative, unital algebra. Moreover, completely similar as in~\cite{mpt}, the algebra $K_S$ is a graded Frobenius algebra of Gorenstein parameter $n$. We do not need this fact in this paper, so the interested reader can find the details in~\cite{mpt}.
 
Note that for any $u\in B_S$, the identity $1_u\in\foamt(u,u)$ defines an idempotent. 
We have 
\[
1=\sum_{u\in B_S} 1_u\in K_S.
\]
Alternatively, one can see $K_S$ as a category whose 
objects are the elements in $B_S$ such that the module of morphisms 
between $u\in B_S$ and $v\in B_S$ is given by $\foamt(u,v)$. 

Moreover, there is a graded, linear involution on $K_S$ denoted
\begin{equation}\label{eq-invo}
{}^*\colon K_S\to K_S
\end{equation}
that reflects the foams along the xy-plane and reorient the edges afterwards. For example
\[
\left(\xy(0,1.25)*{\includegraphics[scale=0.75]{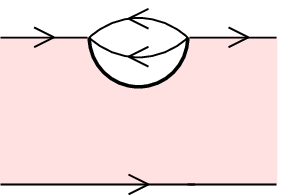}}\endxy\right)^*=\xy(0,-1.75)*{\reflectbox{\includegraphics[scale=0.75,angle=180]{res/figs/basicsb/digonem.eps}}}\endxy
\]
\end{rem}
\begin{rem}\label{rem-basis}
The algebra has a basis given by the so-called \textit{face removing algorithm} as explained in~\cite{tub3}. We do not know much about this basis except that it is easy to compute. We do not need it here and do not recall it, but it is worth noting that it is possible to prove Theorem~\ref{thm-foambasis} ``by hand'', i.e. use induction on the number of faces and show that the corresponding moves are ``almost'' face removing moves.
\end{rem}
\vspace*{0.15cm}

We will now briefly recall an instance of $q$-skew Howe duality, the so-called \textit{foamation functor}. We will refer to the application of this functor on the level of webs and foams, depending on the context, by saying \textit{``using foamation''} or by abuse of language \textit{``by $q$-skew Howe duality''}. The hope that the reader does not get confused.

We note that such a functor was independently studied by Lauda, Queffelec and Rose~\cite{lqr} with a slightly different convention than the one we recall from~\cite{mpt}.
\vspace*{0.25cm}

Let us denote by $K_S\text{-}\mathrm{(p)\textbf{Mod}}_{\mathrm{gr}}$ 
the category of all finite dimensional, (projective), unitary, 
graded $K_S$-modules. Recall that we need a slightly generalization of a sign string called \textit{enhanced sign string}. With a slight abuse of notation, we use $S$ for enhanced sign strings. Fix $d=3\ell$. 
\begin{defn} An \textit{enhanced sign string} is 
a sequence denoted by $S=(s_1,\ldots, s_n)$ with entries $s_i\in\{\circ,-1,+1,\times\}$, for 
all $i=1,\ldots n$. The corresponding weight $\mu=\mu_S\in\Lambda(n,d)$ is 
given by the rules
\[
\mu_i=
\begin{cases}
0,&\quad\text{if}\;s_i=\circ,\\
1,&\quad\text{if}\;s_i=1,\\
2,&\quad\text{if}\;s_i=-1,\\
3,&\quad\text{if}\;s_i=\times.
\end{cases}
\] 
Let $\Lambda(n,d)_3\subset \Lambda(n,d)$ be again the subset of weights 
with entries between $0$ and $3$.
\end{defn}
Define  
\[
K_{(3^{\ell})}=\bigoplus_{\mu_S\in\Lambda(n,n)_3} K_S
\]
and
\begin{align*}
{\mathcal W}^{(p)}_{(3^{\ell})}&=K_{(3^{\ell})}\text{-}\mathrm{(p)\textbf{Mod}}_{\mathrm{gr}}\\
&\cong 
\bigoplus_{\mu_S\in\Lambda(n,n)_3} 
K_S\text{-}\mathrm{(p)\textbf{Mod}}_{\mathrm{gr}}.
\end{align*}
\vspace*{0.005cm}

Recall that there exists a categorical $\Ucat$-action on ${\mathcal W}^{(p)}_{(3^{\ell})}$, called \textit{foamation}.
\begin{defn}\label{defn-foamation}(\textbf{$\mathfrak{sl}_3$-foamation})
We define a $2$-functor
\[
\Psi\colon\Ucat\to{\mathcal W}^{(p)}_{(3^{\ell})}
\]
called \textit{foamation}, in the following way. Recall that we read $\Ucat$ diagrams from right to left - this will correspond to reading webs from bottom to top.
\vspace*{0.13cm}

\textbf{On objects:} The functor is defined by sending an $\mathfrak{sl}_n$-weight $\lambda=(\lambda_1,\dots,\lambda_{n-1})$ to an object $\Psi(\lambda)$ of ${\mathcal W}_{(3^{\ell})}$ by
\[
\Psi(\lambda)=S,\; S=(a_1,\dots,a_n),\,a_i\in\{0,1,2,3\},\;\lambda_i=a_{i+1}-a_i,\;\sum_{i=1}^na_i=3\ell,
\]
and to zero if no such enhanced sign string $S$ exists. Note that, if such an $S$ exists, then it is unique.
\vspace*{0.13cm}

\textbf{On morphisms:} The functor on morphisms is defined to be the algebra homomorphism of $\bC$-algebras defined by glueing the following webs on top of the $\mathfrak{sl}_3$-webs in $W_{(3^{\ell})}$.
\begin{align*}
\onel&\mapsto
\;\xy
(0,0)*{\includegraphics[width=60px]{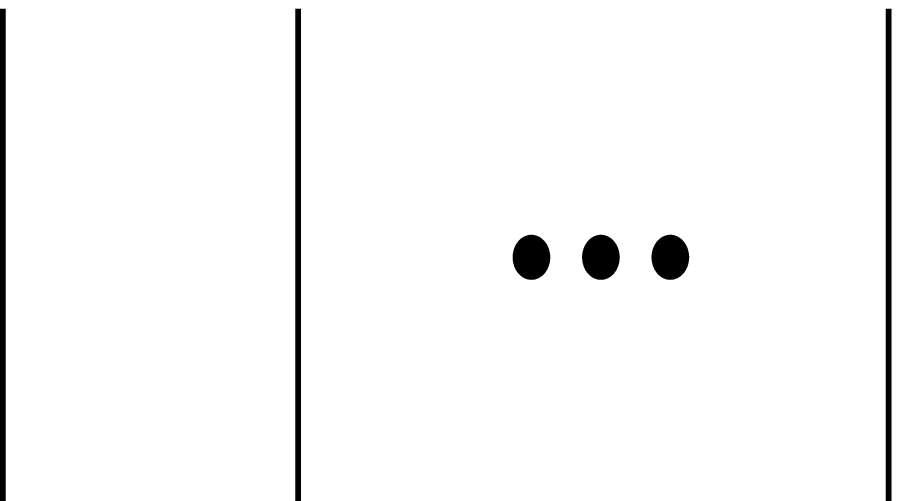}};
(-10,-7.3)*{\scriptstyle\lambda_1};
(-3,-7.3)*{\scriptstyle\lambda_{2}};
(10,-7.3)*{\scriptstyle\lambda_n};
\endxy
\\[0.5ex]
\mathcal E_{i}\onel&\mapsto
\;\xy
(-0,0)*{\includegraphics[width=150px]{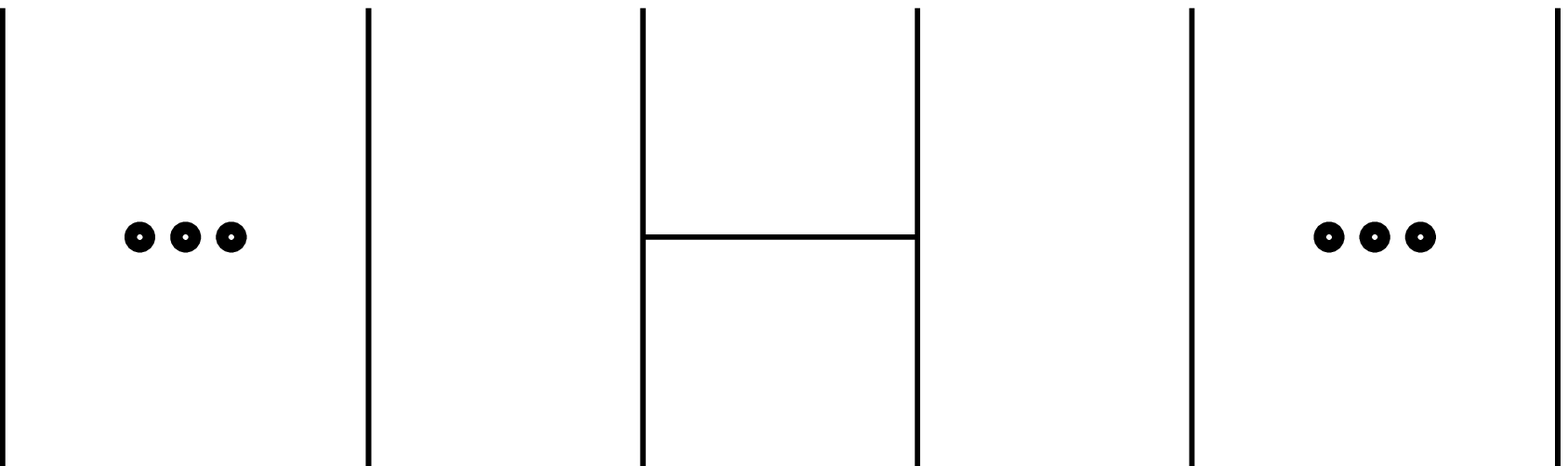}};
(-27,-9.3)*{\scriptstyle\lambda_1};
(-14,-9.3)*{\scriptstyle\lambda_{i-1}};
(-5,-9.3)*{\scriptstyle\lambda_{i}};
(5,-9.3)*{\scriptstyle\lambda_{i+1}};
(-5,9.3)*{\scriptstyle\lambda_{i}+1};
(5,9.3)*{\scriptstyle\lambda_{i+1}-1};
(14,-9.3)*{\scriptstyle\lambda_{i+2}};
(27,-9.3)*{\scriptstyle\lambda_n};
\endxy
\\[0.5ex]
\mathcal F_{i}\onel&\mapsto
\;\xy
(-0,0)*{\includegraphics[width=150px]{res/figs/basicsb/Hweb.eps}};
(-27,-9.3)*{\scriptstyle\lambda_1};
(-14,-9.3)*{\scriptstyle\lambda_{i-1}};
(-5,-9.3)*{\scriptstyle\lambda_{i}};
(5,-9.3)*{\scriptstyle\lambda_{i+1}};
(-5,9.3)*{\scriptstyle\lambda_{i}-1};
(5,9.3)*{\scriptstyle\lambda_{i+1}+1};
(14,-9.3)*{\scriptstyle\lambda_{i+2}};
(27,-9.3)*{\scriptstyle\lambda_n};
\endxy
\end{align*}
If the boundary values $a_i$ are $a_i\notin\{0,1,2,3\}$, then we send the morphism to zero. We use the convention that vertical edges labeled 1 are oriented upwards, 
vertical edges labeled 2 are oriented downwards and edges labeled 0 or 3 
are erased. Note that the orientation of the horizontal edges is uniquely determined by 
the orientation of the vertical edges, e.g. a H-move from an $E$ is always orientated from right to left and a H-move from a $F$ from left to right.
\vspace*{0.155cm}

\textbf{On $2$-cells:} Note our conventions. We only draw the most important part of the foams, 
omitting partial identity foams. We only mention the most important ones here. See~\cite{mpt} for the full list. Note that our drawing conventions in this paper are slightly different from those in~\cite{mpt}, i.e. we have rotated the foams by $\frac{\pi}{2}$ clockwise around the y-axis.
\begin{itemize}
\item[(1)] The bottom (top) part of the string diagram in $\Ucat$ is represented by the web at the bottom (top) of the foam.
\item[(2)] Glueing a $\Ucat$ diagram on top of another corresponds to gluing a foam on top of the other.
\item[(3)] In our string diagram conventions an upwards pointing arrow represents an $E$. Hence, on the level of webs, such H-moves have to point to the bottom left.
\item[(4)] We only draw some of the orientations in the pictures. The rest are fixed by the ones drawn following the conventions in~\cite{mpt}. They are not so important here, so we will not recall them.
\item[(5)] A facet is labeled 0 or 3 if and only if its boundary has edges 
labeled 0 or 3.
\item[(6)] All facets labeled 0 or 3 in the images below have to be erased, 
in order to get real foams.
\item[(7)] For any $\lambda>(3^{\ell})$, the image of 
the elementary morphisms below is taken to be zero, by convention.
\end{itemize}
In the list below, we always assume that $i<j$. 

{\allowdisplaybreaks
\begin{align*}
\xy
(4,1.5)*{\includegraphics[width=9px]{res/figs/basicsc/upsimple}};
(7,-4)*{{\scriptstyle i,\lambda}};
\endxy &\mapsto 
\;\xy
(0,0)*{\includegraphics[scale=0.5]{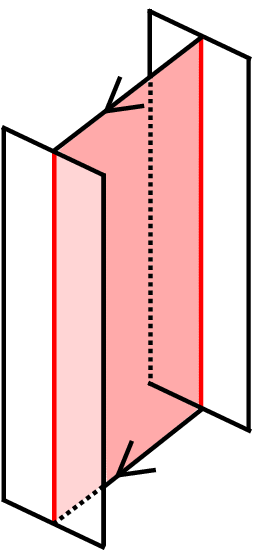}};
(0.5,5.5)*{\scriptstyle\lambda_i};
(10,11)*{\scriptstyle\lambda_{i+1}};
\endxy
\;\;\hspace*{0.05cm}\;\;
\xy
(4,1.5)*{\includegraphics[width=9px]{res/figs/basicsc/downsimple}};
(7,-4)*{{\scriptstyle i,\lambda}};
\endxy \mapsto 
\;\xy
(0,0)*{\includegraphics[scale=0.5]{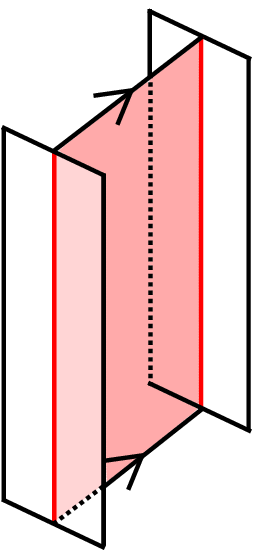}};
(0.5,5.5)*{\scriptstyle\lambda_i};
(10,11)*{\scriptstyle\lambda_{i+1}};
\endxy
\;\;\hspace*{0.05cm}\;\;
\xy
(4,1.5)*{\includegraphics[width=9px]{res/figs/basicsc/upsimpledot}};
(7,-4)*{{\scriptstyle i,\lambda}};
\endxy \mapsto 
\;\xy
(0,0)*{\includegraphics[scale=0.5]{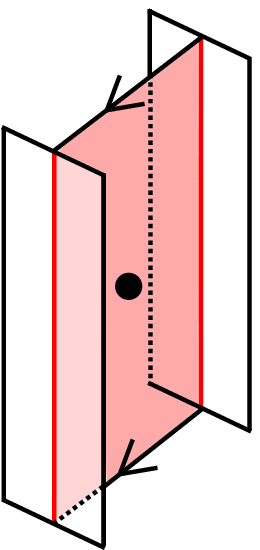}};
(0.5,5.5)*{\scriptstyle\lambda_i};
(10,11)*{\scriptstyle\lambda_{i+1}};
\endxy
\;\;\hspace*{0.05cm}\;\;
\xy
(4,1.5)*{\includegraphics[width=9px]{res/figs/basicsc/downsimpledot}};
(7,-4)*{{\scriptstyle i,\lambda}};
\endxy \mapsto 
\;\xy
(0,0)*{\includegraphics[scale=0.5]{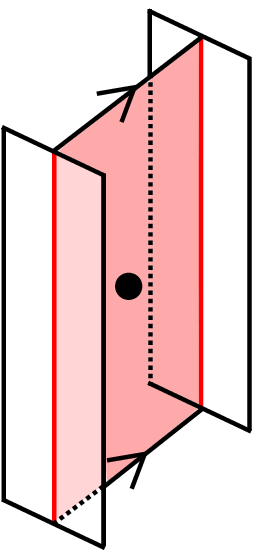}};
(0.5,5.5)*{\scriptstyle\lambda_i};
(10,11)*{\scriptstyle\lambda_{i+1}};
\endxy
\\
&\hspace*{3.5cm}\phantom{.}
\xy
(6,1.5)*{\includegraphics[width=20px]{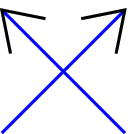}};
(12.5,-4)*{{\scriptstyle i,i,\lambda}};
\endxy  \mapsto
\;\;-\;\;
\xy
(0,0)*{\includegraphics[scale=0.5]{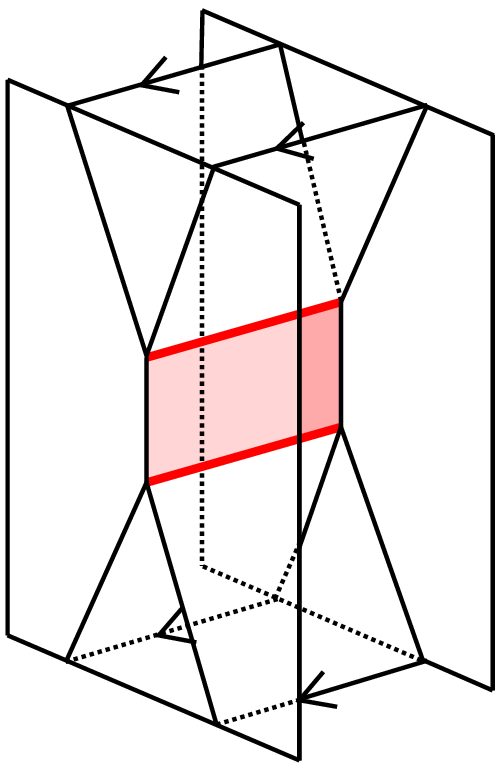}};
(4.5,9.5)*{\scriptstyle\lambda_i};
(16,13)*{\scriptstyle\lambda_{i+1}};
\endxy
\\
\xy
(6,1.5)*{\includegraphics[width=20px]{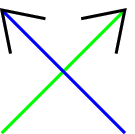}};
(12.5,-4)*{{\scriptstyle i,i+1,\lambda}};
\endxy &\mapsto
\;\;(-1)^{\lambda_{i+1}}\;\;
\xy
(0,0)*{\includegraphics[scale=0.5]{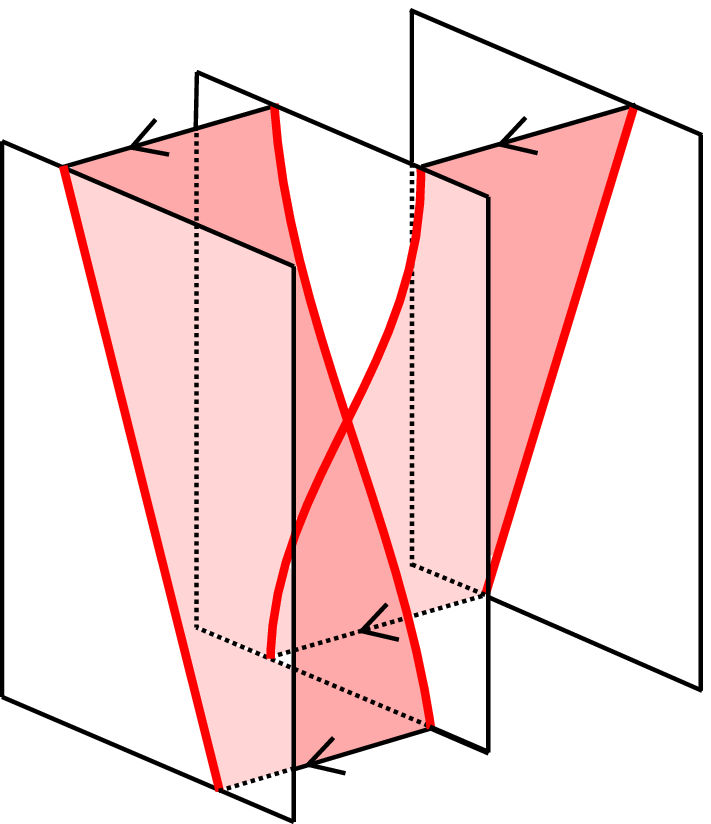}};
(-1,8)*{\scriptstyle\lambda_i};
(10,11.5)*{\scriptstyle\lambda_{i+1}};
(21,15)*{\scriptstyle\lambda_{i+2}};
\endxy
\;\;\hspace*{0.05cm}\;\;
\xy
(6,1.5)*{\includegraphics[width=20px]{res/figs/basicsc/upcross}};
(12.5,-4)*{{\scriptstyle i+1,i,\lambda}};
\endxy \mapsto
\;
\xy
(0,0)*{\includegraphics[scale=0.5]{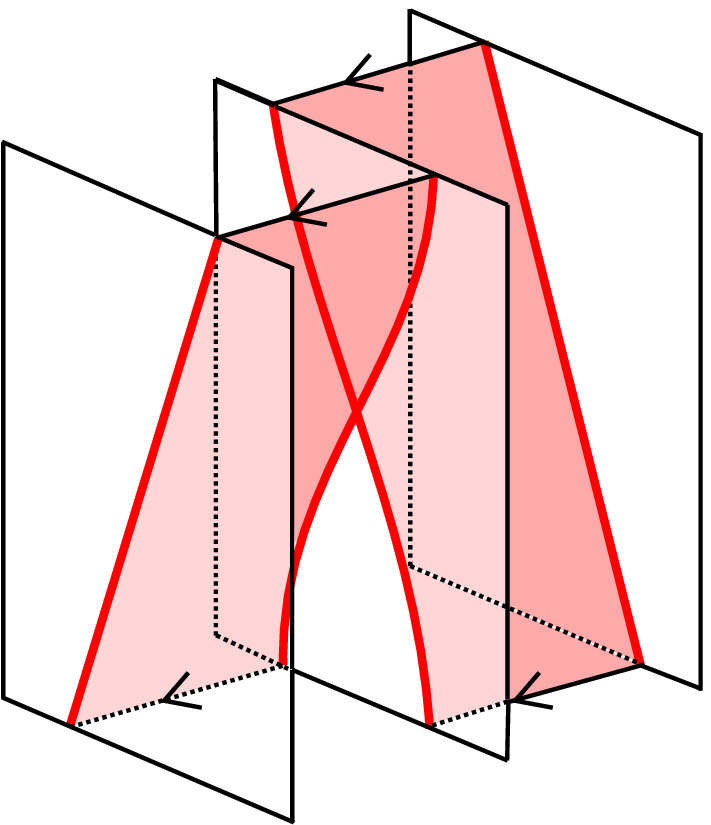}};
(-1.5,8)*{\scriptstyle\lambda_i};
(11,11)*{\scriptstyle\lambda_{i+1}};
(21,14.5)*{\scriptstyle\lambda_{i+2}};
\endxy
\\
\xy
(6,1.5)*{\includegraphics[width=20px]{res/figs/basicsc/upcross}};
(12.5,-4)*{{\scriptstyle i,j,\lambda}};
\endxy &\mapsto
\;
\xy
(0,0)*{\includegraphics[scale=0.5]{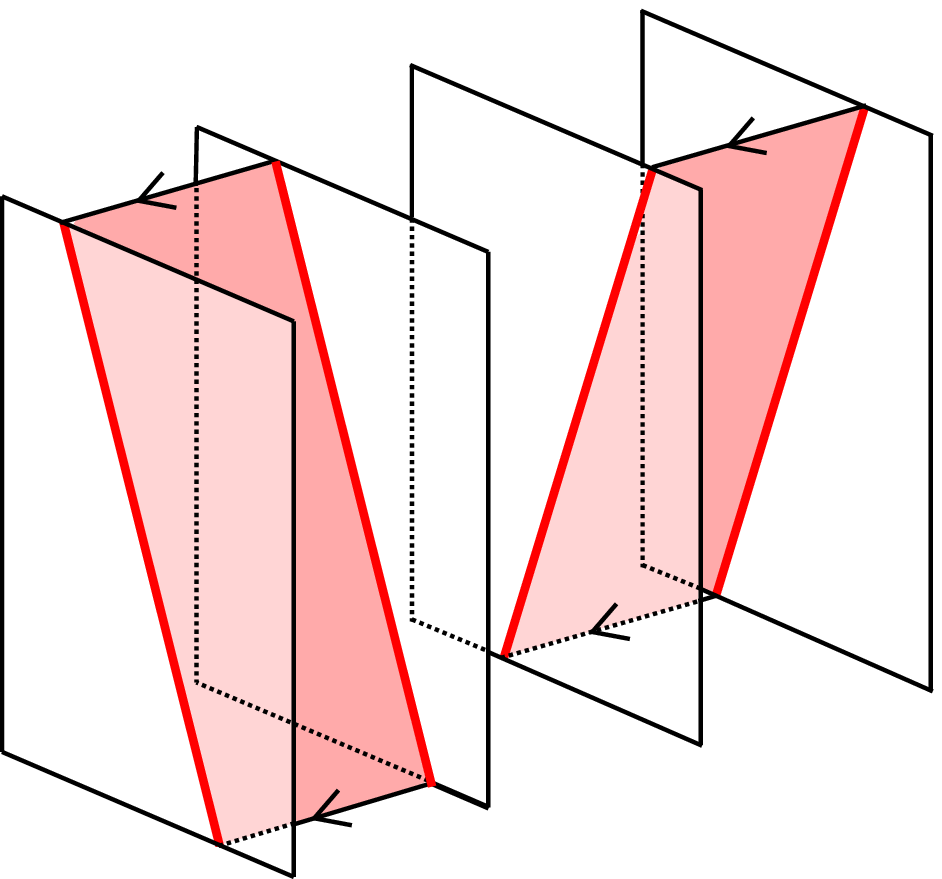}};
(-7.5,6.5)*{\scriptstyle\lambda_{j}};
(4,10)*{\scriptstyle\lambda_{j+1}};
(13.5,13)*{\scriptstyle\lambda_{i}};
(27,15.5)*{\scriptstyle\lambda_{i+1}};
\endxy
\;\;\hspace*{0.05cm}\;\;
\xy
(6,1.5)*{\includegraphics[width=20px]{res/figs/basicsc/upcross}};
(12.5,-4)*{{\scriptstyle j,i,\lambda}};
\endxy \mapsto
\;
\xy
(0,0)*{\includegraphics[scale=0.5]{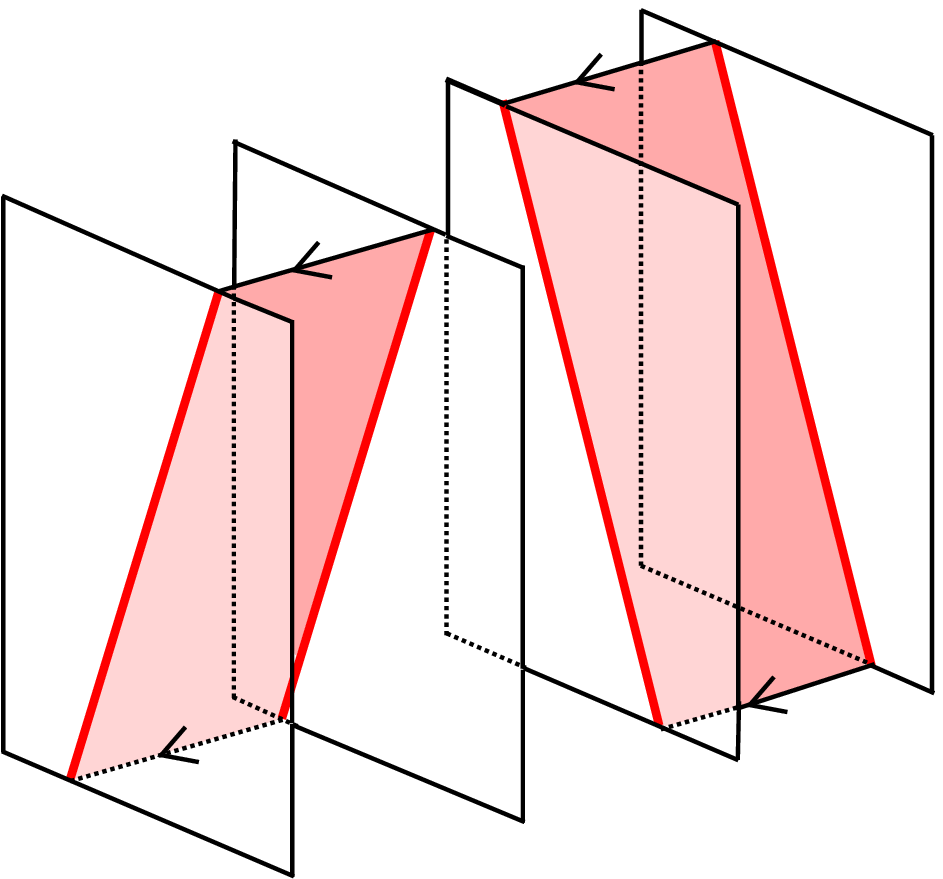}};
(-7.5,6.5)*{\scriptstyle\lambda_i};
(6,9)*{\scriptstyle\lambda_{i+1}};
(15.5,12)*{\scriptstyle\lambda_{j}};
(27,15.5)*{\scriptstyle\lambda_{j+1}};
\endxy
\\
\xy
(8,0)*{\includegraphics[width=25px]{res/figs/basicsc/rightcup}};
(12.5,-4)*{{\scriptstyle i,\lambda}};
\endxy &\mapsto
\;\xy
(0,0)*{\includegraphics[scale=0.5]{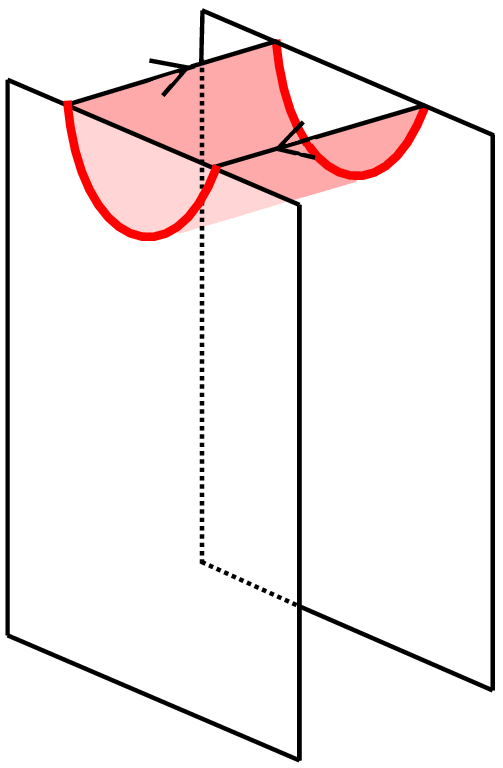}};
(4.5,9.5)*{\scriptstyle\lambda_i};
(16,13)*{\scriptstyle\lambda_{i+1}};
\endxy
\;\;\hspace*{0.05cm}\;\;
\xy
(8,0)*{\includegraphics[width=25px]{res/figs/basicsc/leftcup}};
(12.5,-4)*{{\scriptstyle i,\lambda}};
\endxy \mapsto
(-1)^{\lfloor\frac{\lambda_i}{2}\rfloor+\lceil\frac{\lambda_{i+1}}{2}\rceil}\;\xy
(0,0)*{\includegraphics[scale=0.5]{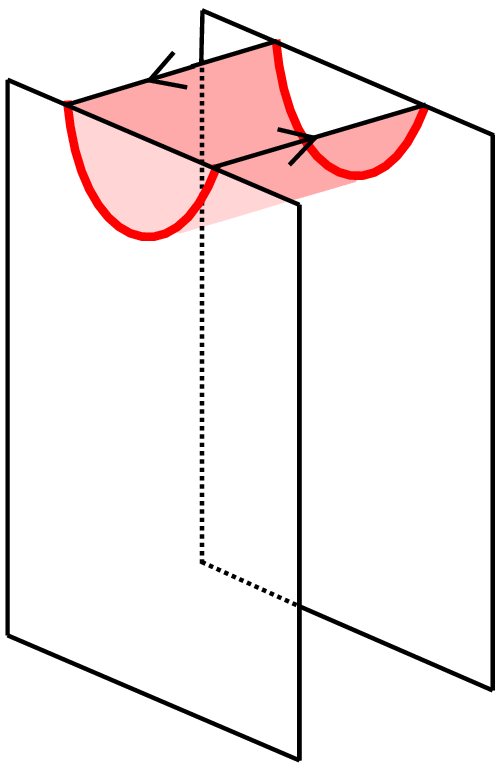}};
(4.5,9.5)*{\scriptstyle\lambda_i};
(16,13)*{\scriptstyle\lambda_{i+1}};
\endxy
\\
\xy
(8,0)*{\includegraphics[width=25px]{res/figs/basicsc/leftcap}};
(12.5,-4)*{{\scriptstyle i,\lambda}};
\endxy &\mapsto 
(-1)^{\lceil\frac{\lambda_i}{2}\rceil+\lfloor\frac{\lambda_{i+1}}{2}\rfloor}\;\xy
(0,0)*{\includegraphics[scale=0.5]{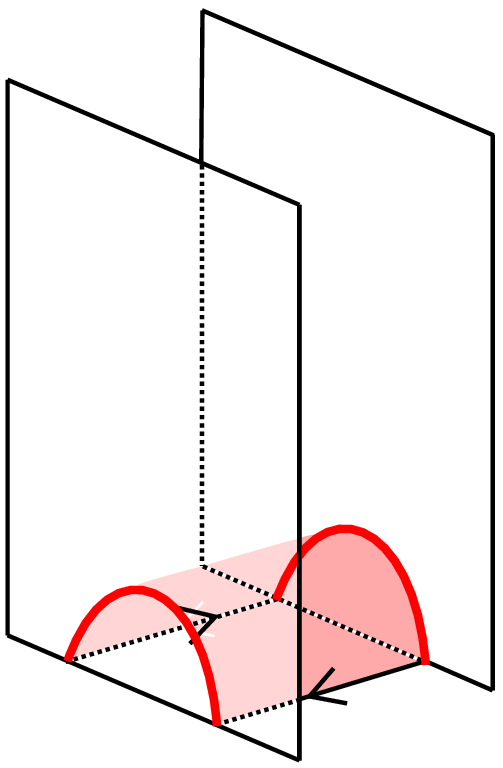}};
(4.5,9.5)*{\scriptstyle\lambda_i};
(16,13)*{\scriptstyle\lambda_{i+1}};
\endxy
\;\;\hspace*{0.05cm}\;\;
\xy
(8,0)*{\includegraphics[width=25px]{res/figs/basicsc/rightcap}};
(12.5,-4)*{{\scriptstyle i,\lambda}};
\endxy \mapsto
\;\xy
(0,0)*{\includegraphics[scale=0.5]{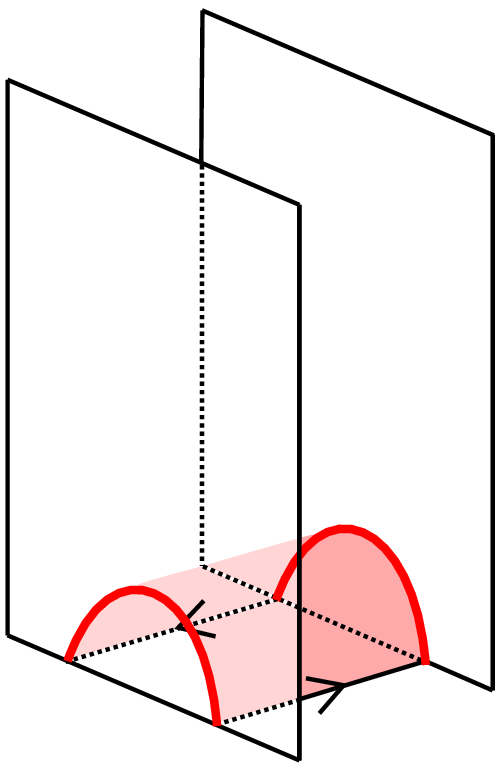}};
(4.5,9.5)*{\scriptstyle\lambda_i};
(16,13)*{\scriptstyle\lambda_{i+1}};
\endxy
\end{align*} 
}
\end{defn}
\begin{prop}\label{prop-actwelldef}
The foamation
\[
\Psi\colon\Ucat\to{\mathcal W}^{(p)}_{(3^{\ell})}
\]
is a well-defined strong $2$-representation in the sense of~\cite{cala}. 
\end{prop}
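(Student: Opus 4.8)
The plan is to verify that $\Psi$ is a well-defined $2$-functor by checking that the images of the generating $2$-morphisms $\phi_1,\dots,\phi_5,\psi_1,\dots,\psi_5$ satisfy all the defining relations of $\Ucat=\mathcal{U}_Q(\mathfrak{sl}_n)$ with the chosen scalars $Q$ (namely $t_{i,i+1}=-1$, $t_{ij}=1$ otherwise, $r_i=1$, $s_{ij}^{pq}=0$), and then to check the additional conditions making the resulting $2$-representation \emph{strong} in the sense of Cautis--Lauda~\cite{cala}. On objects there is nothing to do: the enhanced sign string $S$ with $\lambda_i=a_{i+1}-a_i$ and $\sum_i a_i=3\ell$ is unique when it exists and $\Psi(\lambda)=0$ otherwise, so $\Psi$ is defined on objects by a genuine function. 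On $1$-morphisms the condition $a_i\in\{0,1,2,3\}$ is exactly the condition that gluing the $H$-web produces an honest $\mathfrak{sl}_3$-web after erasing the facets labelled $0$ and $3$; everything then reduces to a finite list of local identities among foams, which we group as: (i) the nilHecke relations for a single colour $i$ (dot slides, the square of an $i$-crossing being zero, the triple-$i$-strand relation, and the curl/bubble relations), (ii) the $R(\nu)$-relations mixing two colours, and (iii) the biadjunction (zig-zag) relations and the $\mathfrak{sl}_2$ Chevalley relation relating $\mathcal{E}_i\mathcal{F}_i\onel$ and $\mathcal{F}_i\mathcal{E}_i\onel$.

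For (i), the single-colour relations become local foam identities that follow from the defining relations $(S)$, \eqref{eq:3d}, \eqref{eq:dotm} and the digon removal \eqref{eq:dr}: the vanishing of the square of the $\mathcal{E}_i\mathcal{E}_i$-crossing is the statement that the corresponding self-intersecting foam is $0$ by \eqref{eq:3d}, a dot sliding past a crossing uses $(S)$ together with dot migration \eqref{eq:dotm}, and the curl/bubble evaluations reduce (via closure) to evaluations of closed theta-foams through \eqref{eq:theta} and $(S)$; the $\mathcal{F}_i$-versions follow by the reflection involution \eqref{eq-invo}. For the two-colour relations with $|i-j|\ge 2$, the relevant webs and foams are supported on disjoint horizontal intervals, so distant crossings visibly commute and the mixed crossing squares to the identity; these cases are immediate.

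The substantive case is $j=i+1$ in (ii). Here the scalar $t_{i,i+1}=-1$ must be reproduced on the foam side, and it is: the net sign in the relevant foam identity is the product of the explicit prefactor $(-1)^{\lambda_{i+1}}$ attached to the $(i,i+1)$-crossing with the sign implicitly carried by the $H$-move gluings, and one checks this matches $t_{i,i+1}$. The two $R(\nu)$-relations for adjacent colours (the ``dot on one side minus dot on the other'' relation and the triple-point/braid-like relation) are then verified using the square removal \eqref{eq:sqr} and dot migration \eqref{eq:dotm}; this is where the bulk of the diagrammatic computation lies, and it is precisely this computation that forced the particular sign conventions in the definition of $\Psi$. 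For the biadjunction relations one checks that the cups and caps, with their prefactors $(-1)^{\lfloor\lambda_i/2\rfloor+\lceil\lambda_{i+1}/2\rceil}$ and $(-1)^{\lceil\lambda_i/2\rceil+\lfloor\lambda_{i+1}/2\rfloor}$, satisfy the snake identities: each reduces to the fact that a foam with an $S$-shaped bend is isotopic (modulo the closure relation and $(S)$) to a flat sheet, and the signs are arranged so that both zig-zags give $+1$; hence $\Psi(\mathcal{E}_i\onel)$ is simultaneously the left and right adjoint of $\Psi(\mathcal{F}_i\mathbf{1}_{\lambda+\alpha_i})$ up to the grading shifts prescribed by $\phi_4,\phi_5,\psi_4,\psi_5$. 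Finally the $\mathfrak{sl}_2$ relation is the direct-sum decomposition of $\Psi(\mathcal{E}_i\mathcal{F}_i\onel)$ against $\Psi(\mathcal{F}_i\mathcal{E}_i\onel)$ into a number of grading-shifted copies of the identity web dictated by $\lambda_i-\lambda_{i+1}$; on the foam side this is exactly digon removal \eqref{eq:dr} applied to the composite $H$-web, and Remark~\ref{rem-dim} (the graded rank of $\F(w)$ is the Kuperberg bracket $\langle w\rangle$) guarantees that the resulting comparison maps are genuine isomorphisms rather than merely split maps.

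For strongness, by the criterion of Cautis--Lauda~\cite{cala} it remains, once $\Psi$ is a $2$-functor whose cups and caps are the units and counits of biadjunctions and whose $\mathfrak{sl}_2$-relation holds as an honest isomorphism, to verify the nondegeneracy condition on the $2$-morphism spaces between the relevant $1$-morphisms; this follows again from Remark~\ref{rem-dim} together with Lemma~\ref{lem:webalgaltern}, since the graded dimensions of foam morphism spaces between webs match the graded ranks predicted by the categorified quantum group, so no nonzero $2$-morphism is killed. The essential content of all of this is already carried out in~\cite{mpt} (and in the variant of~\cite{lqr}), so in the write-up one may either cite those computations or reproduce the local checks. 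I expect the main obstacle to be the sign bookkeeping in the two-colour and cup/cap relations — orienting every facet consistently and reducing each asserted local identity to an evaluation of closed foams via \eqref{eq:theta} and $(S)$, since foams are defined only modulo the closure relation — rather than any conceptual difficulty.
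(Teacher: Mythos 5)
Your proposal is correct and matches the paper's approach: the paper proves this proposition simply by citing the computations in~\cite{mpt} (and the variant in~\cite{lqr}), which is exactly the relation-checking you outline and then propose to cite. Your sketch of verifying the $\mathcal{U}_Q(\mathfrak{sl}_n)$ relations and the Cautis--Lauda strongness conditions via the foam relations is a faithful summary of what those references actually do, so there is no gap to flag.
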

\begin{proof}
See~\cite{mpt} or with a slightly different setting~\cite{lqr}.
\end{proof}
It is worth noting that the foamation functor descends to the KL-R algebra by taking only downwards pointing arrows in a good way, see Cautis and Lauda~\cite{cala} or Lauda, Queffelec and Rose~\cite{lqr} for more details. There is also a version for the extended graphical calculus from~\cite{klms} and we conjecture that using it could make some of the later arguments easier.
\vspace*{0.25cm}

Moreover, for $N>3$ Mackaay and Yonezawa give a version of the foamation using matrix factorizations~\cite{my}. Conjecturally, there should also be a foamation using $N>3$-foams (see~\cite{msv}).
\section{The uncategorified picture}\label{sec-uncat}
\subsection{Web bases and intermediate crystal bases}\label{sec-compare}We are going to show in this section that Kuperberg's web basis $B_S$ is in fact an intermediate crystal basis, i.e. we show that it can be obtained from a Leclerc-Toffin like\footnote{The reader should compare our definition with the one of Leclerc-Toffin explained in Section~\ref{sec-quantum}.} algorithm using $q$-skew Howe duality, i.e. use the foamation $2$-functor $\Psi$ from Definition~\ref{defn-foamation} on the level of webs and let the divided powers act on the highest weight vector $v_{3^{\ell}}$.
\vspace*{0.25cm}

Throughout the whole section we assume that all partitions $\lambda$ are partitions of $3\ell$ for some $\ell$, i.e. $\lambda\in\Lambda^+(n,3\ell)$, and $S$ denotes a sign string of length $n$ such that $n_++2n_-=3\ell$, where $n_+,n_-$ denote the number of $+,-$ of $S$.
\begin{defn}\label{defn-LTgen}
Let $T\in\mathrm{Std}^s(\lambda)\subset\mathrm{Col}(\lambda)$ denote a semi-standard tableau. We associate to each such $T$ a string of divided powers of $F$ which we call the \textit{LT-generators} of $T$ and we denote it by
\[
\mathrm{LT}(T)=\prod_k F_{i_k}^{(j_k)}.
\]
The rule to obtain the string is as follows. We have two different kinds of rules, i.e. the \textit{usual} and the \textit{extraordinary} rules. We generate the string of $F$'s inductively using these rules.

Start with the empty product $\mathrm{LT}(T)_0=1$ and set $T_0=T$. Assume that we are in the $k$-th step. Search for the lowest entry of $T_k$ which is below its own row and not in a \textit{forbidden position}, i.e. a position such that lowering this entry by one does not lead to a semi-standard tableau any more.

Note that we use for simplicity $2$ for this entry in the following definition and we only picture the important parts of the tableau. As long as the tableau $T_k$ is not of the form
\[
T_k=\xy
 (0,0)*{
\begin{Young}
1&2& \cr
2&& \cr
3'&& \cr
\end{Young}
}\endxy\;\;\text{or}\;\;
T_k=\xy
 (0,0)*{
\begin{Young}
1&2& 3 \cr
2&& \cr
\end{Young}
}\endxy
\]
where the $3^{\prime}$ should indicate that there is at least one $3$ in the first column, we use the \textit{usual rules}, that is, replace all $j_k$ occurrences of $2$ below its own row by $1$ and obtain a new tableau
\[
T_{k+1}=T_k(2\mapsto 1\text{ for all }2\text{ below row }2)
\]
and a longer string $\mathrm{LT}(T)_{k+1}=\mathrm{LT}(T)_kF_{1_k}^{(j_k)}$. Otherwise, and this is the extraordinary step, search for the next higher, that is $>2$, value that appears below its own row and that is not only in the first column or in the first and second column and repeat this step with it instead of $2$.

We denote the \textit{length} of
\[
\mathrm{LT}(T)=\prod_k^{\ell(\mathrm{LT}(T))} F_{i_k}^{(j_k)}
\]
by $\ell(\mathrm{LT}(T))$ and the \textit{total length} by $\ell_{\mathrm{t}}(\mathrm{LT}(T))=\sum_k j_k$.
\end{defn}
Note that, since we do not allow infinite tableaux, there can not be an infinite string of extraordinary cases, i.e. a tableau like
\[
\xy
 (0,0)*{
\begin{Young}
1&2& \cr
2&3& \cr
3&4& \cr
4&5& \cr
5&& \cr
\end{Young}
};
(0,-13)*{\vdots}
\endxy
\]
does not appear. Hence, the inductive process terminates.
\begin{defn}\label{defn-LT}(\textbf{LT-algorithm for webs})
Let $T\in\mathrm{Std}^s(\lambda)\subset\mathrm{Col}(\lambda)$ denote a semi-standard tableau. The \textit{Leclerc-Toffin algorithm} for $\mathfrak{sl}_3$-webs is defined by applying the LT-generators to the highest weight vector $v_{3^{\ell}}$ using $q$-skew Howe duality, that is
\[
u(T)=\mathrm{LT}(T)v_{3^{\ell}}\in W_S.
\]

We use the notions \textit{length} and \textit{total length of non-elliptic webs} $\ell_{\mathrm{t}}(u)$ under the identification with their semi-standard tableaux (recall that each non-elliptic web $u$ can be identified with a semi-standard tableau $T_u$ using its unique canonical flow. See~\cite{mpt} for more details).
\end{defn}
\begin{ex}\label{ex-totlength}
The only non-elliptic webs whose total length $\ell_{\mathrm{t}}(u)\leq 3$ are either arcs
\[
\xy(0,0)*{\includegraphics[scale=0.5]{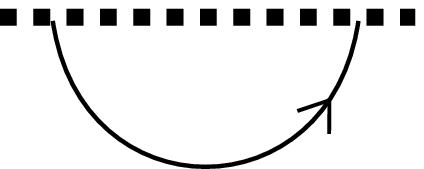}}\endxy\;\; T=\xy
 (0,0)*{
\begin{Young}
1 & 1 & 2\cr
\end{Young}}
\endxy\;\;\text{ and }\;\;\xy(0,0)*{\includegraphics[scale=0.5]{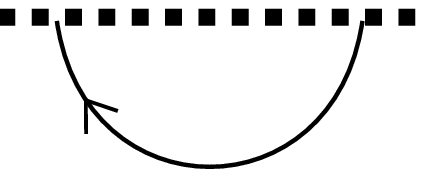}}\endxy\;\; T=\xy
 (0,0)*{
\begin{Young}
1 & 2 & 2\cr
\end{Young}}
\endxy
\]
with LT-generators $F_1$ and $F_1^{(2)}$, or theta webs
\[
\xy(0,0)*{\includegraphics[scale=0.5]{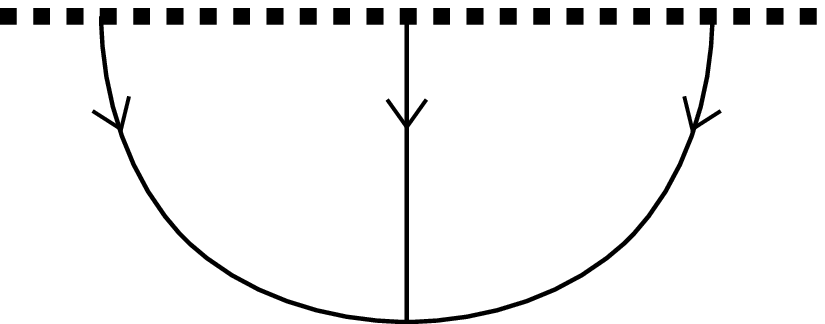}}\endxy\;\; T=\xy
 (0,0)*{
\begin{Young}
1 & 1 & 2\cr
2 & 3 & 3\cr
\end{Young}}
\endxy\;\;\text{ and }\;\;\xy(0,0)*{\includegraphics[scale=0.5]{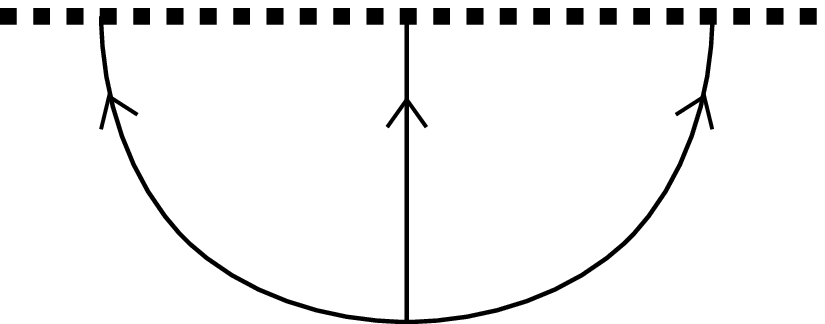}}\endxy\;\;T=\xy
 (0,0)*{
\begin{Young}
1 & 2 & 3\cr
\end{Young}}
\endxy
\]
with LT-generators $F_1F_2^{(2)}$ and $F_1F_2F_1$ respectively. Moreover, the first three are also the only examples of length $\ell(u)\leq 2$. The corresponding procedure under $q$-skew Howe duality for the third web is
\[
\xy
(0,0)*{\includegraphics[scale=0.65]{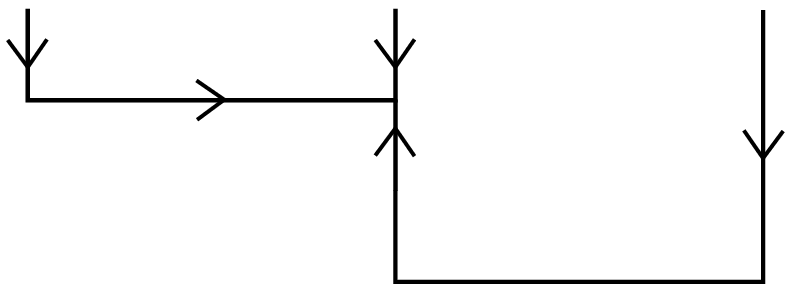}};
(-35,3)*{F_1};
(-35,-6.5)*{F_2^{(2)}};
(-35,-15.75)*{v_{3^2}=};
(-22.5,10)*{2};
(-22.5,-2)*{3};
(-22.5,-15)*{3};
(2,10)*{2};
(2,-2)*{1};
(2,-15)*{3};
(26,10)*{2};
(26,-2)*{2};
(26,-15)*{0};
\endxy
\]
Note that we read form bottom to top when we apply $F_1F_2^{(2)}$ to the highest weight vector $v_{3^2}$ and we label the boundary points from left to right, i.e. an $F_i$ acts between the boundary point number $i$ and $i+1$.
\end{ex}
\begin{prop}
\label{prop-sl3bases}
For any $T\in \mathrm{Std}^s(3^{\ell})$ we have that Khovanov and Kuperberg's growth 
algorithm and Leclerc and Toffin's algorithm give the same 
non-elliptic $\mathfrak{sl}_3$-web. 
\end{prop}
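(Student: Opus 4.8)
\emph{Strategy.} I would prove Proposition~\ref{prop-sl3bases} by induction on the number $s$ of steps of the LT-algorithm of Definition~\ref{defn-LTgen}. Write $T=T^{(0)}\to T^{(1)}\to\dots\to T^{(s)}$ for the chain of semi-standard tableaux produced by the algorithm, so $T^{(s)}$ has every entry equal to its row number and $\mathrm{LT}(T)=F_{i_0}^{(j_0)}F_{i_1}^{(j_1)}\cdots F_{i_{s-1}}^{(j_{s-1})}$, with the leftmost factor $F_{i_0}^{(j_0)}$ coming from the passage $T^{(0)}\to T^{(1)}$. As the algorithm is deterministic, a fresh run on $T^{(1)}$ reproduces the tail $T^{(1)}\to\dots\to T^{(s)}$, so $\mathrm{LT}(T)=F_{i_0}^{(j_0)}\,\mathrm{LT}(T^{(1)})$ and, applying the divided powers to $v_{3^{\ell}}$ through foamation,
\[
u(T)=F_{i_0}^{(j_0)}\cdot u(T^{(1)}).
\]
For $s=0$ the tableau $T$ corresponds to the enhanced sign string $(\times,\dots,\times)$ and $u(T)=v_{3^{\ell}}$ is the empty web, which is also the output of the growth algorithm on this trivial boundary. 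Writing $w^{S}_{J}$ for the basis web that the growth algorithm attaches to the pair $(S,J)$ encoding $T$ via its canonical flow (Proposition~\ref{prop-tableauxflows}), the whole proof thus reduces to a single statement: if $u(T^{(1)})=w^{S^{(1)}}_{J^{(1)}}$, then gluing the web obtained by foamation from $F_{i_0}^{(j_0)}$ on top of $w^{S^{(1)}}_{J^{(1)}}$ produces $w^{S}_{J}$. (One could instead try to compare $u(T)$ and $w^{S}_{J}$ via the triangularities of Theorem~\ref{thm:upptriang} and formula~\eqref{eq:LT}, but that only matches their leading terms $e^{S}_{J}$ and would still require knowing that $u(T)$ is non-elliptic --- which is essentially equivalent to the proposition.)

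\emph{The single step.} First I would record that, under foamation, $F_{i}^{(j)}\mathbf 1_{\lambda}$ is the $\mathfrak{sl}_3$-web obtained by gluing $j$ rungs (H-webs) between the $i$-th and $(i{+}1)$-st strands, the divided-power normalisation being exactly what cancels the $[j]!$ created by the digon relation~\eqref{eq:digon}; this local web is honestly non-elliptic precisely when the LT-step is not an extraordinary one (compare the arc and theta-web computations in Example~\ref{ex-totlength}). Next I would translate the LT-step into boundary data: lowering the $j_0$ copies of $i_0{+}1$ lying below their own row changes the multiplicities of $i_0$ and $i_0{+}1$ and so turns $(S,J)$ into $(S^{(1)},J^{(1)})$, modified only at positions $i_0,i_0{+}1$; the requirement that the lowered entries are not in a \emph{forbidden position} is exactly the condition that $w^{S}_{J}$ can be grown there by the above rung configuration without creating a reducible square or digon, i.e.\ that the associated weight-lattice path remains dominant. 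Finally, using Khovanov and Kuperberg's result~\cite{kk} that the growth algorithm is independent of the order in which the local rules are applied, I would run it on $(S,J)$ starting at the strands $i_0,i_0{+}1$: the first batch of rules produces exactly that rung configuration and leaves the boundary equal to $(S^{(1)},J^{(1)})$. Hence $w^{S}_{J}=F_{i_0}^{(j_0)}\cdot w^{S^{(1)}}_{J^{(1)}}=F_{i_0}^{(j_0)}\cdot u(T^{(1)})=u(T)$, closing the induction.

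\emph{Two points of care.} The \emph{extraordinary} steps of Definition~\ref{defn-LTgen}, and the two exceptional tableaux displayed there, correspond to the small theta-web configurations in which a bare rung would create a square: there one must use the square relation~\eqref{eq:square} (equivalently the growth rule inserting a trivalent pair) in place of a rung, and I would verify directly --- as in Example~\ref{ex-totlength} --- that the two algorithms still agree. One also has to carry the orientations along, but for the webs-only statement of Proposition~\ref{prop-sl3bases} they are forced by the sign string, so this is routine (it is the foam-level refinement, with its signs, that is more delicate and is dealt with later).

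\emph{Main obstacle.} The crux is the middle step: establishing the precise dictionary between ``the lowest entry below its own row not in a forbidden position'' on the tableau side and ``the next boundary configuration handled by the growth algorithm'' on the web side, together with the bookkeeping of which digons and squares appear and how the divided powers clear them. This is the ``non-trivial combinatorics of $\mathfrak{sl}_3$-webs'' advertised in the introduction. In the write-up it is cleanest to isolate it as a lemma computing the effect of a single $F_{i}^{(j)}$ on a non-elliptic web whose associated tableau has a prescribed active value, and then feed that lemma into the induction above; the combinatorial machinery of Sections~\ref{sec-flow} and~\ref{sec-exgrowth}, specialised to the canonical flow, provides exactly the tools for this lemma.
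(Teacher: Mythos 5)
Your strategy is essentially the one the paper uses: its proof also matches each LT-step with a particular legal move of Khovanov--Kuperberg's growth algorithm and then invokes their Lemma~1 (the growth algorithm's output is independent of the choices made) to conclude that the two outputs coincide; your induction on the number of steps is just a way of organizing that matching. The problem is that the entire substance of the proof is the ``single step'' you defer, and as you state it the single-step claim is not quite true. It is not literally the case that the first batch of growth rules at the strands $i_0,i_0+1$ reproduces the rung configuration coming from $F_{i_0}^{(j_0)}$: the paper's case-by-case check exhibits two tableau configurations (rows $1\,2\,/\,2\,3$ and $1\,2\,3\,/\,2\,3$) in which the LT-output and the growth-algorithm output agree only up to isotopy, and the extraordinary rule of Definition~\ref{defn-LTgen} exists because the naive choice genuinely fails there, not because a bare rung would create a square. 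The paper explains the first extraordinary case as an ``outside first'' prescription for nested components (the entry chosen for replacement must not carry flow $1$ while its left neighbour does, which guarantees that some Y-, H- or arc-move of the growth algorithm applies); no square relation~\eqref{eq:square} is used anywhere, since the assertion is an equality of honest non-elliptic webs up to isotopy, not an identity in $W_S$ after simplification. So the dictionary you correctly identify as the main obstacle has to be established by exactly the exhaustive local analysis the paper carries out (the table of usual cases, the two extraordinary cases, the isotopy cases, and the argument that certain tableau configurations can never occur), and it cannot be absorbed into the phrase ``the first batch of rules produces exactly that rung configuration.''

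A second, smaller point: you propose to extract the single-step lemma from the machinery of Sections~\ref{sec-flow} and~\ref{sec-exgrowth}, but in the paper those sections are built on Proposition~\ref{prop-sl3bases} (the flows-to-fillings map and Theorem~\ref{thm-fine} use the LT-generators of a basis web and invoke this proposition explicitly), so that route would be circular in the paper's logical order; the case analysis has to be done directly at this stage.
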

\begin{proof}
In the growth algorithm there are often cases in which 
one can choose between several steps. Khovanov and Kuperberg show 
in Lemma 1 in~\cite{kk} that the final web does not depend on these choices. 
We are going to show that the Leclerc-Toffin algorithm makes 
a particular choice for each step in the growth algorithm, which proves the 
proposition. 

Suppose we are given a semi-standard tableau and that we have 
already done some steps in the Leclerc-Toffin algorithm. The list below 
gives all possibilities for the next step. All cases in which 
one of the entries appears three times are similar, so we have only 
given one such example at the bottom of our list. To simplify matters we have 
called the entries $0,1,2$ and $3$ etc., instead of $k-1,k,k+1$ and $k+2$ etc., and we 
have only drawn the relevant part of the tableau. The nodes whose contents 
is not relevant for our argument are left empty. Moreover, if a number $3$ appears, then we only assume for simplicity that this number has to appear with the same multiplicity in the same columns, but is allowed to be above the shown row.

The usual cases are (beware that in the following pictures we use the numbers $+1,0,-1$ from Khovanov-Kuperberg's growth algorithm, that is they do not indicate the orientation of the webs)   
\[
\begin{array}{lllllll}
\xy
 (0,0)*{
\begin{Young}
1&2& \cr
& & \cr
\end{Young}
};
 (0,-15)*{
\begin{Young}
1&1& \cr
& & \cr
\end{Young}
};
(0,-7.5)*{\downarrow};
(3,-7.5)*{F_1};
\endxy
&

&
\xy
 (0,-7.5)*{\includegraphics[height=.07\textheight]{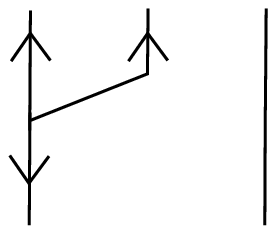}};
 (-7.5,-17)*{1};
 (-7.5,2)*{1};
 (0.75,2)*{0};
\endxy
&
&
\xy
(0,0)*{
\begin{Young}
1&3& \cr
2& & \cr
\end{Young}
};
(0,-15)*{
\begin{Young}
1&2& \cr
2& & \cr
\end{Young}
};
(0,-7.5)*{\downarrow};
(3,-7.5)*{F_2};
\endxy
&

&
\xy
 (0,-7.5)*{\includegraphics[height=.07\textheight]{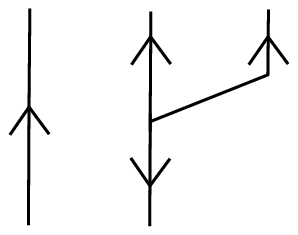}};
 (0,-17)*{1};
 (0,2)*{1};
 (7.75,2)*{0};
\endxy\\[6em]

%%%%%%%%%%%%%%%%%%%%%%%%%%%%%%%%%%%%%%%%%%% 2nd line %%%%%%%%%%%%%%%%%%
\xy
 (0,0)*{
\begin{Young}
1&3&3 \cr
2& & \cr
\end{Young}
};
 (0,-15)*{
\begin{Young}
1&2&2 \cr
2& & \cr
\end{Young}
};
(0,-7.5)*{\downarrow};
(4.5,-7.5)*{F_2^{(2)}};
\endxy
&

&
\xy
 (0,-7.5)*{\includegraphics[height=.07\textheight]{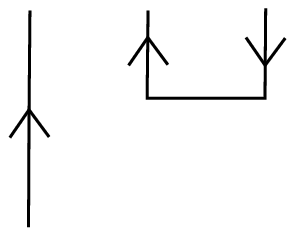}};
 (0,2)*{1};
 (7.75,2)*{-1};
\endxy
&
&
\xy
 (0,0)*{
\begin{Young}
1&2&2 \cr
& & \cr
\end{Young}
};
 (0,-15)*{
\begin{Young}
1&1&1 \cr
& & \cr
\end{Young}
};
(0,-7.5)*{\downarrow};
(8.5,-7.5)*{F_1^{(2)}\phantom{F_2^{(2)}}};
\endxy
&

&
\xy
 (0,-7.5)*{\includegraphics[height=.07\textheight]{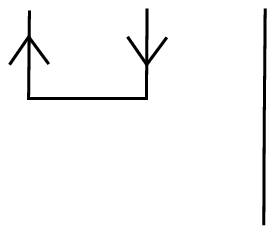}};
 (-7,2)*{1};
 (0.75,2)*{-1};
\endxy
\\[6em]
\end{array}
\]
\[
%%%%%%%%%%%%%%%%%%%%%%%%%%%%%%%%%%%%%%%%%%% 3rd line %%%%%%%%%%%%%%%%%%
\begin{array}{lllllll}
\xy
(0,0)*{
\begin{Young}
1&1&2 \cr
& & \cr
\end{Young}
};
(0,-15)*{
\begin{Young}
1&1&1 \cr
& & \cr
\end{Young}
};
(0,-7.5)*{\downarrow};
(3,-7.5)*{F_1};
\endxy
&

&
\xy
 (0,-7.5)*{\includegraphics[height=.07\textheight]{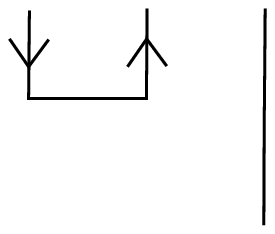}};
  (-7.5,2)*{1};
 (0.75,2)*{-1};
\endxy

&
&
\xy
 (0,0)*{
\begin{Young}
1&1&3 \cr
2& & \cr
\end{Young}
};
 (0,-15)*{
\begin{Young}
1&1&2 \cr
2& & \cr
\end{Young}
};
(0,-7.5)*{\downarrow};
(3,-7.5)*{F_2};
\endxy
&

&
\xy
 (0,-7.5)*{\includegraphics[height=.07\textheight]{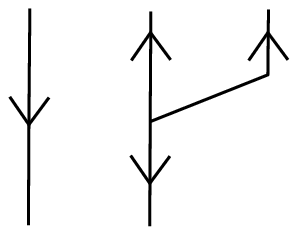}};
 (0,2)*{1};
 (7.75,2)*{-1};
 (0,-17)*{0};
\endxy\\[6em]

%%%%%%%%%%%%%%%%%%%%%%%%%%%%%%%%%%%% 4th line %%%%%%%%%%%%%%%%%%
\xy
(0,0)*{
\begin{Young}
1&1&3 \cr
2&3 & \cr
\end{Young}
};
(0,-15)*{
\begin{Young}
1&1&2 \cr
2&2 & \cr
\end{Young}
};
(0,-7.5)*{\downarrow};
(4.5,-7.5)*{F_2^{(2)}};
\endxy
&

&
\xy
 (0,-7.5)*{\includegraphics[height=.07\textheight]{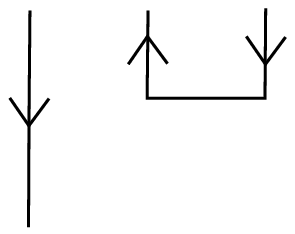}};
  (0,2)*{1};
 (7.75,2)*{-1};
\endxy
&
&
\xy
 (0,0)*{
\begin{Young}
1&1&2 \cr
2& & \cr
\end{Young}
};
 (0,-15)*{
\begin{Young}
1&1&1 \cr
2& & \cr
\end{Young}
};
(0,-7.5)*{\downarrow};
(3,-7.5)*{F_1};
\endxy
&

&
\xy
 (0,-7.5)*{\includegraphics[height=.07\textheight]{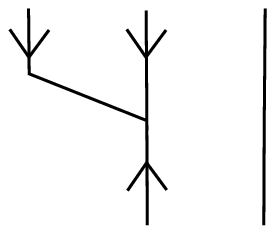}};
 (-7.5,2)*{1};
 (0.75,2)*{0};
 (0.75,-17)*{1};
\endxy
\\[6em]

%%%%%%%%%%%%%%%%%%%%%%%%%%%%%%%%%%%%%%%%%%% 5th line %%%%%%%%%%%%%%%%%%
\xy
(0,0)*{
\begin{Young}
1&1&3 \cr
2&2& \cr
\end{Young}
};
(0,-15)*{
\begin{Young}
1&1&2 \cr
2&2& \cr
\end{Young}
};
(0,-7.5)*{\downarrow};
(3,-7.5)*{F_2};
\endxy
&

&
\xy
 (0,-7.5)*{\includegraphics[height=.07\textheight]{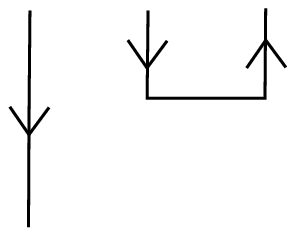}};
  (0,2)*{1};
 (7.75,2)*{-1};
\endxy
&
&
\xy
 (0,0)*{
\begin{Young}
1&2&2 \cr
2&3&3 \cr
\end{Young}
};
 (0,-15)*{
\begin{Young}
1&1&1 \cr
2&2&2 \cr
\end{Young}
};
(0,-7.5)*{\downarrow};
(8.5,-7.5)*{F_1^{(2)}F_2^{(2)}};
\endxy
&

&
\xy
 (0,-7.5)*{\includegraphics[height=.07\textheight]{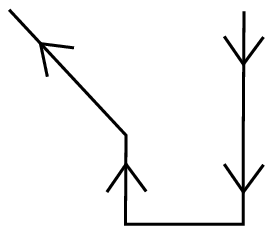}};
 (-8.5,2)*{1};
 (-0.5,2)*{0};
 (7.75,2)*{-1};
\endxy
\end{array}
\]
In the first extraordinary case, that is
\[
\xy
 (0,0)*{
\begin{Young}
1&2& \cr
2&& \cr
\end{Young}
};
 (0,-15)*{
\begin{Young}
1&1& \cr
2&& \cr
\end{Young}
};
(0,-7.5)*{\downarrow};
(3,-7.5)*{F_1};
\endxy
\;\;\;\;\;\;\;\;
\xy
 (0,-7.5)*{\includegraphics[height=.07\textheight]{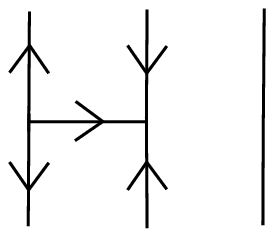}};
 (-7.5,-17)*{?};
 (0.75,-17)*{?};
 (-7.5,2)*{1};
 (0.75,2)*{1};
\endxy
\]
the Leclerc-Toffin algorithm gives the ``wrong'' answer. That is why we had to add an extra rule for this case. One easily checks that the extra rule works out as claimed.

To be more precise, the first extraordinary case, i.e. at least one three in the first column, is related to the question of nested components of webs. The rules in Definition~\ref{defn-LTgen} for this case can be read as ``outside first''. In fact, choosing the smallest entry that is not only in the first column or in the first and second columns ensures that this entry does not have flow $1$, while its left neighbour has flow $1$. In this case, no matter what the orientation of the web is, it is always possible to perform a step in the growth algorithm, i.e. either a Y-move, if the flow is $1$ and $0$ or $1$ and $-1$ and the orientation is the same, a H-move, if the orientation is different and the flow is $1$ and $0$ or an arc-move otherwise.
\vspace*{0.25cm}

In the second extraordinary case (where $3$ is the next to be replaced) we can easily see that the rules forces us to use $F_2$ which is precisely the answer of the growth algorithm, that is
\[
\xy
 (0,0)*{
\begin{Young}
1&2&3 \cr
2&& \cr
\end{Young}
};
 (0,-15)*{
\begin{Young}
1&1&2 \cr
2&& \cr
\end{Young}
};
(0,-7.5)*{\downarrow};
(3,-7.5)*{F_2};
\endxy
\;\;\;\;\;\;\;\;
\xy
 (0,-7.5)*{\includegraphics[height=.07\textheight]{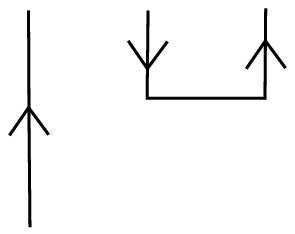}};
 (-8.25,-17)*{1};
 (-8.25,2)*{1};
 (0,2)*{1};
 (6.4,2)*{-1};
\endxy
\]
We should note that the other two cases, where the LT-algorithm gives the ``wrong'' answer, i.e.
\[
\xy
 (0,0)*{
\begin{Young}
1&2& \cr
2&3& \cr
\end{Young}
}
\endxy\;\;\text{ and }\;\;
\xy
 (0,0)*{
\begin{Young}
1&2&3 \cr
2&3& \cr
\end{Young}
}
\endxy
\]
work out up to isotopies. To see this note that in the first case we get first $F_1$ and then $F_2$, while we get $F_2^{(2)}$ after $F_1$ in the second case. Hence, the LT-algorithm gives us
\[
\xy
 (0,0)*{\includegraphics[height=.1\textheight]{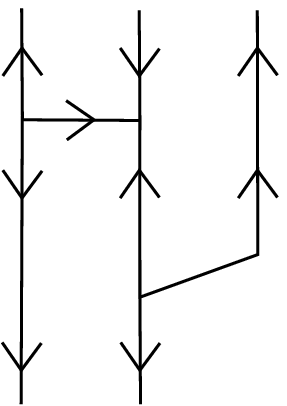}};
 (-6.5,13)*{1};
 (0,13)*{1};
 (6.5,13)*{0};
\endxy\;\;\text{ and }\;\;
\xy
 (0,0)*{\includegraphics[height=.1\textheight]{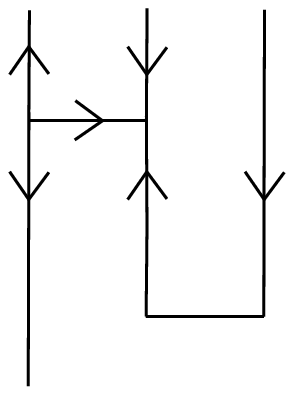}};
 (-7,13)*{1};
 (0,13)*{1};
 (6.2,13)*{-1};
\endxy
\]
while the growth algorithm gives us
\[
\xy
 (0,0)*{\includegraphics[height=.1\textheight]{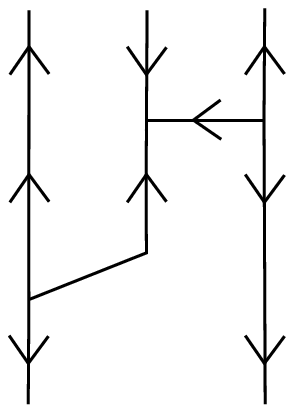}};
 (-6.5,13)*{1};
 (0,13)*{1};
 (6.5,13)*{0};
\endxy\;\;\text{ and }\;\;
\xy
 (0,0)*{\includegraphics[height=.105\textheight]{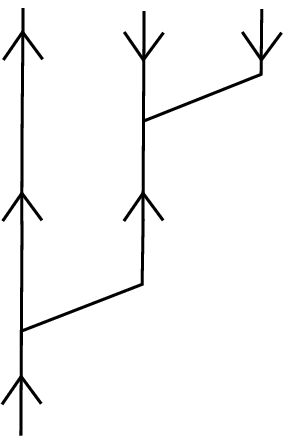}};
 (-7,13)*{1};
 (0,13)*{1};
 (6.2,13)*{-1};
\endxy
\]
\vspace*{0.15cm}

It is worth noting that, under the assumption that the next step in the Leclerc-Toffin algorithm is a step after $i-1$, tableaux of the form
\[
\xy
 (0,0)*{
\begin{Young}
& & \cr
&1& \cr
\end{Young}
};
\endxy
\;\;\;\;
\xy
 (0,0)*{
\begin{Young}
& & \cr
& &1 \cr
\end{Young}
};
\endxy
\;\;\;\;
\xy
 (0,0)*{
\begin{Young}
&1 & \cr
1& & \cr
\end{Young}
};
\endxy
\;\;\;\;
\xy
 (0,0)*{
\begin{Young}
& & 1\cr
1& & \cr
\end{Young}
};
\endxy
\;\;\;\;
\xy
 (0,0)*{
\begin{Young}
& & 1\cr
& 1& \cr
\end{Young}
};
\endxy
\] 
can not appear, since, if we assume that the $1$ does not change in this step, one has to create a tower of non-allowed pairs as illustrated below.
\[
\xy
 (0,0)*{
\begin{Young}
&-2 & \cr
&-1 & \cr
& 0& \cr
&1& \cr
\end{Young}
};
\endxy
\;\;\;\;
\xy
 (0,0)*{
\begin{Young}
& &-2  \cr
& &-1  \cr
& &0 \cr
& &1 \cr
\end{Young}
};
\endxy
\;\;\;\;
\xy
 (0,0)*{
\begin{Young}
-2&-1 & \cr
-1& 0 & \cr
0&1 & \cr
1& & \cr
\end{Young}
};
\endxy
\;\;\;\;
\xy
 (0,0)*{
\begin{Young}
-2 & & -1\cr
-1 & & 0\cr
0& & 1\cr
1& & \cr
\end{Young}
};
\endxy
\;\;\;\;
\xy
 (0,0)*{
\begin{Young}
&-2 & -1\cr
&-1 & 0\cr
& 0& 1\cr
& 1& \cr
\end{Young}
};
\endxy
\]
That is the reason why we do not have to take these case in account, since they will never lead to legal tableaux constructions.
\end{proof}
\begin{ex}\label{ex-sl3growth}
It should be noted that the Leclerc-Toffin algorithm makes a choice of how to apply Khovanov and Kuperberg's growth algorithm that avoid doing H-move with an horizontal arrow pointing to the left, since they would correspond to $E$'s and not $F$'s. For example, instead of doing a move like
\[
\xy
 (0,0)*{\includegraphics[height=.1\textheight]{res/figs/LT/case12a.eps}};
 (-6.5,13)*{0};
 (0,13)*{0};
 (6.2,13)*{-1};
\endxy
\]
the Leclerc-Toffin algorithm makes the following move.
\[
\begin{array}{llll}
\xy
 (0,11)*{
\begin{Young}
&&2 \cr
&1&3 \cr
2&& \cr
\end{Young}
};
 (0,-9)*{
\begin{Young}
& &1 \cr
&1&2 \cr
2&& \cr
\end{Young}
};
(0,1)*{\downarrow};
(8.5,1)*{F_1\cdots F_2};
\endxy
&

&
\xy
 (0,0)*{\includegraphics[height=.1\textheight]{res/figs/LT/case12b.eps}};
 (-6.5,13)*{0};
 (0,13)*{0};
 (6.2,13)*{-1};
\endxy
&
\end{array}
\]
The dots between $F_1$ and $F_2$ should indicate that there could be $F_i$ in between, but for all those $F_i$ one has $i<1$.
\end{ex}
We obtain directly from Proposition~\ref{prop-sl3bases} that the LT-algorithm gives the $\mathfrak{sl}_3$-web basis $B_S$, since the set of semi-standard tableaux enumerates the web basis.
\begin{cor}\label{cor-ltalgo}
The LT-algorithm gives the full $\mathfrak{sl}_3$-web basis $B_S$.\qed
\end{cor}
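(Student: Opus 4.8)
The plan is to read Corollary~\ref{cor-ltalgo} off immediately from Proposition~\ref{prop-sl3bases} together with the properties of the growth algorithm recalled above. First I would isolate two facts. On the one hand, Khovanov and Kuperberg's growth algorithm, applied to every pair $(S,J)$ representing a closed dominant $\mathfrak{sl}_3$-lattice path, produces a basis web, and as $J$ ranges over all admissible state strings for a fixed sign string $S$ one obtains exactly the non-elliptic web basis $B_S$. On the other hand, by Proposition~\ref{prop-tableauxflows} and the subsequent remarks on the canonical flow, restricting to the canonical flow identifies the basis webs in $B_S$ bijectively with the semi-standard tableaux $T\in\mathrm{Std}^s(3^{\ell})$ whose content is prescribed by $S$ (each entry $k$ occurring once or twice according to $s_k$).

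Next I would invoke Proposition~\ref{prop-sl3bases}: for each such $T$ the web $u(T)=\mathrm{LT}(T)v_{3^{\ell}}$ produced by the LT-algorithm of Definition~\ref{defn-LT} coincides with the web produced by the growth algorithm on the corresponding data. Combining this with the two facts above, the assignment $T\mapsto u(T)$ has image equal to the set of all basis webs with boundary $S$, i.e.\ to $B_S$; since distinct semi-standard tableaux correspond to distinct basis webs, the LT-algorithm produces the full basis $B_S$, each element exactly once.

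There is essentially no real obstacle here beyond bookkeeping. One only needs to check that the LT-algorithm of Definition~\ref{defn-LT} is in fact defined on all of $\mathrm{Std}^s(3^{\ell})$ (terminating on every input), which is the content of the termination remark following Definition~\ref{defn-LTgen}, and that the content constraints on $T\in\mathrm{Std}^s(3^{\ell})$ match the multiplicities imposed by $S$ under $q$-skew Howe duality, which is exactly what Proposition~\ref{prop-tableauxflows} provides. Given these, the corollary is immediate.
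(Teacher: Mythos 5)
Your argument is correct and follows the paper's own route: the corollary is deduced directly from Proposition~\ref{prop-sl3bases} together with the fact that the semi-standard tableaux $\mathrm{Std}^s(3^{\ell})$ enumerate Kuperberg's basis webs (via the canonical flow, as recalled around Proposition~\ref{prop-tableauxflows}). Your additional remarks on termination of the LT-algorithm and on matching the content constraints under $q$-skew Howe duality are just the bookkeeping the paper leaves implicit, so there is nothing to add.
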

We note that this gives another proof of Khovanov and Kuperberg's result.
\begin{proof}(\textbf{Theorem}~\ref{thm:upptriang})
This follows from the LT-algorithm, i.e. an equation like~\ref{eq:LT2} can be verified (by using the ``LT-like'' algorithm from above) as in~\cite{leto}, Proposition~\ref{prop-sl3bases} and the substitution $v=-q^{-1}$. Note that the underlying space is changed under $q$-skew Howe duality (and therefore the corresponding elementary tensors).
\end{proof}
\begin{cor}\label{cor-uptri}
Kuperberg's web basis $B_S$ and the dual canonical basis are related by a change of base matrix which is unitriangular.
\end{cor}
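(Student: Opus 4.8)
The plan is to transport the Leclerc--Toffin triangularity~\eqref{eq:LT2} across $q$-skew Howe duality. Recall from Section~\ref{sec-quantum} that $q$-skew Howe duality provides a $\bC(q)$-linear isomorphism from $V_{\Lambda}$, with $\Lambda=(3^{\ell})$, onto the $\mathfrak{sl}_3$-web space, identifying the weight-$\mu_S$ subspace of $V_{\Lambda}$ with $W_S$; that this isomorphism carries the canonical basis $\{b_T\}$ of $V_{\Lambda}$ onto the dual canonical basis $\{b_T^*\}$; and that, by Corollary~\ref{cor-ltalgo} together with Proposition~\ref{prop-sl3bases}, it carries the LT-type intermediate crystal basis $\{A_T\}$ of Definition~\ref{defn-LTgen} onto Kuperberg's web basis $B_S=\{u(T)\}$. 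Under this dictionary all of the relevant basis elements are indexed by semi-standard tableaux $T\in\mathrm{Std}^s(3^{\ell})$ (of the content fixed by $S$).

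First I would record the triangularity on the algebraic side. As observed in Section~\ref{sec-quantum} and already used in the proof of Theorem~\ref{thm:upptriang}, the argument of Leclerc and Toffin (Section~4.2 of~\cite{leto}) goes through verbatim for the slightly modified basis $\{A_T\}$ used here, producing bar-invariant coefficients $\beta_{T''T}(v)\in\bZ[v,v^{-1}]$, $v=-q^{-1}$, with
\[
b_T=A_T+\sum_{T''\prec T}\beta_{T''T}(v)\,A_{T''},
\]
where $\prec$ is the order on $\mathrm{Std}^s(3^{\ell})$ induced from the lexicographic order on column-strict tableaux, equivalently $\vec{\lambda}_{T''}\lhd\vec{\lambda}_T$ in the dominance order of Definition~\ref{defn-dominnance}. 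Applying the $\bC(q)$-linear $q$-skew Howe duality isomorphism, which sends $b_T\mapsto b_T^*$ and $A_T\mapsto u(T)$, gives
\[
b_T^*=u(T)+\sum_{T''\prec T}\beta_{T''T}(v)\,u(T'').
\]
Fixing any total order on $\mathrm{Std}^s(3^{\ell})$ refining $\prec$, this says precisely that the change-of-basis matrix from $B_S$ to the dual canonical basis of $W_S$ is unitriangular (diagonal entries $1$, off-diagonal entries indexed by strictly smaller tableaux); since the inverse of a unitriangular matrix is again unitriangular, the corollary follows, and this reproves, without categorification, the result of~\cite{mpt}.

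The work here is bookkeeping rather than anything deep, and that is where the main obstacle lies. The two points to nail down are: that the indexing of $B_S$ by the semi-standard tableaux produced by Khovanov--Kuperberg's growth algorithm coincides with the indexing of $\{A_T\}$ under $q$-skew Howe duality --- which is exactly Proposition~\ref{prop-sl3bases} --- and that the modification of the lowering rules in Definition~\ref{defn-LTgen}, relative to the original Leclerc--Toffin algorithm, preserves both the bar-invariance of the coefficients $\beta_{T''T}$ and their triangularity with respect to $\prec$. Both are guaranteed by the discussion already given in this section together with the arguments in~\cite{leto} (and~\cite{mack1}); no new estimate or construction is needed.
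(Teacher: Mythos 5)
Your proposal is correct and follows essentially the same route as the paper's own proof: invoke Proposition~\ref{prop-sl3bases} to identify the (modified) LT-basis with Kuperberg's basis $B_S$, note that the modified rules do not disturb Leclerc--Toffin's unitriangularity~\ref{eq:LT2}, and transport this through $q$-skew Howe duality, which exchanges canonical and dual canonical bases. The only difference is that you spell out the bookkeeping (indexing and bar-invariance) more explicitly, which the paper leaves implicit.
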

\begin{proof}
Note again that our small change of the rules does not affect the arguments in~\cite{leto}.

By Proposition~\ref{prop-sl3bases} the LT-algorithm gives the basis $B_S$. Moreover, since Leclerc and Toffin showed~\cite{leto} that their basis is related to the canonical basis by an unitriangular change of base matrix, see~\ref{eq:LT2}, and since $q$-skew Howe duality turns the canonical into the dual canonical basis (as for example explained in~\cite{mack1}), we obtain the statement.
\end{proof}
To conclude this section, we note the following simple, but nevertheless important observation that we need repeatedly.
\begin{lem}\label{lem-ltalgo}
Let $u,v\in B_S$ be two non-elliptic webs with the same boundary and let $\mathrm{LT}(u),\mathrm{LT}(v)$ denote their LT-generators. Then $\ell_{\mathrm{t}}(\mathrm{LT}(u))=\ell_{\mathrm{t}}(\mathrm{LT}(v))$.
\end{lem}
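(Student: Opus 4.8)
The plan is to prove the sharper statement that $\ell_{\mathrm{t}}(\mathrm{LT}(u))$ depends only on $S$ --- in fact that it equals the constant $c(S)$ of~\eqref{eq-const} --- which gives the lemma at once.

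First I would rephrase everything in terms of tableaux. By Definition~\ref{defn-LT} we have $\mathrm{LT}(u)=\mathrm{LT}(T_u)$, where $T_u\in\mathrm{Std}^s(3^{\ell})$ is the semi-standard tableau attached to $u$ through its canonical flow; note that for every $u\in B_S$ the shape is the same partition $\lambda=(3^{\ell})$, with $\ell$ fixed by $S$ through $n_++2n_-=3\ell$. Running the algorithm of Definition~\ref{defn-LTgen} with $T_0=T_u$, the $k$-th step produces the factor $F_{i_k}^{(j_k)}$ and is carried out by lowering, by exactly one, precisely $j_k$ of the entries of the current tableau; the algorithm stops at the tableau all of whose boxes contain their own row number. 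Since entries are only ever decreased, the entry of a box $b=(r,c)$ travels monotonically from $T_u(b)$ down to $r$, so $b$ absorbs exactly $T_u(b)-r$ of the unit decrements (a non-negative integer, by semi-standardness). Summing over all steps and over all boxes gives
\[
\ell_{\mathrm{t}}(\mathrm{LT}(u))=\sum_k j_k=\sum_{(r,c)\in(3^{\ell})}\bigl(T_u(r,c)-r\bigr)=\Bigl(\sum_{b\in(3^{\ell})}T_u(b)\Bigr)-\tfrac{3}{2}\ell(\ell+1),
\]
using $\sum_{(r,c)\in(3^{\ell})}r=3\sum_{r=1}^{\ell}r=\tfrac32\ell(\ell+1)$. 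This identity is insensitive to the extraordinary rules, which only affect \emph{which} entries are lowered in a step, not the fact that a step lowers $j_k$ entries by one and that the terminal tableau is the forced one $b\mapsto r$.

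Second I would evaluate $\sum_{b}T_u(b)$. The web--tableau dictionary (the content constraint recalled just before Proposition~\ref{prop-tableauxflows}; see~\cite{mpt}) forces the number $k$ to occur in $T_u$ with multiplicity $1$ or $2$ according as $s_k=+$ or $s_k=-$, for $k=1,\dots,n$, and this multiset of entries is therefore the same for every $u\in B_S$. Hence $\sum_{b\in(3^{\ell})}T_u(b)=\sum_{k=1}^{n}c(s_k)$ with $c(s_k)$ as in~\eqref{eq-const} (a reassuring consistency check), so
\[
\ell_{\mathrm{t}}(\mathrm{LT}(u))=\sum_{k=1}^{n}c(s_k)-\tfrac32\ell(\ell+1)=c(S).
\]
The same computation applies verbatim to $v$, so $\ell_{\mathrm{t}}(\mathrm{LT}(u))=\ell_{\mathrm{t}}(\mathrm{LT}(v))$.

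The only delicate point is the telescoping count in the first step: that the total number of unit decrements performed is simultaneously $\sum_k j_k$ and $\sum_{(r,c)}(T_u(r,c)-r)$, no matter how the ordinary and extraordinary cases interleave. One can bypass tableau bookkeeping entirely and argue on weights instead: $u=\mathrm{LT}(u)\,v_{3^{\ell}}\in W_S$ is a weight vector whose $\mathfrak{gl}_n$-weight is determined by $S$, each $F_i$ translates the weight by a fixed vector $\pm\alpha_i$, and the linear functional on the weight lattice which sends $\alpha_i\mapsto 1$ for all $i$ changes by exactly $j_k$ under $F_{i_k}^{(j_k)}$; evaluating it at the source weight (that of the highest weight vector $v_{3^{\ell}}$) and at the target weight again produces a constant depending only on $S$.
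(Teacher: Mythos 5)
Your proposal is correct and takes essentially the same route as the paper's proof, which likewise observes that the entry multiplicities of $T_u$ and $T_v$ are fixed by $S$ and that the algorithm reduces either tableau to the row-number tableau by unit decrements, so the total number of decrements agrees. Your explicit evaluation $\ell_{\mathrm{t}}(\mathrm{LT}(u))=\sum_k c(s_k)-\tfrac32\ell(\ell+1)=c(S)$ is a sharpening the paper does not state, and is consistent with the later role of $c(S)$.
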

\begin{proof}
Note that the multiplicities of all the entries in the associated semi-standard tableaux $T_u,T_v$ for $u,v$ are equal. Therefore, since the LT-generators are obtained by reducing these semi-standard tableaux $T_u,T_v$ to the one associated to the highest weight vector, the total length is the same. 
\end{proof}
It should be noted that the length can be different in general, that is $\ell(\mathrm{LT}(u))\neq\ell(\mathrm{LT}(v))$, due to Serre relations like
\[
F_iF_{i+1}F_i=F_i^{(2)}F_{i+1}+F_{i+1}F_i^{(2)}.
\]
\subsection{Multipartitions and webs with flow}\label{sec-flow}
We are going to show in this and the next section that non-elliptic webs with flows $u_f$ can be obtained from fillings of $3$-multitableaux via an \textit{extended growth algorithm}. In this section we discuss how we can turn a web with flow into a filling of a $3$-multipartition, while we give the extended growth algorithm in the next section.
\vspace*{0.25cm}

The main observation is that there are way more ways to fill a $3$-multipartition than there are webs with flows, because, roughly speaking, a filling of a $3$-multipartition does not see isotopies. This corresponds to the fact that two Morita equivalent algebras can have different dimensions, e.g. the $\mathfrak{sl}_3$-web algebra has a way smaller dimension than the corresponding cyclotomic Hecke algebra. \textit{Very roughly}, the cyclotomic Hecke algebra does not see isotopies. We therefore stress that, if we work in $B^J_S$ or $W^J_S$ as defined below, then we \textit{do not} take isotopies into account.
\vspace*{0.25cm}

The way we show that the extended growth algorithm is really an algorithm to obtain every flow on any web is that we construct an injection $\iota$ from webs with flow to fillings of $3$-multipartitions and we show that $\iota$ is a left-inverse of the extended growth algorithm (which we discuss in the next section).
\vspace*{0.25cm}

We start by extending the Proposition~\ref{prop-tableauxflows}, that is we give another bijection to a set of so-called \textit{$3$-multipartitions $\vec{\lambda}=(\lambda^1,\lambda^2,\lambda^3)$} from Definition~\ref{defn-tabcomb}. Recall that $\mathrm{Col}(\lambda)$ below means that the entry $k$ in the column-strict tableaux of $\mathrm{Col}(\lambda)$ appears with multiplicity $\lambda_k$.
\begin{prop}\label{prop-tableauxflows2}
There is a bijection between $\mathrm{Col}(\lambda)$, the 
set of state strings $J$ such that there exists a web $w\in B_S$ and 
a flow $f$ on $w$ which extends $J$, denoted by $B_S^J$, and $\Lambda^+(n,d,3)$, i.e. the set of $3$-multipartitions of $d=3\ell$. 
\end{prop}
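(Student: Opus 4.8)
The bijection between $\mathrm{Col}(\lambda)$ and $B_S^J$ is already the content of Proposition~\ref{prop-tableauxflows}, so the only genuinely new ingredient is a bijection $\mathrm{Col}(\lambda)\cong\Lambda^+(n,d,3)$; once that is in hand, composing the two gives the asserted three‑way correspondence. The plan is to use the explicit map already sketched in Section~\ref{sec-quantum}: given a column‑strict tableau $T$ of shape $(3^\ell)$, subtract from every entry lying in row $r$ the row number $r$ to obtain a tableau $\tilde T$, read the $k$‑th column of $\tilde T$ from bottom to top, call the resulting sequence $\lambda^k$, and set $\Phi(T)=(\lambda^1,\lambda^2,\lambda^3)$.

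First I would check that $\Phi$ is well defined, i.e.\ that $\Phi(T)\in\Lambda^+(n,d,3)$. Since the columns of $T$ increase strictly from top to bottom, after subtracting $r$ in row $r$ the entries of each column of $\tilde T$ are weakly increasing from top to bottom, hence each $\lambda^k$, read from bottom to top, is weakly decreasing, i.e.\ a genuine partition. For the length and size constraints I would use that with boundary $S$ the value $s_k$ occurs in $T$ with exactly the multiplicity prescribed by $S$, so that the three columns of $T$ have total height $3\ell$, the ``trivial'' rows correspond precisely to the zero parts that get dropped, and the total size is bookkept by the identity~\eqref{eq-const}. Matching these multiplicity patterns against the defining length/residue conditions of $\Lambda^+(n,d,3)$ in our divided‑power conventions is the step that needs care.

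Next I would exhibit the inverse: from $\vec\lambda=(\lambda^1,\lambda^2,\lambda^3)$ one pads each $\lambda^k$ with the requisite number of zero parts, writes the three resulting columns of common height as a tableau of shape $(3^\ell)$, and adds back the row number $r$ to every entry of row $r$. The inequalities between consecutive parts of each $\lambda^k$ turn precisely into strict increase down the columns, so the output lies in $\mathrm{Col}(\lambda)$, and the two assignments are visibly mutually inverse; this is exactly the ``it is easy to see that this process is in fact invertible'' remark of Section~\ref{sec-quantum}, made precise. Composing $\Phi$ with the bijection of Proposition~\ref{prop-tableauxflows} then yields the three‑way bijection, and I would also record that this is the same correspondence used implicitly in Section~\ref{sec-quantum}, so that it is compatible with the later constructions.

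The hard part will be the combinatorial bookkeeping in the well‑definedness step: pinning down exactly how the sign string $S$ restricts the admissible multiplicities of the entries of $T$, and confirming that under $\Phi$ these restrictions coincide with the conditions defining $\Lambda^+(n,d,3)$ — in particular getting the total size right via~\eqref{eq-const} and handling the zero‑part corner cases. Everything else (strictness of columns $\leftrightarrow$ the partition condition, and mutual inverseness of the two maps) is routine once this dictionary of constraints is fixed.
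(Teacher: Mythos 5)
Your proposal is correct and takes essentially the same route as the paper: the paper likewise composes Proposition~\ref{prop-tableauxflows} with the map that subtracts the standard row-filled tableau $T^i_j$ (i.e.\ the row number in every row) and reads each column from bottom to top, and it simply leaves the verification that this is a bijection to the reader, so your well-definedness and explicit-inverse checks supply exactly what the paper omits. The size bookkeeping you single out as delicate is in fact a point where the paper's own statement is loose (the total number of nodes of $\vec{\lambda}$ is $c(S)$, as in Example~\ref{ex-tabl}, rather than literally $d=3\ell$), but this does not affect the bijection or your argument.
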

\begin{proof}
The first bijection was proven in Proposition~\ref{prop-tableauxflows}.

We give the bijection from $\mathrm{Col}(\lambda)$ to $3$-multipartitions $\Lambda^+(n,d,3)$ with $d=3\ell$. First we define a standard filling of a diagram with $i$-rows and $j$-columns, denoted by $T^i_j$, by filling all nodes in the $i^{\prime}$-row with $i^{\prime}$ for all $1\leq i^{\prime}\leq i$, e.g.
\[
T^5_3=\xy
 (0,0)*{
\begin{Young}
1&1&1 \cr
2&2&2 \cr
3&3&3 \cr
4&4&4 \cr
5&5&5 \cr
\end{Young}
}
\endxy
\]
Now, given a $T\in\mathrm{Col}(\lambda)$, the tableau $T-T^i_j$ (for suitable $i,j$) has non-negative entries due to the column-strictness of $T$. We can now associate for each column $1\leq j^{\prime}\leq j$ the column word $c_{j^{\prime}}$ by reading the entries of $T-T^i_j$ from bottom to top. This gives a partition $\lambda_{j^{\prime}}$. The reader can easily verify that this is a bijection.
\end{proof}
Note that the second bijection of Proposition~\ref{prop-tableauxflows2} is not restricted to tableaux with only three columns. In fact the whole bijection is not new, i.e. it is well-known. See~\cite{bk3} for example.
\begin{ex}\label{ex-tabl}
We have the following correspondence.
\[
\xy
 (0,0)*{
\begin{Young}
2&1&2 \cr
4&3&4 \cr
6&5&6 \cr
\end{Young}
}
\endxy\leftrightarrow
\vec{\lambda}=
((3,2,1),(2,1),(3,2,1))=\left(\;
\xy
 (0,0)*{
\begin{Young}
&& \cr
& \cr
 \cr
\end{Young}}
\endxy\;,
\;\xy
 (0,0)*{
\begin{Young}
 &\cr
 \cr
\end{Young}}
\endxy\;,\;
\xy
 (0,0)*{
\begin{Young}
&& \cr
& \cr
 \cr
\end{Young}}
\endxy\;
\right).
\]
\end{ex}
\vspace*{0.15cm}

Seeing a column-strict tableau $T$ as a $3$-multipartition $\vec{\lambda}$ has some advantages. For example we can fill such a $3$-multipartition $\vec{\lambda}$ with non-negative numbers. As in Definition~\ref{defn-tabcomb}, we denote the set of all $3$-multitableaux $\vec{T}$ of $\vec{\lambda}$ with a \textit{standard fillings} by $\mathrm{Std}(\vec{\lambda})$. For example the following tableau is in $\mathrm{Std}(\vec{\lambda}=((3,2,1),(2,1),(3,2,1)))$.
\[
\left(\;
\xy
 (0,0)*{
\begin{Young}
1&2&3 \cr
4&7 \cr
8 \cr
\end{Young}}
\endxy\;,
\;\xy
 (0,0)*{
\begin{Young}
5 & 9\cr
6\cr
\end{Young}}
\endxy\;,\;
\xy
 (0,0)*{
\begin{Young}
1&2&3 \cr
6&10 \cr
11 \cr
\end{Young}}
\endxy\;
\right)
\]
\vspace*{0.15cm}

In order to give a \textit{growth algorithm for webs with flow lines} $u_f\in B_S^J$ (that is the set of all non-elliptic webs $\partial u=S$ with a fixed flow $f$ that extends $J$), we first define an injection of the set of all non-elliptic webs with flows associated to a certain pair $(S,J)$ (or equivalent a column-strict tableau or a $3$-multipartition $\vec{\lambda}$) into $\mathrm{Std}(\vec{\lambda})$. Moreover, we denote the set \textit{all} webs with boundary datum $(S,J)$ and a chosen flow by $W_S^J$.
\vspace*{0.25cm}

Note again that there are in general much more elements in $\mathrm{Std}(\vec{\lambda})$ than in $B_S^J$. After that we give a method to obtain from an element of $\mathrm{Std}(\vec{\lambda})$ a web with flow and we show that this map is in fact the left-inverse of the embedding $\iota\colon B_S^J\to\mathrm{Std}(\vec{\lambda})$ . Hence, we can call this process an \textit{extended growth algorithm}.
\begin{defn}\label{defn-fareastwest}
Given a partition $\lambda$ which is partially filled with numbers and a fixed number $j$. Then the \textit{highest not filled node $N$ of residue $j$} is the unique (if it exists), highest node of $\lambda$ without filling to the north east such that a filling of this node still gives a legal tableau and $r(N)=j$.
\end{defn}
\begin{ex}\label{ex-fareastwest}
Assume that $\lambda$ is the following partition (note that $m=3$, i.e. the residue of the nodes is shifted by $3$) with the following partially filling.
\[
\lambda=\xy
 (0,0)*{
\begin{Young}
1&2&5 \cr
6&$\times$ & \cr
$\circ$  & \cr
\end{Young}}
\endxy
\]
We have marked the highest node $N_{\times}$ with residue $r(N_{\times})=3$ by $\times$ and the highest node $N_{\circ}$ with residue $r(N_{\circ})=1$ by $\circ$. Moreover, there is no possible node of residue $2$ to be filled. 
\end{ex}
\vspace*{0.15cm}

We start now by defining the map $\iota$. We give it inductively using an inductive algorithm.

It should be noted that the following algorithm looks very complicated, since it has many different rules one has to follow. But the main idea is simple and straightforward, i.e. look at the corresponding pictures and read of the tableau ``locally'' at the top and the bottom. Then the $k$-th step in the algorithm should add nodes labeled $k$, whose number depends on the divided power of the $F^{(j_k)}_{i_k}$'s, at the corresponding given positions with residue $i_k$.
\begin{defn}\label{defn-webtotab}(\textbf{Flows to fillings})
Given a pair of a sign string and a state string $(S,J)$ and a web $u_f\in B_S^J$, we associate to it a standard filling $\iota(u_f)\in\mathrm{Std}(\vec{\lambda})$ inductively by going backwards the LT-growth algorithm from Proposition~\ref{ex-sl3growth}, i.e. use the canonical flow on $w$ to determine the LT-generators $LT(w)=F^{(j_n)}_{i_n}\cdots F^{(j_1)}_{i_1}$ for $u$ (note that we read backwards now).
\begin{enumerate}
\item At the initial stage set $\vec{T}_0=(\emptyset,\emptyset,\emptyset)$.
\item At the $k$-th step use $F^{(j_{k})}_{i_{k}}$ to determine the \textit{type and color} of the operation performed on $\vec{T}_{k-1}$. We give a full list of all possible types and colors below together with the operation
\[
\mathbf{k}\colon\vec{T}_{k-1}\mapsto\vec{T}_{k}.
\]
\item Repeat until $k=n$. Then set $\iota(u_f)=\vec{T}_n$.
\end{enumerate}
The full list of possible types (depending on the orientation of the web) and colors (depending on the flow line) is the following.
\begin{itemize}
\item \textit{Arc moves} come in two different types, called arc-type a and b, each with three different colors, called $1,0,-1$. Note that type a corresponds to an $F^{(2)}$, while type b corresponds to an $F^{(1)}$.
\[
\xy
(0,5)*{\text{Type a:}};
(0,-5)*{\text{Type b:}}
\endxy\;\;\;\;
\xy
(0,12)*{\text{Color } 1};
(0,-12)*{\text{Color } 1};
(0,5)*{\includegraphics[scale=.5]{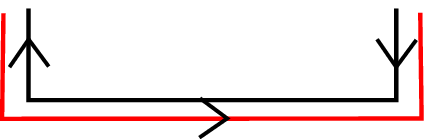}};
(0,-5)*{\includegraphics[scale=.5]{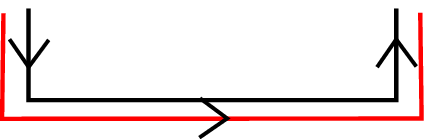}}
\endxy\;\;\;\;
\xy
(0,12)*{\text{Color } 0};
(0,-12)*{\text{Color } 0};
(0,5)*{\includegraphics[scale=.5]{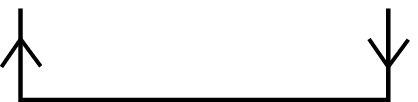}};
(0,-5)*{\includegraphics[scale=.5]{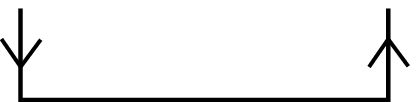}}
\endxy\;\;\;\;
\xy
(0,12)*{\text{Color } -1};
(0,-12)*{\text{Color } -1};
(0,5)*{\includegraphics[scale=.5]{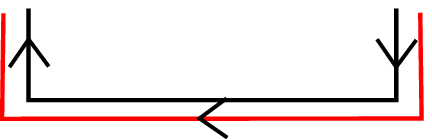}};
(0,-5)*{\includegraphics[scale=.5]{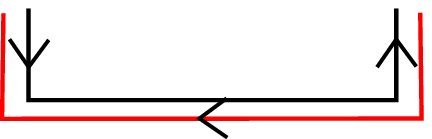}}
\endxy
\]
\item \textit{Y-moves} come in two different types, called a and b, with six different colors, called $1,1^{\prime},0,0^{\prime}$ and $-1,-1^{\prime}$. Note that all Y-moves belong to a $F^{(1)}$. The Y-moves of type a are
\[
\xy
(0,0)*{\text{Type a:}};
\endxy\;\;\;\;
\xy
(0,12)*{\text{Color } 1};
(0,-12)*{\text{Color } 1^{\prime}};
(0,5)*{\includegraphics[scale=.5]{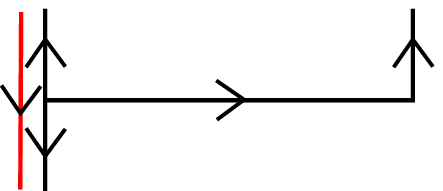}};
(0,-5)*{\includegraphics[scale=.5]{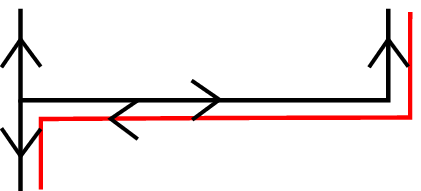}}
\endxy\;\;\;\;
\xy
(0,12)*{\text{Color } 0};
(0,-12)*{\text{Color } 0^{\prime}};
(0,5)*{\includegraphics[scale=.5]{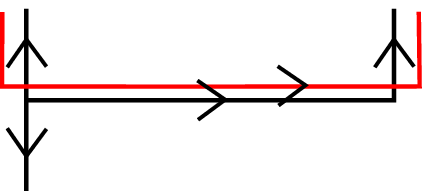}};
(0,-5)*{\includegraphics[scale=.5]{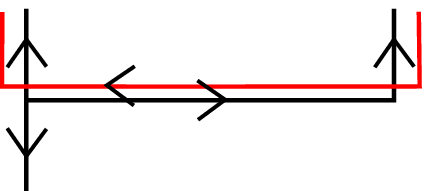}}
\endxy\;\;\;\;
\xy
(0,12)*{\text{Color } -1};
(0,-12)*{\text{Color } -1^{\prime}};
(0,5)*{\includegraphics[scale=.5]{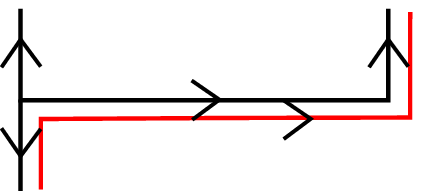}};
(0,-5)*{\includegraphics[scale=.5]{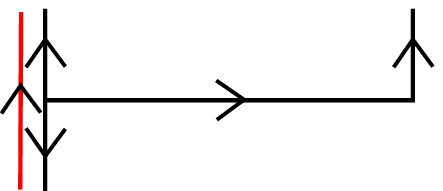}}
\endxy
\]
and the Y-moves of type b are
\[
\xy
(0,0)*{\text{Type b:}};
\endxy\;\;\;\;
\xy
(0,12)*{\text{Color } 1};
(0,-12)*{\text{Color } 1^{\prime}};
(0,5)*{\includegraphics[scale=.5]{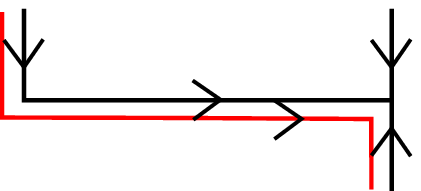}};
(0,-5)*{\includegraphics[scale=.5]{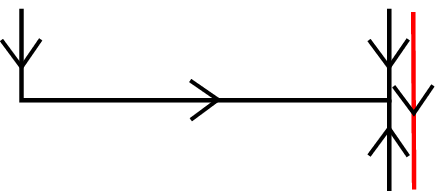}}
\endxy\;\;\;\;
\xy
(0,12)*{\text{Color } 0};
(0,-12)*{\text{Color } 0^{\prime}};
(0,5)*{\includegraphics[scale=.5]{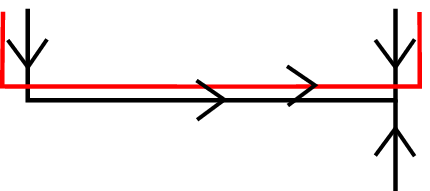}};
(0,-5)*{\includegraphics[scale=.5]{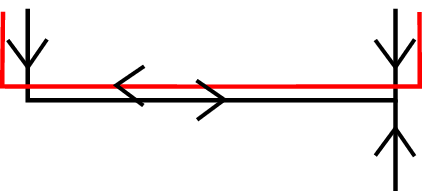}}
\endxy\;\;\;\;
\xy
(0,12)*{\text{Color } -1};
(0,-12)*{\text{Color } -1^{\prime}};
(0,5)*{\includegraphics[scale=.5]{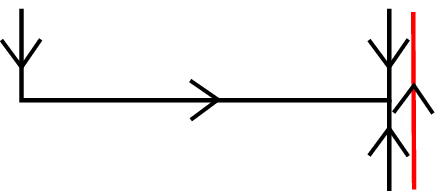}};
(0,-5)*{\includegraphics[scale=.5]{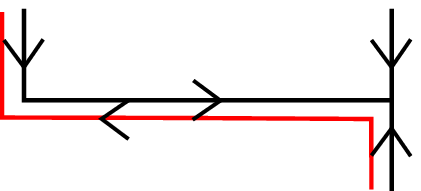}}
\endxy
\]
\item \textit{H-moves} only have one type, since the other type does not appear in the LT-algorithm (it would use a $E^{(1)}$ instead of a $F^{(1)}$). But to make the argument later more convenient, we use two types, again a and b, for them again with six different colors, again $1,1^{\prime},0,0^{\prime}$ and $-1,-1^{\prime}$. The H-moves of type a are
\[
\xy
(0,0)*{\text{Type a:}};
\endxy\;\;\;\;
\xy
(0,12)*{\text{Color } 1};
(0,-12)*{\text{Color } 1^{\prime}};
(0,5)*{\includegraphics[scale=.5]{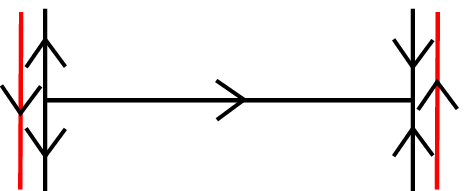}};
(0,-5)*{\includegraphics[scale=.5]{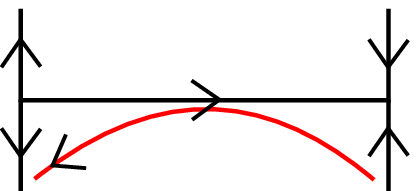}}
\endxy\;\;\;\;
\xy
(0,12)*{\text{Color } 0};
(0,-12)*{\text{Color } 0^{\prime}};
(0,5)*{\includegraphics[scale=.5]{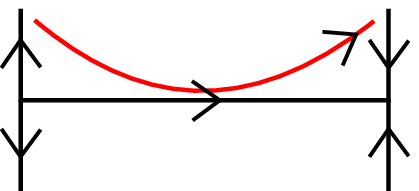}};
(0,-5)*{\includegraphics[scale=.5]{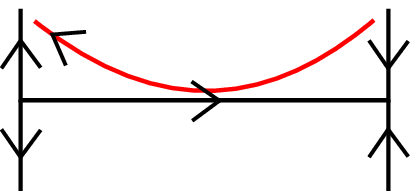}}
\endxy\;\;\;\;
\xy
(0,12)*{\text{Color } -1};
(0,-12)*{\text{Color } -1^{\prime}};
(0,5)*{\includegraphics[scale=.5]{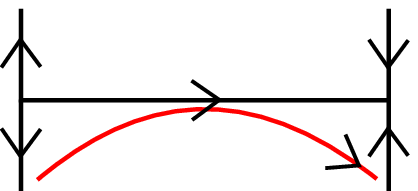}};
(0,-5)*{\includegraphics[scale=.5]{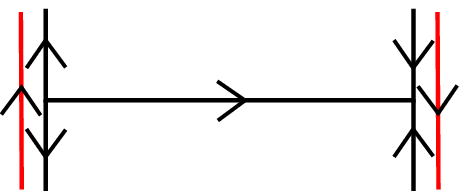}}
\endxy
\]
while the H-moves of type b are
\[
\xy
(0,0)*{\text{Type b:}};
\endxy\;\;\;\;
\xy
(0,12)*{\text{Color } 1};
(0,-12)*{\text{Color } 1^{\prime}};
(0,5)*{\includegraphics[scale=.5]{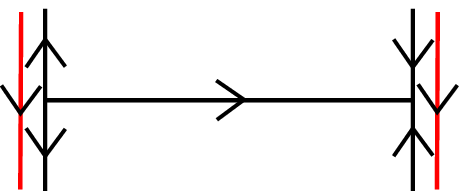}};
(0,-5)*{\includegraphics[scale=.5]{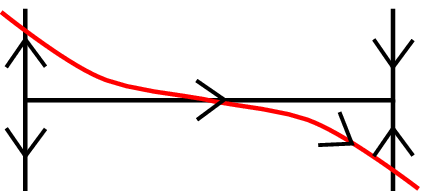}}
\endxy\;\;\;\;
\xy
(0,12)*{\text{Color } 0};
(0,-12)*{\text{Color } 0^{\prime}};
(0,5)*{\includegraphics[scale=.5]{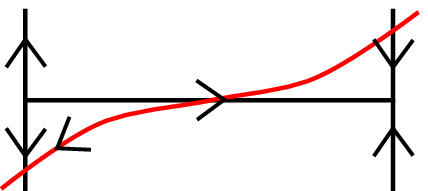}};
(0,-5)*{\includegraphics[scale=.5]{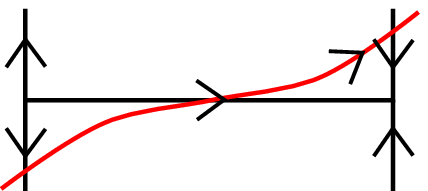}}
\endxy\;\;\;\;
\xy
(0,12)*{\text{Color } -1};
(0,-12)*{\text{Color } -1^{\prime}};
(0,5)*{\includegraphics[scale=.5]{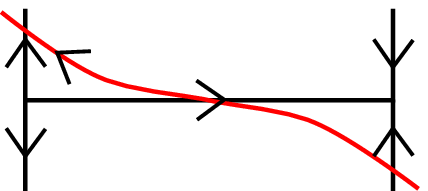}};
(0,-5)*{\includegraphics[scale=.5]{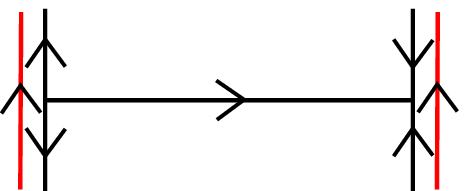}}
\endxy
\]
\item \textit{Right shifts} come again in two types a and b with three colors, again $1,0,-1$. Note that type a corresponds to $F^{(1)}$, while type b corresponds to $F^{(2)}$.
\[
\xy
(0,5)*{\text{Type a:}};
(0,-5)*{\text{Type b:}};
\endxy\;\;\;\;
\xy
(0,12)*{\text{Color } 1};
(0,-12)*{\text{Color } 1};
(0,5)*{\includegraphics[scale=.5]{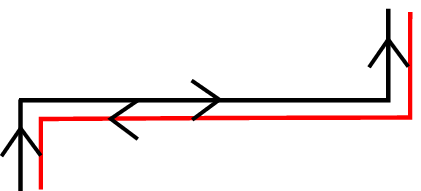}};
(0,-5)*{\includegraphics[scale=.5]{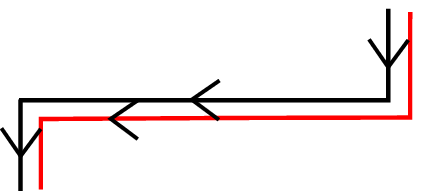}}
\endxy\;\;\;\;
\xy
(0,12)*{\text{Color } 0};
(0,-12)*{\text{Color } 0};
(0,5)*{\includegraphics[scale=.5]{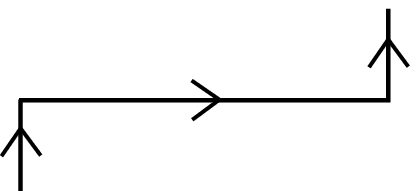}};
(0,-5)*{\includegraphics[scale=.5]{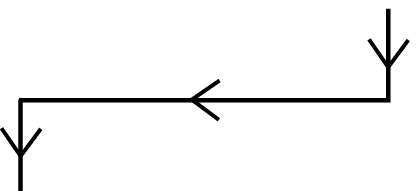}}
\endxy\;\;\;\;
\xy
(0,12)*{\text{Color } -1};
(0,-12)*{\text{Color } -1};
(0,5)*{\includegraphics[scale=.5]{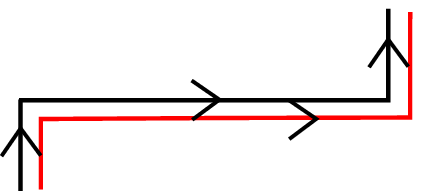}};
(0,-5)*{\includegraphics[scale=.5]{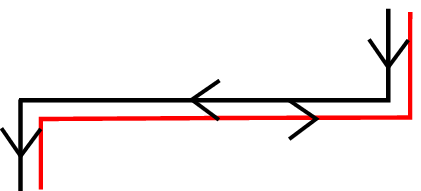}}
\endxy
\]
\item \textit{Left shifts} come again in two types a and b with three colors, again $1,0,-1$. Note that type a corresponds to $F^{(2)}$, while type b corresponds to $F^{(1)}$.
\[
\xy
(0,5)*{\text{Type a:}};
(0,-5)*{\text{Type b:}};
\endxy\;\;\;\;
\xy
(0,12)*{\text{Color } 1};
(0,-12)*{\text{Color } 1};
(0,5)*{\includegraphics[scale=.5]{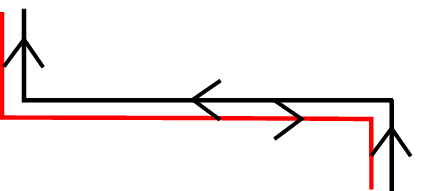}};
(0,-5)*{\includegraphics[scale=.5]{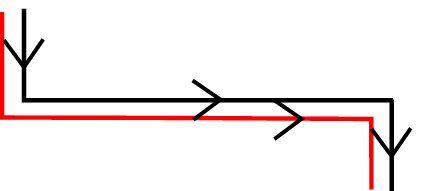}}
\endxy\;\;\;\;
\xy
(0,12)*{\text{Color } 0};
(0,-12)*{\text{Color } 0};
(0,5)*{\includegraphics[scale=.5]{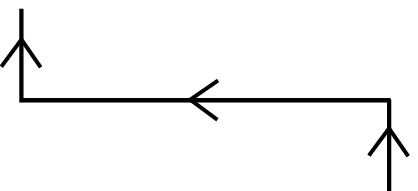}};
(0,-5)*{\includegraphics[scale=.5]{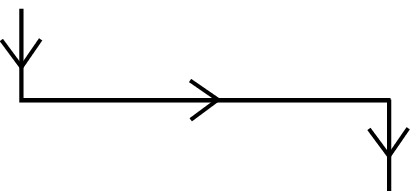}}
\endxy\;\;\;\;
\xy
(0,12)*{\text{Color } -1};
(0,-12)*{\text{Color } -1};
(0,5)*{\includegraphics[scale=.5]{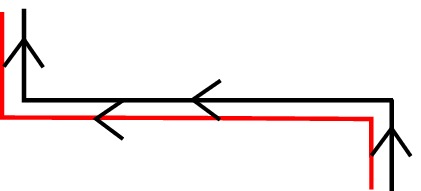}};
(0,-5)*{\includegraphics[scale=.5]{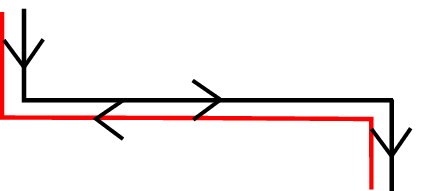}}
\endxy
\]
\item The unique \textit{empty shift}. Note that type a corresponds to $F^{(3)}$.
\[
\xy
(9,7)*{\times};
(-9,-7)*{\times};
(-9,7)*{\circ};
(9,-7)*{\circ};
\endxy
\]
\end{itemize}
Note that the pair $(S,J)$ gives rise to a 3-multipartition $\vec{\lambda}=(\lambda^1,\lambda^2,\lambda^3)$. The tableau $\vec{T}_n$ should be a $3$-multitableau of shape $\vec{\lambda}$. We build it inductively.

The following list of operators should add the corresponding nodes at the \textit{highest} free place of the tableau $T^{1,2,3}$ of residue $i_k$ (if the step is for $F_{i_k}^{(j_k)}$) (as in in Definition~\ref{defn-fareastwest}). We use the symbol $+$ for this procedure and denote the tableaux of $\vec{T}_{k-1}$ by $T_{k-1}^{1,2,3}$. The operators for the $k$-th step are the following.
\begin{itemize}
\item For the arc-moves of type a we use
\[
\mathbf{k}\colon(T_{k-1}^{1},T_{k-1}^{2},T_{k-1}^{3})\mapsto
\begin{cases}
(T_{k-1}^{1},T_{k-1}^{2}+\xy(0,0)*{\begin{Young}k\cr\end{Young}}\endxy,T_{k-1}^{3}+\xy(0,0)*{\begin{Young}k\cr\end{Young}}\endxy),  & \text{if the color is }1,\\
(T_{k-1}^{1}+\xy(0,0)*{\begin{Young}k\cr\end{Young}}\endxy,T_{k-1}^{2},T_{k-1}^{3}+\xy(0,0)*{\begin{Young}k\cr\end{Young}}\endxy), & \text{if the color is }0,\\
(T_{k-1}^{1}+\xy(0,0)*{\begin{Young}k\cr\end{Young}}\endxy,T_{k-1}^{2}+\xy(0,0)*{\begin{Young}k\cr\end{Young}}\endxy,T_{k-1}^{3}),  & \text{if the color is }-1,
\end{cases}
\]
and for the arc-moves of type b we use
\[
\mathbf{k}\colon(T_{k-1}^{1},T_{k-1}^{2},T_{k-1}^{3})\mapsto
\begin{cases}
(T_{k-1}^{1},T_{k-1}^{2},T_{k-1}^{3}+\xy(0,0)*{\begin{Young}k\cr\end{Young}}\endxy),  & \text{if the color is }1,\\
(T_{k-1}^{1},T_{k-1}^{2}+\xy(0,0)*{\begin{Young}k\cr\end{Young}}\endxy,T_{k-1}^{3}), & \text{if the color is }0,\\
(T_{k-1}^{1}+\xy(0,0)*{\begin{Young}k\cr\end{Young}}\endxy,T_{k-1}^{2},T_{k-1}^{3}),  & \text{if the color is }-1.
\end{cases}
\]
\item For the Y-moves of type a and colors $1,1^{\prime}$ we use
\[
\mathbf{k}\colon(T_{k-1}^{1},T_{k-1}^{2},T_{k-1}^{3})\mapsto
\begin{cases}
(T_{k-1}^{1},T_{k-1}^{2}+\xy(0,0)*{\begin{Young}k\cr\end{Young}}\endxy,T_{k-1}^{3}),  & \text{if the color is }1,\\
(T_{k-1}^{1}+\xy(0,0)*{\begin{Young}k\cr\end{Young}}\endxy,T_{k-1}^{2},T_{k-1}^{3}),  & \text{if the color is }1^{\prime},
\end{cases}
\]
and or the Y-moves of type a and colors $0,0^{\prime}$ we use
\[
\mathbf{k}\colon(T_{k-1}^{1},T_{k-1}^{2},T_{k-1}^{3})\mapsto
\begin{cases}
(T_{k-1}^{1},T_{k-1}^{2},T_{k-1}^{3}+\xy(0,0)*{\begin{Young}k\cr\end{Young}}\endxy),  & \text{if the color is }0,\\
(T_{k-1}^{1}+\xy(0,0)*{\begin{Young}k\cr\end{Young}}\endxy,T_{k-1}^{2},T_{k-1}^{3}),  & \text{if the color is }0^{\prime},
\end{cases}
\]
and or the Y-moves of type a and colors $-1,-1^{\prime}$ we use
\[
\mathbf{k}\colon(T_{k-1}^{1},T_{k-1}^{2},T_{k-1}^{3})\mapsto
\begin{cases}
(T_{k-1}^{1},T_{k-1}^{2},T_{k-1}^{3}+\xy(0,0)*{\begin{Young}k\cr\end{Young}}\endxy),  & \text{if the color is }-1,\\
(T_{k-1}^{1},T_{k-1}^{2}+\xy(0,0)*{\begin{Young}k\cr\end{Young}}\endxy,T_{k-1}^{3}),  & \text{if the color is }-1^{\prime}.
\end{cases}
\]
\item For the Y-moves of type b and colors $1,1^{\prime}$ we use similar rules, i.e.
\[
\mathbf{k}\colon(T_{k-1}^{1},T_{k-1}^{2},T_{k-1}^{3})\mapsto
\begin{cases}
(T_{k-1}^{1},T_{k-1}^{2},T_{k-1}^{3}+\xy(0,0)*{\begin{Young}k\cr\end{Young}}\endxy),  & \text{if the color is }1,\\
(T_{k-1}^{1},T_{k-1}^{2}+\xy(0,0)*{\begin{Young}k\cr\end{Young}}\endxy,T_{k-1}^{3}),  & \text{if the color is }1^{\prime},
\end{cases}
\]
and or the Y-moves of type b and colors $0,0^{\prime}$ we use
\[
\mathbf{k}\colon(T_{k-1}^{1},T_{k-1}^{2},T_{k-1}^{3})\mapsto
\begin{cases}
(T_{k-1}^{1},T_{k-1}^{2},T_{k-1}^{3}+\xy(0,0)*{\begin{Young}k\cr\end{Young}}\endxy),  & \text{if the color is }0,\\
(T_{k-1}^{1}+\xy(0,0)*{\begin{Young}k\cr\end{Young}}\endxy,T_{k-1}^{2},T_{k-1}^{3}),  & \text{if the color is }0^{\prime},
\end{cases}
\]
and or the Y-moves of type b and colors $-1,-1^{\prime}$ we use
\[
\mathbf{k}\colon(T_{k-1}^{1},T_{k-1}^{2},T_{k-1}^{3})\mapsto
\begin{cases}
(T_{k-1}^{1},T_{k-1}^{2}+\xy(0,0)*{\begin{Young}k\cr\end{Young}}\endxy,T_{k-1}^{3}),  & \text{if the color is }-1,\\
(T_{k-1}^{1}+\xy(0,0)*{\begin{Young}k\cr\end{Young}}\endxy,T_{k-1}^{2},T_{k-1}^{3}),  & \text{if the color is }-1^{\prime}.
\end{cases}
\]
\item For the H-moves of type a and colors $1,1^{\prime}$ we use
\[
\mathbf{k}\colon(T_{k-1}^{1},T_{k-1}^{2},T_{k-1}^{3})\mapsto
\begin{cases}
(T_{k-1}^{1},T_{k-1}^{2}+\xy(0,0)*{\begin{Young}k\cr\end{Young}}\endxy,T_{k-1}^{3}),  & \text{if the color is }1,\\
(T_{k-1}^{1}+\xy(0,0)*{\begin{Young}k\cr\end{Young}}\endxy,T_{k-1}^{2},T_{k-1}^{3}),  & \text{if the color is }1^{\prime},
\end{cases}
\]
and or the H-moves of type a and colors $0,0^{\prime}$ we use
\[
\mathbf{k}\colon(T_{k-1}^{1},T_{k-1}^{2},T_{k-1}^{3})\mapsto
\begin{cases}
(T_{k-1}^{1},T_{k-1}^{2},T_{k-1}^{3}+\xy(0,0)*{\begin{Young}k\cr\end{Young}}\endxy),  & \text{if the color is }0,\\
(T_{k-1}^{1}+\xy(0,0)*{\begin{Young}k\cr\end{Young}}\endxy,T_{k-1}^{2},T_{k-1}^{3}),  & \text{if the color is }0^{\prime},
\end{cases}
\]
and or the H-moves of type a and colors $-1,-1^{\prime}$ we use
\[
\mathbf{k}\colon(T_{k-1}^{1},T_{k-1}^{2},T_{k-1}^{3})\mapsto
\begin{cases}
(T_{k-1}^{1},T_{k-1}^{2},T_{k-1}^{3}+\xy(0,0)*{\begin{Young}k\cr\end{Young}}\endxy),  & \text{if the color is }-1,\\
(T_{k-1}^{1},T_{k-1}^{2}+\xy(0,0)*{\begin{Young}k\cr\end{Young}}\endxy,T_{k-1}^{3}),  & \text{if the color is }-1^{\prime}.
\end{cases}
\]
\item For the H-moves of type b and colors $1,1^{\prime}$ we use
\[
\mathbf{k}\colon(T_{k-1}^{1},T_{k-1}^{2},T_{k-1}^{3})\mapsto
\begin{cases}
(T_{k-1}^{1},T_{k-1}^{2}+\xy(0,0)*{\begin{Young}k\cr\end{Young}}\endxy,T_{k-1}^{3}),  & \text{if the color is }1,\\
(T_{k-1}^{1},T_{k-1}^{2},T_{k-1}^{3}+\xy(0,0)*{\begin{Young}k\cr\end{Young}}\endxy),  & \text{if the color is }1^{\prime},
\end{cases}
\]
and or the H-moves of type b and colors $0,0^{\prime}$ we use
\[
\mathbf{k}\colon(T_{k-1}^{1},T_{k-1}^{2},T_{k-1}^{3})\mapsto
\begin{cases}
(T_{k-1}^{1}+\xy(0,0)*{\begin{Young}k\cr\end{Young}}\endxy,T_{k-1}^{2},T_{k-1}^{3}),  & \text{if the color is }0,\\
(T_{k-1}^{1},T_{k-1}^{2},T_{k-1}^{3}+\xy(0,0)*{\begin{Young}k\cr\end{Young}}\endxy),  & \text{if the color is }0^{\prime},
\end{cases}
\]
and or the H-moves of type b and colors $-1,-1^{\prime}$ we use
\[
\mathbf{k}\colon(T_{k-1}^{1},T_{k-1}^{2},T_{k-1}^{3})\mapsto
\begin{cases}
(T_{k-1}^{1}+\xy(0,0)*{\begin{Young}k\cr\end{Young}}\endxy,T_{k-1}^{2},T_{k-1}^{3}),  & \text{if the color is }-1,\\
(T_{k-1}^{1},T_{k-1}^{2}+\xy(0,0)*{\begin{Young}k\cr\end{Young}}\endxy,T_{k-1}^{3}),  & \text{if the color is }-1^{\prime}.
\end{cases}
\]
\item For the right shifts of type a and colors $1,0,-1$ we use
\[
\mathbf{k}\colon(T_{k-1}^{1},T_{k-1}^{2},T_{k-1}^{3})\mapsto
\begin{cases}
(T_{k-1}^{1}+\xy(0,0)*{\begin{Young}k\cr\end{Young}}\endxy,T_{k-1}^{2},T_{k-1}^{3}),  & \text{if the color is }1,\\
(T_{k-1}^{1},T_{k-1}^{2}+\xy(0,0)*{\begin{Young}k\cr\end{Young}}\endxy,T_{k-1}^{3}),  & \text{if the color is }0,\\
(T_{k-1}^{1},T_{k-1}^{2},T_{k-1}^{3}+\xy(0,0)*{\begin{Young}k\cr\end{Young}}\endxy),  & \text{if the color is }-1,
\end{cases}
\]
and for right shifts of type b and colors $1,0,-1$ we use
\[
\mathbf{k}\colon(T_{k-1}^{1},T_{k-1}^{2},T_{k-1}^{3})\mapsto
\begin{cases}
(T_{k-1}^{1}+\xy(0,0)*{\begin{Young}k\cr\end{Young}}\endxy,T_{k-1}^{2}+\xy(0,0)*{\begin{Young}k\cr\end{Young}}\endxy,T_{k-1}^{3}),  & \text{if the color is }1,\\
(T_{k-1}^{1}+\xy(0,0)*{\begin{Young}k\cr\end{Young}}\endxy,T_{k-1}^{2},T_{k-1}^{3}+\xy(0,0)*{\begin{Young}k\cr\end{Young}}\endxy),  & \text{if the color is }0,\\
(T_{k-1}^{1},T_{k-1}^{2}+\xy(0,0)*{\begin{Young}k\cr\end{Young}}\endxy,T_{k-1}^{3}+\xy(0,0)*{\begin{Young}k\cr\end{Young}}\endxy),  & \text{if the color is }-1,
\end{cases}
\]
\item And finally, for the left shifts of type a and colors $1,0,-1$ we use
\[
\mathbf{k}\colon(T_{k-1}^{1},T_{k-1}^{2},T_{k-1}^{3})\mapsto
\begin{cases}
(T_{k-1}^{1},T_{k-1}^{2}+\xy(0,0)*{\begin{Young}k\cr\end{Young}}\endxy,T_{k-1}^{3}+\xy(0,0)*{\begin{Young}k\cr\end{Young}}\endxy),  & \text{if the color is }1,\\
(T_{k-1}^{1}+\xy(0,0)*{\begin{Young}k\cr\end{Young}}\endxy,T_{k-1}^{2},T_{k-1}^{3}+\xy(0,0)*{\begin{Young}k\cr\end{Young}}\endxy),  & \text{if the color is }0,\\
(T_{k-1}^{1}+\xy(0,0)*{\begin{Young}k\cr\end{Young}}\endxy,T_{k-1}^{2}+\xy(0,0)*{\begin{Young}k\cr\end{Young}}\endxy,T_{k-1}^{3}),  & \text{if the color is }-1,
\end{cases}
\]
and for left shifts of type b and colors $1,0,-1$ we use
\[
\mathbf{k}\colon(T_{k-1}^{1},T_{k-1}^{2},T_{k-1}^{3})\mapsto
\begin{cases}
(T_{k-1}^{1},T_{k-1}^{2},T_{k-1}^{3}+\xy(0,0)*{\begin{Young}k\cr\end{Young}}\endxy),  & \text{if the color is }1,\\
(T_{k-1}^{1},T_{k-1}^{2}+\xy(0,0)*{\begin{Young}k\cr\end{Young}}\endxy,T_{k-1}^{3}),  & \text{if the color is }0,\\
(T_{k-1}^{1}+\xy(0,0)*{\begin{Young}k\cr\end{Young}}\endxy,T_{k-1}^{2},T_{k-1}^{3}),  & \text{if the color is }-1,
\end{cases}
\]
\item For the unique empty shift we use
\[
\mathbf{k}\colon(T_{k-1}^{1},T_{k-1}^{2},T_{k-1}^{3})\mapsto
(T_{k-1}^{1}+\xy(0,0)*{\begin{Young}k\cr\end{Young}}\endxy,T_{k-1}^{2}+\xy(0,0)*{\begin{Young}k\cr\end{Young}}\endxy,T_{k-1}^{3}+\xy(0,0)*{\begin{Young}k\cr\end{Young}}\endxy).
\]
\end{itemize}
Thus, we have a map
\[
\iota\colon B_S^J\to \mathrm{Std}(\vec{\lambda}).
\]
\end{defn}
\vspace*{0.15cm}

We have to show that the algorithm is well-defined, i.e. a priori we could run into ambiguities if the nodes in one of the three tableaux are already filled with numbers or that there are no suitable free nodes of residue $i_k$. But the following lemma ensures that this will never happen. Moreover, the lemma shows that the map $\iota$ is an injection. But we give an example before we state and prove the lemma.
\begin{ex}\label{ex-tabl2}
\begin{itemize}
\item[(a)] For example for $S=(-,-,-)$ and $J=(1,-1,0)$ we have the half-theta web with the following flow and tableau, $3$-multipartition and LT-generators.
\[
\xy
(0,0)*{\includegraphics[scale=0.85]{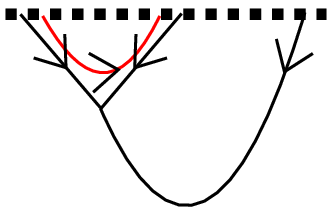}}
\endxy\;\;\;\;
T=\xy(0,0)*{\begin{Young}1 & 1 & 2\cr 3 & 2 & 3\cr\end{Young}}\endxy\;\;\;\;
\vec{\lambda}=\left(\;\xy(0,0)*{\begin{Young}\cr\end{Young}}\endxy\;,\;\emptyset\;,\;\xy(0,0)*{\begin{Young}\cr \cr\end{Young}}\endxy\;\right)\;\;\;\; LT=F_1F_2^{(2)}
\]
Hence, the algorithm from Definition~\ref{defn-webtotab} has three steps, i.e. the initial and two ``honest'' steps. One gets the following sequence
\[
\vec{T}_0=(\emptyset,\emptyset,\emptyset)\mapsto \vec{T}_1=\left(\;\xy(0,0)*{\begin{Young}1\cr\end{Young}}\endxy\;,\;\emptyset\;,\;\xy(0,0)*{\begin{Young}1 \cr\end{Young}}\endxy\;\right)\mapsto \vec{T}_2=\left(\;\xy(0,0)*{\begin{Young}1\cr\end{Young}}\endxy\;,\;\emptyset\;,\;\xy(0,0)*{\begin{Young}1\cr 2\cr\end{Young}}\endxy\;\right),
\]
which can be easily verified from the following picture
\[
\xy
(0,0)*{\includegraphics[scale=0.7]{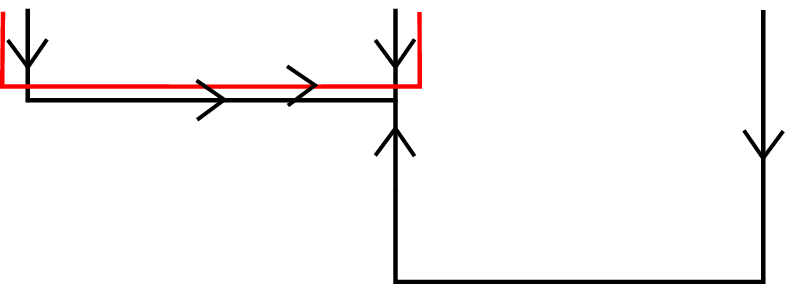}};
(13,-13.5)*{\text{Arc(a,0)}};
(-13,6.5)*{\text{Y(b,0)}};
\endxy
\]
\item[(b)] We have two webs with flows for the pair $S=(+,-,+,-)$ and $J=(0,0,0,0)$, i.e. two either nested or non-nested circles without flow
\[
\xy
(-30,-1.89)*{\includegraphics[scale=0.4]{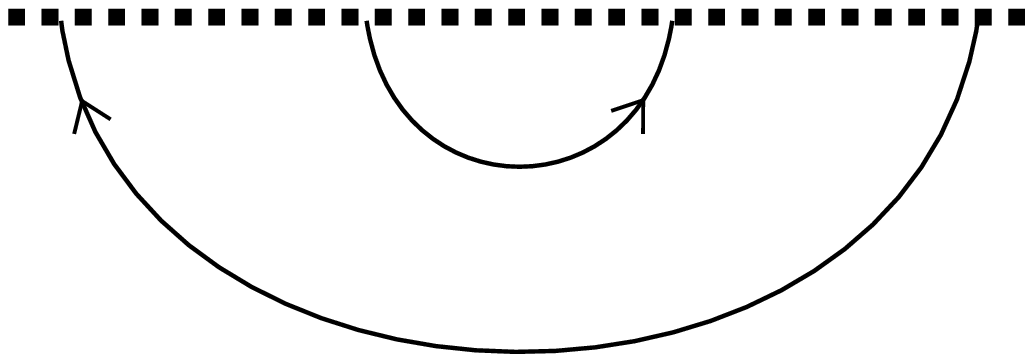}};
(30,1.89)*{\includegraphics[scale=0.4]{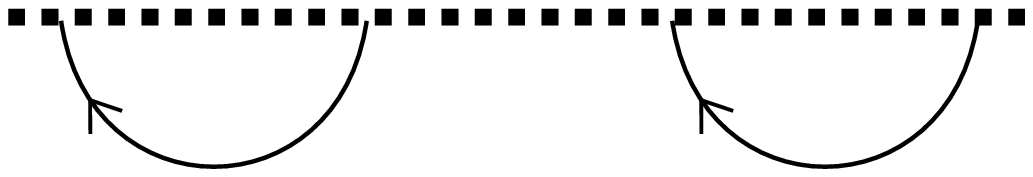}};
\endxy
\]
In this case the tableaux and the $3$-multipartitions are the same, but the LT-generators are different, that is
\[
T=\xy(0,0)*{\begin{Young}2 & 1 & 2\cr 4 & 3 & 4\cr\end{Young}}\endxy\;\;
\vec{\lambda}=\left(\;\xy(0,0)*{\begin{Young}&\cr\cr\end{Young}}\endxy\;,\;\xy(0,0)*{\begin{Young}\cr\end{Young}}\endxy\;,\;\xy(0,0)*{\begin{Young}&\cr\cr\end{Young}}\endxy\;\right)\;\; LT_{\mathrm{left}}=F_2F_1^{(2)}F_3^{(2)}F_2^{(2)}\;\; LT_{\mathrm{right}}=F_1^{(2)}F_2F_3^{(2)}F_2^{(2)}
\]
For the left case one gets the following sequence
\begin{align*}
\vec{T}_0=(\emptyset,\emptyset,\emptyset)&\mapsto \vec{T}_1=\left(\;\xy(0,0)*{\begin{Young}1\cr\end{Young}}\endxy\;,\;\emptyset\;,\;\xy(0,0)*{\begin{Young}1 \cr\end{Young}}\endxy\;\right)\mapsto \vec{T}_2=\left(\;\xy(0,0)*{\begin{Young}1 & 2\cr\end{Young}}\endxy\;,\;\emptyset\;,\;\xy(0,0)*{\begin{Young}1 & 2\cr\end{Young}}\endxy\;\right)\\&\mapsto \vec{T}_3=\left(\;\xy(0,0)*{\begin{Young}1 & 2\cr 3\cr\end{Young}}\endxy\;,\;\emptyset\;,\;\xy(0,0)*{\begin{Young}1 & 2\cr 3\cr\end{Young}}\endxy\;\right)\mapsto \vec{T}_4=\left(\;\xy(0,0)*{\begin{Young}1 & 2\cr 3\cr\end{Young}}\endxy\;,\;\xy(0,0)*{\begin{Young}4\cr\end{Young}}\endxy\;,\;\xy(0,0)*{\begin{Young}1 & 2\cr 3\cr\end{Young}}\endxy\;\right),
\end{align*}
while in the right case we get
\begin{align*}
\vec{T}_0=(\emptyset,\emptyset,\emptyset)&\mapsto \vec{T}_1=\left(\;\xy(0,0)*{\begin{Young}1\cr\end{Young}}\endxy\;,\;\emptyset\;,\;\xy(0,0)*{\begin{Young}1 \cr\end{Young}}\endxy\;\right)\mapsto \vec{T}_2=\left(\;\xy(0,0)*{\begin{Young}1 & 2\cr\end{Young}}\endxy\;,\;\emptyset\;,\;\xy(0,0)*{\begin{Young}1 & 2\cr\end{Young}}\endxy\;\right)\\&\mapsto \vec{T}_3=\left(\;\xy(0,0)*{\begin{Young}1 & 2\cr\end{Young}}\endxy\;,\;\xy(0,0)*{\begin{Young}3\cr\end{Young}}\endxy\;,\;\xy(0,0)*{\begin{Young}1 & 2\cr\end{Young}}\endxy\;\right)\mapsto \vec{T}_4=\left(\;\xy(0,0)*{\begin{Young}1 & 2\cr 4\cr\end{Young}}\endxy\;,\;\xy(0,0)*{\begin{Young}3\cr\end{Young}}\endxy\;,\;\xy(0,0)*{\begin{Young}1 & 2\cr 4\cr\end{Young}}\endxy\;\right),
\end{align*}
which can be easily verified from the following picture
\[
\xy
(-30,0)*{\includegraphics[scale=0.4]{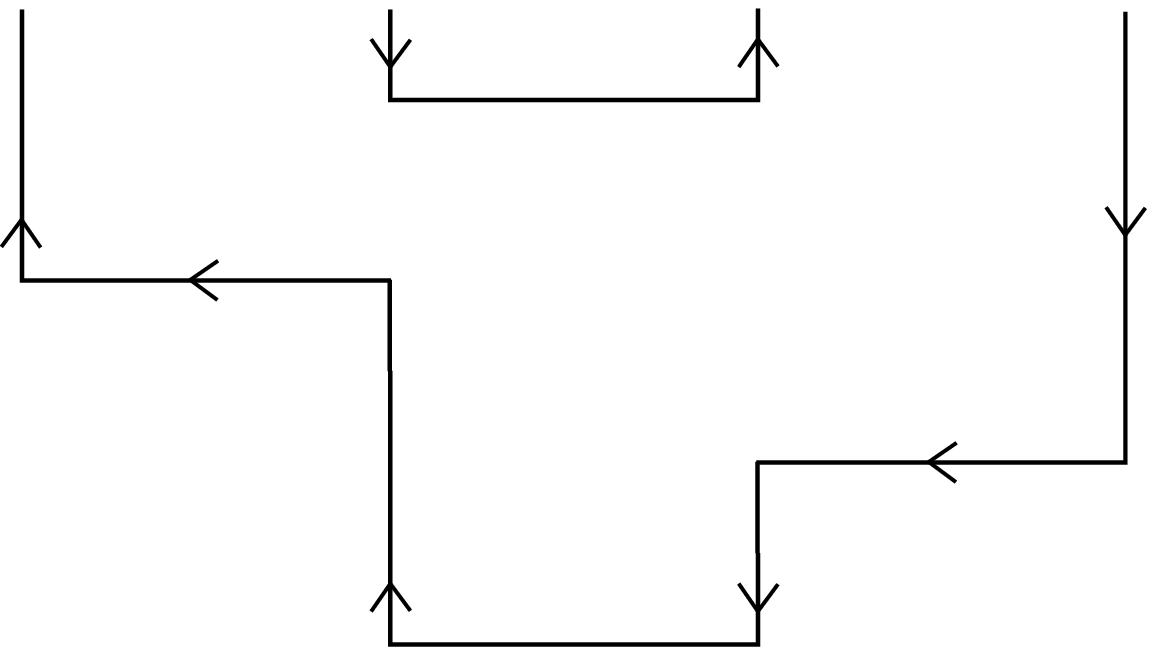}};
(-30,-16.5)*{\text{Arc(a,0)}};
(-30,12.5)*{\text{Arc(b,0)}};
(-14,-8.5)*{\text{right(b,0)}};
(-45,-1)*{\text{left(a,0)}};
(30,0)*{\includegraphics[scale=0.4]{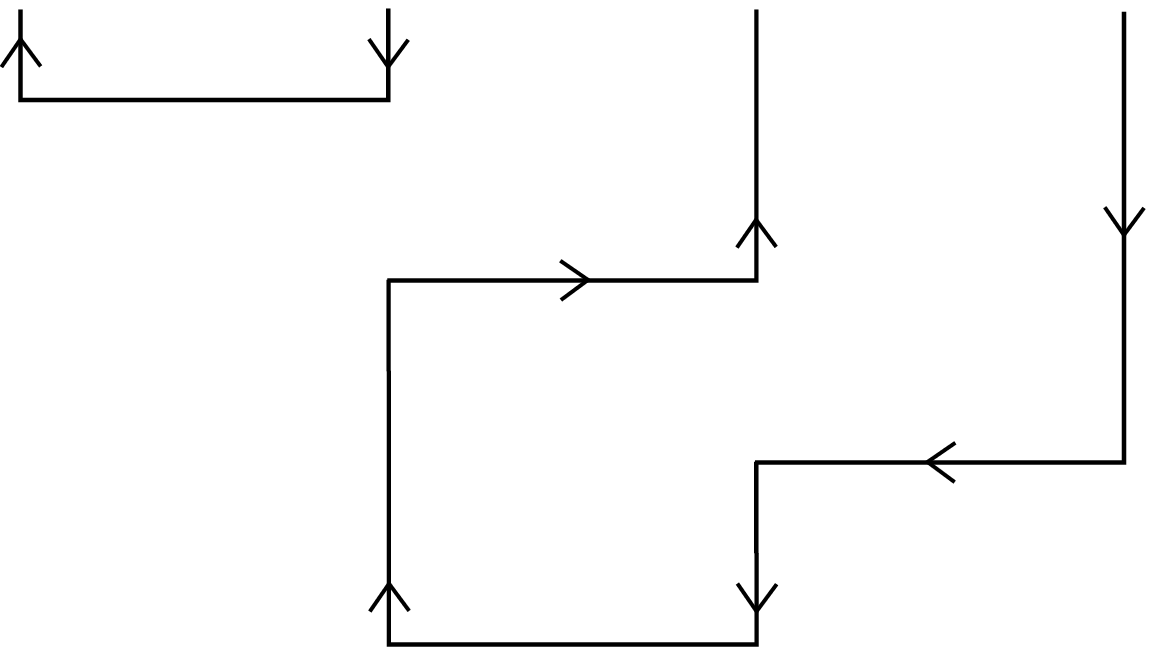}};
(30,-16.5)*{\text{Arc(a,0)}};
(15,12.5)*{\text{Arc(a,0)}};
(46,-8.5)*{\text{right(b,0)}};
(31,-1)*{\text{right(a,0)}};
\endxy
\]
It is worth noting that the residues of the nodes match exactly with the corresponding positions.
\end{itemize}
\end{ex}
\begin{lem}\label{lem-welldef}
The algorithm in Definition~\ref{defn-webtotab} is well-defined and two webs with flow $u_f,v_{f^{\prime}}$ satisfy
\[
\iota(u_f)=\iota(v_{f^{\prime}})\Leftrightarrow u=v\text{ and }f=f^{\prime}.
\]
\end{lem}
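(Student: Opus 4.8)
The plan is to prove both assertions by an induction that runs along the LT-growth algorithm read backwards, as set up in Definition~\ref{defn-webtotab}, and then to observe that the whole procedure is reversible. Throughout, fix $(S,J)$ and the associated $3$-multipartition $\vec\lambda$ from Proposition~\ref{prop-tableauxflows2}. For $u_f\in B_S^J$ the canonical flow on $u$ fixes, via Proposition~\ref{prop-sl3bases}, a sequence of local growth moves, hence the LT-generators $\mathrm{LT}(u)=F_{i_n}^{(j_n)}\cdots F_{i_1}^{(j_1)}$, and the chosen flow $f$ assigns a colour to each move; running Definition~\ref{defn-webtotab} produces a chain of fillings $\vec T_0\subset\vec T_1\subset\cdots$.

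The inductive claim that makes ``well-defined'' precise is: for every $k$, $\vec T_k$ is a standard $3$-multitableau whose shape is exactly the $3$-multipartition read off (as in the proof of Proposition~\ref{prop-tableauxflows2}) from the state string of the partial web obtained after $k$ growth moves, and the $j_k$ nodes labelled $k$ are the highest free nodes of residue $i_k$ in the components prescribed by the colour (Definition~\ref{defn-fareastwest}). The inductive step is a case check over the list of moves in Definition~\ref{defn-webtotab}: for each type/colour one must see that (i) in each prescribed component there genuinely is a free node of residue $i_k$ where $k$ can be placed legally, (ii) that node's position has residue $r-c+m$ equal to $i_k$, and (iii) that, since the labels $1,2,\dots$ are inserted in increasing order into Young-shaped regions, strict increase along rows and columns is automatic, so $\vec T_k\in\mathrm{Std}$. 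Points (i) and (ii) carry the real content: they amount to matching the column structure of a web with flow to the component structure of $\vec\lambda$ exactly as in Proposition~\ref{prop-tableauxflows2}, with the colour recording in which of the (up to three) possible components the active strand sits; Lemma~\ref{lem-ltalgo} then guarantees $\sum_k j_k=3\ell=|\vec\lambda|$, so after the last step $\vec T_n$ has shape $\vec\lambda$ and lies in $\mathrm{Std}(\vec\lambda)$.

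For the equivalence, the direction ``$\Leftarrow$'' is immediate: equal webs with equal flows have equal canonical flows, hence equal LT-generators and equal colour sequences, hence equal output (and the associated semi-standard tableau, so also $\mathrm{LT}(u)$, depends only on the isotopy class of $u$, so $\iota$ is well defined on $B_S^J$). For ``$\Rightarrow$'' I would show the algorithm is reversible by a map depending on $\vec T=\iota(u_f)$ alone. Given $\vec T\in\mathrm{Std}(\vec\lambda)$, first recover $(S,J)$ from $\vec\lambda$ by Proposition~\ref{prop-tableauxflows2}; then, reading the filling $\vec T_0\subset\cdots\subset\vec T_n=\vec T$ (determined by $\vec T$, since the box labelled $k$ is added at step $k$), read off for each $k$ the residue $i_k$, the divided power $j_k$ (the number of boxes labelled $k$) and the colour (which components received a box labelled $k$), by inverting the tables in Definition~\ref{defn-webtotab}. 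Finally reconstruct the web with flow: starting from the highest-weight configuration, at step $k$ perform the growth move at the position prescribed by $i_k$; the key point, already used in the proof of Proposition~\ref{prop-sl3bases}, is that once the partial web and the local flow (colour) at that position are fixed, the growth algorithm forces a unique move ($Y$-, $H$- or arc-move, or a shift). Since this reconstruction uses only $\vec T$, $\iota(u_f)=\iota(v_{f'})$ forces $u=v$ and $f=f'$.

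The main obstacle is the case analysis in the inductive step, and inside it the residue/shape matching (i)--(ii): one must check, uniformly over all types and colours of Definition~\ref{defn-webtotab}, that the local picture of a web with flow translates into adding a box of the correct residue into the correct component without ever destroying the Young-diagram shape or overwriting a filled node --- essentially re-deriving the bijection of Proposition~\ref{prop-tableauxflows2} ``with a flow attached''. The computations in Example~\ref{ex-tabl2} are the templates for this check, and the remark there that ``the residues of the nodes match exactly with the corresponding positions'' is precisely the statement that must be promoted to a proof.
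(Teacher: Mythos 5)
Your well-definedness argument is essentially the paper's: an induction along the LT-moves (the paper phrases it as induction on the total length, removing the last move and invoking Lemma~\ref{lem-ltalgo} for the total node count) together with a case-by-case check that each type/colour adds boxes of the correct residue at legal positions, and you correctly identify the residue/shape matching as the real content. For the injectivity direction, however, you take a genuinely different route. The paper argues directly by locating the first point of difference: if $u\neq v$, the first differing LT-generator forces either $i_k\neq i_k^{\prime}$ (different residue of the new nodes) or $j_k\neq j_k^{\prime}$ (different number of boxes labelled $k$); if $u=v$ but $f\neq f^{\prime}$, the type of the differing move is fixed and one only has to check that the potentially confusable colour pairs (in effect only H-moves of type a, colours $1/0$, $1^{\prime}/-1$, $0^{\prime}/-1^{\prime}$) place their boxes in different components. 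You instead build a left inverse from $\vec{T}$ alone, which is in substance the extended growth algorithm $\mathrm{g}$ of Definition~\ref{defn-exgrowth} together with Theorem~\ref{thm-fine} ($\mathrm{g}\circ\iota=\mathrm{id}$), so you are folding the content of the next section into this lemma; that is legitimate and arguably more conceptual, but it is the heavier option at this stage. One step you should not gloss over: the tables in Definition~\ref{defn-webtotab} are \emph{not} invertible from the component data alone, since several distinct type/colour pairs add a box to the same component (the proof of Proposition~\ref{prop-degree} even exhibits H-moves of type a with colours $1$ and $-1^{\prime}$ having identical placement rules). Your reconstruction therefore must use the partial web with its flow (the bottom sign/state data) to pin down the type, and the remaining verification that, for fixed type and fixed bottom data, distinct colours yield distinct component increments is exactly the H-move check the paper performs; as written, ``inverting the tables'' hides precisely this verification, so spell it out if you pursue your route.
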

\begin{proof}
To show that the algorithm is well-defined observe that Lemma~\ref{lem-ltalgo} ensures that the total number of nodes of the $3$-multipartition $\vec{\lambda}_u$ associated to $u$ is exactly the number of nodes added in total by the algorithm from Definition~\ref{defn-webtotab}. Hence, we only need to ensure that the placement of the nodes works as claimed, that is, we fill the nodes without running into a step where there are either no nodes to fill at the corresponding position or the tableau can not be filled in a suitable standard way any more.
\vspace*{0.25cm}

To show this we use induction on the total length $\ell_{\mathrm{t}}(u)$. One can easily verify that the placement works for all cases with $\ell_{\mathrm{t}}(u)\leq 3$, i.e. the arcs and theta webs from Example~\ref{ex-totlength} with all possible flows.

By induction, given a web with a flow $u_f$ and LT-generators
\[
\mathrm{LT}(u)=\prod_k^{\ell(u)} F_{i_k}^{(j_k)},
\]
we know that the placement works for the web with flow $u^{<}_{f^<}$ that is obtained from $u_f$ by removing the last step. Note that $u^{<}_{f^<}$ has in general a different pair of sign string $S$ and state string $J$ than $u_f$. Moreover, the total length can drop by maximal three.
\vspace{0.25cm}

A case-by-case check considering all possible steps from Definition~\ref{defn-webtotab} (since all of them could be the last step) then shows the statement. They all work the same so we only illustrate two here, i.e. first an H-move of type a with color 0
\[
\xy(0,0)*{\includegraphics[scale=.5]{res/figs/webflow/hrulee.eps}};
(0,10)*{\begin{Young}1 &2 & 2\cr\end{Young}};
(0,-10)*{\begin{Young}1 &2 & 1\cr\end{Young}};
(-12.5,10)*{u_f};
(-13,-10)*{u^{<}_{f^<}};
\endxy
\]
where we have illustrated the change for the tableau going from $u^{<}_{f^<}$ (bottom) to $u_f$ (top). Note that we for simplicity use the entries $1,2$ again, but this does not affect the argument.

Hence, the difference between the associated bottom and top $3$-multipartition is that latter has one extra node in the last entry. The rules from Definition~\ref{defn-webtotab} now tell us to perform the following move for the H of type a and color $0$.
\[
\mathbf{k}\colon(T_{k-1}^{1},T_{k-1}^{2},T_{k-1}^{3})\mapsto
(T_{k-1}^{1},T_{k-1}^{2},T_{k-1}^{3}+\xy(0,0)*{\begin{Young}k\cr\end{Young}}\endxy),
\]
with $k=\ell(u)$. That the residue of this extra node is exactly $1$ is due to induction, since in one of the steps before (that is for the web $u^{<}_{f^<}$) we see that the $2$ in the middle was filled with a node that, by the map from Proposition~\ref{prop-tableauxflows2}, has the same residue as the new node.

Second, we consider an right shift of type b and color 0, that is
\[
\xy
(0,0)*{\includegraphics[scale=.5]{res/figs/webflow/shiftrightb2.eps}};
(0,10)*{\begin{Young}2 & & 2\cr\end{Young}};
(0,-10)*{\begin{Young}1 & & 1\cr\end{Young}};
(-12.5,10)*{u_f};
(-13,-10)*{u^{<}_{f^<}};
\endxy
\]
where we have again illustrated the change for the tableau going from $u^{<}_{f^<}$ (bottom) to $u_f$ (top). Thus, the top has two more nodes in the first and third partition. The rules tell us to put two nodes exactly in these partitions, that is 
\[
\mathbf{k}\colon(T_{k-1}^{1},T_{k-1}^{2},T_{k-1}^{3})\mapsto
(T_{k-1}^{1}+\xy(0,0)*{\begin{Young}k\cr\end{Young}}\endxy,T_{k-1}^{2},T_{k-1}^{3}+\xy(0,0)*{\begin{Young}k\cr\end{Young}}\endxy),
\]
with $k=\ell(u)$. That these two nodes have the right residue follows by induction again and the fact that this move corresponds to a $F_1^{(2)}$ and the two extra nodes will be in a row that starts with residue $1$ (by our shift in the residue from Definition~\ref{defn-tabcomb}) and was empty for $u^{<}_{f^<}$ (again by Proposition~\ref{prop-tableauxflows2}).

Hence, the placement works out as claimed. Moreover, it follows directly that the corresponding tableau is still standard.
\vspace*{0.25cm}

To show the second statement we use induction on the total length again. First we consider the case $u=v$ and $f\neq f^{\prime}$. Moreover, we can freely assume that the sign and state string pair is the same for both flows, since otherwise the shape of the corresponding $3$-multitableaux $\vec{T}_{u_f},\vec{T}_{u_{f^{\prime}}}$ is already different.

One can check the statement again by hand for the four webs from Example~\ref{ex-totlength}. As before, we consider the difference between $u^<_f$ and $u^<_{f^{\prime}}$ by removing the last step of the LT-algorithm and do a case-by-case check.

It should be noted that the number of cases we have to check is small, since the assumption $u=v$ fixes the type of the last performed move, i.e. only the color could be different, and the assumption that they have the same state string reduces the question to H-moves of type a. To be more precise, we need to ensure that the H-moves of type a and colors $1/0$, $1^{\prime}/-1$ and $0^{\prime}/-1^{\prime}$ perform a different last step, which is indeed the case. Hence, using induction, we see that
\[
\iota(u_f)=\iota(u_{f^{\prime}})\Leftrightarrow f= f^{\prime}.
\]

The case $u\neq v$ works in a slightly different vein, i.e. we are going to show that the two tableaux $\iota(u_f)$ and $\iota(v_{f^{\prime}})$ already differ at the first step, following the LT-algorithm for both webs, where the two webs differ.

In order to show this, we note that Proposition~\ref{prop-tableauxflows} shows that different webs $u$ and $v$ with different sign string $S$ or state string $J$ have different column-strict tableaux and therefore we see that the shapes of $\iota(u_f)$ and $\iota(v_{f^{\prime}})$ are different.

So let us consider the first step in the LT-algorithm of $u$ that is different form the one from $v$. Denote the $F$ obtained in this step for $u$ by $F^{(j_k)}_{i_k}$ and the one for $v$ by $F^{(j^{\prime}_k)}_{i^{\prime}_k}$.

Since we assume that they differ at this position, we see that $i_k\neq i^{\prime}_k$ or $j_k\neq j^{\prime}_k$. In the first case the residue of the new node with entry $k$ is different by our construction, since the residue corresponds to the $i_k$ and the $i^{\prime}_k$. In the second case the total number of new boxes with entry $k$ are different, since by our construction they correspond to $j_k$ and the $j^{\prime}_k$. In both case we see that the tableaux are different.

Hence, we see that
\[
\iota(u_f)=\iota(v_{f^{\prime}})\Leftrightarrow u=v\text{ and }f=f^{\prime}.
\]
This proves the lemma. 
\end{proof}
\subsection{Extended growth algorithm}\label{sec-exgrowth}
In this section we give the extended growth algorithm
\[
\mathrm{g}\colon\mathrm{Std}(\vec{\lambda})\to W_S^J\supset B_S^J,
\]
i.e. we give a method to obtain from an element of $\mathrm{Std}(\vec{\lambda})$ a web with a flow $u_f\in W_S^J$ (recall that $W_S^J$ means the set of \textit{all} possible webs with boundary pair $(S,J)$). In general, this web can be elliptic. We start with some preliminary combinatorial definitions. It is worth noting that the following definitions can be seen as an extended version of the definitions in the $\mathfrak{sl}_2$ case, i.e. the case of the level two Hecke algebras, see for example~\cite{bs1},~\cite{bs2} and~\cite{bs3}.
\begin{defn}\label{defn-combinatorics1}
Let $\vec{T}\in\mathrm{Std}(\vec{\lambda})$ be a $3$-multitableau $\vec{T}=(T^1,T^2,T^3)$ such that $T^i$ is a standard tableau and the numbers are from a fixed set $\{1,\dots,k\}$. Then we associate to $\vec{T}$ a \textit{sequence of $3$-multitableaux} $(\vec{T}^j)$ for each $j\in\{0,1,\dots,k\}$ where $\vec{T}^j=(T_j^1,T_j^2,T_j^3)$ and $T_j^{1,2,3}$ is obtained from $T^{1,2,3}$ by deleting all nodes with numbers strictly bigger than $j$.

Moreover, we associate to it a \textit{sequence of $3$-multipartitions} $(\vec{\lambda}^j)$ by removing the entries of the nodes of $(\vec{T}^j)$.
\end{defn}
\begin{ex}\label{ex-combinatorics1}
Given the following $3$-multitableau
\[
\vec{T}=\left(\;
\xy
 (0,0)*{
\begin{Young}
1&2 \cr
3 \cr
\end{Young}}
\endxy\;,
\;\xy
 (0,0)*{\begin{Young}
3 \cr
\end{Young}}\endxy\;,\;
\xy
 (0,0)*{
\begin{Young}
1&4 \cr
\end{Young}}
\endxy
\;\right),
\]
one obtains the following sequence. First note that, by definition, $\vec{T}^0=(\emptyset,\emptyset,\emptyset)$ and the rest is
\[
\vec{T}^1=\left(\;
\xy
 (0,0)*{
\begin{Young}
1 \cr
\end{Young}}
\endxy\;,
\;\emptyset\;,\;
\xy
 (0,0)*{
\begin{Young}
1 \cr
\end{Young}}
\endxy
\;\right),\;\;\vec{T}^2=\left(\;
\xy
 (0,0)*{
\begin{Young}
1&2 \cr
\end{Young}}
\endxy\;,
\;\emptyset\;,\;
\xy
 (0,0)*{
\begin{Young}
1 \cr
\end{Young}}
\endxy
\;\right),\;\;\vec{T}^3=\left(\;
\xy
 (0,0)*{
\begin{Young}
1&2 \cr
3 \cr
\end{Young}}
\endxy\;,
\;\xy
 (0,0)*{
\begin{Young}
3 \cr
\end{Young}}\endxy\;,\;
\xy
 (0,0)*{
\begin{Young}
1 \cr
\end{Young}}
\endxy
\;\right),\;\;\vec{T}^4=\vec{T}.
\]
\end{ex}
In order to make connection to webs with flows recall that a flow can be seen as either $+1$, if it is pointing downwards, $-1$, if it is pointing upwards or $0$, if there is no flow at all. We use two extra symbols $\circ,\times$ to illustrate the case when there is no web at all.
\begin{defn}\label{defn-combinatorics2}
A \textit{$\mathfrak{sl}_3$-weight diagram} $\Lambda$ is a $\bZ$-graded tuple with entries $\Lambda_i$ of symbols from the set $\{\circ,-1,0,+1,\times\}$.

A $\mathfrak{sl}_3$-weight diagram $\Lambda$ is said to be \textit{trivial}, if $\Lambda_{\bN_{\leq 0}}\in\{\times\}$ and $\Lambda_{\bN_{>0}}\in\{\circ\}$.
\end{defn}
\begin{ex}\label{ex-combinatorics2}
An example of a $\mathfrak{sl}_3$-weight diagram $\Lambda$ is the following.
\[
\xy(-52.5,0)*{\dots};
(-45,0)*{\times};
(-37.5,0)*{\times};
(-30,0)*{+1};
(-22.5,0)*{-1};
(-15,0)*{0};
(-7.5,0)*{0};
(0,0)*{\times};
(0,-5)*{\bullet};
(7.5,0)*{\circ};
(15,0)*{\times};
(22.5,0)*{+1};
(30,0)*{0};
(37.5,0)*{\circ};
(45,0)*{\circ};
(52.5,0)*{\dots};
\endxy
\]
Note that the $\bullet$ indicates the position of $\Lambda_0$. And the entries $\Lambda_i$ with $i>0$ are right of $\Lambda_0$, while the entries $\Lambda_i$ with $i<0$ are left of $\Lambda_0$.
\end{ex}
\begin{defn}\label{defn-combinatorics3}
A \textit{tower of $\mathfrak{sl}_3$-weight diagrams} $(\Lambda^i)_{i\in I}$ is a sequence of $\bZ$-graded tuples with entries $\Lambda_i$ of symbols from the set $\{\circ,-1,0,+1,\times\}$ for a finite index set $I=\{0,1,\dots,k\}$.

We draw the sequence as a tower starting from the bottom with $\Lambda^0$. 
\end{defn}
\begin{ex}\label{ex-combinatorics3}
An example of such a tower is the following.
\[
\xy(-52.5,0)*{\dots};
(-45,0)*{\times};
(-37.5,0)*{\times};
(-30,0)*{+1};
(-22.5,0)*{-1};
(-15,0)*{0};
(-7.5,0)*{0};
(0,0)*{\times};
(0,-5)*{\bullet};
(7.5,0)*{\circ};
(15,0)*{\times};
(22.5,0)*{+1};
(30,0)*{0};
(37.5,0)*{\circ};
(45,0)*{\circ};
(52.5,0)*{\dots};
(-52.5,10)*{\dots};
(-45,10)*{\times};
(-37.5,10)*{\times};
(-30,10)*{+1};
(-22.5,10)*{-1};
(-15,10)*{0};
(-7.5,10)*{0};
(0,10)*{\times};
(7.5,10)*{\circ};
(15,10)*{\times};
(22.5,10)*{+1};
(30,10)*{0};
(37.5,10)*{\circ};
(45,10)*{\circ};
(52.5,10)*{\dots};
(-52.5,20)*{\dots};
(-45,20)*{\times};
(-37.5,20)*{\times};
(-30,20)*{-1};
(-22.5,20)*{0};
(-15,20)*{-1};
(-7.5,20)*{-1};
(0,20)*{\circ};
(7.5,20)*{\circ};
(15,20)*{0};
(22.5,20)*{-1};
(30,20)*{\times};
(37.5,20)*{\circ};
(45,20)*{\circ};
(52.5,20)*{\dots};
(-52.5,30)*{\dots};
(-45,30)*{\times};
(-37.5,30)*{\times};
(-30,30)*{0};
(-22.5,30)*{-1};
(-15,30)*{\circ};
(-7.5,30)*{\times};
(0,30)*{\times};
(7.5,30)*{+1};
(15,30)*{\circ};
(22.5,30)*{0};
(30,30)*{-1};
(37.5,30)*{\circ};
(45,30)*{\circ};
(52.5,30)*{\dots};
\endxy
\]
\end{ex}
\begin{defn}\label{defn-combinatorics4}
Let $\vec{T}\in\mathrm{Std}(\vec{\lambda})$ be a $3$-multitableau and let $(\vec{\lambda}^j)$ denote its associated sequence of $3$-multipartitions. We denote the number of nodes in the $1,2,3$-th entry and the $l$-th row of $T_j^{1,2,3}$ by $|T_j^{1,2,3}|_l$. We can associate to it a tower of $\mathfrak{sl}_3$-weight diagrams $(\Lambda^i)_{i\in I}$ with $I=\{0,\dots,k\}$, where $k$ is the maximal entry of $\vec{T}$ in the following way.
\begin{itemize}
\item First define for each $(\vec{\lambda}^j)$ three $\bN$-graded tuples $\vec{k}^j_{1,2,3}$ by
\[
\vec{k}^j_{1,2,3}=(|T_j^{1,2,3}|_1,|T_j^{1,2,3}|_2-1,|T_j^{1,2,3}|_3-2,\dots).
\]
\item Set $\Lambda^0$ to be the trivial $\mathfrak{sl}_3$-weight diagram.
\item For $i=1,\dots,k$ we use the convention
\[
\Lambda^i_l=\begin{cases}
\times,  & \text{if }l\text{ is an entry of } \vec{k}^j_{1,2,3},\\
+1,  & \text{if }l\text{ is only an entry of } \vec{k}^j_{1,2}\text{ or only of } \vec{k}^j_{1},\\
0,  & \text{if }l\text{ is only an entry of } \vec{k}^j_{1,3}\text{ or only of } \vec{k}^j_{2},\\
-1,  & \text{if }l\text{ is only an entry of } \vec{k}^j_{2,3}\text{ or only of } \vec{k}^j_{3},\\
\circ,  & \text{otherwise}.
\end{cases}
\]
\end{itemize}
It should be noted that the convention agrees with the initial convention to set $\Lambda^0$ to be the trivial, since the corresponding $3$-multitableau $\vec{T}^0$ will only contain empty sets.
\end{defn}
\begin{ex}\label{ex-combinatorics4}
Suppose one has the following filling of a $3$-multipartition $\vec{\lambda}$, which is the one from Example~\ref{ex-tabl2} part (a),
\[
\vec{T}=\left(\;\xy(0,0)*{\begin{Young}1\cr\end{Young}}\endxy\;,\;\emptyset\;,\;\xy(0,0)*{\begin{Young}1\cr 2\cr\end{Young}}\endxy \;\right).
\]
Then we only have to consider three steps, i.e. the initial plus two non-trivial steps. We illustrate the resulting vectors $\vec{k}^j_{1,2,3}$ below with $j$ increasing from left to right and $\vec{k}^j_{1}$ in the top row, $\vec{k}^j_{2}$ in the middle and $\vec{k}^j_{3}$ at the bottom.
\[
\xy
(-40,5)*{(0,-1,-2,\dots)};
(-40,0)*{(0,-1,-2,\dots)};
(-40,-5)*{(0,-1,-2,\dots)};
(-40,-10)*{j=0};
(0,5)*{(1,-1,-2,\dots)};
(0,0)*{(0,-1,-2,\dots)};
(0,-5)*{(1,-1,-2,\dots)};
(0,-10)*{j=1};
(40,5)*{(1,-1,-2,\dots)};
(40,0)*{(0,-1,-2,\dots)};
(40,-5)*{(1,0,-2,\dots)};
(40,-10)*{j=2};
\endxy
\]
The associated tower is illustrated below. Note that it is no coincidence that we need exactly as many steps as in the LT-algorithm for the half-theta web. In fact, the reader is encourage to draw the half-theta web with the flow from Example~\ref{ex-tabl2} (a) inside.
\[
\xy(-22.5,0)*{\dots};
(-15,0)*{\times};
(-7.5,0)*{\times};
(0,0)*{\times};
(0,-5)*{\bullet};
(7.5,0)*{\circ};
(15,0)*{\circ};
(22.5,0)*{\dots};
(-22.5,10)*{\dots};
(-15,10)*{\times};
(-7.5,10)*{\times};
(0,10)*{0};
(7.5,10)*{0};
(15,10)*{\circ};
(22.5,10)*{\dots};
(-22.5,20)*{\dots};
(-15,20)*{\times};
(-7.5,20)*{1};
(0,19.8)*{-1};
(7.5,20)*{0};
(15,20)*{\circ};
(22.5,20)*{\dots};
\endxy
\]
\end{ex}
\begin{rem}\label{rem-comb}
Usually it is more convenient to extend the notions from Definition~\ref{defn-combinatorics4} slightly by allowing the entries $+1,0,-1$ to be marked with a $*$. For the webs this has the advantage that the tower constructed in~\ref{defn-combinatorics4} suffices to recover the web completely by the convention that a $*$-marked entry is a downwards pointing arrow of the web and an entry $+1,0,-1$ without such a marked is a upwards pointing arrow. We use these notions in the following.

In order to define $\Lambda^i_l$ with $*$-markers one just have to refine the convention and also count the number of occurrences in the vectors $\vec{k}^j_{1,2,3}$, i.e. use a $*$-marker iff the corresponding entry appears twice. Then the example from above would be
\[
\xy(-22.5,0)*{\dots};
(-15,0)*{\times};
(-7.5,0)*{\times};
(0,0)*{\times};
(0,-5)*{\bullet};
(7.5,0)*{\circ};
(15,0)*{\circ};
(22.5,0)*{\dots};
(-22.5,10)*{\dots};
(-15,10)*{\times};
(-7.5,10)*{\times};
(0,10)*{0};
(7.5,10)*{0^*};
(15,10)*{\circ};
(22.5,10)*{\dots};
(-22.5,20)*{\dots};
(-15,20)*{\times};
(-7.5,20)*{1^*};
(0,19.8)*{-1^*};
(7.5,20)*{0^*};
(15,20)*{\circ};
(22.5,20)*{\dots};
\endxy
\]
\end{rem}
\vspace*{0.15cm}

We are now ready to define the \textit{extended growth algorithm} for $\mathfrak{sl}_3$-webs.
\begin{defn}\label{defn-exgrowth}(\textbf{Extended growth algorithm})
The \textit{extended growth algorithm} is a map
\[
\mathrm{g}\colon\mathrm{Std}(\vec{\lambda})\to W_S^J\supset B_S^J,
\]
i.e. it assigns to every $3$-multitableau $\vec{T}\in\mathrm{Std}(\vec{\lambda})$ a (possible elliptic) web with a flow $\mathrm{g}(\vec{T})$.

The $\mathrm{g}(\vec{T})$ is defined to be the web that one obtains by connecting the entries of the tower $(\Lambda^i)_{i\in I}$ of Definition~\ref{defn-combinatorics4} in the way explained in Remark~\ref{rem-comb}.
\end{defn}
\begin{thm}\label{thm-fine}
The extended growth algorithm is well-defined. Moreover, we have
\[
\mathrm{g}\circ\iota=\mathrm{id}_{B^J_S},
\]
that is the extended growth algorithm can be used to obtain any non-elliptic web with any flow and if one start with the unique $3$-multitableau $\iota(\cdot)$, then the resulting web is also unique.
\end{thm}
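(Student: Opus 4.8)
The plan is to prove two things: first that $\mathrm{g}$ is well-defined (i.e. the tower construction of Definition~\ref{defn-combinatorics4}, read via the conventions of Remark~\ref{rem-comb}, always produces a genuine web with a compatible flow with boundary datum $(S,J)$), and second that $\mathrm{g}\circ\iota=\mathrm{id}_{B^J_S}$. For well-definedness I would argue by induction on the maximal entry $k$ of $\vec T$. At each step $j\mapsto j+1$ exactly those nodes with entry $j+1$ are added; because $\vec T$ is a standard $3$-multitableau, the shifted tuples $\vec k^{j}_{1,2,3}$ change in a controlled way (a node being added in row $\ell$ of component $T^a$ inserts the value $|T^{a}_{j}|_\ell - (\ell-1)$ into $\vec k^{j}_{a}$, and since the filling is standard this value is strictly larger than it was in the previous row), so the rules assigning symbols $\{\circ,-1,0,+1,\times\}$ (with $*$-markers) are applied to a consistent configuration and produce, at each horizontal slice, a legal local piece of a trivalent graph. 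The key point is that passing from $\Lambda^{i}$ to $\Lambda^{i+1}$ changes only finitely many entries, and in each of the finitely many local patterns the change is exactly one of the moves from the list in Definition~\ref{defn-webtotab} (arc, Y-move, H-move, left/right shift, empty shift) read \emph{forwards}; gluing these slices gives a web, and the $*$-markers record a compatible flow, with boundary exactly $(S,J)$ by the bijection of Proposition~\ref{prop-tableauxflows2}.

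\textbf{Next}, for the identity $\mathrm{g}\circ\iota=\mathrm{id}_{B^J_S}$, I would fix a non-elliptic web with flow $u_f\in B^J_S$ and trace through the definitions. By Definition~\ref{defn-webtotab}, $\iota(u_f)$ is built by going \emph{backwards} along the LT-growth algorithm for $u$ (using the canonical flow to read off $\mathrm{LT}(u)=F^{(j_n)}_{i_n}\cdots F^{(j_1)}_{i_1}$), and at the $k$-th step the chosen operator $\mathbf{k}$ adds a box labelled $k$ to one, two, or three of the component tableaux $T^{1,2,3}$, at the highest free node of residue $i_k$, the precise placement dictated by the type and colour of the $k$-th move on $u_f$. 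On the other hand $\mathrm{g}$ reads the resulting $3$-multitableau forwards, reconstructing the tower $(\Lambda^i)$ and hence a web. The claim is that these two passes are mutually inverse move-by-move. I would prove this by induction on the total length $\ell_{\mathrm t}(u)$ (which by Lemma~\ref{lem-ltalgo} is intrinsic to $u$, hence drops by the correct amount when the last move is removed): the inductive step is a finite case check over the move list, verifying that the operator $\mathbf{k}$ used by $\iota$ for a given (type, colour) is undone exactly by the tower-reading rule of Definition~\ref{defn-combinatorics4}, and that the residue/position bookkeeping matches — precisely the compatibility already exploited in the proof of Lemma~\ref{lem-welldef}. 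Since $u_f$ non-elliptic guarantees $\iota(u_f)$ is genuinely in the image and the last-step analysis shows $\mathrm{g}$ returns the web obtained by putting that last move back, we recover $u_f$, and by induction the whole web with its flow.

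\textbf{The hard part} will be the bookkeeping in the inductive step of the second claim: one must check that \emph{every} move type in Definition~\ref{defn-webtotab} (including the several colours and the $a/b$ type distinction, which encodes whether the corresponding divided power is $F^{(1)}$, $F^{(2)}$ or $F^{(3)}$, hence how many boxes get added) is correctly inverted by the $\{\circ,-1,0,+1,\times\}$-with-$*$ reading rules, and in particular that the residue of each newly added box — which is forced by the position in the shifted tuple $\vec k^{j}_{1,2,3}$ and by the residue shift of Definition~\ref{defn-tabcomb} — agrees with the index $i_k$ of the ladder operator, and that no ambiguity arises from the choices left open in Khovanov–Kuperberg's growth algorithm (these are pinned down by Proposition~\ref{prop-sl3bases}, which says the LT-algorithm makes a definite choice). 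The rest — well-definedness of $\mathrm{g}$ and the reduction of the identity to a last-step analysis via induction on $\ell_{\mathrm t}$ — is essentially parallel to, and can borrow directly from, the argument of Lemma~\ref{lem-welldef}; it is the exhaustive local verification that is the genuine content.
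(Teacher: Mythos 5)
Your proposal is correct and follows essentially the same route as the paper: well-definedness via the deterministic/step-by-step nature of the tower construction, and the identity $\mathrm{g}\circ\iota=\mathrm{id}_{B^J_S}$ by induction on the total length $\ell_{\mathrm{t}}(u)$, reducing to a case-by-case check that each possible last move of Definition~\ref{defn-webtotab} is undone by the tower-reading rules, with the residue and position bookkeeping handled through the tuples $\vec{k}^{j}_{1,2,3}$ exactly as in (and relying on) Lemma~\ref{lem-welldef}.
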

\vspace*{0.15cm}
\begin{ex}\label{ex-big}
Before we prove the theorem, let us give an more sophisticated example. We take the web $u_f$ with one internal hexagon and with the following flow.
\[
\xy
(0,0)*{\includegraphics[height=.075\textheight]{res/figs/exgrowth/hexanewflowa2.eps}}
\endxy
\]
As one can easily check (see also Example~\ref{ex-flowline}) the corresponding column-strict tableau is
\[
T=\xy
 (0,0)*{
\begin{Young}
2&1&2 \cr
4&3&4 \cr
6&5&6 \cr
\end{Young}}
\endxy\]
and the corresponding $3$-multipartition is
\[
\vec{\lambda}=
\left(\;
\xy
 (0,0)*{
\begin{Young}
&& \cr
& \cr
 \cr
\end{Young}}
\endxy\;,
\; \xy
 (0,0)*{
\begin{Young}
 &\cr
 \cr
\end{Young}}
\endxy\;,\;
\xy
 (0,0)*{
\begin{Young}
&& \cr
& \cr
 \cr
\end{Young}}
\endxy
\;\right).
\]
Moreover, one easily checks that the sequence of LT-generators is
\[
LT(u_f)=F_1F_2F^{(2)}_3F_2F_1F_4F_3F_2F^{(2)}_5F^{(2)}_4F^{(2)}_3,
\]
which can be verified as we explained above in Section~\ref{sec-compare}.

Everything is of course also the same for the web
\[
\xy
(0,0)*{\includegraphics[height=.075\textheight]{res/figs/exgrowth/hexanewflowa.eps}}
\endxy
\]
since the data on the boundary is the same in both cases.

We have to fill the $3$-multitableaux with numbers $\{1,\dots,11\}$ to distinguish the two and the algorithm given in Definition~\ref{defn-webtotab} for the first makes the following twelve steps.
\begin{align*}
\vec{T}_0=(\emptyset,\emptyset,\emptyset)&\mapsto \vec{T}_1=\left(\;\xy(0,0)*{\begin{Young}1\cr\end{Young}}\endxy\;,\;\emptyset\;,\;\xy(0,0)*{\begin{Young}1 \cr\end{Young}}\endxy \;\right)\mapsto \vec{T}_2=\left(\;\xy(0,0)*{\begin{Young}1 & 2\cr\end{Young}}\endxy\;,\;\emptyset\;,\;\xy(0,0)*{\begin{Young}1 & 2\cr\end{Young}}\endxy \;\right)\\
&\mapsto \vec{T}_3=\left(\;\xy(0,0)*{\begin{Young}1 & 2 & 3\cr\end{Young}}\endxy\;,\;\emptyset\;,\;\xy(0,0)*{\begin{Young}1 & 2 & 3\cr\end{Young}}\endxy \;\right)\mapsto \vec{T}_4=\left(\;\xy(0,0)*{\begin{Young}1 & 2 & 3\cr 4\cr\end{Young}}\endxy\;,\;\emptyset\;,\;\xy(0,0)*{\begin{Young}1 & 2 & 3\cr\end{Young}}\endxy \;\right)\\
&\mapsto \vec{T}_5=\left(\;\xy(0,0)*{\begin{Young}1 & 2 & 3\cr 4\cr\end{Young}}\endxy\;,\;\xy(0,0)*{\begin{Young}5\cr\end{Young}}\endxy\;,\;\xy(0,0)*{\begin{Young}1 & 2 & 3\cr\end{Young}}\endxy \;\right)\mapsto \vec{T}_6=\left(\;\xy(0,0)*{\begin{Young}1 & 2 & 3\cr 4\cr\end{Young}}\endxy\;,\;\xy(0,0)*{\begin{Young}5 & 6\cr\end{Young}}\endxy\;,\;\xy(0,0)*{\begin{Young}1 & 2 & 3\cr\end{Young}}\endxy \;\right)\\
&\mapsto \vec{T}_7=\left(\;\xy(0,0)*{\begin{Young}1 & 2 & 3\cr 4 \cr 7\cr\end{Young}}\endxy\;,\;\xy(0,0)*{\begin{Young}5 & 6\cr\end{Young}}\endxy\;,\;\xy(0,0)*{\begin{Young}1 & 2 & 3\cr\end{Young}}\endxy \;\right)\mapsto \vec{T}_8=\left(\;\xy(0,0)*{\begin{Young}1 & 2 & 3\cr 4 \cr 7\cr\end{Young}}\endxy\;,\;\xy(0,0)*{\begin{Young}5 & 6\cr\end{Young}}\endxy\;,\;\xy(0,0)*{\begin{Young}1 & 2 & 3\cr 8\cr\end{Young}}\endxy \;\right)\\
&\mapsto \vec{T}_9=\left(\;\xy(0,0)*{\begin{Young}1 & 2 & 3\cr 4 & 9\cr 7\cr\end{Young}}\endxy\;,\;\xy(0,0)*{\begin{Young}5 & 6\cr\end{Young}}\endxy\;,\;\xy(0,0)*{\begin{Young}1 & 2 & 3\cr 8 & 9\cr\end{Young}}\endxy \;\right)\mapsto \vec{T}_{10}=\left(\;\xy(0,0)*{\begin{Young}1 & 2 & 3\cr 4 & 9\cr 7\cr\end{Young}}\endxy\;,\;\xy(0,0)*{\begin{Young}5 & 6\cr 10\cr\end{Young}}\endxy\;,\;\xy(0,0)*{\begin{Young}1 & 2 & 3\cr 8 & 9\cr\end{Young}}\endxy\right)\\
&\mapsto \vec{T}_{11}=\left(\;\xy(0,0)*{\begin{Young}1 & 2 & 3\cr 4 & 9\cr 7\cr\end{Young}}\endxy\;,\;\xy(0,0)*{\begin{Young}5 & 6\cr 10\cr\end{Young}}\endxy\;,\;\xy(0,0)*{\begin{Young}1 & 2 & 3\cr 8 & 9\cr 11\cr\end{Young}}\endxy \;\right).
\end{align*}
It is worth noting that the node in the seventh step, due to our convention for the residue, is placed the way it is. The reader is encourage to try to do the example
\[
\xy
(0,0)*{\includegraphics[height=.075\textheight]{res/figs/exgrowth/hexanewflowa.eps}}
\endxy
\]
that has the same boundary datum as before (and the same LT-generators). The resulting filling in this case after the last step is
\[
\vec{T}_{11}=\left(\;\xy(0,0)*{\begin{Young}1 & 2 & 3\cr 8 & 9\cr 11\cr\end{Young}}\endxy\;,\;\xy(0,0)*{\begin{Young}5 & 6\cr 10\cr\end{Young}}\endxy\;,\;\xy(0,0)*{\begin{Young}1 & 2 & 3\cr 4 & 9\cr 7\cr\end{Young}}\endxy \;\right)
\]
Following the extended growth algorithm one obtains the original web with flow back. Again we encourage the reader to do it for the case with the reversed flow.
\[
\xy
(-9,0)*{\includegraphics[scale=0.405]{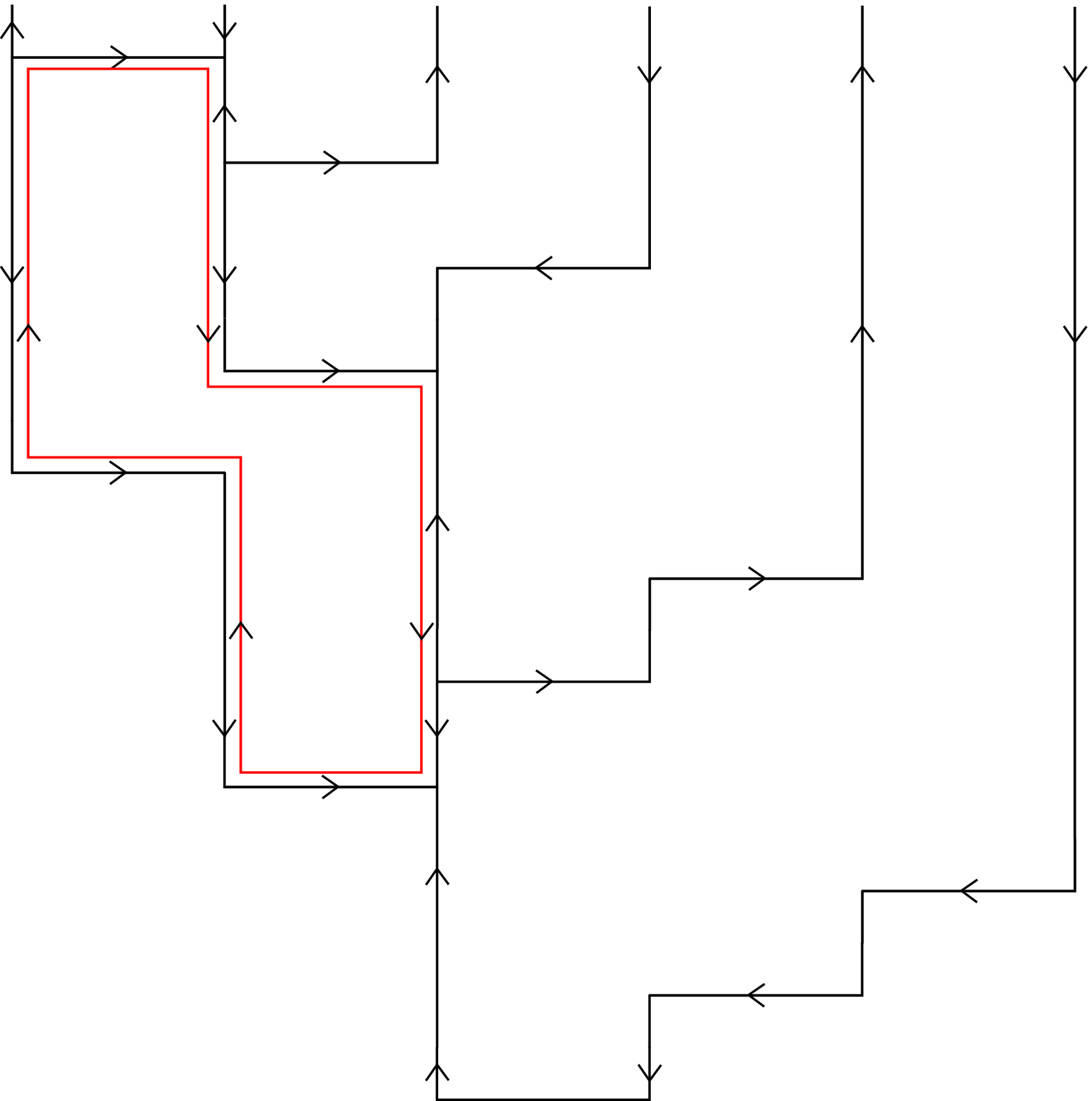}};
(-75,-45)*{\dots};
(-60,-45)*{\times};
(-45,-45)*{\times};
(-30,-45)*{\times};
(-15,-45)*{\times};
(0,-45)*{\circ};
(-15,-50)*{\bullet};
(15,-45)*{\circ};
(30,-45)*{\circ};
(45,-45)*{\circ};
(60,-45)*{\circ};
(75,-45)*{\dots};
(-75,-37.5)*{\dots};
(-60,-37.5)*{\times};
(-45,-37.5)*{\times};
(-30,-37.5)*{\times};
(-14.8,-37.2)*{0};
(0.8,-37.2)*{0^*};
(15,-37.5)*{\circ};
(30,-37.5)*{\circ};
(45,-37.5)*{\circ};
(60,-37.5)*{\circ};
(75,-37.5)*{\dots};
(-75,-30)*{\dots};
(-60,-30)*{\times};
(-45,-30)*{\times};
(-30,-30)*{\times};
(-14.8,-30)*{0};
(0,-30)*{\circ};
(15.8,-29.9)*{0^*};
(30,-30)*{\circ};
(45,-30)*{\circ};
(60,-30)*{\circ};
(75,-30)*{\dots};
(-75,-22.5)*{\dots};
(-60,-22.5)*{\times};
(-45,-22.5)*{\times};
(-30,-22.5)*{\times};
(-14.8,-22.5)*{0};
(0,-22.5)*{\circ};
(15,-22.5)*{\circ};
(30.8,-22.4)*{0^*};
(45,-22.5)*{\circ};
(60,-22.5)*{\circ};
(75,-22.5)*{\dots};
(-75,-15)*{\dots};
(-60,-15)*{\times};
(-45,-15)*{\times};
(-28.5,-15)*{-1^*};
(-14.5,-15)*{1^*};
(0,-15)*{\circ};
(15,-15)*{\circ};
(30.8,-14.9)*{0^*};
(45,-15)*{\circ};
(60,-15)*{\circ};
(75,-15)*{\dots};
(-75,-7.5)*{\dots};
(-60,-7.5)*{\times};
(-45,-7.5)*{\times};
(-28.5,-7.5)*{-1^*};
(-14.9,-7.5)*{1};
(0.5,-7.5)*{0};
(15,-7.5)*{\circ};
(30.8,-7.4)*{0^*};
(45,-7.5)*{\circ};
(60,-7.5)*{\circ};
(75,-7.5)*{\dots};
(-75,0)*{\dots};
(-60,0)*{\times};
(-45,0)*{\times};
(-28.5,0)*{-1^*};
(-14.9,0)*{1};
(0,0)*{\circ};
(15,0)*{0};
(30.8,0)*{0^*};
(45,0)*{\circ};
(60,0)*{\circ};
(75,0)*{\dots};
(-75,7.5)*{\dots};
(-60,7.5)*{\times};
(-43.8,7.5)*{-1^*};
(-30,7.5)*{\times};
(-14.9,7.5)*{1};
(0,7.5)*{\circ};
(15,7.5)*{0};
(30.8,7.5)*{0^*};
(45,7.5)*{\circ};
(60,7.5)*{\circ};
(75,7.5)*{\dots};
(-75,15)*{\dots};
(-60,15)*{\times};
(-43.8,15)*{-1^*};
(-29.8,15)*{1^*};
(-14.6,15)*{0^*};
(0,15)*{\circ};
(15,15)*{0};
(30.8,15)*{0^*};
(45,15)*{\circ};
(60,15)*{\circ};
(75,15)*{\dots};
(-75,22.5)*{\dots};
(-60,22.5)*{\times};
(-43.8,22.5)*{-1^*};
(-29.8,22.5)*{1^*};
(-15,22.5)*{\circ};
(0.8,22.5)*{0^*};
(15,22.5)*{0};
(30.8,22.5)*{0^*};
(45,22.5)*{\circ};
(60,22.5)*{\circ};
(75,22.5)*{\dots};
(-75,30)*{\dots};
(-60,30)*{\times};
(-43.8,30)*{-1^*};
(-29.8,30)*{1};
(-14.6,30)*{0};
(0.8,30)*{0^*};
(15,30)*{0};
(30.8,30)*{0^*};
(45,30)*{\circ};
(60,30)*{\circ};
(75,30)*{\dots};
(-75,37.5)*{\dots};
(-60,37.5)*{\times};
(-44.8,37.5)*{0};
(-29.1,37.5)*{0^*};
(-14.6,37.5)*{0};
(0.8,37.5)*{0^*};
(15,37.5)*{0};
(30.8,37.5)*{0^*};
(45,37.5)*{\circ};
(60,37.5)*{\circ};
(75,37.5)*{\dots};
\endxy
\]
\end{ex}
\begin{proof}
The main part of this proof is the Lemma~\ref{lem-welldef}, i.e. $\iota\colon B^J_S\to\mathrm{Std}(\vec{\lambda})$ is a well-defined inclusion of webs with flows $u_f$ to filled $3$-multitableaux $\iota(u_f)$. In fact the only thing we have to see is that the process which we described in Definition~\ref{defn-exgrowth} is given by reading the process from Definition~\ref{defn-webtotab} backwards and place the web with flow $u_f$ on a suitable chosen grid.
\vspace*{0.25cm}

That the algorithm is well-defined follows by its deterministic nature.
\vspace*{0.25cm}

To see that the extended growth algorithm is a left-inverse for the embedding $\iota\colon B^J_S\to\mathrm{Std}(\vec{\lambda})$, we note that, by Proposition~\ref{prop-sl3bases}, we only have to consider non-elliptic webs. That the process of Definition~\ref{defn-webtotab} is literally undone by the process of Definition~\ref{defn-exgrowth} follows by comparing the two procedures as follows.
\vspace*{0.25cm}

We prove the statement again by induction on the total length. It is easy to verify all the small cases from Example~\ref{ex-totlength}, so assume that for all $1\leq i\leq j$ the statement is true and consider a web whose total length is $j=\ell_{\mathrm{t}}(u)$.

Hence, we have to check all the possible last moves. Since they all work in the same fashion, we only do two here and leave the rest to the reader. The first move we discuss is a Y-move of type b and color $0^{\prime}$, i.e.
\[
\xy
(0,0)*{\includegraphics[scale=.5]{res/figs/webflow/yrulee2.eps}};
(-9.25,-7)*{\times};
(9.25,-7)*{0};
(-9.24,-12)*{l^{\prime}};
(9.24,-12)*{l^{\prime}+1};
(18,-7)*{\cdots};
(-18,-7)*{\cdots};
(-9,7)*{-1^*};
(9.6,7)*{1^*};
(18,7)*{\cdots};
(-18,7)*{\cdots};
\endxy
\]
We assume that this is the $j$-th step. The rules from Definition~\ref{defn-webtotab} tells us to add one extra node labeled $j$ to the first tableau $T^1$, say in the $l$-th row. Looking at the information at the bottom, we see that the $\bN$-graded tuples $\vec{k}^{j-1}_{1,2,3}$ are of the form
\[
\xy
(-70,5)*{\vec{k}_1:};
(-70,0)*{\vec{k}_2:};
(-70,-5)*{\vec{k}_3:};
(-40,5)*{(\dots,l^{\prime},>l^{\prime}+1,\dots)};
(-40,0)*{(\dots,l^{\prime},l^{\prime}+1,\dots)};
(-40,-5)*{(\dots,l^{\prime},>l^{\prime}+1,\dots)};
(-40,-10)*{j-1};
(0,5)*{(\dots,<l^{\prime},l^{\prime}+1,\dots)};
(0,0)*{(\dots,l^{\prime},l^{\prime}+1,\dots)};
(0,-5)*{(\dots,l^{\prime},>l^{\prime}+1,\dots)};
(0,-10)*{j};
\endxy
\]
where we have only illustrated the important parts of the tuples. Here $l^{\prime}=|T^{1}_{j-1}|_l-l+1$.

This is true, because we can use the induction hypothesis and an easy calculation checking the corresponding residues. It should be noted that the grid is chosen in such a way that for $d=3\ell$ a $F_{\ell}^{(j_{\ell})}$ acts between the $0$-entry and the $1$-entry of the tower.

To see that the step from $j-1$ to $j$ happens as indicated above, we note that adding a node only in the first tableau $T^1$ only effects the tuple $\vec{k}^{j-1}_1$. Moreover, since we assume to add the node in the $l$-row, the only change is as indicated above because $|T^{1}_{j}|_l=|T^{1}_{j-1}|_l+1$.
\vspace*{0.25cm}

The second move we discuss is an arc-move of type a and color $-1$, i.e.
\[
\xy
(0,0)*{\includegraphics[scale=.5]{res/figs/webflow/arcc1.eps}};
(-9.25,-7)*{\times};
(9.25,-7)*{\circ};
(-9.24,-12)*{l^{\prime}};
(9.24,-12)*{l^{\prime}+1};
(18,-7)*{\cdots};
(-18,-7)*{\cdots};
(-9,7)*{-1};
(9.6,7)*{1^*};
(18,7)*{\cdots};
(-18,7)*{\cdots};
\endxy
\]
We use the same notation as before. Again, looking at the information at the bottom, we see that the $\bN$-graded tuples $\vec{k}^{j-1}_{1,2,3}$ are of the form
\[
\xy
(-70,5)*{\vec{k}_1:};
(-70,0)*{\vec{k}_2:};
(-70,-5)*{\vec{k}_3:};
(-40,5)*{(\dots,l^{\prime},>l^{\prime}+1,\dots)};
(-40,0)*{(\dots,l^{\prime},>l^{\prime}+1,\dots)};
(-40,-5)*{(\dots,l^{\prime},>l^{\prime}+1,\dots)};
(-40,-10)*{j-1};
(0,5)*{(\dots,<l^{\prime},l^{\prime}+1,\dots)};
(0,0)*{(\dots,<l^{\prime},l^{\prime}+1,\dots)};
(0,-5)*{(\dots,l^{\prime},>l^{\prime}+1,\dots)};
(0,-10)*{j};
\endxy
\]
Here $l^{\prime}=|T^{1,2}_{j-1}|_{l_{1,2}}-l_{1,2}+1$ by the same argument as above. Note that this time the rules from Definition~\ref{defn-webtotab} tell us to add a $j$ labeled node to the first $T^{1}$ and second $T^{2}$ tableau. Note that both nodes will be added in different rows $l_{1,2}$, but since they have the same residue, we see that $l^{\prime}=l^{\prime}_1=l^{\prime}_2$. Therefore the corresponding numbers for the LT-algorithm have to be in the same row, because their nodes were filled with the highest appearing numbers at the beginning. Hence, the change is as indicated above because $|T^{1,2}_{j}|_{l_{1,2}}=|T^{1,2}_{j-1}|_{l_{1,2}}+1$.

All other case work similar, hence this proves the statement.
\end{proof}
\subsection{Degree of tableaux and weight of flows}\label{sec-degflow}
We are going to recall Brundan, Kleshchev and Wang's definition of the degree of a tableau given in~\cite{bkw} in this section. We show that the embedding $\iota\colon B_S^J\to\mathrm{Std}(\vec{\lambda})$ is a degree preserving map, where we consider minus the weight of a web with flow $u_f\in B_S^J$ as the degree $\mathrm{deg}_{\mathrm{wt}}$ for $B_S^J$ and Brundan, Kleshchev and Wang's degree $\mathrm{deg}_{\mathrm{BKW}}$ as the degree for a $3$-multitableau $\vec{T}\in\mathrm{Std}(\vec{\lambda})$.

Note that this shows that $\mathrm{deg}_{\mathrm{BKW}}$ is an isotopy invariant and that Brundan, Kleshchev and Wang's degree has a very natural interpretation on the level of webs, i.e. it is minus the weight of a flow line and is therefore directly connected (to be more precise: to the slightly changed version) to the LT-coefficients in~\ref{eq:LT}, since Khovanov and Kuperberg showed in Section 4 of~\cite{kk} that the weight of flows gives the coefficients of the web basis $B_S$ with respect to the elementary tensors and we showed in Proposition~\ref{prop-sl3bases} that $B_S$ is in fact an intermediate crystal basis.
\vspace*{0.25cm}

Recall the following combinatorial definitions. We only recall the case of a $3$-multitableaux, since we do not need the more general case here.

The reader should be careful that we use a slightly different definition than Brundan, Kleshchev and Wang, since we are working with divided powers (and therefore we have in general more than one node with entry $j\in\{1,\dots,k\}$). This is the reason why we have some shifts in our definition. For example, an arc-move created by a divided power differs from the one without the divided power by a digon
\[
F^{(2)}\colon\;\xy(0,0)*{\includegraphics[scale=.5]{res/figs/webflow/arcb1.eps}}\endxy\;\;\text{ and }\;\;F^2\colon\;\xy(0,0)*{\includegraphics[scale=0.45]{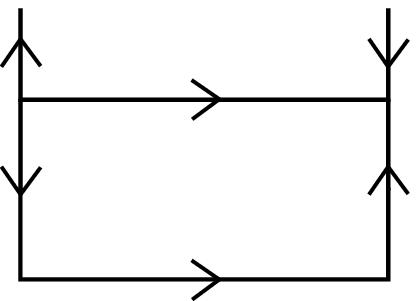}}\endxy
\]
and the empty-shift differs from the one without divided powers by a theta web
\[
F^{(3)}\colon\;
\xy
(9,7)*{\times};
(-9,-7)*{\times};
(-9,7)*{\circ};
(9,-7)*{\circ};
\endxy
\;\;\text{ and }\;\;F^3\colon\;\xy(0,0)*{\includegraphics[scale=0.45]{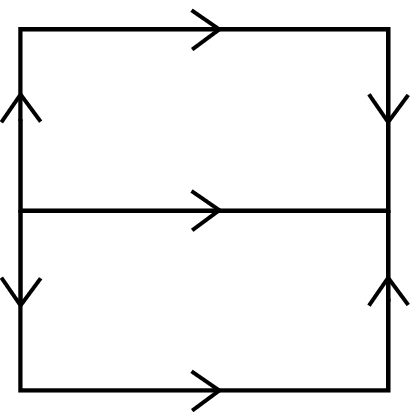}}\endxy
\]
\begin{defn}\label{defn-degpart1}(\textbf{Brundan, Kleshchev and Wang: Degree of a tableau})
Let $\vec{T}\in\mathrm{Std}(\vec{\lambda})$ be a (filled with numbers from $\{1,\dots,k\}$) $3$-multitableau $\vec{T}=(T^1,T^2,T^3)$ as in Definition~\ref{defn-combinatorics1}. For $j\in\{1,\dots,k\}$ let $N^j$ denote the set of all nodes that are filled with the number $j$ and let $\vec{T}^j$ denote the $3$-multitableau obtained from $\vec{T}$ by removing all nodes with entries $>j$.

The \textit{degree of $\vec{T}^j$}, denoted by $\mathrm{deg}(\vec{T}^j)$, is defined to be
\[
\mathrm{deg}(\vec{T}^j)=|\mathsf{A}^{k\succ N}(\vec{T}^j)|-|\mathsf{R}^{k\succ N}(\vec{T}^j)|-a\;\;\text{ with }\;\;a=\begin{cases}0, &\text{if }|N^j|=1,\\1, &\text{if }|N^j|=2,\\3, &\text{if }|N^j|=3,\end{cases}
\]
where we use the convention to count all nodes with the same number step by step starting from the leftmost.

The \textit{degree} of the $3$-multitableau $\vec{T}=(T^1,T^2,T^3)$, denoted by $\mathrm{deg}_{\mathrm{BKW}}(\vec{T})$, is then defined by
\[
\mathrm{deg}_{\mathrm{BKW}}(\vec{T})=\sum_{j=1}^k \mathrm{deg}(\vec{T}^j).
\]
\end{defn}
\begin{ex}\label{ex-degreea}
The following three standard $3$-multitableaux have all degree zero.
\[
\vec{T}_1 =\left(\emptyset\;,\;\emptyset\;,\;\xy(0,0)*{\begin{Young} 1\cr\end{Young}}\endxy \right)\;,\; \vec{T}_2=\left(\emptyset\;,\;\xy(0,0)*{\begin{Young} 1\cr\end{Young}}\endxy\;,\;\xy(0,0)*{\begin{Young} 1\cr\end{Young}}\endxy \right)\;,\; \vec{T}_3=\left(\xy(0,0)*{\begin{Young} 1\cr\end{Young}}\endxy\;,\;\xy(0,0)*{\begin{Young} 1\cr\end{Young}}\endxy\;,\;\xy(0,0)*{\begin{Young} 1\cr\end{Young}}\endxy \right).
\]
To see this, we note that in the first case there is no node after $\succ$ the unique node $N^1$. Hence, $\mathrm{deg}(\vec{T}_1)=0$. In the second case we have to calculate two steps. In the first step, i.e.
\[
\left(\emptyset\;,\;\xy(0,0)*{\begin{Young} 1\cr\end{Young}}\endxy\;,\;\xy(0,0)*{\begin{Young} $\cdot$\cr\end{Young}}\endxy \right),
\]
we count one addable node of the same residue which we have marked with a $\cdot$, but the second step there is again no node after $\succ$ the last node anymore. Hence, $\mathrm{deg}(\vec{T}_2)=0$, since we have to take the divided power into account. For the last case we have to calculate three steps, i.e. the first and the second are
\[
\left(\xy(0,0)*{\begin{Young} 1\cr\end{Young}}\endxy\;,\;\xy(0,0)*{\begin{Young} $\cdot$\cr\end{Young}}\endxy\;,\;\xy(0,0)*{\begin{Young} $\cdot$\cr\end{Young}}\endxy \right)\;\;\text{ and }\;\;\left(\xy(0,0)*{\begin{Young} 1\cr\end{Young}}\endxy\;,\;\xy(0,0)*{\begin{Young} 1\cr\end{Young}}\endxy\;,\;\xy(0,0)*{\begin{Young} $\cdot$\cr\end{Young}}\endxy \right),
\]
where we have again indicated the addable nodes of the same residue with a $\cdot$. The third step is again as before. Hence, $\mathrm{deg}(\vec{T}_3)=0$, because the divided power is $3$.
\vspace*{0.25cm}

It should be noted that it is possible that the degree (total or local) is negative. For example the last step of
\[
\vec{T}_4=\left(\xy(0,0)*{\begin{Young} 1 & 2 &3\cr 8 & 9\cr\end{Young}}\endxy\;,\;\xy(0,0)*{\begin{Young} 5 & 6\cr 10\cr 11\cr\end{Young}}\endxy\;,\;\xy(0,0)*{\begin{Young} 1 & 2 & 3\cr 4& 9\cr 7\cr\end{Young}}\endxy \right)
\]
has no addable nodes after $\succ$ the node $N^{11}$ with the same residue, but one removable, namely the left node filled with the entry $7$. Hence, $\mathrm{deg}(\vec{T}_4^{11})=-1$. The total degree in this case is
\[
\mathrm{deg}_{\mathrm{BKW}}(\vec{T}_4)=1+0+0+0+1+0+0+1+0+1-1=3.
\]
\end{ex}
\begin{ex}\label{ex-degreeb}
Given a column-strict tableau $T$, a string of LT-generators $\mathrm{LT}(T)$ and a filling, i.e. flow, for its corresponding $3$-multitableau $\vec{T}$ like
\[
T=\xy(0,0)*{\begin{Young} 2&  1&  3\cr  4& 3&4\cr\end{Young}}\endxy\;\;\text{ and }\;\;\mathrm{LT}(T)=F_1F_2^{(2)}F_1F_3^{(2)}F_2^{(2)}\;\;\text{ and }\;\;\vec{T}=\left(\xy(0,0)*{\begin{Young} 1& 2 \cr 5 \cr\end{Young}}\endxy\;,\;\xy(0,0)*{\begin{Young} 4\cr\end{Young}}\endxy\;,\;\xy(0,0)*{\begin{Young}1&2\cr 3&4 \cr\end{Young}}\endxy\right),
\]
one can easily draw a web with flow $u_f$ from this data
\[
u_f=\xy
(0,0)*{\includegraphics[scale=0.45]{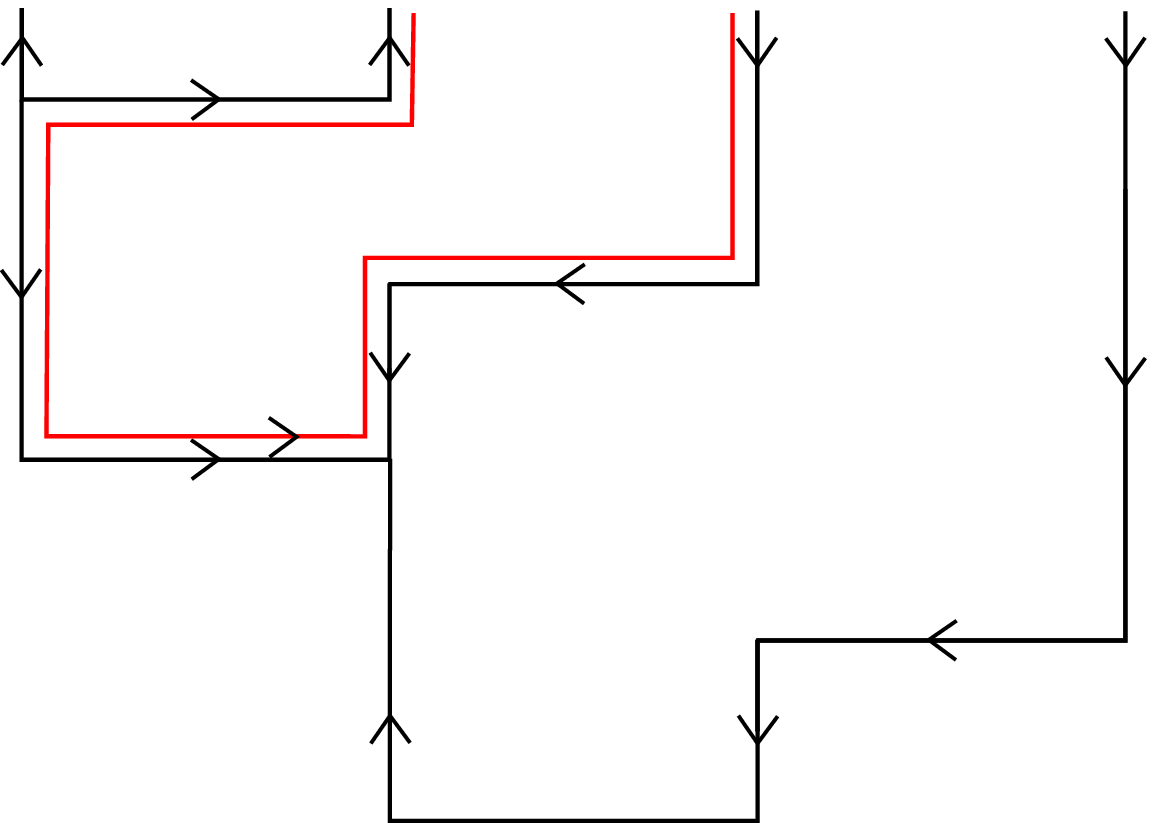}};
(0,-16.5)*{1};
(0,9.5)*{0};
(-17,1.25)*{0};
(-17,16.75)*{1};
(17,-7.5)*{0};
\endxy
\]
with total degree $\mathrm{deg}_{\mathrm{wt}}(u_f)=-\mathrm{wt}(u_f)=2$ and local degrees as indicated in the figure. The degree $\mathrm{deg}_{\mathrm{BKW}}(\vec{T})$ of $\vec{T}$ is given inductively by $(\vec{T}^j)^5_{j=1}$ with
\begin{align*}
\vec{T}^1 &=\left(\xy(0,0)*{\begin{Young}1\cr\end{Young}}\endxy\;,\;\emptyset\;,\;\xy(0,0)*{\begin{Young} 1\cr\end{Young}}\endxy \right)\;,\; \vec{T}^2=\left(\xy(0,0)*{\begin{Young} 1& 2\cr\end{Young}}\endxy\;,\;\emptyset\;,\;\xy(0,0)*{\begin{Young} 1& 2\cr\end{Young}}\endxy \right)\;,\; \vec{T}^3=\left(\xy(0,0)*{\begin{Young} 1& 2\cr\end{Young}}\endxy\;,\;\emptyset\;,\;\xy(0,0)*{\begin{Young} 1& 2\cr 3\cr\end{Young}}\endxy \right)\;,\\
\vec{T}^4&=\left(\xy(0,0)*{\begin{Young} 1 & 2\cr\end{Young}}\endxy\;,\;\xy(0,0)*{\begin{Young}4\cr\end{Young}}\endxy\;,\;\xy(0,0)*{\begin{Young} 1& 2\cr 3&4 \cr\end{Young}}\endxy \right)\;,\;
\vec{T}^5=\left(\xy(0,0)*{\begin{Young} 1& 2\cr 5\cr\end{Young}}\endxy\;,\;\xy(0,0)*{\begin{Young}4\cr\end{Young}}\endxy\;,\;\xy(0,0)*{\begin{Young} 1& 2\cr 3 &4 \cr\end{Young}}\endxy \right).
\end{align*}
Hence, we get
\[
\mathrm{deg}(\vec{T}^1)=1,\;\;\mathrm{deg}(\vec{T}^2)=0,\;\;\mathrm{deg}(\vec{T}^3)=0,\;\;\mathrm{deg}(\vec{T}^4)=0,\;\;\mathrm{deg}(\vec{T}^5)=1.
\]
Therefore we see that the total degree is
\[
\mathrm{deg}_{\mathrm{BKW}}(\vec{T})=1+0+0+0+1=2=\mathrm{deg}_{\mathrm{wt}}(u_f)
\]
and the local degrees are exactly the ones we have seen in the figure above.
\end{ex}
\vspace*{0.15cm}

Recall that the sets $B_S^J$ and $W_S^J$ are graded with $\mathrm{deg}_{\mathrm{wt}}(u_f)=-\mathrm{wt}(u_f)$, while the set $\mathrm{Std}(\vec{\lambda})$ is graded with the degree from Definition~\ref{defn-degpart1}. In fact the observation from Example~\ref{ex-degreeb} that the degree of a web with flow $u_f\in B^J_S$ is exactly the degree of $\iota(u_f)\in\mathrm{Std}(\vec{\lambda})$ is no coincidence, i.e. we note that following interesting Proposition.
\begin{prop}\label{prop-degree}
The map $\iota\colon B^J_S\to \mathrm{Std}(\vec{\lambda})$ and $g\colon \mathrm{Std}(\vec{\lambda})\to W^J_S$ are of degree $0$.
\end{prop}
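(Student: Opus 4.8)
The plan is to reduce the statement to a purely local check, carried out step by step along the (extended) growth algorithm, and then to sum. Recall from Theorem~\ref{thm-fine} that on $B_S^J$ the maps $\iota$ and $\mathrm{g}$ are the two directions of one and the same procedure, obtained by reading the list of moves in Definition~\ref{defn-webtotab} forwards and backwards; by Lemma~\ref{lem-ltalgo} all webs with flow in $B_S^J$ (equivalently all $\vec{T}$ of a fixed shape $\vec{\lambda}$) are built in the same number of steps. Now both quantities in play are sums over these steps: the Khovanov--Kuperberg weight $\mathrm{wt}(u_f)$ is a sum of the local contributions at the trivalent vertices and arcs created by the successive moves (see~\ref{weights} and~\ref{weights2}), and $\mathrm{deg}_{\mathrm{BKW}}(\vec{T})=\sum_{j}\mathrm{deg}(\vec{T}^{j})$ by Definition~\ref{defn-degpart1}. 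Hence it suffices to prove the following local dictionary: for each move in Definition~\ref{defn-webtotab}, occurring at step $k$, the weight of the web fragment created by that move equals $-\mathrm{deg}(\vec{T}^{k})$, where $\vec{T}^{k}$ records the node(s) labelled $k$ that the operation $\mathbf{k}$ adds to the multitableau. Granting this, induction on the total length $\ell_{\mathrm{t}}$ (the cases $\ell_{\mathrm{t}}\le 3$ of Example~\ref{ex-totlength} being the base) yields $\mathrm{deg}_{\mathrm{wt}}(\mathrm{g}(\vec{T}))=\mathrm{deg}_{\mathrm{BKW}}(\vec{T})$ for all $\vec{T}\in\mathrm{Std}(\vec{\lambda})$, and in particular $\mathrm{deg}_{\mathrm{wt}}(u_f)=\mathrm{deg}_{\mathrm{BKW}}(\iota(u_f))$ on $B_S^J$; both maps are then of degree $0$.

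The local verification runs against exactly the tables already used for Lemma~\ref{lem-welldef} and Theorem~\ref{thm-fine}. For a move at step $k$ adding node(s) $N$ of residue $i_k$ to the multipartition $\vec{\lambda}^{k-1}$, the number $|\mathsf{A}^{i_k\succ N}(\vec{\lambda}^{k-1})|-|\mathsf{R}^{i_k\succ N}(\vec{\lambda}^{k-1})|$ appearing in Definition~\ref{defn-degpart1} is read off directly from the three $\mathbb{N}$-graded tuples $\vec{k}^{k-1}_{1,2,3}$ — equivalently, from the row of the tower of $\mathfrak{sl}_3$-weight diagrams of Definition~\ref{defn-combinatorics4} lying just below the fragment — exactly as in the two sample computations in the proof of Theorem~\ref{thm-fine}. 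On the other side, the weight of the fragment (a cup/cap for an arc-move, a trivalent vertex for a Y-move, a pair of trivalent vertices for an H-move, and nothing for a left/right/empty shift) is read off from~\ref{weights} and~\ref{weights2} once the local flow states — encoded by the colors $1,1',0,0',-1,-1'$ — are fixed. Comparing the two expressions move-by-move and color-by-color gives the asserted equality, the divided-power correction $a\in\{0,1,3\}$ of Definition~\ref{defn-degpart1} precisely accounting for the fact, recorded just before that definition, that an $F^{(2)}$-arc differs from an $F^{2}$-arc by a digon and an $F^{(3)}$-empty-shift from an $F^{3}$ by a theta web, each of which carries a known weight.

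Because this argument is entirely local, it applies verbatim to all of $\mathrm{Std}(\vec{\lambda})$ — in particular to those $\vec{T}$ whose image $\mathrm{g}(\vec{T})$ is an elliptic web — so for the statement about $\mathrm{g}$ no appeal to $\mathrm{g}\circ\iota=\mathrm{id}$ (which only controls the subset $\iota(B_S^J)$) is required; the $*$-marked tower of Remark~\ref{rem-comb} is all that is used to reconstruct, and to weigh, the fragment at each step.

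The main obstacle is the size of the bookkeeping rather than any single difficult point: one must pass through every move-type/color pair of Definition~\ref{defn-webtotab} and, for each, correctly count addable minus removable nodes of residue $i_k$ strictly after $N$ in the ambient three-component partition. The delicate part — and where the precise residue shift of Definition~\ref{defn-tabcomb} and the $*$-markers of Remark~\ref{rem-comb} are genuinely needed — is that whether a given row of a component contributes an addable or a removable node of the right residue is governed by the relative heights of the three component partitions, i.e.\ by adjacent entries of the tower of weight diagrams; getting these comparisons right in all cases, especially those with triple multiplicities and the two ``extraordinary'' situations of Definition~\ref{defn-LTgen}, is where the care lies. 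Once the local dictionary ``fragment weight $\leftrightarrow$ BKW local degree'' is tabulated, the proposition follows immediately.
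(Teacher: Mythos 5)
Your proposal is correct and takes essentially the same route as the paper: an induction on the total length (base cases $\ell_{\mathrm{t}}\le 3$ from Example~\ref{ex-totlength}) reduced to a move-by-move, color-by-color comparison of the local flow-weight contribution of each fragment from~\eqref{weights} and~\eqref{weights2} with the local BKW degree, i.e.\ addable minus removable nodes of the relevant residue read off the ambient state/tower data, with the divided-power correction $a$ accounting for the digon/theta discrepancy. Your additional observation that the purely local argument applies to all of $\mathrm{Std}(\vec{\lambda})$, so that $\mathrm{g}$ is handled even on elliptic images without invoking $\mathrm{g}\circ\iota=\mathrm{id}$, is just a more explicit version of the paper's closing remark that the claim for $\mathrm{g}$ follows by the same arguments.
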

\begin{proof}
The proof is again an inductive case-by-case check. It is easy to verify the statement for the case where the total length $\ell_{\mathrm{t}}(u)\leq 3$, i.e. for the four webs given in Example~\ref{ex-totlength}.

So assume that $\ell_{\mathrm{t}}(u)> 3$ and that the statement is true for all $i<\ell_{\mathrm{t}}(u)=k$, i.e. after removing the last LT-generator from $u_f$, which results in a web with flow denoted by $u^<_{<f}$, we have
\[
\mathrm{deg}_{\mathrm{wt}}(u^<_{<f})=\mathrm{deg}_{\mathrm{BKW}}(\iota(u^<_{<f})).
\]
Thus, we only have to show that the local changes of minus weight and $\mathrm{deg}_{\mathrm{BKW}}(\iota(u_f)^k)$ coincide.

We have to check all the possible cases again. We skip most of them and leave them to the reader, since they all work in the same vein. We do three as an example, i.e. an H-move of type a and color $1$, an H-move of type a and color $-1^{\prime}$, and a right shift of type b and color $0$.

The two H-moves of type a are almost the same, i.e. we have
\[
\xy
(-17.5,0)*{\includegraphics[scale=.5]{res/figs/webflow/hrulea.eps}};
(-17.5,3)*{-1};
(17.5,0)*{\includegraphics[scale=.5]{res/figs/webflow/hruled.eps}};
(17.5,3)*{+1};
\endxy\;\;\text{ and for both }\;\;
\mathbf{k}\colon
(\lambda^{k-1}_{1},\lambda^{k-1}_{2}+\xy(0,0)*{\begin{Young}k\cr\end{Young}}\endxy,\lambda^{k-1}_{3}).
\]
That is the H-move of type a and color $1$ lowers the degree by one, while the H-move of type a and color $-1^{\prime}$ raises the degree by $1$, but both have the same rule to place the node. 

To see why the degree is still different we note that they have a different state string at the top. In fact, if we use $1$ and $2$ for simplicity again, we have
\[
\xy
(-25,0)*{\includegraphics[scale=.5]{res/figs/webflow/hrulea.eps}};
(-25,10)*{\begin{Young}1 & 2 & 2\cr \end{Young}};
(-25,-10)*{\begin{Young}2 & 2 & 1\cr \end{Young}};
(25,0)*{\includegraphics[scale=.5]{res/figs/webflow/hruled.eps}};
(25,10)*{\begin{Young}2 & 2 & 1\cr \end{Young}};
(25,-10)*{\begin{Young}1 & 2 & 2\cr \end{Young}};
\endxy
\]
Hence, the H-move of color $1$ has a removable node of the same residue above the new node iff the H-move of color $-1^{\prime}$ has an addable node of the same residue above the new node. To see that such a removable (or addable) node exists and is unique note that this follows from the fact that there will be a last move before the H-moves that gives rise to the flow at the left side of the H-moves. The node that corresponds to this move will be, by construction, the removable (or addable) node.

The shift of type b and color $0$, that is
\[
\xy
(0,0)*{\includegraphics[scale=.5]{res/figs/webflow/shiftrightb2.eps}};
(0,3)*{0};
\endxy\;\;\text{ and }\;\;
\mathbf{k}\colon
(\lambda^{k-1}_{1}+\xy(0,0)*{\begin{Young}k\cr\end{Young}}\endxy,\lambda^{k-1}_{2},\lambda^{k-1}_{3}+\xy(0,0)*{\begin{Young}k\cr\end{Young}}\endxy),
\]
can be done in a similar vein. Again we have to take the state string in account. We have
\[
\xy
(0,0)*{\includegraphics[scale=.5]{res/figs/webflow/shiftrightb2.eps}};
(0,10)*{\begin{Young}2 &  & 2\cr \end{Young}};
(0,-10)*{\begin{Young}1 &  & 1\cr \end{Young}};
\endxy
\]
Thus, the local change in degree is zero, since, by construction, the left new node will have exactly one addable node of the same residue and the second none. Since this shift corresponds to a divided power we have to subtract $1$ from the degree. Hence, the local change is zero.

We leave the verification that the map $g$ is also of degree $0$ to the reader, since it follows by similar arguments as given above.
\end{proof}
\begin{ex}\label{ex-degreec}
For the filled $3$-multitableau $\vec{T}_4$ from Example~\ref{ex-degreea} we see that the corresponding web with flow $u_f$ is
\[
u_f=
\xy
(0,0)*{\includegraphics[scale=0.405]{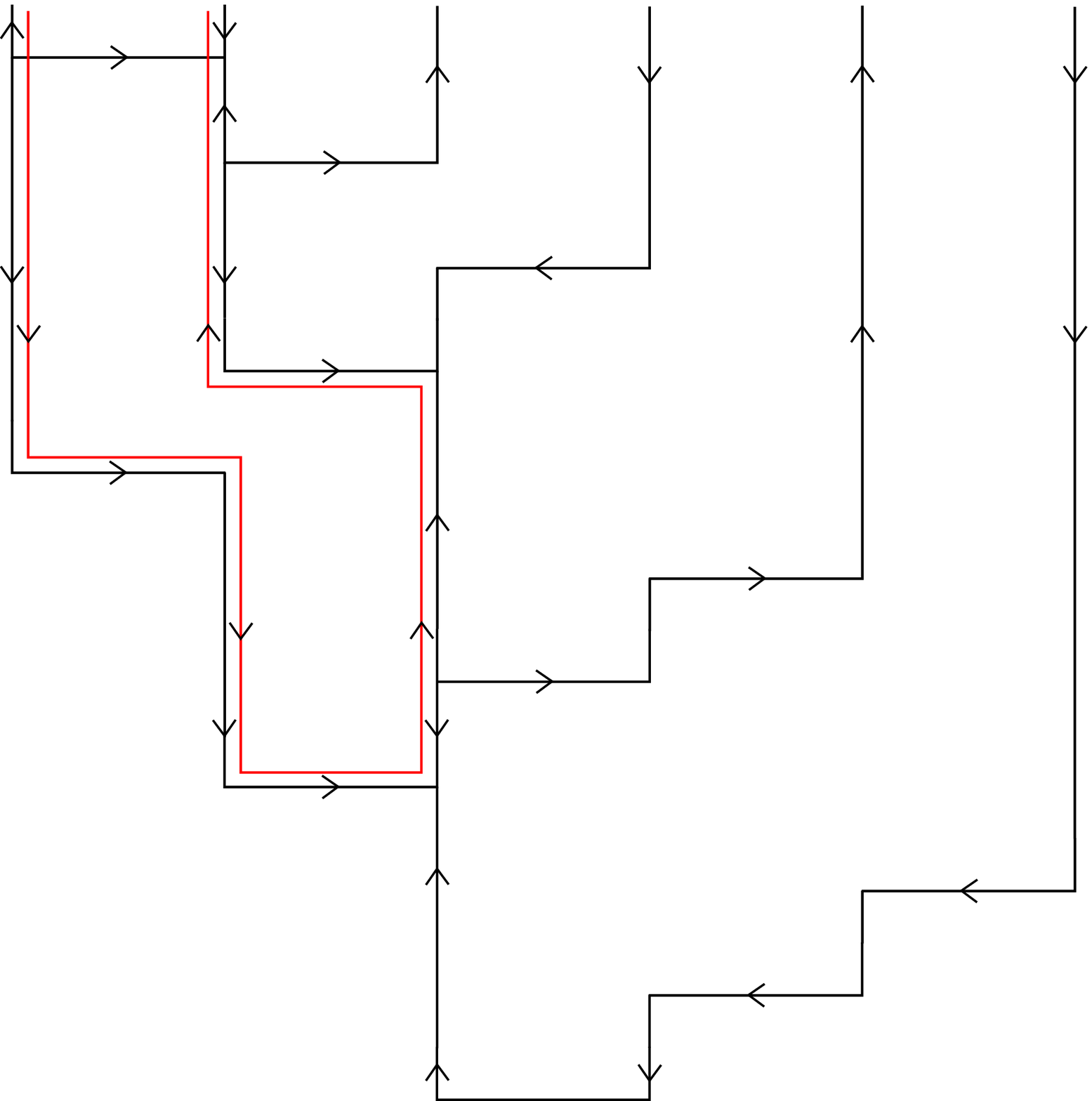}};
(-31,37.7)*{-1};
(-30,9)*{0};
(-15.1,30.7)*{1};
(-15.1,15.5)*{1};
(-15.1,-13.25)*{0};
(0,22.8)*{0};
(0,-6.7)*{1};
(0,-36.25)*{1};
(15.2,0.75)*{0};
(15.2,-28.8)*{0};
(30.5,-21.6)*{0};
\endxy
\]
where we have indicated the local degree changes in the figure above (the weight of the flow has a different sign). This corresponds exactly to the calculation from Example~\ref{ex-degreea}.
\end{ex}
Since Khovanov and Kuperberg~\cite{kk} showed that the weight is invariant under isotopies, we get the following two corollaries.
\begin{cor}\label{cor-degreea}
Let $u_f\in B^J_S$ be a web with flow. Then $\mathrm{deg}_{\mathrm{BKW}}(\iota(u_f))$ is invariant under isotopies of the web $u_f$.\qed
\end{cor}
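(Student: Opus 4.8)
The plan is to deduce the statement immediately from Proposition~\ref{prop-degree} together with Khovanov and Kuperberg's result that the weight of a flow on a web is an isotopy invariant; no new argument is really needed beyond chaining these two facts. First I would invoke Proposition~\ref{prop-degree}, which gives, for any $u_f\in B^J_S$,
\[
\mathrm{deg}_{\mathrm{BKW}}(\iota(u_f))=\mathrm{deg}_{\mathrm{wt}}(u_f)=-\mathrm{wt}(u_f).
\]
Next I would recall from Section~4 of~\cite{kk} that $\mathrm{wt}(u_f)$ depends only on the isotopy class of the pair $(u,f)$ and not on a chosen planar representative. Hence if $u_f$ and $u'_{f'}$ are two elements of $B^J_S$ that are related by an isotopy of webs with flow, then $\mathrm{wt}(u_f)=\mathrm{wt}(u'_{f'})$, and therefore
\[
\mathrm{deg}_{\mathrm{BKW}}(\iota(u_f))=-\mathrm{wt}(u_f)=-\mathrm{wt}(u'_{f'})=\mathrm{deg}_{\mathrm{BKW}}(\iota(u'_{f'})),
\]
which is exactly the asserted invariance.

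There is essentially no obstacle here, since both inputs are already in place: the combinatorial identification of the two degrees is the content of Proposition~\ref{prop-degree}, and the topological invariance of the weight is cited from~\cite{kk}. The only point that needs a little care in the write-up is the interpretation of the statement, because $B^J_S$ was deliberately defined \emph{without} quotienting by isotopy (as stressed in Section~\ref{sec-flow}); thus ``invariance under isotopies of $u_f$'' should be read as the assertion that the integer $\mathrm{deg}_{\mathrm{BKW}}(\iota(\,\cdot\,))$ is constant on the set of planar representatives of a fixed isotopy class of non-elliptic webs with flow. With that reading fixed, the short computation above completes the proof; no induction or case analysis beyond what is already carried out in Proposition~\ref{prop-degree} is required.
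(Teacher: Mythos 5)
Your proposal is correct and is exactly the argument the paper intends: the corollary follows immediately from Proposition~\ref{prop-degree} (which identifies $\mathrm{deg}_{\mathrm{BKW}}(\iota(u_f))$ with $-\mathrm{wt}(u_f)$) combined with Khovanov and Kuperberg's isotopy invariance of the weight from~\cite{kk}. Your clarifying remark about how to read ``invariance under isotopies'' given that $B^J_S$ is not taken up to isotopy is a sensible addition but does not change the substance.
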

\begin{cor}\label{cor-degreeb}
Let $\vec{T}_1,\vec{T}_2\in\mathrm{Std}(\vec{\lambda})$ such that $\mathrm{g}(\vec{T}_1)$ and $\mathrm{g}(\vec{T}_2)$ differ only by isotopies. Then we have $\mathrm{deg}_{\mathrm{BKW}}(\vec{T}_1)=\mathrm{deg}_{\mathrm{BKW}}(\vec{T}_2)$.\qed
\end{cor}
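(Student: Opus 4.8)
The plan is to deduce this immediately from Proposition~\ref{prop-degree}, together with Khovanov and Kuperberg's result~\cite{kk} that the weight of a flow on a web is invariant under isotopies. No new combinatorics is needed: the corollary is a bookkeeping consequence of the fact that the extended growth algorithm $\mathrm{g}$ is degree preserving, so I would simply transport the isotopy invariance of $\mathrm{wt}$ on the web side across $\mathrm{g}$ to the multitableau side.

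Concretely, I would argue as follows. Fix $\vec{T}_1,\vec{T}_2\in\mathrm{Std}(\vec{\lambda})$ such that the webs with flow $\mathrm{g}(\vec{T}_1)$ and $\mathrm{g}(\vec{T}_2)$ in $W^J_S$ differ only by isotopies. By the part of Proposition~\ref{prop-degree} asserting that $\mathrm{g}\colon\mathrm{Std}(\vec{\lambda})\to W^J_S$ is of degree $0$, we have
\[
\mathrm{deg}_{\mathrm{BKW}}(\vec{T}_i)=\mathrm{deg}_{\mathrm{wt}}(\mathrm{g}(\vec{T}_i))=-\mathrm{wt}(\mathrm{g}(\vec{T}_i))\qquad(i=1,2),
\]
the last equality being the definition of the grading on $W^J_S$ recalled just above the statement. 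Since $\mathrm{g}(\vec{T}_1)$ and $\mathrm{g}(\vec{T}_2)$ are isotopic as webs with flow and $\mathrm{wt}$ is an isotopy invariant by~\cite{kk}, this gives $\mathrm{wt}(\mathrm{g}(\vec{T}_1))=\mathrm{wt}(\mathrm{g}(\vec{T}_2))$, hence $\mathrm{deg}_{\mathrm{BKW}}(\vec{T}_1)=\mathrm{deg}_{\mathrm{BKW}}(\vec{T}_2)$, as claimed.

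The only point that calls for a word of care — and this is the closest thing to an obstacle — is that $\mathrm{g}(\vec{T}_i)$ need not be a non-elliptic web, so one must check both ingredients apply in the possibly elliptic setting. This is unproblematic: the weight $\mathrm{wt}(w_f)$ is defined purely locally, as the sum over all trivalent vertices and arcs of $w_f$ of the contributions in Figures~\ref{weights} and~\ref{weights2}, so it makes sense and remains isotopy invariant for an arbitrary web with flow; and Proposition~\ref{prop-degree} is stated and proved for the full map $\mathrm{g}$ on all of $\mathrm{Std}(\vec{\lambda})$, not merely on the image of $\iota$. (If one prefers, in the case where both $\mathrm{g}(\vec{T}_i)$ are non-elliptic one may instead combine $\mathrm{g}\circ\iota=\mathrm{id}_{B^J_S}$ from Theorem~\ref{thm-fine} with Corollary~\ref{cor-degreea}.) Thus nothing beyond the already-established Propositions is required, and I expect the write-up to be essentially the two displayed lines above.
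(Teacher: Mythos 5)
Your proposal is correct and is essentially the paper's own argument: the corollary is stated with an immediate \qed precisely because it follows from Proposition~\ref{prop-degree} (the map $\mathrm{g}$ is of degree $0$) combined with Khovanov--Kuperberg's isotopy invariance of the weight of a flow. Your extra remark that the weight is defined locally and hence applies even when $\mathrm{g}(\vec{T}_i)$ is elliptic is a sensible clarification but adds nothing beyond what the paper already relies on.
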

Moreover, we have the following natural interpretation of Brundan, Kleshchev and Wang's degree on the level of webs. Note that, by our more general construction, the proposition works not just for non-elliptic webs $u\in B_S$, but for all webs.
\begin{prop}\label{prop-degreenat}
Let $u\in B_S$ be a non-elliptic web. Moreover, let $F_J$ denote the possible empty set of all flows on $u$ with boundary $J$. Denote by $e^S_J=e^{s_1}_{j_1}\otimes\cdots\otimes e^{s_n}_{j_n}$ the corresponding element in the tensor basis. Then
\[
u=\sum_{J}c(S,J)e^S_J,
\]
where the coefficients $c(S,J)$ (recall $v=-q^{-1}$) are given by
\[
c(S,J)=\sum_{f\in F_J}v^{\mathrm{wt}(u_f)}=\sum_{f\in F_J}\pm q^{-\mathrm{deg}_{\mathrm{wt}}(u_f)}=\sum_{f\in F_J}\pm q^{-\mathrm{deg}_{\mathrm{BKW}}(\iota(u_f))}.
\]
\end{prop}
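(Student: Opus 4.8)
The plan is to derive this directly from Khovanov and Kuperberg's description of the web-to-tensor expansion together with the degree-preservation statement of Proposition~\ref{prop-degree}; no new combinatorics should be needed, since all of it is already packaged in Proposition~\ref{prop-sl3bases}, Theorem~\ref{thm:upptriang} and Proposition~\ref{prop-degree}.

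First I would invoke the refinement of Theorem~\ref{thm:upptriang} coming from Section~4 of~\cite{kk}: for a non-elliptic web $u\in B_S$, regarded as an invariant tensor in $\mathrm{Inv}_{{\mathbf U}_q(\mathfrak{sl}_3)}(V_S)$, the coefficient of the elementary tensor $e^S_J$ in $u$ is obtained by summing $q^{\mathrm{wt}(u_f)}$ over all flows $f$ on $u$ restricting to the state string $J$ on the boundary, i.e. over $f\in F_J$. This is precisely the statement that the $c(S,J,J')$ in Theorem~\ref{thm:upptriang} are counted by flow weights, and it already records that $c(S,J)=0$ when $F_J=\emptyset$ and that the ``diagonal'' coefficient is $1$ because the unique weight-$0$ flow is the canonical one. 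Passing to the quantum parameter convention used throughout this paper via the substitution $v=-q^{-1}$ (the same substitution used in the proof of Theorem~\ref{thm:upptriang}) turns this into
\[
u=\sum_J c(S,J)\,e^S_J,\qquad c(S,J)=\sum_{f\in F_J}v^{\mathrm{wt}(u_f)},
\]
which is the first equality.

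Next I would rewrite the exponent. Since $\mathrm{deg}_{\mathrm{wt}}(u_f)=-\mathrm{wt}(u_f)$ by definition and $v=-q^{-1}$, each term $v^{\mathrm{wt}(u_f)}$ is $\pm q^{-\mathrm{deg}_{\mathrm{wt}}(u_f)}$, the sign being a fixed power of $-1$ determined by $\mathrm{deg}_{\mathrm{wt}}(u_f)$; this gives the second equality. For the third equality I would apply Proposition~\ref{prop-degree}, which says that $\iota\colon B^J_S\to\mathrm{Std}(\vec{\lambda})$ is of degree $0$, i.e. $\mathrm{deg}_{\mathrm{wt}}(u_f)=\mathrm{deg}_{\mathrm{BKW}}(\iota(u_f))$ for every web with flow $u_f\in B^J_S$; substituting this identity term by term finishes the argument for non-elliptic $u$. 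The extension asserted in the remark preceding the proposition follows from $\bC(q)$-linearity: every web reduces to a combination of non-elliptic ones by the Kuperberg relations~\eqref{eq:circle}--\eqref{eq:square} and both sides are linear in $u$; alternatively, one can argue directly using the extended growth algorithm $\mathrm{g}$ of Definition~\ref{defn-exgrowth} and the degree-$0$ statement for $\mathrm{g}$ in Proposition~\ref{prop-degree}.

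The main obstacle is bookkeeping rather than conceptual: one must match Khovanov and Kuperberg's weight normalization, orientation conventions, and choice of quantum parameter (they work over $\bZ[q,q^{-1}]$ with their own reading conventions) with the ones fixed in Section~\ref{sec-webfoam}, and in particular pin down the sign $(-1)^{\mathrm{deg}_{\mathrm{wt}}(u_f)}$ unambiguously so that the three displayed expressions for $c(S,J)$ literally agree. Once that dictionary is made explicit the proof is a two-line substitution, since the genuinely difficult inputs — the identification of $B_S$ as an intermediate crystal basis and the degree-$0$ property of $\iota$ — are already available.
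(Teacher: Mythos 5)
Your proposal is correct and follows essentially the same route as the paper: the paper's proof simply cites Khovanov--Kuperberg's Section~4 flow-weight expansion together with Propositions~\ref{prop-sl3bases} and~\ref{prop-degree}, which is exactly the combination you spell out (KK's coefficients as flow weights, the convention translation $v=-q^{-1}$, and the degree-$0$ property of $\iota$ for the last equality). The sign/exponent bookkeeping you flag is indeed the only delicate point, and it is present in the paper's own formulation as well, so nothing further is needed.
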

\begin{proof}
This is a direct consequence of Khovanov and Kuperberg's results from Section 4 of~\cite{kk} together with the two Propositions~\ref{prop-sl3bases} and~\ref{prop-degree}.
\end{proof}
\section{The categorified picture}\label{sec-cat}
\subsection{A growth algorithm for foams}\label{sec-catpart1}
In this section we are going to categorify the results from before. That is, we explain how the extended growth algorithm can be used as a growth algorithm for foams.
\vspace*{0.25cm}

It should be noted that we freely use the results from the uncategorified framework that we discussed in the sections before, e.g. we identify the flows on webs with their $3$-multitableaux.
\vspace*{0.25cm}

The main idea can be explained as follows. To define a foam $\mathcal F\colon u\to v$ with $u,v\in B_S$ we use the two different types of data, i.e. the pair $(S,J)$ of the sign and state string on the line and the two flows on $u,v$. The first completely determines the dot placement, while the second gives rise to the topology. It should be noted that the dot placement, i.e. the combinatorics, is rather mysterious in the foam framework, but rather ``straightforward'' in the cyclotomic Hecke algebra framework. The topology on the other hand is in the foam framework only given by zipping certain edges away. Recall that we have $\mathcal F(u,v)\cong \mathcal F(u^*v)$. We tend to use the former in the following.
\vspace*{0.25cm}

We start and give the definition of the basic idempotent, denoted by $e(\vec{\lambda})$. It is worth noting that $e(\vec{\lambda})$ will in general be a foam between elliptic webs, since we do not use divided powers.
\begin{defn}\label{defn-idem}(\textbf{Idempotent associated to $\vec{\lambda}$})
Given a $3$-multipartition $\vec{\lambda}$ with $k$ nodes, we can associate to it a certain \textit{idempotent foam}, denoted by $e(\vec{\lambda})$, using the following rules. Define a sequence of LT-generators for $\vec{\lambda}$ by (with $r(\vec{\lambda})$ as in Definition~\ref{defn-rsequence})
\begin{equation}\label{eq-idem}
\mathrm{LT}(\vec{\lambda})=\prod_k F_{r(\vec{\lambda})_k}=F_{r(\vec{\lambda})_{c(S)}}\cdot\ldots\cdot F_{r(\vec{\lambda})_1}.
\end{equation}
Define a web $w(\vec{\lambda})$ to be the web generated by applying $\mathrm{LT}(\vec{\lambda})$ to a highest weight vector $v_{3^{\ell}}$ and use $q$-skew Howe duality. Then
\[
e(\vec{\lambda})=\mathrm{Id}\in\mathcal F(w(\vec{\lambda}),w(\vec{\lambda})),
\]
that is the identity foam $\mathrm{Id}\colon w(\vec{\lambda})\to w(\vec{\lambda})$.
\end{defn}
Recall that there is an $\bC$-linear involution ${}^*\colon\mathcal F\to \mathcal F$ of the foam space
\[
\mathcal F_S=\bigoplus_{\partial u,\partial v = S}\mathcal F(u,v)
\]
that turns the foams around (and changes some internal boundary orientations). For example
\[
\left(\xy(0,1.25)*{\includegraphics[scale=0.75]{res/figs/basicsb/digonem.eps}}\endxy\right)^*=\xy(0,-1.75)*{\reflectbox{\includegraphics[scale=0.75,angle=180]{res/figs/basicsb/digonem.eps}}}\endxy
\]
\begin{lem}\label{lem-welldefidem}
The idempotent $e(\vec{\lambda})$ is well-defined, that is it is not zero and an idempotent. Moreover, for all $3$-multipartitions $\vec{\lambda},\vec{\mu}$ we have
\[
e(\vec{\lambda})e(\vec{\mu})=e(\vec{\mu})e(\vec{\lambda})=\delta_{\vec{\lambda},\vec{\mu}}e(\vec{\lambda})=\delta_{\vec{\lambda},\vec{\mu}}e(\vec{\mu}),\delta_{\vec{\lambda},\vec{\mu}}=\begin{cases}1, &\text{if }r(\vec{\lambda})=r(\vec{\mu}),\\0, &\text{if }r(\vec{\lambda})\neq r(\vec{\mu}).\end{cases}
\]
Moreover, we have
\[
e(\vec{\lambda})^*=e(\vec{\lambda}),
\]
that is $e(\vec{\lambda})$ is fixed by the involution ${}^*$.
\end{lem}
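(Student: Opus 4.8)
The plan is to prove the three assertions of Lemma~\ref{lem-welldefidem} in the order they are stated, leaning heavily on the fact that $e(\vec\lambda)$ is by definition the identity foam on the web $w(\vec\lambda)$ produced by the LT-algorithm applied to the highest weight vector.

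First I would establish that $e(\vec\lambda)$ is nonzero and idempotent. By Corollary~\ref{cor-ltalgo} the LT-algorithm produces webs in $B_S$ (possibly elliptic here, since we do not use divided powers in~\eqref{eq-idem}, but in any case a genuine, nonzero object of $\foamt$). Hence $w(\vec\lambda)$ is a well-defined web, and the identity foam $\mathrm{Id}\colon w(\vec\lambda)\to w(\vec\lambda)$ is nonzero because $\F(w(\vec\lambda))$ has $q$-rank $\langle w(\vec\lambda)\rangle\neq 0$ by Remark~\ref{rem-dim} (the Kuperberg bracket of any web is a nonzero element of $\bN[q,q^{-1}]$), so $\mathrm{Id}$ cannot be zero in $\foamt(w(\vec\lambda),w(\vec\lambda))\cong{}_{w(\vec\lambda)}K_{w(\vec\lambda)}$ by Lemma~\ref{lem:webalgaltern}. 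That $\mathrm{Id}$ is an idempotent is immediate from $\mathrm{Id}\circ\mathrm{Id}=\mathrm{Id}$ under the composition of Lemma~\ref{lem:webalgaltern}.

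Next I would treat the orthogonality relation. The key observation is that the web $w(\vec\lambda)$ depends only on the sequence $\mathrm{LT}(\vec\lambda)=\prod_k F_{r(\vec\lambda)_k}$, hence only on the residue sequence $r(\vec\lambda)$ (Definition~\ref{defn-rsequence}); this is precisely why the statement is phrased in terms of $r(\vec\lambda)=r(\vec\mu)$ rather than $\vec\lambda=\vec\mu$. So if $r(\vec\lambda)=r(\vec\mu)$ then $w(\vec\lambda)=w(\vec\mu)$ and $e(\vec\lambda)=e(\vec\mu)$, and the product is just $\mathrm{Id}\circ\mathrm{Id}=\mathrm{Id}$. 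If $r(\vec\lambda)\neq r(\vec\mu)$, then $w(\vec\lambda)$ and $w(\vec\mu)$ have the same boundary sign string $S$ but, I claim, are \emph{distinct} elements of $W_S$; granting this, the composition ${}_{w(\vec\lambda)}K_{w(\vec\lambda)}\otimes{}_{w(\vec\mu)}K_{w(\vec\mu)}\to{}_{w(\vec\lambda)}K_{w(\vec\mu)}$ is zero by the definition of the multiplication on $K_S$ (it is zero unless the two ``middle'' webs agree). To see $w(\vec\lambda)\neq w(\vec\mu)$: the residue sequence of a $3$-multipartition records, step by step, which ladder operator $F_{i_k}$ is applied, and by the analysis in the proof of Proposition~\ref{prop-sl3bases} the resulting web (together with its canonical flow) reconstructs the semi-standard tableau, hence the multipartition up to residue; two different residue sequences therefore cannot yield the same web. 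I expect this to be the main obstacle, since it requires extracting from the (somewhat involved) case analysis of Proposition~\ref{prop-sl3bases} the precise statement that the LT-word is recoverable from the web, but it is essentially contained there together with Corollary~\ref{cor-ltalgo} and Proposition~\ref{prop-tableauxflows}.

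Finally, $e(\vec\lambda)^*=e(\vec\lambda)$: the involution ${}^*\colon\F_S\to\F_S$ reflects a foam through the $xy$-plane and reorients; applied to the identity foam $\mathrm{Id}\colon w(\vec\lambda)\to w(\vec\lambda)$, the reflection of a cylinder-type (partial-identity) foam over $w(\vec\lambda)$ is again the identity foam over $w(\vec\lambda)$ (the reflection swaps source and target, which here coincide, and the induced reorientation is undone by the convention built into ${}^*$). Hence $(\mathrm{Id})^*=\mathrm{Id}$, i.e. $e(\vec\lambda)^*=e(\vec\lambda)$. This completes the proof.
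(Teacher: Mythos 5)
Your overall architecture is the same as the paper's: idempotency is immediate because $e(\vec{\lambda})$ is an identity foam, orthogonality is reduced to ``$w(\vec{\lambda})$ depends only on $r(\vec{\lambda})$'' plus the convention that composition vanishes when the middle webs disagree, and ${}^*$-invariance is read off from the definition of the involution. However, there is a genuine gap in your well-definedness step. The word $\mathrm{LT}(\vec{\lambda})$ of Definition~\ref{defn-idem} is \emph{not} the LT-algorithm of Definition~\ref{defn-LT}: it is the string of single $F$'s (no divided powers) indexed by the residue sequence of an arbitrary $3$-multipartition, and it generally produces an elliptic web. Corollary~\ref{cor-ltalgo} only asserts that the divided-power words attached to semi-standard tableaux of shape $(3^{\ell})$ yield the non-elliptic basis $B_S$; it says nothing about whether $F_{r(\vec{\lambda})_{c(S)}}\cdots F_{r(\vec{\lambda})_1}$ kills the highest weight vector $v_{3^{\ell}}$ (for instance through an intermediate weight entry leaving $\{0,\dots,3\}$), and that non-vanishing is precisely the one nontrivial assertion of the lemma. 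Your parenthetical ``in any case a genuine, nonzero object of $\foamt$'' assumes exactly what has to be shown. The paper closes this by invoking Theorem~\ref{thm-fine}: since $T_{\vec{\lambda}}\in\mathrm{Std}(\vec{\lambda})$, the extended growth algorithm produces an honest (possibly elliptic) web with flow whose ladder word is $\mathrm{LT}(\vec{\lambda})$, so $w(\vec{\lambda})$ exists and is nonzero. Once that is in place, your observation that the identity foam is nonzero because $\F(w(\vec{\lambda}))$ has nonzero $q$-rank (Remark~\ref{rem-dim}, Lemma~\ref{lem:webalgaltern}) is fine, though the paper does not even need it.

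A secondary, less serious point: for orthogonality you propose to show $r(\vec{\lambda})\neq r(\vec{\mu})\Rightarrow w(\vec{\lambda})\neq w(\vec{\mu})$ by reconstructing the word from the web via the case analysis of Proposition~\ref{prop-sl3bases}. That proposition concerns divided-power words of semi-standard tableaux and non-elliptic webs, so it does not apply to the residue-sequence words here; moreover it is unnecessary. The webs $w(\vec{\lambda})$ are rigid ladder diagrams built step by step from the word on a fixed grid, and in this setting isotopies are not taken into account, so different residue sequences produce literally different webs and the multiplication is zero by convention --- which is all the paper uses. So this part of your argument is a detour rather than an error, but the well-definedness step needs Theorem~\ref{thm-fine} (or an equivalent argument) to be a proof.
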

\begin{proof}
To see that it is well-defined, i.e. that $\mathrm{LT}(\vec{\lambda})$ does not kill the highest weight vector, we have to use Theorem~\ref{thm-fine}. That is the Theorem~\ref{thm-fine} ensures that there is a web with a flow that has exactly the corresponding multitableaux $T_{\vec{\lambda}}$. Thus, the idempotent can not be zero.

That the element is an idempotent follows from the fact that it is just the identity foam on a certain web $w(\vec{\lambda})$.

The other statements follow directly from the definition of the multiplication, that is different  $3$-multipartition $\vec{\lambda},\vec{\mu}$ give rise to different webs $w(\vec{\lambda})$ and $w(\vec{\mu})$ iff $r(\vec{\lambda})\neq r(\vec{\mu})$, since the convention to obtain the webs from the multitableaux only depends on the residue sequence, see~\ref{eq-idem}. Moreover, the composition is zero if the webs are not the same. by convention.

That the idempotents are fixed by the involution is clear by the definition of ${}^*$, i.e. it just turns the foams around (and inverts some arrows).
\end{proof}
\begin{ex}\label{ex-idemfoam}
If the $3$-multipartition is the one from Example~\ref{ex-tabl2} part (b), that is
\[
\vec{\lambda}=\left(\xy(0,0)*{\begin{Young}&\cr\cr\end{Young}}\endxy\;,\;\xy(0,0)*{\begin{Young}\cr\end{Young}}\endxy\;,\;\xy(0,0)*{\begin{Young}&\cr\cr\end{Young}}\endxy \right),
\]
then we get $\mathrm{LT}(\vec{\lambda})=F_1F_3F_2F_2F_1F_3F_2$ and therefore $w(\vec{\lambda})$ will be
\[
w(\vec{\lambda})=\xy(0,0)*{\includegraphics[scale=0.5]{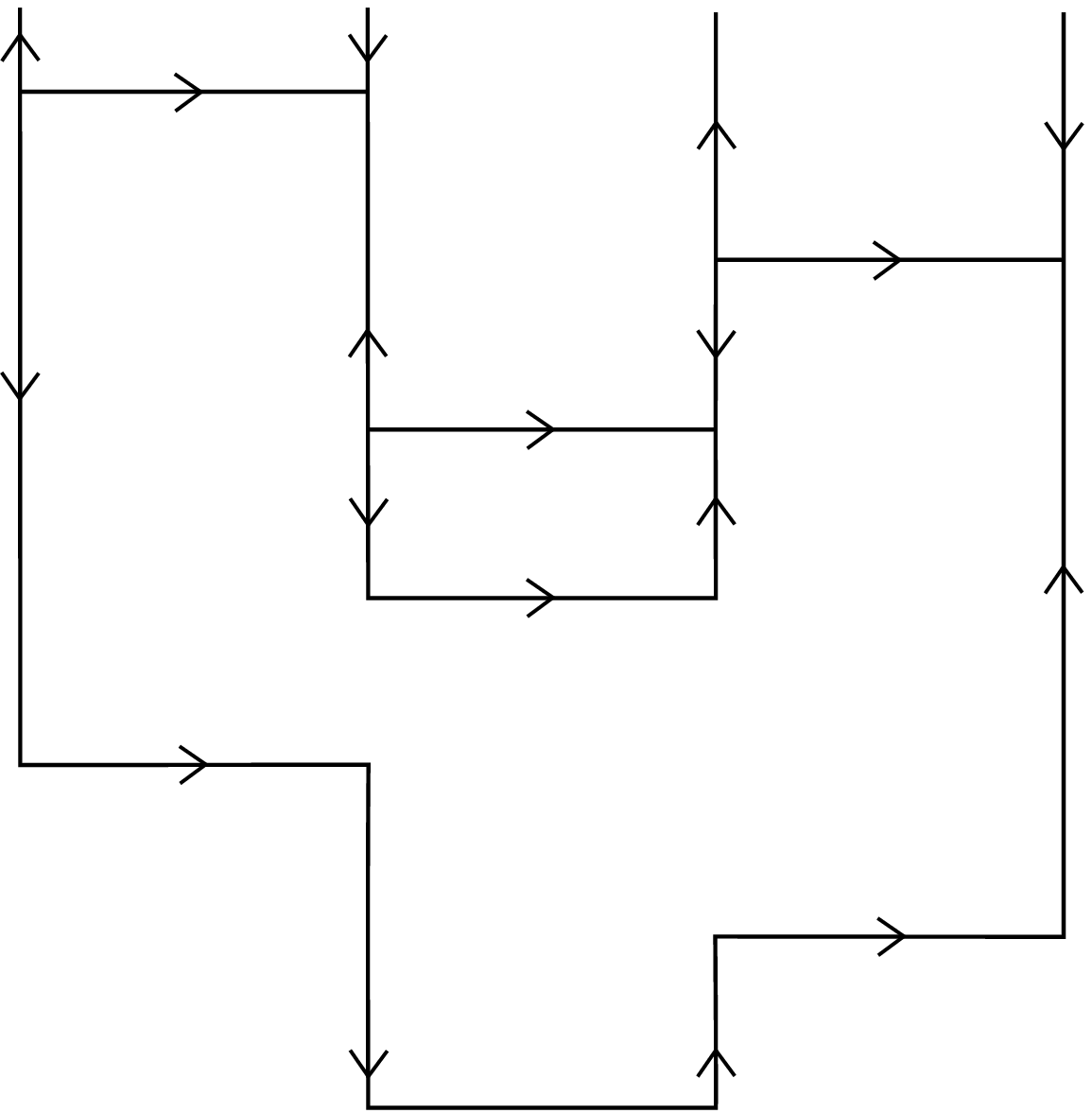}}\endxy
\]
Hence, the idempotent (we have skipped the orientations below) for $\vec{\lambda}$ is
\[
e(\vec{\lambda})=\xy(0,0)*{\includegraphics[scale=0.5]{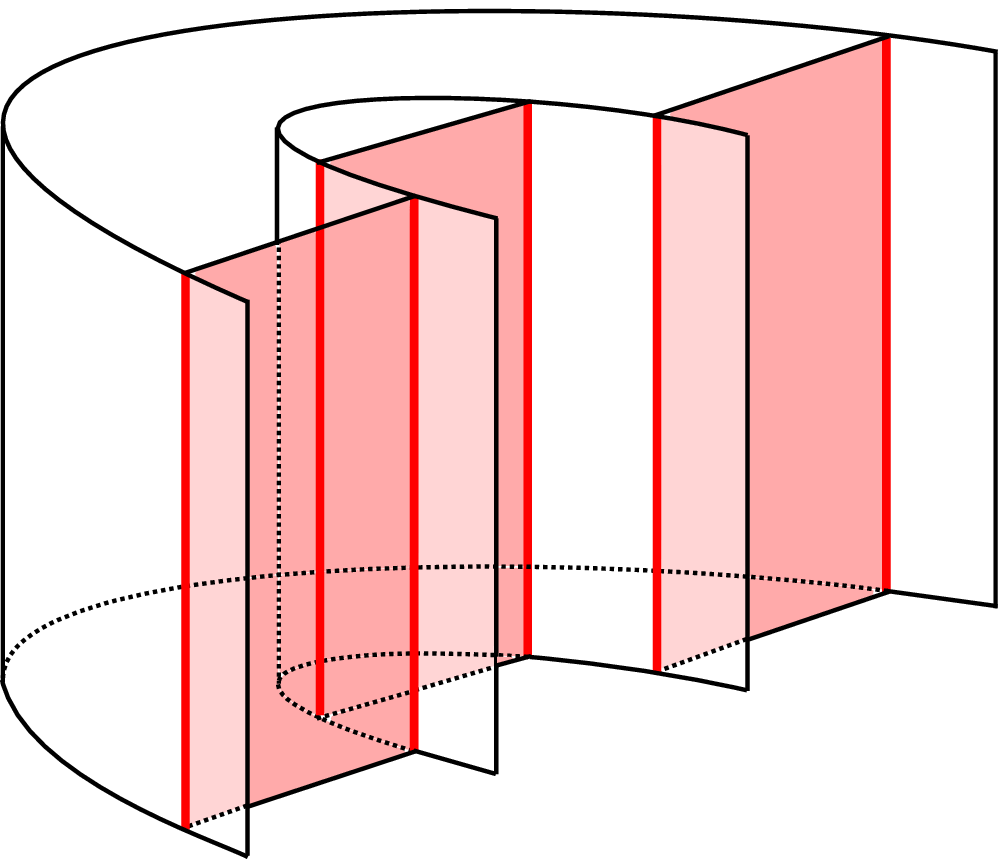}}\endxy
\]
An important property of this idempotent is the square (if we ignore the digon) in the back, since it allows use to change to both webs that can be associated to $\vec{\lambda}$ for ``free'', i.e. without changing the degree. 
\end{ex}
\begin{defn}\label{defn-idemdots}(\textbf{Dot placement associated to $\vec{\lambda}$})
Given a $3$-multipartition $\vec{\lambda}$ as in Definition~\ref{defn-idem} together with its associated idempotent $e(\vec{\lambda})$ and its LT-generators
\[
\mathrm{LT}(\vec{\lambda})=\prod_k F_{r(\vec{\lambda})_k}=F_{r(\vec{\lambda})_{c(S)}}\cdot\ldots\cdot F_{r(\vec{\lambda})_1}.
\]
Recall that each of the $F_{r(\vec{\lambda})_k}$ corresponds to a ladder-move (with some possible deleted edges) on the level of webs. Therefore, each $F_{r(\vec{\lambda})_k}$ corresponds to a
\[
\phantom{(m_k)}F_{r(\vec{\lambda})_k}\rightsquigarrow
\xy
(0,3)*{\includegraphics[scale=0.7]{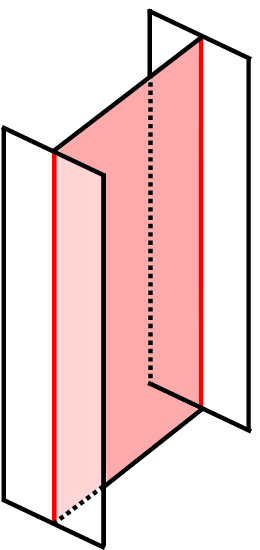}}
\endxy
\]
for the foam $e(\vec{\lambda})$. We place \textit{$m_k$ dots on $F_{r(\vec{\lambda})_k}$ on the middle facet}
\[
F_{r(\vec{\lambda})_k}(m_k)\rightsquigarrow
\xy
(0,3.5)*{\includegraphics[scale=0.7]{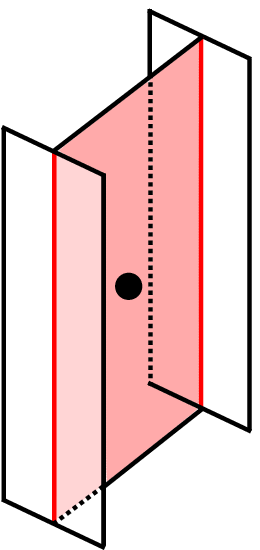}};
(2,5.5)*{m_k}
\endxy
\]
where $m_k=\mathsf{A}^{^{k\succ N}}(T_{\vec{\lambda}^k})$, i.e. is the number of \textit{addable nodes} after the node $N$ with entry $k$ in $T_{\vec{\lambda}^k}$ with the same residue as the node $N$. We denote the dotted idempotent associated to $\vec{\lambda}$ by $e(\vec{\lambda})d(\vec{\lambda})$, where
\[
d(\vec{\lambda})=\prod_k d_k^{m_k}
\]
should indicate the number of dots on the different facets.
\end{defn}
It should be noted that it is not clear that $e(\vec{\lambda})d(\vec{\lambda})$ is well-defined. We show this fact in a lemma below, but first we give an example.
\begin{ex}\label{ex-dots}
We consider the $3$-multipartition from Example~\ref{ex-tabl2} part (b) again, i.e.
\[
\vec{\lambda}=\left(\xy(0,0)*{\begin{Young}&\cr\cr\end{Young}}\endxy\;,\;\xy(0,0)*{\begin{Young}\cr\end{Young}}\endxy\;,\;\xy(0,0)*{\begin{Young}&\cr\cr\end{Young}}\endxy \right)\;.
\]
As before, we have $\mathrm{LT}(\vec{\lambda})=F_1F_3F_2F_2F_1F_3F_2$ (that is $k=1,\dots,7=c((+,-,+,-))$) and 
\[
T_{\vec{\lambda}}=\left(\xy(0,0)*{\begin{Young}1&2\cr3\cr\end{Young}}\endxy\;,\;\xy(0,0)*{\begin{Young}4\cr\end{Young}}\endxy\;,\;\xy(0,0)*{\begin{Young}5&6\cr7\cr\end{Young}}\endxy \right)\;.
\]
Thus, we see that $m_k=0$ unless $k=1$ or $k=4$, where $m_1=2$ and $m_4=1$. Therefore, we have
\[
e(\vec{\lambda})d(\vec{\lambda})=
\xy(0,0)*{\includegraphics[scale=0.5]{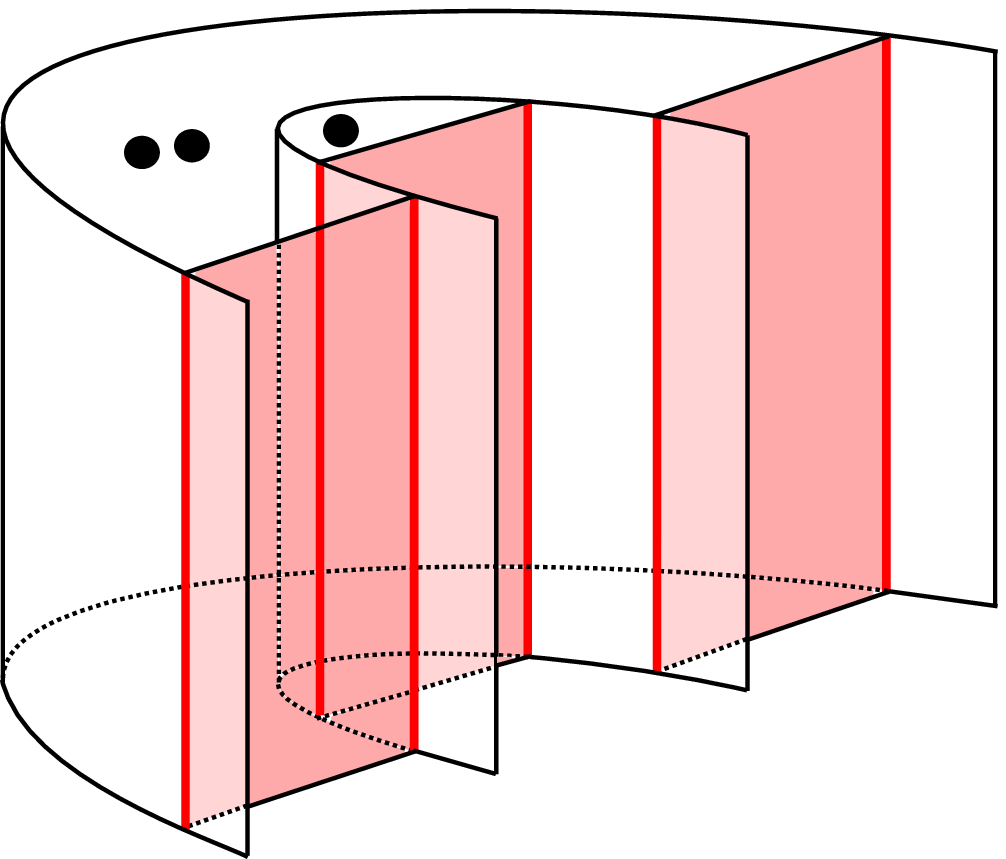}}\endxy
\]
\end{ex}
\begin{lem}\label{lem-welldefdots}
The dot placement of Definition~\ref{defn-idemdots} is well-defined. Moreover, we have
\[
e(\vec{\lambda})d(\vec{\lambda})=d(\vec{\lambda})e(\vec{\lambda})=(e(\vec{\lambda})d(\vec{\lambda}))^*.
\]
\end{lem}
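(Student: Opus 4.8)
The plan is to handle the three assertions in turn, leaning on Lemma~\ref{lem-welldefidem} and the combinatorics of Section~\ref{sec-flow}. The only substantive point is well-definedness; the other two equalities are essentially formal once one remembers that $e(\vec{\lambda})$ is literally an identity foam.

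For well-definedness, first I would note that every ingredient of Definition~\ref{defn-idemdots} is already canonical: the word $\mathrm{LT}(\vec{\lambda})=F_{r(\vec{\lambda})_{c(S)}}\cdots F_{r(\vec{\lambda})_1}$ depends only on the residue sequence $r(\vec{\lambda})$ (Definition~\ref{defn-rsequence}), whence $w(\vec{\lambda})$ and $e(\vec{\lambda})=\mathrm{Id}_{w(\vec{\lambda})}$ are fixed and nonzero by Lemma~\ref{lem-welldefidem}, and each multiplicity $m_k=|\mathsf{A}^{k\succ N}(T_{\vec{\lambda}^k})|$ depends only on $\vec{\lambda}$ and the node with entry $k$ in $T_{\vec{\lambda}}$. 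The real content is that the instruction ``place $m_k$ dots on the middle facet of the $k$-th ladder step'' makes sense, i.e.\ that this facet is an honest facet of $w(\vec{\lambda})$ (labelled $1$ or $2$, not erased) whenever $m_k>0$; granting that, the precise position of the dots on that facet is irrelevant, since dots slide freely on a facet away from singular arcs, and since $\mathrm{LT}(\vec{\lambda})$ uses no divided powers each step contributes a single fresh edge/facet. To check the claim I would run through the local moves of Definition~\ref{defn-webtotab}, using the residue/addable-node bookkeeping of Definition~\ref{defn-combinatorics4} together with Theorem~\ref{thm-fine}: for each move type one reads off whether an addable node of residue $i_k$ can lie strictly after the $k$-th node, and one sees that precisely the moves whose middle facet is erased force $m_k=0$. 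This case check is the main obstacle — routine but long. Alternatively one may bypass it entirely: $e(\vec{\lambda})d(\vec{\lambda})$ is the image under the foamation $2$-functor $\Psi$ of the unambiguous $\Ucat$-$2$-cell obtained by putting $m_k$ dots on the $k$-th strand of $\mathcal F_{r(\vec{\lambda})_{c(S)}}\cdots\mathcal F_{r(\vec{\lambda})_1}\mathbf 1_{(3^{\ell})}$, so well-definedness is immediate from Proposition~\ref{prop-actwelldef}.

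For the remaining identities, since $e(\vec{\lambda})=\mathrm{Id}\colon w(\vec{\lambda})\to w(\vec{\lambda})$ and $d(\vec{\lambda})$ is by construction that same identity foam carrying the dots, both $e(\vec{\lambda})d(\vec{\lambda})$ and $d(\vec{\lambda})e(\vec{\lambda})$ amount to stacking the product cylinder $w(\vec{\lambda})\times[0,1]$ onto (resp.\ below) the dotted cylinder; stacking with an identity foam changes nothing up to isotopy of pre-foams (the dots simply slide through the gluing level), so $e(\vec{\lambda})d(\vec{\lambda})=d(\vec{\lambda})e(\vec{\lambda})$. Finally, ${}^*$ reflects a foam across the $xy$-plane, $(x,y,t)\mapsto(x,y,1-t)$, and reorients edges afterwards; applied to $w(\vec{\lambda})\times[0,1]$ it sends each facet $E\times[0,1]$ to itself by $(e,t)\mapsto(e,1-t)$, hence preserves the number of dots on every facet, the reorientation of edges being irrelevant to dot counts. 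Since Lemma~\ref{lem-welldefidem} already gives $e(\vec{\lambda})^{*}=e(\vec{\lambda})$, we get $(e(\vec{\lambda})d(\vec{\lambda}))^{*}=e(\vec{\lambda})d(\vec{\lambda})$, which together with the previous paragraph yields the full chain of equalities.
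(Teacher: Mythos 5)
Your proposal is correct and follows essentially the paper's own argument: well-definedness reduces to the existence of the middle (rung) facet for each ladder move coming from Definition~\ref{defn-webtotab}, and the equalities $e(\vec{\lambda})d(\vec{\lambda})=d(\vec{\lambda})e(\vec{\lambda})=(e(\vec{\lambda})d(\vec{\lambda}))^*$ follow formally from the foams being identity foams together with $e(\vec{\lambda})^*=e(\vec{\lambda})$ from Lemma~\ref{lem-welldefidem}. The long case check you defer is unnecessary: since $\mathrm{LT}(\vec{\lambda})$ contains no divided powers, every rung is labelled $1$, so the middle facet is never erased --- which is exactly the short reason the paper invokes.
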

\begin{proof}
To see that the dot placement is well-defined it suffices to show that the middle facet always exists, i.e. it is not removed because its corresponding number is $0$ or $3$. That this is indeed the case follows from the fact that all possible ladder-moves in our convention from Definition~\ref{defn-webtotab}.

The second statement follows directly from the definition of the involution ${}^*$, the Lemma~\ref{lem-welldefidem} and the fact that the corresponding foams are just identity foams.
\end{proof}
\vspace*{0.15cm}

It should be noted that $S_{k-1}$ acts on the set of strings of $F$'s of length $k$ with a fixed number of occurrences of the $F$'s by defining the action of the $j$-th transposition $\tau_j$ by exchanging the neighboring entries $j-1$ and $j$ reading from right to left. We denote a transposition $\tau_j$ that exchanges a $F_{a}$ and a $F_{b}$ by $\tau_j(a,b)$, i.e.
\[
\tau_j(a,b)\cdot(F_k \cdots \underbrace{F_{b}F_{a}}_{\text{pos. }j} \cdots F_1)=F_k\cdots \underbrace{F_{a}F_{b}}_{\text{pos. }j} \cdots F_1.
\]
\begin{defn}\label{defn-foamLT}(\textbf{Foam between LT-generators}) Given two strings of LT-generators
\[
\mathrm{LT}_1=\prod_k F_{i_k}\;\;\text{ and }\;\;\mathrm{LT}_2=\prod_k F_{i^{\prime}_k}
\]
such that $\mathrm{LT}_1$ and $\mathrm{LT}_2$ differ only by a permutation $\sigma\in S_{k-1}$ of their $F$'s. Let $w_1=\mathrm{LT}_1v_{3^{\ell}}$ and $w_2=\mathrm{LT}_2v_{3^{\ell}}$ be the corresponding webs obtained by $q$-skew Howe duality. We assume that $\sigma\in S_{k-1}$ is already decomposed into a string of transpositions
\[
\sigma=\tau_{i_1}\dots\tau_{i_l},
\]
such that $\sigma \mathrm{LT}_1=\mathrm{LT}_2$. Then we associate to $\mathrm{LT}_1$, $\mathrm{LT}_2$ and $\sigma$ a \textit{foam}
\[
\mathcal F_{\sigma}(\mathrm{LT}_1,\mathrm{LT}_2)\colon w_1\to w_2,\;\; \mathcal F_{\sigma}(\mathrm{LT}_1,\mathrm{LT}_2)=\mathcal F(\tau_{i_1}(a_{i_1},b_{i_1}))\circ\cdots \circ\mathcal F(\tau_{i_l}(a_{i_l},b_{i_l}))\circ\mathrm{Id}_{w_1}
\]
by composing the identity $\mathrm{Id}_{w_1}$ on $w_1$ from the bottom with the foams
\[
\xy
(-40,22.5)*{\mathcal F(\tau_{i}(a_{i},b_{i}))=};
(-40,0)*{\includegraphics[scale=0.5]{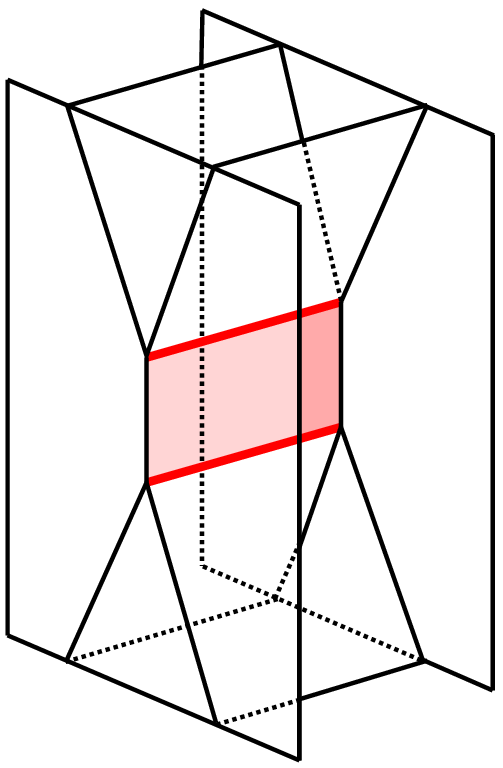}};
(-47.5,18)*{a_i};
(-40,-22.5)*{a_i=b_i};
(-55,0)*{-};
(0,22.5)*{\mathcal F(\tau_{i}(a_{i},b_{i}))=};
(0,0)*{\includegraphics[scale=0.5]{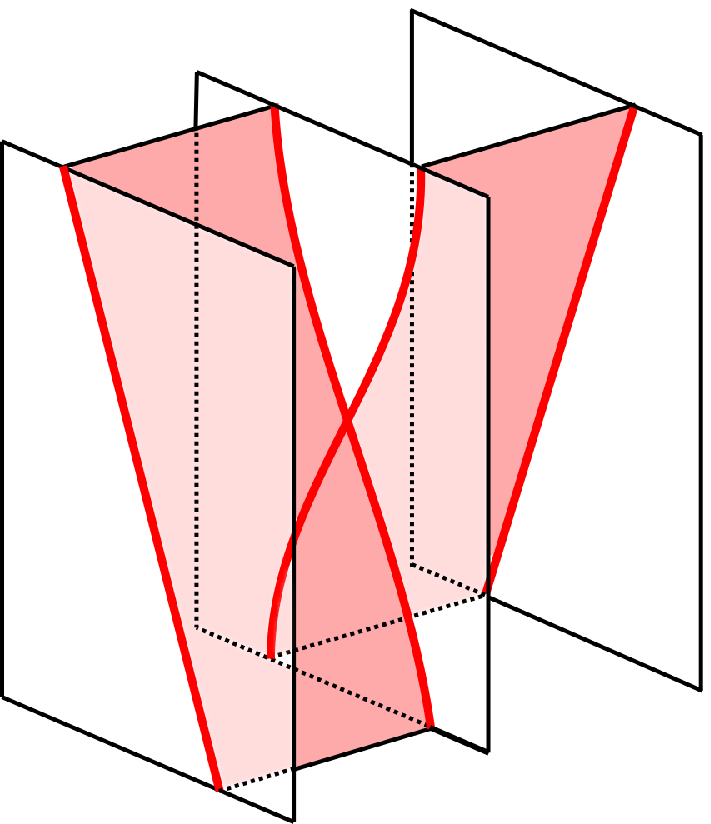}};
(-13.5,16)*{b_i};
(-1,18)*{a_i};
(0,-22.5)*{a_i=b_i+1};
(-20,0)*{\varepsilon};
(40,22.5)*{\mathcal F(\tau_{i}(a_{i},b_{i}))=};
(40,0)*{\includegraphics[scale=0.5]{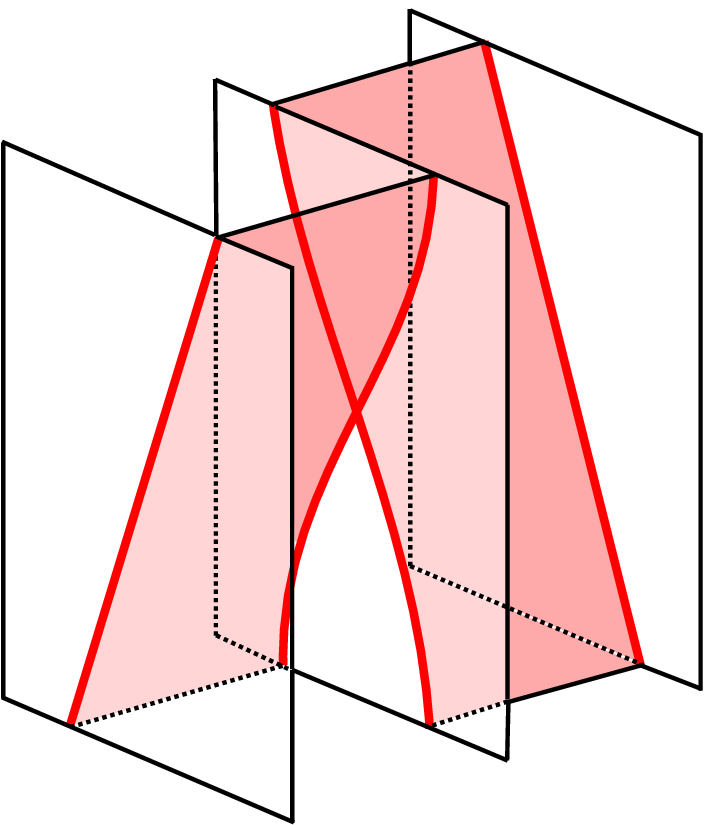}};
(26.5,16)*{a_i};
(38.5,19.25)*{b_i};
(40,-22.5)*{a_i+1=b_i};
\endxy
\]
and
\[
\xy
(-30,25)*{\mathcal F(\tau_{i}(a_{i},b_{i}))=};
(-30,0)*{\includegraphics[scale=0.5]{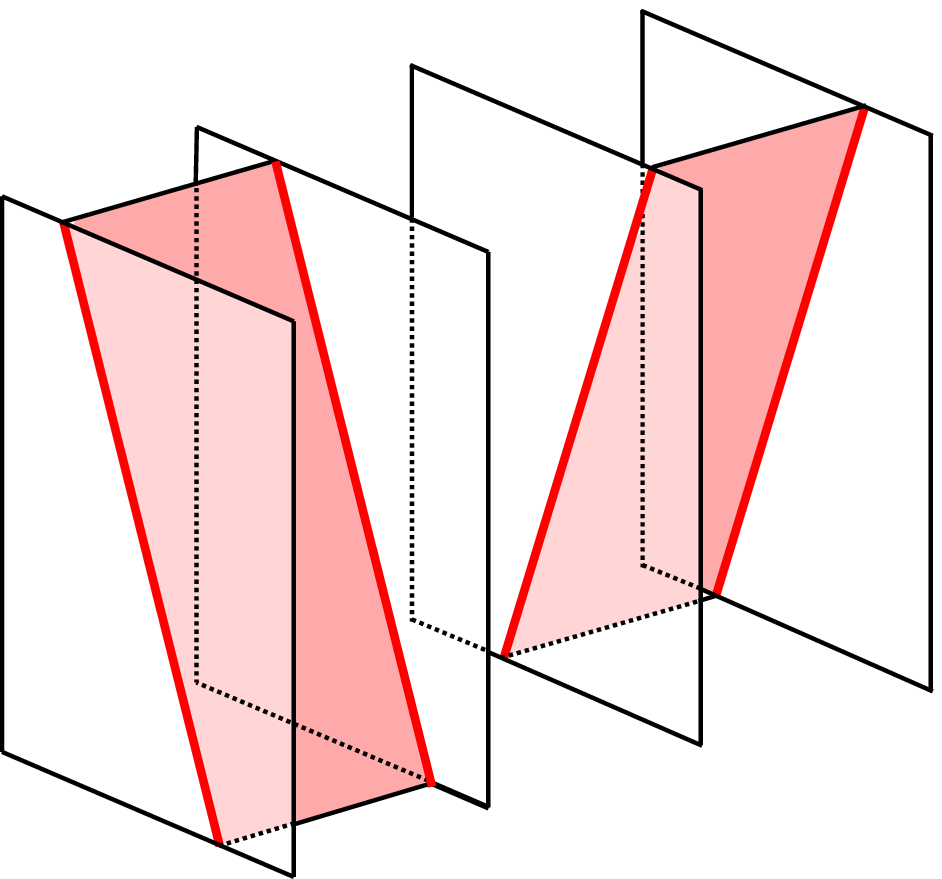}};
(-47.5,15)*{b_i};
(-25,18.5)*{a_i};
(-30,-25)*{a_i-b_i>1};
(30,25)*{\mathcal F(\tau_{i}(a_{i},b_{i}))=};
(30,0)*{\includegraphics[scale=0.5]{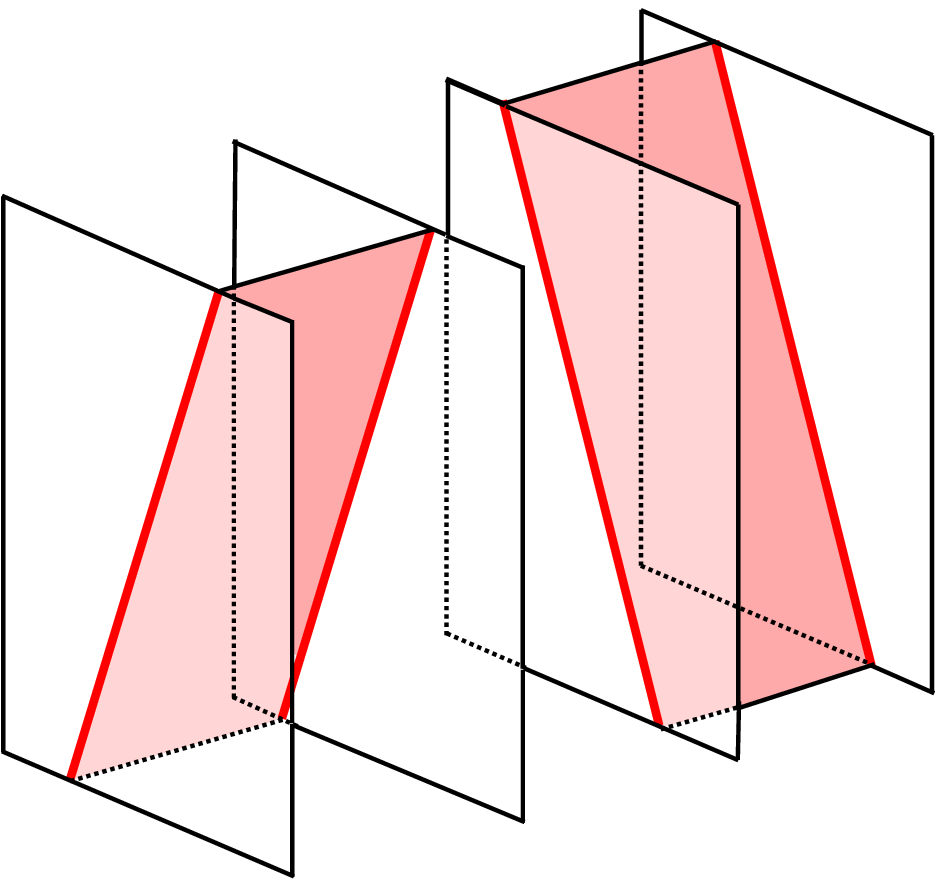}};
(12,13)*{a_i};
(35,20.75)*{b_i};
(30,-25)*{a_i-b_i<-1};
\endxy
\]
where the not pictured parts should be the identity and we use the usual rules to read these foams and the usual signs $\varepsilon\in\{+,-\}$, i.e. the ones given by categorified $q$-skew Howe duality. Note that, by our conventions, this procedure really gives a foam from $w_1$ to $w_2$.
\vspace*{0.25cm}

The action defined above can be stated in a different way just on the level of tableaux without multiple entries. That is, the group $S_{k-1}$ acts on the set of all $3$-multitableaux $\mathrm{Std}(\vec{\lambda})\cup\{\ast\}$ by defining the action of the transposition $\tau_j$ by
\[
\tau_j(\vec{T}_1)=\begin{cases}\vec{T}_2=\vec{T}_1(j\mapsto j+1,\;j+1\mapsto j), &\text{if the result is a standard }3\text{-multitableau},\\ \ast, &\text{otherwise}.\end{cases}
\]
\vspace*{0.15cm}

Finally, for a fixed $3$-multipartition $\vec{\lambda}$ and a corresponding $3$-multitableau $\vec{T}\in\mathrm{Std}(\vec{\lambda})$ we choose a fixed permutation $\sigma\in S_{k-1}$ such that 
\[
\sigma^{-1}(\vec{T})=T_{\vec{\lambda}}
\]
by searching for the lowest $j\in\{1,\dots,k\}$ such that the node $N$ with entry $k$ in $\vec{T}$ is not the same as the node $N^{\prime}$ with entry $k$ in $T_{\vec{\lambda}}$. Apply a minimal sequence of transpositions until they match and repeat the process until $\sigma^{-1}(\vec{T})=T_{\vec{\lambda}}$. By construction, the permutation $\sigma\in S_{k-1}$ will be of minimal length with respect to the property $\sigma^{-1}(\vec{T})=T_{\vec{\lambda}}$.

It is worth noting that we put a $\sigma^{-1}$ here because we usually want a foam with a web associated to $T_{\vec{\lambda}}$ at the top. One easily verifies that
\[
\sigma(\vec{T}_1)=\vec{T}_2
\Leftrightarrow\sigma^{-1}(\vec{T}_2)=\vec{T}_1\;\;\text{for all }\vec{T}_1,\vec{T}_2\in\mathrm{Std}(\vec{\lambda}),\;\sigma\in S_{k-1}.
\]
We denote the foam associated to this permutation $\sigma$ by $\mathcal F_{\sigma}$. 
\end{defn}
\begin{lem}\label{lem-actweldef}
The definition of the foam $\mathcal F_{\sigma}(\mathrm{LT}_1,\mathrm{LT}_2)$ is well-defined, i.e. one does not run into ambiguities and the resulting foam is a foam between $w_1$ and $w_2$. Moreover, the definition via the LT-generators and via actions on $3$-multitableaux agree.
\end{lem}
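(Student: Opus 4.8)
The plan is to reduce everything to the fact, Proposition~\ref{prop-actwelldef}, that foamation $\Psi\colon\Ucat\to{\mathcal W}^{(p)}_{(3^{\ell})}$ is a strong $2$-representation, so that each local foam $\mathcal F(\tau_i(a_i,b_i))$ is literally the $\Psi$-image of an upward crossing $2$-cell of $\Ucat$, and composition of these foams is the $\Psi$-image of vertical composition of $2$-cells. First I would check that the five displayed local pictures are \emph{exhaustive and mutually exclusive}: a pair of colours $a_i,b_i\in\{1,\dots,n-1\}$ satisfies exactly one of $a_i=b_i$, $a_i=b_i+1$, $a_i+1=b_i$, $a_i-b_i>1$, $a_i-b_i<-1$, and in each case the picture is precisely the image under $\Psi$ of the corresponding up-crossing listed in the ``On $2$-cells'' part of Definition~\ref{defn-foamation} (the $i,i$ crossing, the $i,i+1$ and $i+1,i$ crossings, the $i,j$ and $j,i$ crossings), evaluated at the weight $\lambda$ sitting at the relevant place of the string — this is exactly what fixes the sign $\varepsilon$. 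Hence each $\mathcal F(\tau_i(a_i,b_i))$ is a genuine, unambiguously defined foam between $\Psi(\cdots\mathcal E_{b_i}\mathcal E_{a_i}\cdots\onel)$ and $\Psi(\cdots\mathcal E_{a_i}\mathcal E_{b_i}\cdots\onel)$.

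Next, composability and the source/target claim. Running $\tau_{i_1},\dots,\tau_{i_l}$ against $\mathrm{LT}_1$ produces a chain of strings $\mathrm{LT}_1=\mathrm{LT}^{(0)},\dots,\mathrm{LT}^{(l)}=\mathrm{LT}_2$, with associated webs $w^{(t)}=\mathrm{LT}^{(t)}v_{3^{\ell}}$ obtained by $q$-skew Howe duality. By the previous paragraph, whiskering $\mathcal F(\tau_{i_{t+1}}(a_{i_{t+1}},b_{i_{t+1}}))$ by the appropriate identity $2$-cells on the untouched $F$'s yields a foam $w^{(t)}\to w^{(t+1)}$ — in particular its bottom boundary is exactly $w^{(t)}$, since $\Psi$ is a $2$-functor — so the composite $\mathcal F(\tau_{i_1})\circ\cdots\circ\mathcal F(\tau_{i_l})\circ\mathrm{Id}_{w_1}$ never ``gets stuck'' and is a foam $w_1=w^{(0)}\to w^{(l)}=w_2$. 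The only bookkeeping is that the colours $a_{i_{t+1}},b_{i_{t+1}}$ at step $t+1$ are read off the current string $\mathrm{LT}^{(t)}$, which is precisely how they are defined. For the distinguished $\sigma$ coming from the greedy ``lowest disagreeing entry'' procedure I would observe that the procedure terminates: each round strictly lengthens the initial segment of entries on which $\vec T$ and $T_{\vec\lambda}$ agree, so after finitely many rounds $\sigma^{-1}(\vec T)=T_{\vec\lambda}$, and minimality of $\ell(\sigma)$ is built into taking a shortest sequence of transpositions in each round; fixing once and for all a reduced word for $\sigma$ then makes $\mathcal F_{\sigma}$ a definite foam. (That a different reduced word changes $\mathcal F_{\sigma}$ only up to the foam relations is not needed here — it is handled in Section~\ref{sec-basis}; for the present lemma we only need existence of such a $\sigma$ and a foam realising it.)

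Finally, the agreement of the two descriptions of the $S_{k-1}$-action. Under the identifications of Section~\ref{sec-compare} (a web $\leftrightarrow$ its LT-generators) and of Sections~\ref{sec-flow}--\ref{sec-exgrowth} (a web with flow $\leftrightarrow$ a $3$-multitableau via $\iota$), step number $j$ of the LT/growth process contributes the node labelled $j$, and the \emph{colour} $i_j$ of the $j$-th generator determines, through the rules of Definition~\ref{defn-webtotab}, the component and row — hence the residue — into which that node is placed. Therefore exchanging the generators in positions $j$ and $j+1$ exchanges the colours $i_j\leftrightarrow i_{j+1}$, hence exchanges the residues and thus the positions of the nodes labelled $j$ and $j+1$, which is exactly the tableau transposition $\tau_j(\vec T)=\vec T(j\mapsto j+1,\ j+1\mapsto j)$; and the permuted string is again the LT-generator string of a web with flow exactly when the resulting filling is still standard, so the convention $\tau_j(\vec T)=\ast$ matches the case where $F_{i_{j+1}}F_{i_j}$ cannot be legally slid to $F_{i_j}F_{i_{j+1}}$ — this is a one-step refinement of Lemma~\ref{lem-welldef}. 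The main obstacle I anticipate is precisely this last matching: one must run through the colour/residue rules of Definition~\ref{defn-webtotab} case by case and verify that ``$\tau_j$ keeps the tableau standard'' is \emph{equivalent} to ``the swap $F_{i_{j+1}}F_{i_j}\rightsquigarrow F_{i_j}F_{i_{j+1}}$ stays inside the set of LT-generator strings in play'', i.e.\ that the obstruction is literally the same on the two sides. Everything else is a formal consequence of $\Psi$ being a $2$-representation.
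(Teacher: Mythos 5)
Your proposal is correct and, in substance, follows the same route as the paper's proof: both build $\mathcal F_{\sigma}(\mathrm{LT}_1,\mathrm{LT}_2)$ stepwise along the chain of intermediate strings, and both match the two actions via the observation that the residue of the node labelled $j$ is exactly the colour of the $j$-th $F$, so that swapping adjacent generators corresponds to the tableau transposition and the legality conditions on the two sides coincide. The one genuine difference is how well-definedness of each step is justified: the paper argues directly from the local pictures, using Lemma~\ref{lem-ltalgo} together with the fact that length and total length agree when no divided powers occur, to see that each step is a legal move between the web above and the web below, whereas you delegate this to Proposition~\ref{prop-actwelldef} by reading each $\mathcal F(\tau_i(a_i,b_i))$ as the foamation image of a crossing $2$-cell, so that composability and the boundary-matching come for free from $2$-functoriality; this is a legitimate and somewhat more structural shortcut. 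The one point you should keep explicit rather than merely flag as an anticipated obstacle is the final matching that the chosen $\sigma$ never passes through a non-standard tableau, equivalently through an illegal intermediate string of generators --- this is what guarantees that all intermediate webs are genuine (not sent to zero) and that the composite foam is not degenerate; the paper records exactly this in the last sentence of its proof, at a comparable level of detail to yours, so your argument is acceptable as it stands.
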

\begin{proof}
To see that the definition is well-defined, note that, using Lemma~\ref{lem-ltalgo} and the observation that, without divided powers, the total length and the length agree, each step of the action of $\mathcal F_{\sigma}(\mathrm{LT}_1,\mathrm{LT}_2)$ on the top web changes the boundary of the web to the bottom boundary as indicated in the pictures above. Thus, each step is a legal move between a web at the top and a web at the bottom. That is, starting from $w_1$ at the top, one builds inductively a foam going stepwise from $w_1$ to $w_2$, i.e.
\[
w_1\mapsto w_{i_l}\mapsto w_{i_{l-1}}\mapsto\cdots\mapsto w_{i_2}\mapsto w_{i_1}\mapsto w_2,
\]
where each step is well-defined and one of the foams pictured above. Hence, we get a well-defined foam
\[
\mathcal F_{\sigma}(\mathrm{LT}_1,\mathrm{LT}_2)\colon w_1\to w_2.
\]
\vspace*{0.15cm}

To see that the two actions agree we note again that the residue $r(N)$ of the node $N$ with entry $k$ is exactly the $F_{r(N)}$. Thus, reading the two residue sequences for $\vec{T}$ and $T_{\vec{\lambda}}$, we see that they give rise to a web on the bottom and the top and the action on $3$-multitableaux is exactly defined such that the result of the action of $\tau_j$ depends in the same way on the residue of the two nodes $N_j$ and $N_{j+1}$ as the action via foams depends on the difference $a_i-b_i$. Then one repeats the inductive argument from before, that is, a direct comparison of the LT-generators in each step shows that each step is a legal permutation and the end result are exactly the LT-generators associated to $w_2$.

It should be noted that this also shows that our rules to define $\sigma$ ensure that the action of the transpositions of $\sigma$ on the tableaux never produce a non-standard tableau.
\end{proof}
\begin{defn}\label{defn-foamLT2}(\textbf{Foam for $u_f\in B^J_S$})
Given a non-elliptic web with a flow $u_f\in B^J_S$, we associate to it a foam
\[
\mathcal F_{u_f}\colon u_f\to w(\vec{\lambda}),
\]
where $\vec{\lambda}$ is the corresponding boundary datum and $w(\vec{\lambda})$ is as in Definition~\ref{defn-idem}, in the following way. Change the $3$-multitableau $\iota(u_f)$ associated to $u_f$ by replacing the lowest multiple entry $j$ of multiplicity $j^{\prime}$ of $\iota(u_f)$ decreasing from left to right with consecutive numbers $j,\dots,j+j^{\prime}$ and shift all other entries by $j^{\prime}$. Repeat until no multiple entries occur and obtain $\iota(u_f)^{\prime}$, see~\ref{eq-divtab}. Set
\[
\mathcal F_{u_f}=\mathcal F_{\mathrm{R}}\circ \mathcal F_{\sigma}(\iota(u_f)^{\prime},T_{\vec{\lambda}})\colon u_f\to w(\vec{\lambda}),
\]
with $\mathcal F_{\sigma}(\iota(u_f)^{\prime},T_{\vec{\lambda}})$ for the LT-generators $\mathrm{LT}_{1,2}$ corresponding to $\iota(u_f)^{\prime}$ and $T_{\vec{\lambda}}$ respectively. The foam $\mathcal F_{\mathrm{R}}$ is given by removing the generated internal digons (pictured below~\ref{eq-invo}) with a dot-free digon removal and the internal theta-foams with a dot-free identity. It should be noted that these are the only possible differences for the webs associated to $g(\iota(u_f))$ and $g(\iota(u_f)^{\prime})$.
\end{defn}
\begin{ex}\label{ex-action}
If we consider Example~\ref{ex-tabl2} part (b) again, that is
\[
\vec{\lambda}=\left(\xy(0,0)*{\begin{Young}&\cr\cr\end{Young}}\endxy\;,\;\xy(0,0)*{\begin{Young}\cr\end{Young}}\endxy\;,\;\xy(0,0)*{\begin{Young}&\cr\cr\end{Young}}\endxy \right),
\]
and consider the right example $u_f$, then we have
\[
T_{\vec{\lambda}}=\left(\;\xy(0,0)*{\begin{Young}1&2\cr3\cr\end{Young}}\endxy\;,\;\xy(0,0)*{\begin{Young}4\cr\end{Young}}\endxy\;,\;\xy(0,0)*{\begin{Young}5&6\cr7\cr\end{Young}}\endxy\;\right)\,\iota(u_f)=\left(\;\xy(0,0)*{\begin{Young}1 & 2\cr 4\cr\end{Young}}\endxy\;,\;\xy(0,0)*{\begin{Young}3\cr\end{Young}}\endxy\;,\;\xy(0,0)*{\begin{Young}1 & 2\cr 4\cr\end{Young}}\endxy\;\right)\,\iota(u_f)^{\prime}=\left(\;\xy(0,0)*{\begin{Young}1 & 3\cr 6\cr\end{Young}}\endxy\;,\;\xy(0,0)*{\begin{Young}5\cr\end{Young}}\endxy\;,\;\xy(0,0)*{\begin{Young}2 & 4\cr 7\cr\end{Young}}\endxy\;\right).
\]
Therefore, $\sigma\in S_6$ is given by $\sigma=\tau_4\tau_5\tau_3\tau_4\tau_5\tau_2$, since
\begin{align*}
\iota(u_f)^{\prime}=\left(\;\xy(0,0)*{\begin{Young}1&3\cr 6\cr\end{Young}}\endxy\;,\;\xy(0,0)*{\begin{Young}5\cr\end{Young}}\endxy\;,\;\xy(0,0)*{\begin{Young}2&4\cr 7\cr\end{Young}}\endxy\;\right)&\stackrel{\tau_2}{\mapsto}\left(\;\xy(0,0)*{\begin{Young}1&2\cr 6\cr\end{Young}}\endxy\;,\;\xy(0,0)*{\begin{Young}5\cr\end{Young}}\endxy\;,\;\xy(0,0)*{\begin{Young}3&4\cr 7\cr\end{Young}}\endxy\;\right)\stackrel{\tau_5}{\mapsto}\left(\;\xy(0,0)*{\begin{Young}1&2\cr 5\cr\end{Young}}\endxy\;,\;\xy(0,0)*{\begin{Young}6\cr\end{Young}}\endxy\;,\;\xy(0,0)*{\begin{Young}3&4\cr 7\cr\end{Young}}\endxy\;\right)\\
&\stackrel{\tau_4}{\mapsto}\left(\;\xy(0,0)*{\begin{Young}1&2\cr 4\cr\end{Young}}\endxy\;,\;\xy(0,0)*{\begin{Young}6\cr\end{Young}}\endxy\;,\;\xy(0,0)*{\begin{Young}3&5\cr 7\cr\end{Young}}\endxy\;\right)\stackrel{\tau_3}{\mapsto}\left(\;\xy(0,0)*{\begin{Young}1&2\cr 3\cr\end{Young}}\endxy\;,\;\xy(0,0)*{\begin{Young}6\cr\end{Young}}\endxy\;,\;\xy(0,0)*{\begin{Young}4&5\cr 7\cr\end{Young}}\endxy\;\right)\\
&\stackrel{\tau_5}{\mapsto}\left(\;\xy(0,0)*{\begin{Young}1&2\cr 3\cr\end{Young}}\endxy\;,\;\xy(0,0)*{\begin{Young}5\cr\end{Young}}\endxy\;,\;\xy(0,0)*{\begin{Young}4&6\cr 7\cr\end{Young}}\endxy\;\right)\stackrel{\tau_4}{\mapsto}\left(\;\xy(0,0)*{\begin{Young}1&2\cr 3\cr\end{Young}}\endxy\;,\;\xy(0,0)*{\begin{Young}4\cr\end{Young}}\endxy\;,\;\xy(0,0)*{\begin{Young}5&6\cr 7\cr\end{Young}}\endxy\;\right)=T_{\vec{\lambda}}.
\end{align*}
This can be reinterpreted as an action on the two LT-strings, that is (reading backwards now)
\begin{align*}
\mathrm{LT}(T_{\vec{\lambda}})=F_1F_3F_2F_2F_1F_3F_2&\stackrel{\tau_4}{\mapsto}F_1F_3F_2F_2F_1F_3F_2\stackrel{\tau_5}{\mapsto}F_1F_2F_3F_2F_1F_3F_2\\
&\stackrel{\tau_3}{\mapsto}F_1F_2F_3F_1F_2F_3F_2\stackrel{\tau_4}{\mapsto}F_1F_2F_1F_3F_2F_3F_2\\
&\stackrel{\tau_5}{\mapsto}F_1F_1F_2F_3F_2F_3F_2\stackrel{\tau_2}{\mapsto}F_1F_1F_2F_3F_3F_2F_2=\mathrm{LT}(\iota(u_f)^{\prime}).
\end{align*}
When we re-read the strings of $F$'s using $q$-skew Howe duality as webs, then we see how this procedure gives rise to a foam between the web associated to $\mathrm{LT}(T_{\vec{\lambda}})$ (as shown in Example~\ref{ex-idemfoam}) and the web associated to $\mathrm{LT}(\iota(u_f)^{\prime})$ as shown below. Each of the $\tau_j$ corresponds to one of the basic generators pictured in Definition~\ref{defn-foamLT} - depending on the difference of the residues of the exchanged nodes. For example, the foam associated to the last step $\tau_2$ is locally
\[
\tau_2\rightsquigarrow\xy(0,0)*{\includegraphics[scale=0.5]{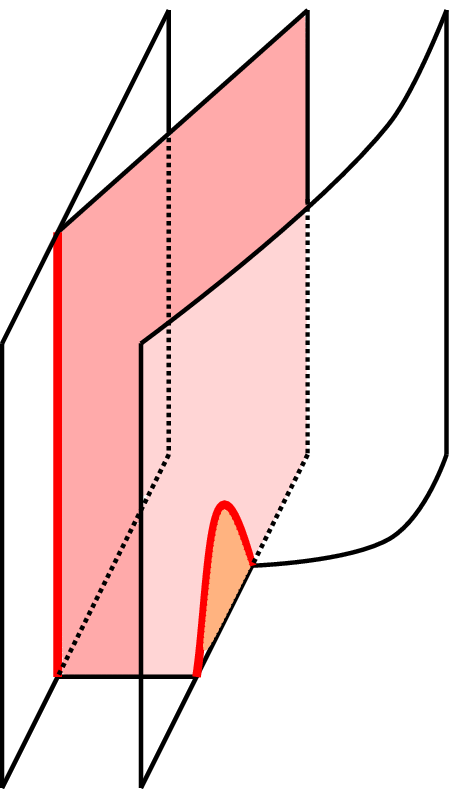}}\endxy
\]
that is a zip, and the identity outside of this local picture. The bottom of this local picture is the part of the web associated to $w_f$ marked with a square on the left and the top is shown on the right
\[
\xy
(0,0)*{\includegraphics[scale=0.5]{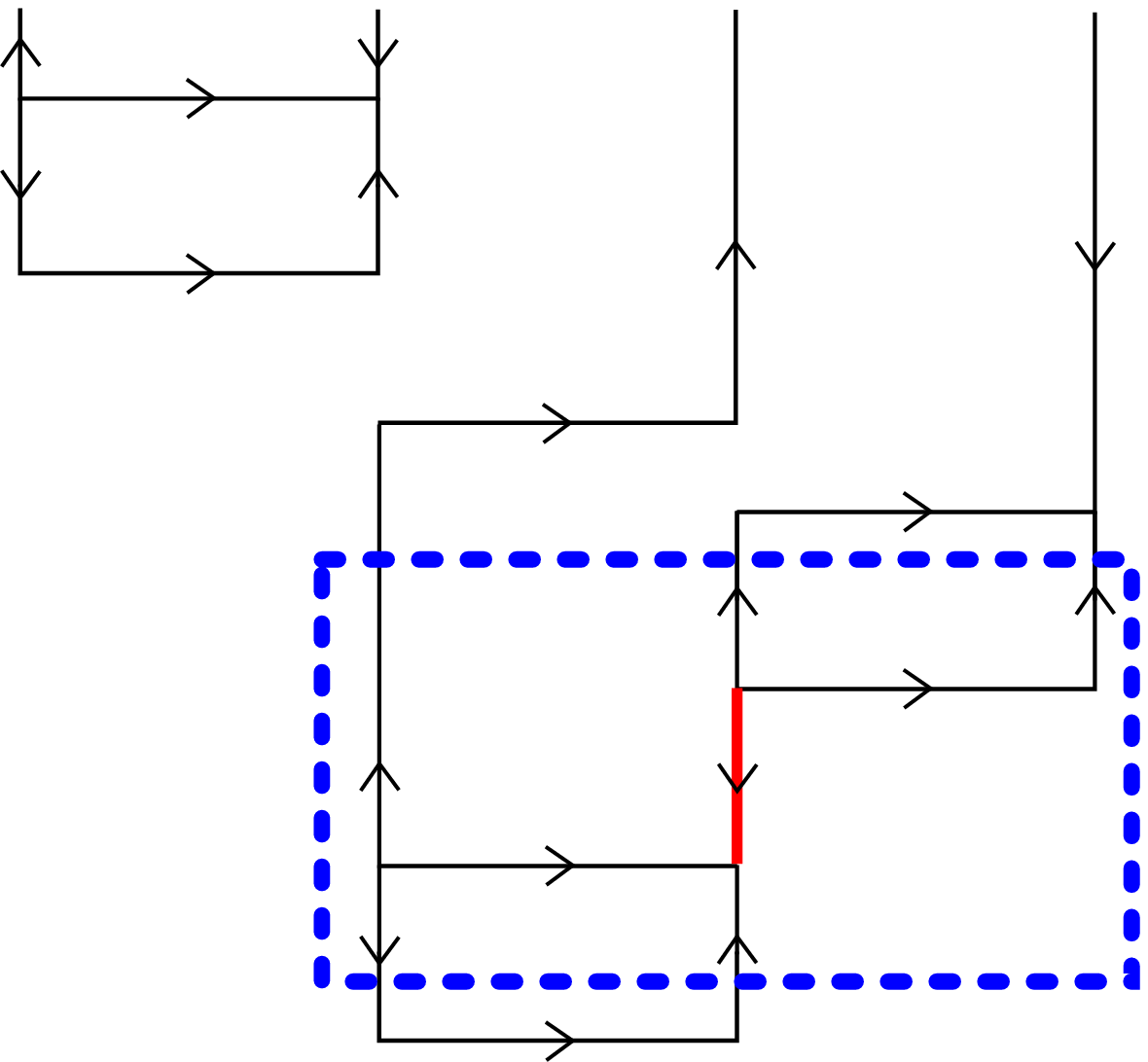}};
\endxy\hspace*{0.25cm}\mapsto\hspace*{0.25cm}
\xy
(0,0)*{\includegraphics[scale=0.5]{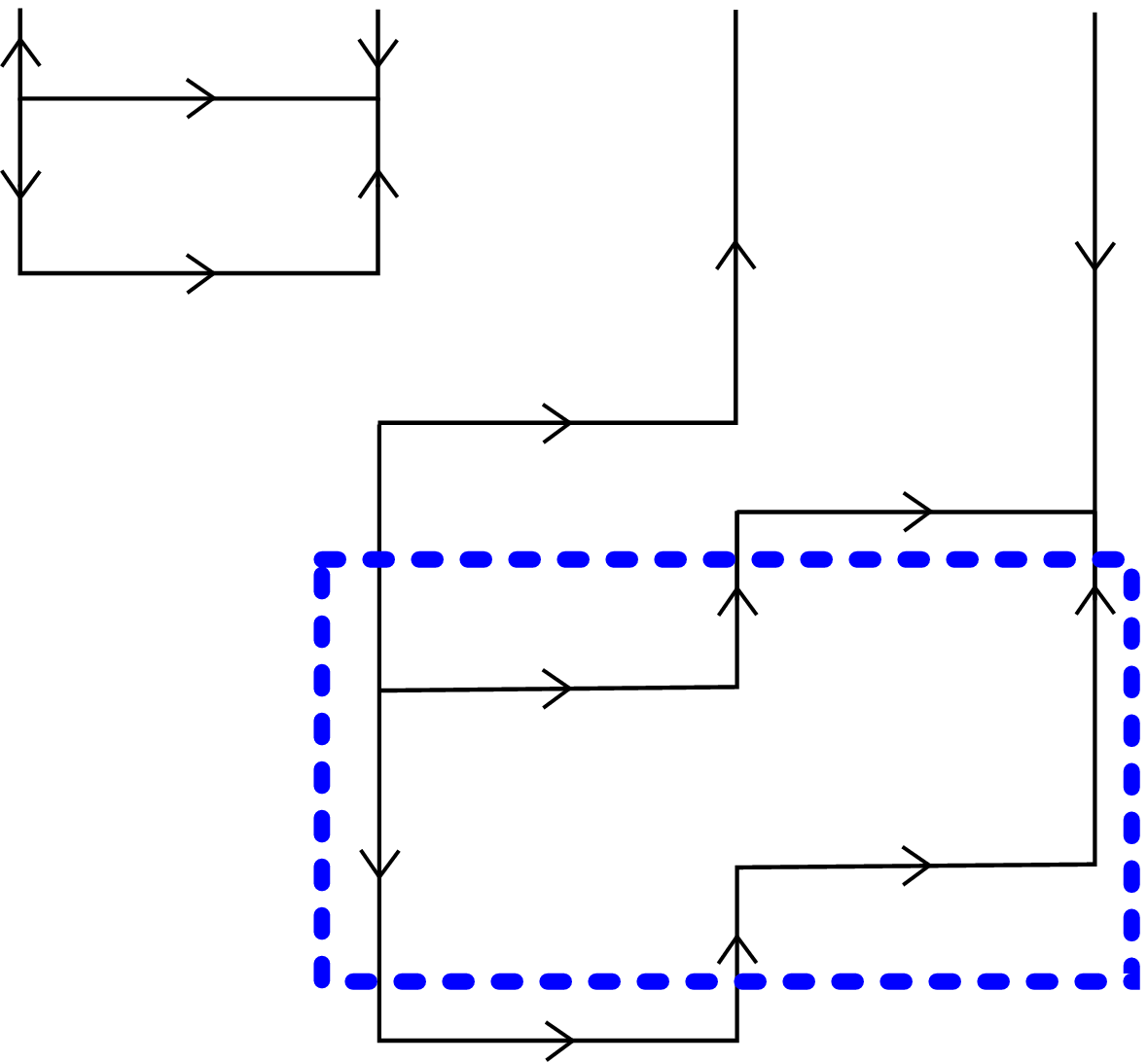}};
\endxy
\]
and unzips the marked edge. The foam $\mathcal F_{\mathrm{R}}$ then just removes the three digons and we obtain the original web from Example~\ref{ex-tabl2} part (b) back.
\end{ex}
\vspace*{0.15cm}

We are now able go give a \textit{growth algorithm for foams}. We will see later that this gives rise to a graded cellular basis in the sense of Hu and Mathas~\cite{hm}. Hence, in the spirit of Proposition~\ref{prop-sl3bases}, we can call this a \textit{categorification} of the intermediate crystal bases.
\begin{defn}\label{defn-growthfoam}(\textbf{Growth algorithm for foams}) Given a pair of a sign string and a state string $(S,J)$, the corresponding $3$-multipartition $\vec{\lambda}$ and two Kuperberg webs $u,v\in B_S$ that extend $J$ to $u_f$ and $v_{f^\prime}$ receptively.

We define a foam following Definition~\ref{defn-foamLT2}
\[
\mathcal F^{\vec{\lambda}}_{\iota(u_f),\iota(v_{f^{\prime}})}\colon u\to v
\]
by
\[
\mathcal F^{\vec{\lambda}}_{\iota(u_f),\iota(v_{f^{\prime}})}=\mathcal F_{u_f}e(\vec{\lambda})d(\vec{\lambda})\mathcal F_{v_{f^{\prime}}}^*.
\]
\end{defn}
\begin{ex}\label{ex-bigfoam}
Let us consider the following example. The web is the right theta-foam from Example~\ref{ex-totlength}. Note that the Kuperberg bracket gives $[2][3]=q^{-3}+2q^{-1}+2q+q^3$. All the six different flows are indicated below.
\[
\xy
 (0,0)*{\includegraphics[width=80px]{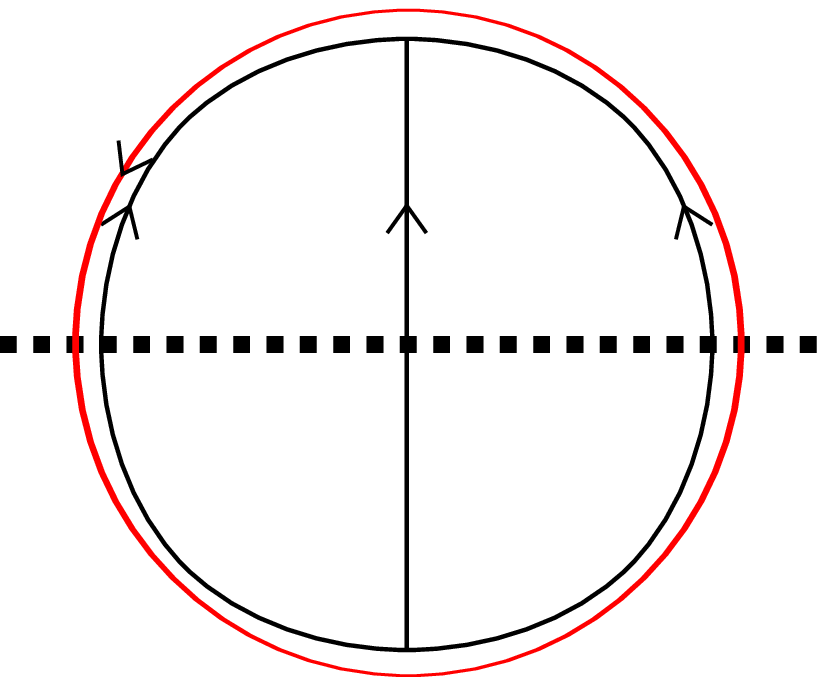}};
 (0,-13.5)*{\scriptstyle \mathrm{wt} = 3};
 (0,-26)*{\includegraphics[width=80px]{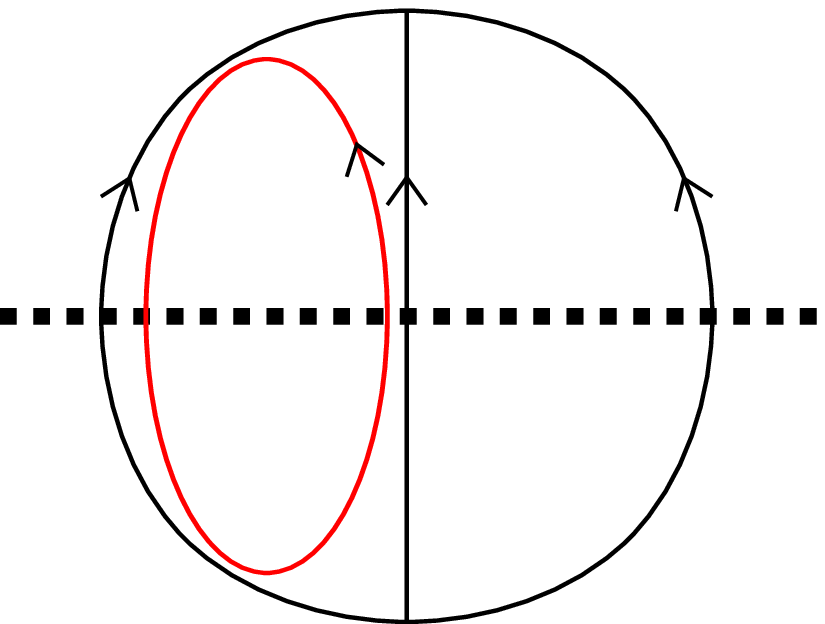}};
 (0,-39)*{\scriptstyle \mathrm{wt} = 1};
 (0,-52)*{\includegraphics[width=80px]{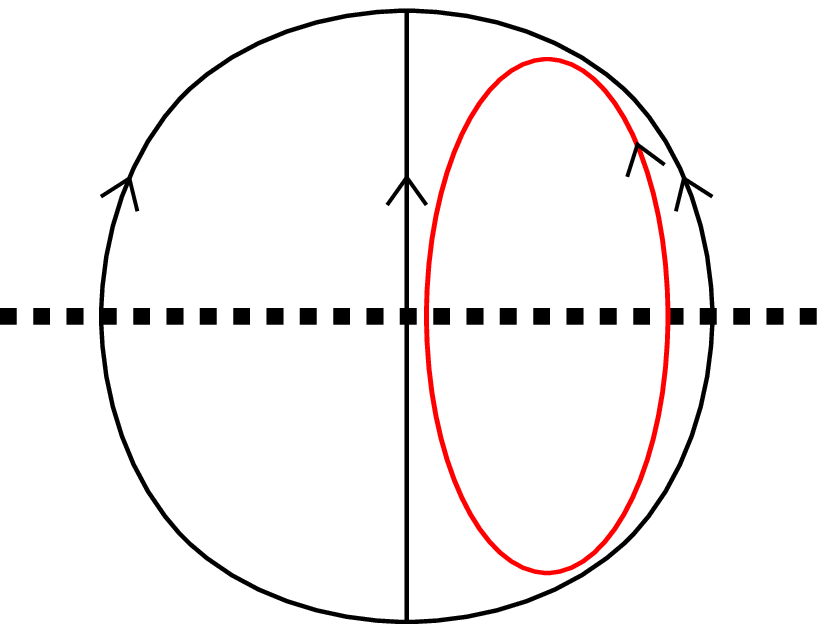}};
 (0,-64.5)*{\scriptstyle \mathrm{wt} = 1};
\endxy\;\;\;\;
\xy
 (0,0)*{\includegraphics[width=80px]{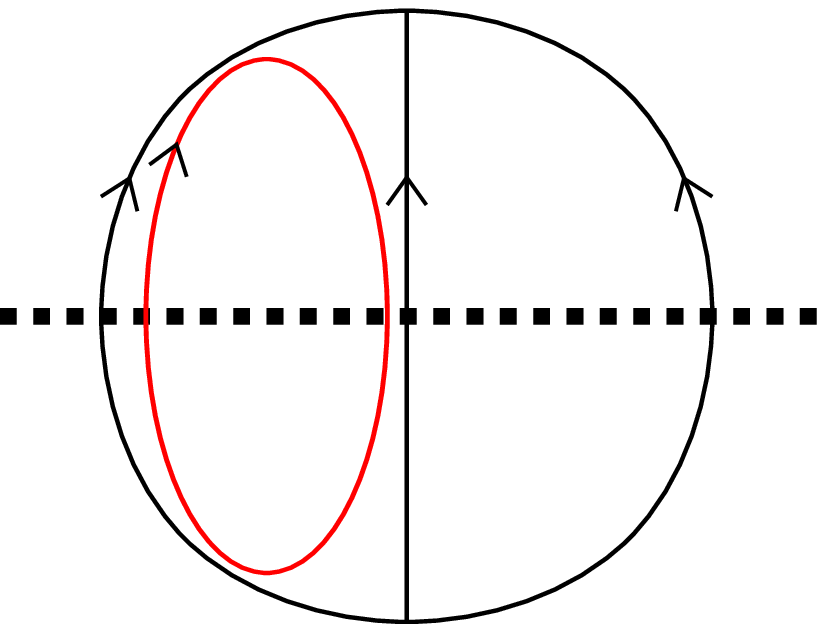}};
 (0,-13.5)*{\scriptstyle \mathrm{wt} = -1};
 (0,-26)*{\includegraphics[width=80px]{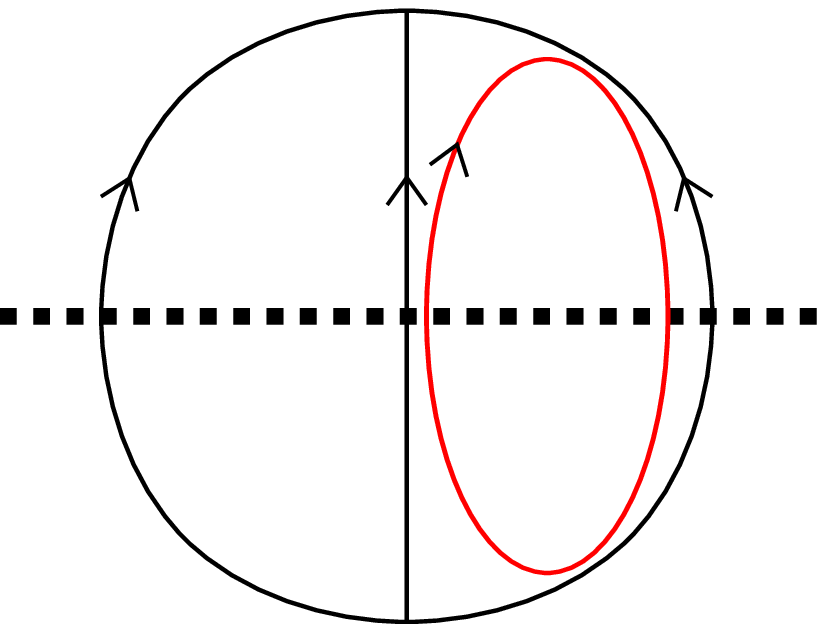}};
 (0,-39)*{\scriptstyle \mathrm{wt} = -1};
 (0,-52)*{\includegraphics[width=80px]{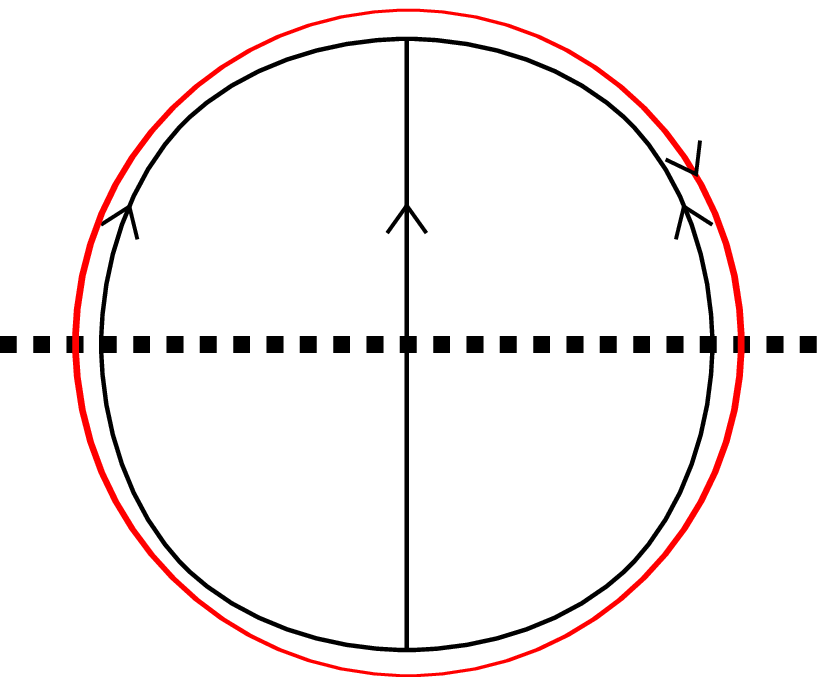}};
 (0,-64.5)*{\scriptstyle \mathrm{wt} = -3};
\endxy\;\;\;\;
\xy
 (0,0)*{\includegraphics[width=80px]{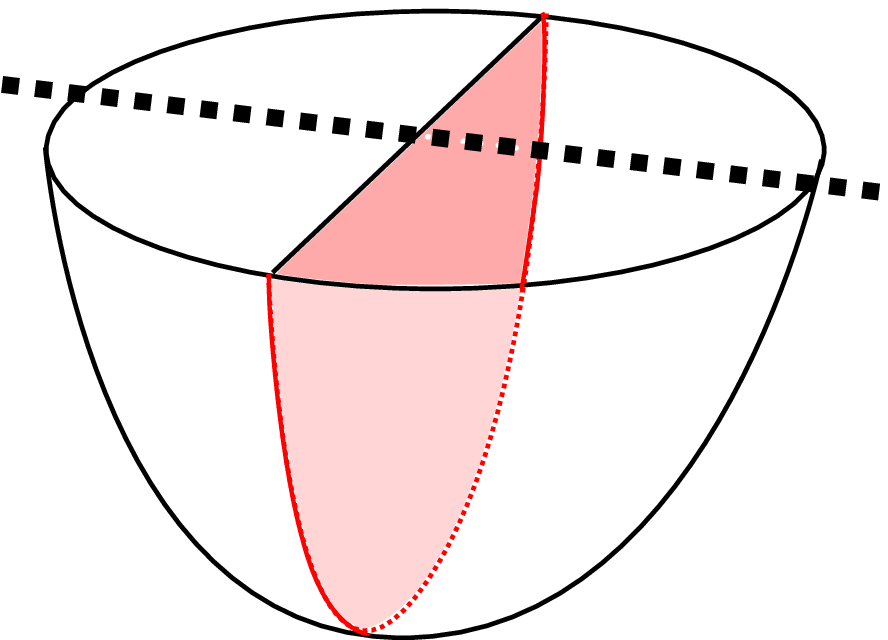}};
 (0,-13.5)*{\scriptstyle \scriptstyle \deg_q = 0};
 (0,-26)*{\includegraphics[width=80px]{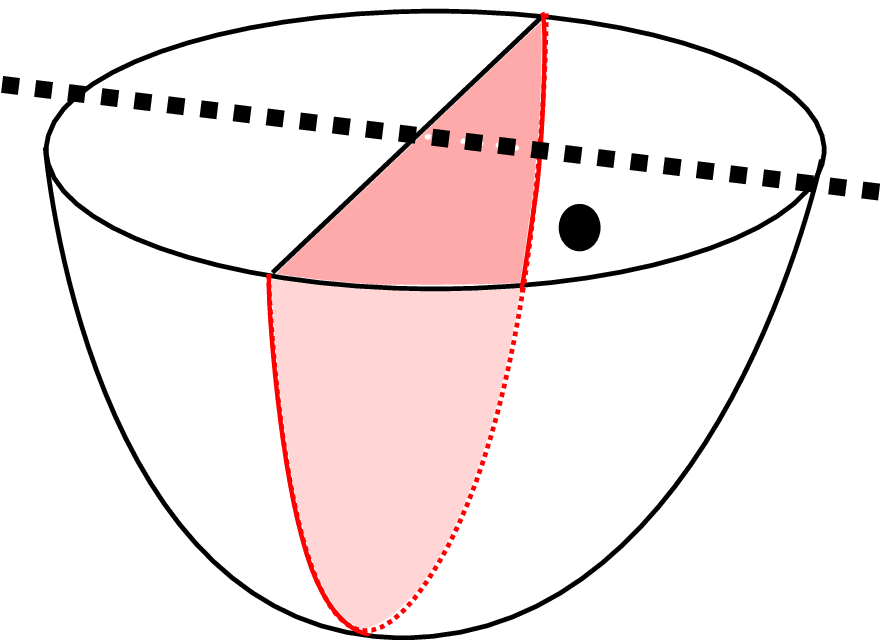}};
 (0,-39)*{\scriptstyle \scriptstyle \deg_q = 2};
 (0,-52)*{\includegraphics[width=80px]{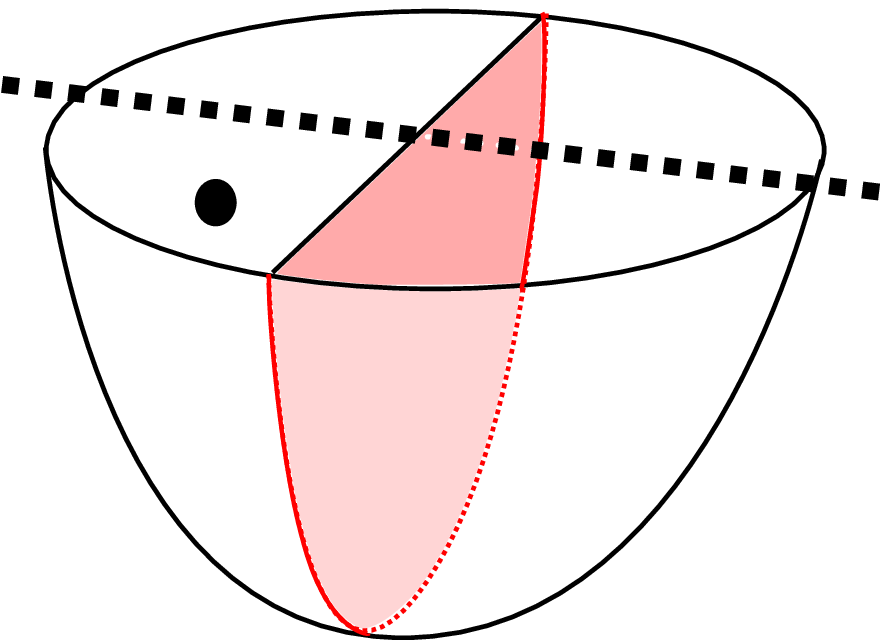}};
 (0,-64.5)*{\scriptstyle \scriptstyle \deg_q = 2};
\endxy\;\;\;\;
\xy
 (0,0)*{\includegraphics[width=80px]{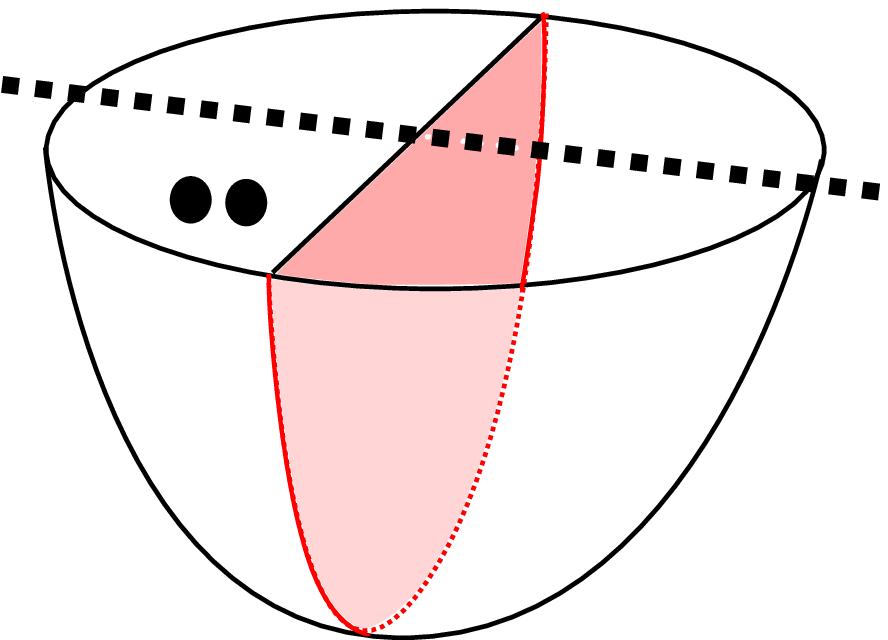}};
 (0,-13.5)*{\scriptstyle \scriptstyle \deg_q = 4};
 (0,-26)*{\includegraphics[width=80px]{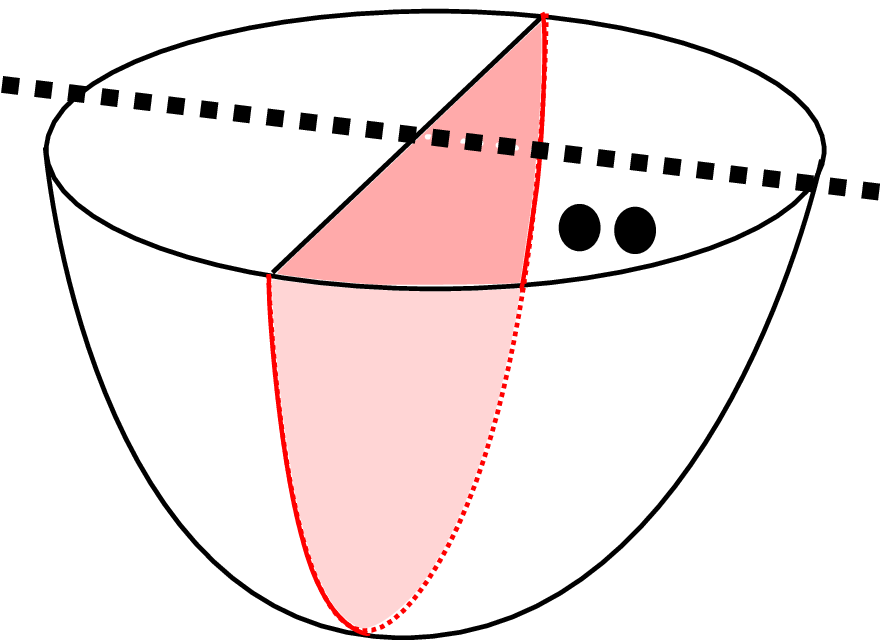}};
 (0,-39)*{\scriptstyle \scriptstyle \deg_q = 4};
 (0,-52)*{\includegraphics[width=80px]{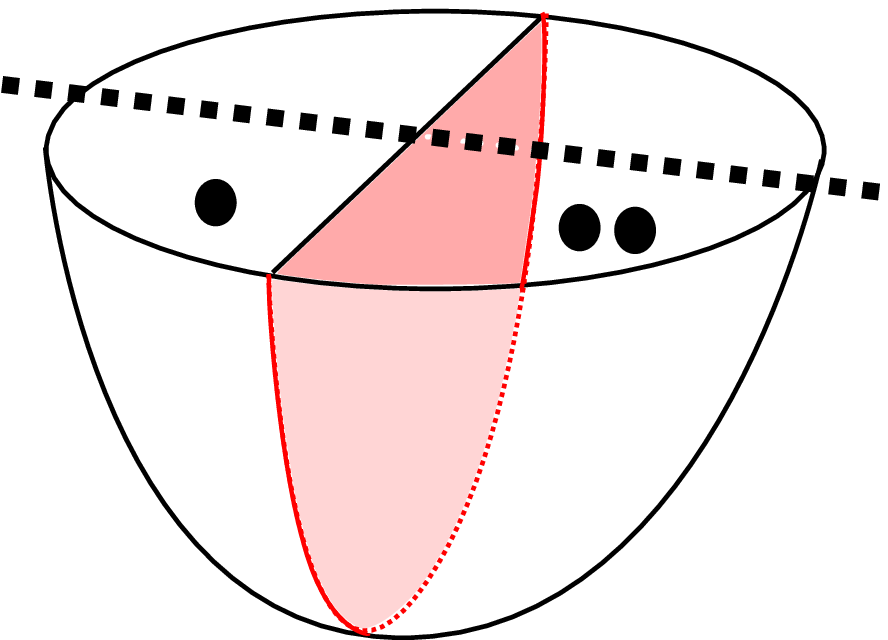}};
 (0,-64.5)*{\scriptstyle \scriptstyle \deg_q = 6};
\endxy
\]
To illustrate why the growth algorithm for foams gives (up to a sign) the result above, let us do the top left $u_c$ in more detail. We have
\[
T_{\vec{\lambda}}=\left(\;\xy(0,0)*{\emptyset}\endxy\;,\;\xy(0,0)*{\begin{Young}1\cr\end{Young}}\endxy\;,\;\xy(0,0)*{\begin{Young}2&3\cr\end{Young}}\endxy\;\right)\;\;\text{and}\;\;\iota(u_c)=\iota(u_c)^{\prime}=\left(\;\xy(0,0)*{\emptyset}\endxy\;,\;\xy(0,0)*{\begin{Young}3\cr\end{Young}}\endxy\;,\;\xy(0,0)*{\begin{Young}1&2\cr\end{Young}}\endxy\;\right).
\]
Thus, the foam $e(\vec{\lambda})d(\vec{\lambda})$ will be a dotted (the dot comes from the node labeled $1$) identity for the web $F_2F_1F_1v_{3^1}$, which is a theta-foam with an extra digon. It is worth noting that this dot is unexpected from the foam framework. But the permutation from $\iota(u_c)$ to $T_{\vec{\lambda}}$ is given by $\sigma=\tau_2\tau_1$ and thus we see that
\[
F_2F_1F_1\stackrel{\tau_1}{\mapsto}F_2F_1F_1\stackrel{\tau_2}{\mapsto}F_2F_1F_2,
\]
where $u=F_1F_2F_1v_{3^1}$. The corresponding foam $\mathcal F_{u_c}$ will be a composite of the top left foam from Definition~\ref{defn-foamLT} and an unzip. Therefore, the final foam $\mathcal F^{\vec{\lambda}}_{\iota(u_c),\iota(u_c)}\colon u\to u$ contains a so-called singular bubble with a dot. As Khovanov explains in~\cite{kh3}, this bubble can be bursted which removes the bubble and the dot. The reader is encouraged to do the other five, but we note that two of them will also create ``two many'' dots, which will be bursted.

Note that it is no coincidence that there are always two basis elements that are \textit{``dual''} to each other, i.e. cupping them of with its ``dual'' gives $\pm 1$ while cupping it of with all the others gives $0$. This is related to the ``dual'' cellular basis given in~\cite{hm}. We note that this could be useful to evaluate ``foams'' for $N>3$.
\end{ex}
\begin{lem}\label{lem-welldefall}
The growth algorithm for foams from Definition~\ref{defn-growthfoam} is well-defined.
\end{lem}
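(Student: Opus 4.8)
The plan is to assemble the well-definedness statements established in the preceding lemmas and to verify that the three constituent foams in Definition~\ref{defn-growthfoam} are composable; no genuinely new argument is needed beyond this bookkeeping. Fix the pair $(S,J)$ and let $\vec{\lambda}$ be the associated $3$-multipartition of $d=3\ell$ supplied by Proposition~\ref{prop-tableauxflows2}. For $u,v\in B_S$ extending $J$ to $u_f,v_{f^{\prime}}$, Lemma~\ref{lem-welldef} says that $\iota(u_f)$ and $\iota(v_{f^{\prime}})$ are well-defined standard $3$-multitableaux, both of shape $\vec{\lambda}$; Lemma~\ref{lem-welldefidem} says that the web $w(\vec{\lambda})$ is well-defined and the idempotent $e(\vec{\lambda})\in\mathcal F(w(\vec{\lambda}),w(\vec{\lambda}))$ is nonzero (this uses Theorem~\ref{thm-fine} to guarantee that $\mathrm{LT}(\vec{\lambda})$ does not annihilate $v_{3^{\ell}}$); and Lemma~\ref{lem-welldefdots} says that the dotted idempotent $e(\vec{\lambda})d(\vec{\lambda})$ is well-defined, the relevant middle facets always being present.

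Next I would check that $\mathcal F_{u_f}\colon u\to w(\vec{\lambda})$ of Definition~\ref{defn-foamLT2} is well-defined (and symmetrically for $\mathcal F_{v_{f^{\prime}}}$). The priming step $\iota(u_f)\mapsto\iota(u_f)^{\prime}$ of~\eqref{eq-divtab} is deterministic and leaves the underlying shape unchanged, so $\iota(u_f)^{\prime}$ is a standard $3$-multitableau of shape $\vec{\lambda}$ whose entries are exactly $\{1,\dots,c(S)\}$ without repetitions, as is $T_{\vec{\lambda}}$. Since the residue of a node depends only on its position, $\iota(u_f)^{\prime}$ and $T_{\vec{\lambda}}$ have the same multiset of residues; together with Lemma~\ref{lem-ltalgo} (equality of total lengths) this shows that they lie in a single orbit of $S_{c(S)-1}$, and the recipe at the end of Definition~\ref{defn-foamLT} pins down a permutation $\sigma$ of minimal length together with its reduced expression. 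Hence $\mathcal F_{\sigma}(\iota(u_f)^{\prime},T_{\vec{\lambda}})$ is a well-defined foam $g(\iota(u_f)^{\prime})\to w(\vec{\lambda})$ by Lemma~\ref{lem-actweldef}. Finally, the webs $g(\iota(u_f))=u$ and $g(\iota(u_f)^{\prime})$ differ only by internal digons (from collapsing each $F^{(2)}$ to $F^2$) and internal theta-foams (from each $F^{(3)}$ to $F^3$), and these carry no dots, since the only dots involved at this stage sit on the middle facets produced by $d(\vec{\lambda})$; therefore $\mathcal F_{\mathrm R}$ is the canonical dot-free digon removal, respectively dot-free identity, and $\mathcal F_{u_f}=\mathcal F_{\mathrm R}\circ\mathcal F_{\sigma}(\iota(u_f)^{\prime},T_{\vec{\lambda}})\colon u\to w(\vec{\lambda})$ is well-defined.

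It remains to observe that the three foams $\mathcal F_{u_f}\colon u\to w(\vec{\lambda})$, $e(\vec{\lambda})d(\vec{\lambda})\colon w(\vec{\lambda})\to w(\vec{\lambda})$ and $\mathcal F_{v_{f^{\prime}}}^{*}\colon w(\vec{\lambda})\to v$ are composable: the object $w(\vec{\lambda})$ depends only on the residue sequence $r(\vec{\lambda})$ (see~\eqref{eq-idem}), so the same $w(\vec{\lambda})$ occurs in the construction attached to $u_f$ and to $v_{f^{\prime}}$, and the adjoint $\mathcal F_{v_{f^{\prime}}}^{*}$ of $\mathcal F_{v_{f^{\prime}}}\colon v\to w(\vec{\lambda})$ indeed runs from $w(\vec{\lambda})$ to $v$. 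Consequently $\mathcal F^{\vec{\lambda}}_{\iota(u_f),\iota(v_{f^{\prime}})}=\mathcal F_{u_f}\,e(\vec{\lambda})d(\vec{\lambda})\,\mathcal F_{v_{f^{\prime}}}^{*}$ is a well-defined element of $\mathcal F(u,v)$, and since every choice made along the way---the priming procedure, the permutation $\sigma$ together with its reduced word, and the dot numbers $m_k=|\mathsf A^{k\succ N}(T_{\vec{\lambda}^k})|$---is determined by the input data, there is no ambiguity. The only point that is not purely formal is the compatibility claim that $\iota(u_f)^{\prime}$ and $T_{\vec{\lambda}}$ lie in a single $S_{c(S)-1}$-orbit so that $\mathcal F_{\sigma}$ is defined between them; this is exactly where Lemmas~\ref{lem-welldef} and~\ref{lem-ltalgo} enter, and I expect this step to be the main, and rather mild, obstacle.
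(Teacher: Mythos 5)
Your proposal is correct and follows essentially the same route as the paper, whose proof simply assembles the well-definedness statements of Lemmata~\ref{lem-welldefidem},~\ref{lem-welldefdots} and~\ref{lem-actweldef}; your extra bookkeeping about the priming step, composability through $w(\vec{\lambda})$, and the $S_{k-1}$-orbit is just a more explicit spelling-out of what those lemmas already provide.
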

\begin{proof}
That the algorithm is well-defined follows from the Lemmata~\ref{lem-welldefidem},~\ref{lem-welldefdots} and~\ref{lem-actweldef} above.
\end{proof}
\subsection{Foam basis}\label{sec-basis}
We are going to show in this section that the growth algorithm for foams given in Definition~\ref{defn-growthfoam} suffices to produce all foams, i.e. it can be used to generate a basis of the foam space. Moreover, we show that this basis is homogeneous. We show in the next section that it is indeed a graded cellular basis in the sense of Hu and Mathas~\cite{hm}.
\vspace*{0.25cm}

The main observation is that the foams $\mathcal F(\tau_{i}(a_{i},b_{i}))$ defined in Definition~\ref{defn-foamLT} are either zip or unzip foams (in the case $|a_i-b_i|=1$), two digon removals (in the case $a_i=b_i$) or just shifts (all the other cases). Since this is not trivial, we state it as a lemma. Recall that $\mathrm{deg}_q$ denotes the $q$-degree of the foam space.
\vspace*{0.25cm}

We start with some useful Lemmata.
\begin{lem}\label{lem-zipping}
Given the same data as in Definition~\ref{defn-foamLT}, we have that for all $\tau_{i}\in S_{k-1}$ the foams $\mathcal F(\tau_{i}(a_{i},b_{i}))$ split into three disjoint cases.
\begin{itemize}
\item[(a)] $\mathcal F(\tau_{i}(a_{i},b_{i}))\colon w_1\to w_2$ is a zip or a unzip iff $|a_i-b_i|=1$. We have $\mathrm{deg}_q(\mathcal F(\tau_{i}(a_{i},b_{i})))=1$ in this case.
\item[(b)] $\mathcal F(\tau_{i}(a_{i},b_{i}))\colon w_1\to w_2$ removes two digons iff $a_i=b_i$. We have $\mathrm{deg}_q(\mathcal F(\tau_{i}(a_{i},b_{i})))=-2$ in this case.
\item[(c)] $\mathcal F(\tau_{i}(a_{i},b_{i}))\colon w_1\to w_2$ is just a shift iff $|a_i-b_i|>1$. We have $\mathrm{deg}_q(\mathcal F(\tau_{i}(a_{i},b_{i})))=0$ in this case.
\end{itemize}
\end{lem}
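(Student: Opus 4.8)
The plan is to unwind the definition: by Definition~\ref{defn-foamLT} (which merely records the relevant instances of the foamation $2$-functor $\Psi$ from Definition~\ref{defn-foamation}), the foam $\mathcal{F}(\tau_i(a_i,b_i))$ is the image under $\Psi$ of the crossing $2$-morphism $\psi_3$ of $\Ucat$ applied to the two divided-power-free strings $\ldots F_{b_i}F_{a_i}\ldots$ and $\ldots F_{a_i}F_{b_i}\ldots$, with the two crossing strands coloured $a_i$ and $b_i$ and partial identities elsewhere. I would then run the obvious case analysis according as $|a_i-b_i|$ is $0$, $1$, or $>1$, pinning down in each case (i) the topological type of the foam and (ii) its $q$-degree.

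First I would record the elementary observation that for $a_i=b_i$ and for $|a_i-b_i|>1$ the two strings of $F$'s --- hence the two webs $w_1,w_2$ --- are literally equal (in the first case because the transposition is trivial on the string, in the second because the rungs $\{a_i,a_i+1\}$ and $\{b_i,b_i+1\}$ touch disjoint vertical strands, so the two ladder pieces commute), so that $\mathcal{F}(\tau_i(a_i,b_i))$ is an endomorphism, whereas for $|a_i-b_i|=1$ the two rungs share exactly one strand and $w_1\neq w_2$. Then, reading the pictures in Definition~\ref{defn-foamLT} after erasing the facets labelled $0$ or $3$: when $|a_i-b_i|>1$ the foam is a product of partial identity foams, i.e.\ a mere relabelling (``shift''); when $|a_i-b_i|=1$ the picture is a single singular saddle plus partial identity, hence a zip or an unzip according to which of $w_1,w_2$ sits at the bottom (equivalently, according to whether $a_i=b_i-1$ or $a_i=b_i+1$ and the local orientations); when $a_i=b_i$ the two parallel rungs bound a digon (precisely one face of the would-be square carries a label $0$ or $3$ and is erased), and using the $\mathfrak{sl}_3$-identities \eqref{eq:dr},\eqref{eq:sqr} the foam is, up to partial identity, the composite of a dotless digon removal with a dotless digon creation --- two ``digon-removal'' pieces in the sense of the statement, realising the nilpotent nilHecke crossing.

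For the $q$-degree I would compute directly from $\mathrm{deg}_q(F)=\chi(\partial F)-2\chi(F)+2d+b$ using the topological descriptions just obtained: a partial-identity/relabelling foam has $q$-degree $0$; a dotless singular saddle has $q$-degree $1$; and a dotless digon removal has $q$-degree $-1$, hence so does a dotless digon creation (reflecting a pre-foam across the $xy$-plane preserves $\chi(\partial F)$, $\chi(F)$, $d$ and $b$), so their composite has $q$-degree $-2$. This yields exactly the values in (a), (b), (c), and in particular the three alternatives are mutually exclusive and exhaustive. As a cross-check, the same degrees follow from the fact that $\Psi$ is a strong $2$-representation (Proposition~\ref{prop-actwelldef}), hence grading-preserving, together with the well-known degree of the crossing $2$-morphism of $\Ucat$ between strands coloured $a_i$ and $b_i$.

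The main obstacle is the orientation bookkeeping inside the topological identification: for each value of $|a_i-b_i|$ there are several sub-cases determined by the orientations of the vertical strands at positions $a_i-1,a_i,a_i+1$ (equivalently by the local entries of the enhanced sign string), and one must check that in every sub-case erasing the $0$- and $3$-labelled facets really turns the generic picture into a relabelling, a single zip/unzip, or a digon-removal-then-creation, with no spurious dots or extra singular arcs. The case $a_i=b_i$ is the most delicate, since a priori the two rungs bound a square rather than a digon; here one uses that $\sum_k a_k=3\ell$ together with the fact (Lemma~\ref{lem-ltalgo}) that all webs in play arise by applying $F$'s to $v_{3^\ell}$ to see that exactly one of the two relevant faces is erased. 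The $q$-degree half of the lemma, by contrast, is essentially immediate once the topology is fixed.
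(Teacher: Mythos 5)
Your proposal is correct and follows essentially the same route as the paper: read the generating foams of Definition~\ref{defn-foamLT} as (foamation images of) the KL-R crossings, split into the three cases $|a_i-b_i|=1$, $a_i=b_i$, $|a_i-b_i|>1$, identify the topology (singular saddle, composite of two dotless digon moves, identity up to isotopy) by a local case-by-case check of which $0$- and $3$-labelled pieces are erased, and then read off the $q$-degrees ($1$, $-1-1=-2$, $0$); your extra cross-check via the graded $2$-functor and the KL-R crossing degrees is a nice addition but not a different argument. One small inaccuracy: in the case $a_i=b_i$ the internal face becomes a digon not because an edge of the square itself is erased, but because exactly two of the adjacent outer vertical edges carry labels $0$ or $3$ and are erased, smoothing two corners of the square -- this does not affect your conclusion.
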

\begin{proof}
(a): A case by case check of all the possibilities how the local picture of the webs $w_1$ at the top and $w_2$ at the bottom could look like. Note that both webs are generated by a sequence of LT-generators, thus not all combinations are possible. Since all cases work in the same vein we only illustrate four here. So five in total, i.e. counting the one from Example~\ref{ex-action}. In fact the Example~\ref{ex-action} is a blueprint of all cases. The four other cases are the following.
\[
\xy
(-30,0)*{\includegraphics[scale=0.5]{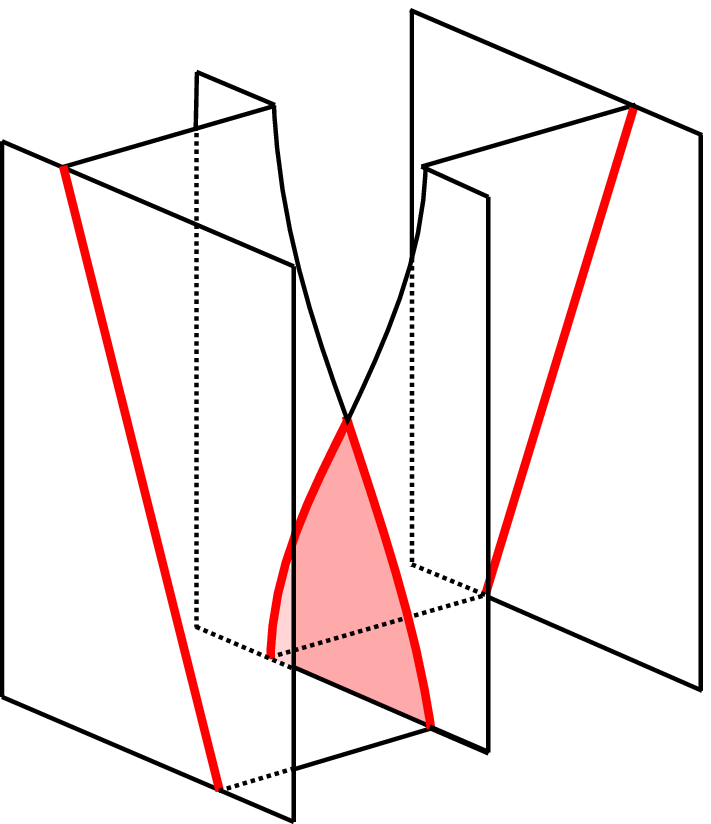}};
(30,0)*{\includegraphics[scale=0.5]{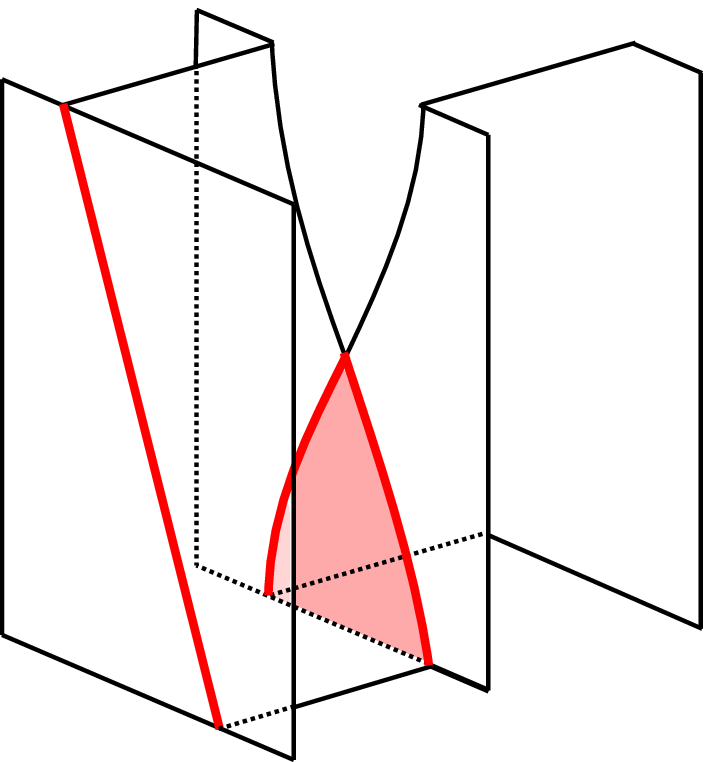}};
\endxy
\]
and
\[
\xy
(-30,0)*{\includegraphics[scale=0.5]{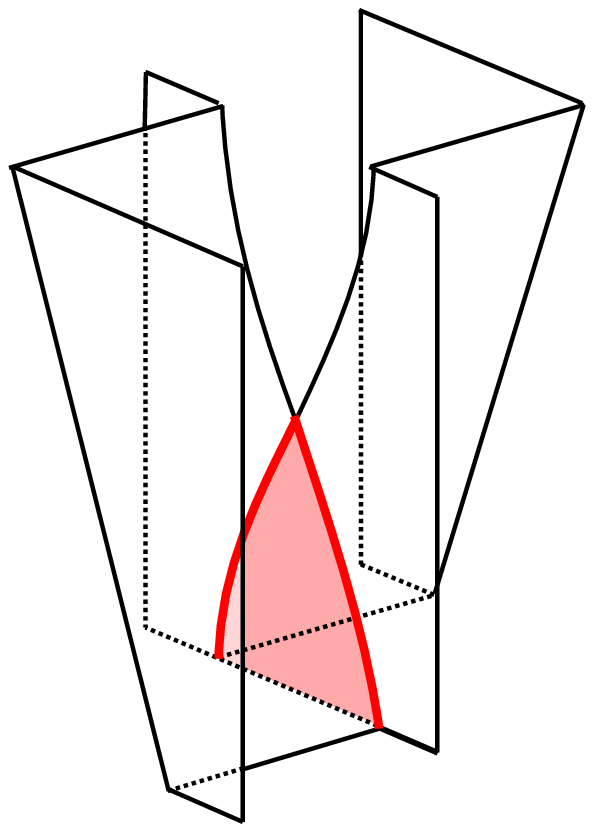}};
(30,0)*{\includegraphics[scale=0.5]{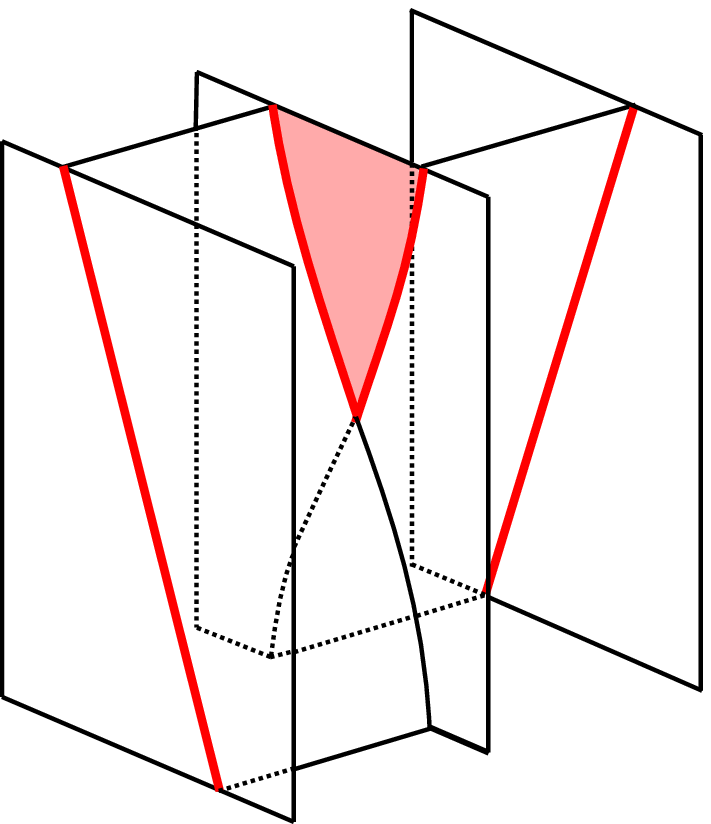}};
\endxy
\]
We have indicated the important face for our argument with color in all the pictures. To be more precise, the foam in the top left is a unzip between a Y-move and a flipped Y-move at the top and two H-moves at the bottom, the foam in the bottom left is between two shifts and a Y-move and a flipped Y-move, the foam in the top right is between a Y-move and arc pair and a H-move and a flipped Y-move at the bottom and the lower right is between two H-moves and a Y-move and a flipped Y-move.

All other cases are similar. It should be noted that the number of case-by-case checks reduces if one takes rotational symmetries into account, e.g. the foam in the top left picture and the one in the right bottom are essentially the same.

Moreover, it is worth noting that the ambiguous case, i.e. two similar moves at the bottom and top, does not occur by construction, since the action of $F_iF_{i+1}$ will be different than the action of $F_{i+1}F_i$ and will therefore always give two different local webs.  
\vspace*{0.25cm}

(b): As before a case-by-case check. In fact the number of cases is small, that is only two can occur, since a H-move after a H-move is not possible because at least one of the H-moves would correspond to an $E$. Thus, we only the following two cases.
\begin{align}\label{eq-digonremoval}
\xy
(0,0)*{\includegraphics[scale=0.5]{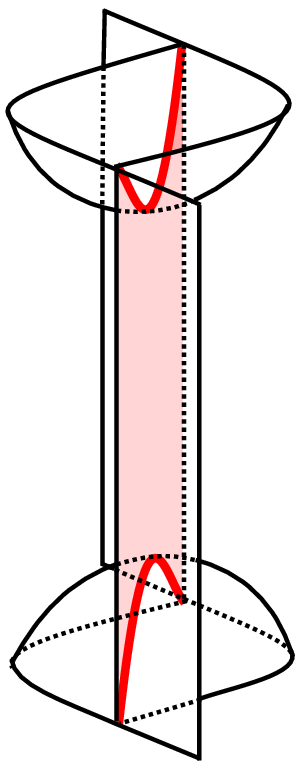}};
\endxy\;\;\;\text{ and }\;\;\;
\xy
(0,0)*{\includegraphics[scale=0.5]{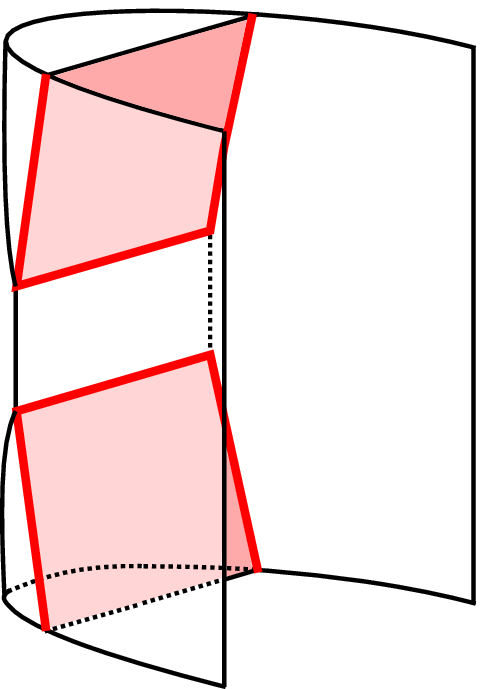}};
\endxy
\end{align}
We have smoothen the first picture a little bit to make it more visible. We have indicated the important face with color again.
\vspace*{0.25cm}

(c): This is clear by the definition of the local changes of the foam pictured in Definition~\ref{defn-foamLT}, because it will be the identity foam up to isotopy.
\vspace*{0.25cm}

Since the $q$-degree of a zip or unzip is $1$, the $q$-degree of a digon removal is $-1$ and the $q$-degree of the identity is zero, we conclude that the degrees work out as stated which finishes the proof.
\end{proof}
\begin{lem}\label{lem-degfoam}
Let the foam
\[
\mathcal F^{\vec{\lambda}}_{\iota(u_f),\iota(v_{f^{\prime}})}=\mathcal F_{u_{f}}e(\vec{\lambda})d(\vec{\lambda})\mathcal F_{v_{f^{\prime}}}^*\colon u\to v
\]
be as in Definition~\ref{defn-growthfoam}. Then
\[
\mathrm{deg}_q(\mathcal F^{\vec{\lambda}}_{\iota(u_f),\iota(v_{f^{\prime}})})=\mathrm{deg}_{\mathrm{wt}}(u_{f})+\mathrm{deg}_{\mathrm{wt}}(v_{f^{\prime}})=\mathrm{deg}_{\mathrm{BKW}}(\iota(u_f))+\mathrm{deg}_{\mathrm{BKW}}(\iota(v_{f^{\prime}})).
\]
\end{lem}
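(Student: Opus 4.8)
The second equality is nothing but Proposition~\ref{prop-degree} applied to $u_f$ and to $v_{f'}$, so the whole content is the first equality. The plan is to use that $\mathrm{deg}_q$ is additive under composition (stacking) of foams: by Lemma~\ref{lem:webalgaltern} the grading on $\foamt(-,-)$ agrees with the grading on $K_S$, and the multiplication on $K_S$ is degree preserving, so for a composable chain of foams the $q$-degrees add. The involution ${}^*$ only reflects a foam and reorients its singular arcs, hence leaves $\chi(\partial F)$, $\chi(F)$, the number of dots and the number of vertical boundary components unchanged, so $\mathrm{deg}_q(\mathcal F_{v_{f'}}^*)=\mathrm{deg}_q(\mathcal F_{v_{f'}})$. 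Thus, from Definition~\ref{defn-growthfoam},
\[
\mathrm{deg}_q(\mathcal F^{\vec{\lambda}}_{\iota(u_f),\iota(v_{f'})})=\mathrm{deg}_q(\mathcal F_{u_f})+\mathrm{deg}_q(e(\vec{\lambda})d(\vec{\lambda}))+\mathrm{deg}_q(\mathcal F_{v_{f'}}),
\]
and it remains to identify the three summands.

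First I would compute the central term. Since $e(\vec{\lambda})$ is an identity foam it has $q$-degree $0$, and $d(\vec{\lambda})$ adds $\sum_k m_k$ dots with $m_k=|\mathsf{A}^{k\succ N}(T_{\vec{\lambda}}^k)|$ (Definition~\ref{defn-idemdots}), each dot raising $\mathrm{deg}_q$ by $2$; hence $\mathrm{deg}_q(e(\vec{\lambda})d(\vec{\lambda}))=2\sum_k m_k$. Because $T_{\vec{\lambda}}$ is filled row by row in each component, with the components taken in order, the node carrying $k$ is the last occupied node of its row in $T_{\vec{\lambda}}^k$ and no strictly later row of any component is yet occupied, so $\mathsf{R}^{k\succ N}(T_{\vec{\lambda}}^k)=\emptyset$; as all entries of $T_{\vec{\lambda}}$ are distinct the correction $a$ of Definition~\ref{defn-degpart1} also vanishes, whence $\mathrm{deg}(\vec{T}_{\vec{\lambda}}^k)=m_k$ and $\mathrm{deg}_q(e(\vec{\lambda})d(\vec{\lambda}))=2\,\mathrm{deg}_{\mathrm{BKW}}(T_{\vec{\lambda}})$. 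Write $D:=\mathrm{deg}_{\mathrm{BKW}}(T_{\vec{\lambda}})$, a quantity depending only on $\vec{\lambda}$.

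The heart of the argument is the identity $\mathrm{deg}_q(\mathcal F_{u_f})=\mathrm{deg}_{\mathrm{BKW}}(\iota(u_f))-D$ (and its analogue for $v$). I would prove this by induction on the total length $\ell_{\mathrm{t}}(u)$, the cases $\ell_{\mathrm{t}}(u)\leq 3$ being the arcs and theta webs of Example~\ref{ex-totlength}, which one checks by hand. For the inductive step, remove the last LT-generator of $u_f$, obtaining a web with flow $u^<_{<f}$ with its own boundary datum; by Definition~\ref{defn-foamLT2} we have $\mathcal F_{u_f}=\mathcal F_{\mathrm{R}}\circ\mathcal F_{\sigma}(\iota(u_f)',T_{\vec{\lambda}})$, and by Lemma~\ref{lem-zipping} the new elementary foam $\mathcal F(\tau_i(a_i,b_i))$ contributed by that last step has $q$-degree $+1$, $-2$ or $0$ according to whether $|a_i-b_i|=1$ (zip/unzip), $a_i=b_i$ (pair of digon removals) or $|a_i-b_i|>1$ (shift), while the piece of $\mathcal F_{\mathrm{R}}$ generated by a divided power in that step absorbs exactly the corresponding correction $a$ of Definition~\ref{defn-degpart1} (a digon removal for $F^{(2)}$, a theta removal for $F^{(3)}$). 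On the other hand, the proof of Proposition~\ref{prop-degree} already records, for each possible last move of Definition~\ref{defn-webtotab} and each colour, the local change of $\mathrm{deg}_{\mathrm{wt}}(u_f)=\mathrm{deg}_{\mathrm{BKW}}(\iota(u_f))$; a case-by-case comparison — one case per move type and colour, entirely parallel to that proof — shows that the local change in $\mathrm{deg}_q(\mathcal F_{u_f})$ equals that same quantity minus the new dot number $m_k$, i.e. minus the local change in $D$. This is the step I expect to be the main obstacle: it is a long, though purely mechanical, bookkeeping exercise matching foam degrees against addable/removable node counts across all cases of Definition~\ref{defn-webtotab}.

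Assembling the three summands,
\[
\mathrm{deg}_q(\mathcal F^{\vec{\lambda}}_{\iota(u_f),\iota(v_{f'})})=\big(\mathrm{deg}_{\mathrm{BKW}}(\iota(u_f))-D\big)+2D+\big(\mathrm{deg}_{\mathrm{BKW}}(\iota(v_{f'}))-D\big)=\mathrm{deg}_{\mathrm{BKW}}(\iota(u_f))+\mathrm{deg}_{\mathrm{BKW}}(\iota(v_{f'})),
\]
and Proposition~\ref{prop-degree} rewrites the right-hand side as $\mathrm{deg}_{\mathrm{wt}}(u_f)+\mathrm{deg}_{\mathrm{wt}}(v_{f'})$, as claimed. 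Finally I would note that the identity of the third paragraph can alternatively be deduced from Hu and Mathas' degree formula for the $\psi$-basis of the cyclotomic KL-R algebra~\cite{hm}, since Lemma~\ref{lem-zipping} matches the $q$-degree of each elementary foam $\mathcal F(\tau_i(a_i,b_i))$ with the KL-R degree $-(\alpha_{a_i},\alpha_{b_i})$ of the corresponding crossing; this reduces the bookkeeping to an already-known combinatorial computation and would give an independent check of the case-by-case verification.
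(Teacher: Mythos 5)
Your decomposition is exactly the paper's: the second equality is delegated to Proposition~\ref{prop-degree}, the $q$-degree is split over the three factors $\mathcal F_{u_f}$, $e(\vec{\lambda})d(\vec{\lambda})$, $\mathcal F_{v_{f'}}^*$, the middle term is computed as $2n_d=2\,\mathrm{deg}_{\mathrm{BKW}}(T_{\vec{\lambda}})$ because $T_{\vec{\lambda}}$ has no removable nodes, and the face removals $\mathcal F_{\mathrm{R}}$ are absorbed into the passage $\iota(u_f)\leftrightarrow\iota(u_f)'$. Where you diverge is the justification of the key identity $\mathrm{deg}_q(\mathcal F_{u_f})=\mathrm{deg}_{\mathrm{BKW}}(\iota(u_f))-\mathrm{deg}_{\mathrm{BKW}}(T_{\vec{\lambda}})$: you make this your main workhorse and propose to prove it by induction on the total length with a case-by-case comparison of local degree changes parallel to the proof of Proposition~\ref{prop-degree}, whereas the paper does not redo this combinatorics at all -- it uses Lemmata~\ref{lem-actweldef} and~\ref{lem-zipping} to match the foam generators and their $q$-degrees with the corresponding moves on tableaux, and then simply invokes Corollary~3.14 of Brundan, Kleshchev and Wang~\cite{bkw} on the cyclotomic Hecke algebra side, which is precisely that identity. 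Amusingly, the ``alternative check'' you mention in your last sentence (reducing to the known KL-R/HM degree formula via Lemma~\ref{lem-zipping}) is the paper's actual proof. Your route is more self-contained and stays entirely on the foam side, at the cost of a long bookkeeping exercise whose details you only sketch (in particular you must track simultaneously the change of $\vec{\lambda}$, hence of $D$, and the divided-power corrections when passing to $u^<_{<f}$); the paper's route is much shorter but rests on the external BKW result and on the already-established translation lemmata. Both are sound, and your write-up would be acceptable once the inductive case-check is actually carried out.
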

\begin{proof}
We only have to show the first equality, since the second already follows as a consequence of the Proposition~\ref{prop-degree}.
\vspace*{0.25cm}

We start by noting that
\[
\mathrm{deg}_q(e(\vec{\lambda})d(\vec{\lambda}))=\mathrm{deg}_q(d(\vec{\lambda}))=2n_d,
\]
where $n_d$ is the number of dots. This is true, because both are topological just the identity and a dot has $q$-degree $2$. This implies, since we spread dots using a convention of adding dots per addable nodes and $T_{\vec{\lambda}}$ does not have removable nodes, that
\[
\mathrm{deg}_q(e(\vec{\lambda})d(\vec{\lambda}))=2\mathrm{deg}_{\mathrm{BKW}}(T_{\vec{\lambda}}).
\]
It should be noted that the extra removing of the internal faces from Definition~\ref{defn-foamLT2} corresponds exactly to the change in degree from $\iota(u_f)$ to $\iota(u_f)^{\prime}$. So to simplify our notation, we can freely assume that $\iota(u_f)=\iota(u_f)^{\prime}$ and $\iota(v_{f^{\prime}})=\iota(v_{f^{\prime}})^{\prime}$.

Hence, the results of the two Lemmata~\ref{lem-actweldef} and~\ref{lem-zipping} imply now that we only have to check that the degree works out on the level of actions on tableaux, because the action defined via foams agrees with the one on the tableaux (part two of Lemma~\ref{lem-actweldef}) and, as a consequence of Lemma~\ref{lem-zipping}, we see that it has exactly the same degree as the action defined on tableaux.

Thus, we can use a result from the side of the cyclotomic Hecke algebra, i.e. Corollary 3.14 of Brundan, Kleshchev and Wang~\cite{bkw}, and we get
\[
\mathrm{deg}_{\mathrm{BKW}}(\iota(u_f))=\mathrm{deg}_{\mathrm{BKW}}(T_{\vec{\lambda}})+\mathrm{deg}_q(\mathcal F_{u_{f}})\;\;\text{and}\;\;\mathrm{deg}_{\mathrm{BKW}}(\iota(v_{f^{\prime}}))=\mathrm{deg}_{\mathrm{BKW}}(T_{\vec{\lambda}})+\mathrm{deg}_q(\mathcal F_{v_{f^{\prime}}}).
\]
Hence, we have
\begin{align*}
\mathrm{deg}_q(\mathcal F^{\vec{\lambda}}_{\iota(u_f),\iota(v_{f^{\prime}})})&=\mathrm{deg}_q(\mathcal F_{u_{f}}e(\vec{\lambda})d(\vec{\lambda})\mathcal F_{v_{f^{\prime}}}^*)=\mathrm{deg}_q(\mathcal F_{u_{f}})+\mathrm{deg}_q(e(\vec{\lambda})d(\vec{\lambda}))+\mathrm{deg}_q(\mathcal F_{u_{f}})\\
&=\mathrm{deg}_{\mathrm{BKW}}(\iota(u_f))-\mathrm{deg}_{\mathrm{BKW}}(T_{\vec{\lambda}})+2n_d+\mathrm{deg}_{\mathrm{BKW}}(\iota(v_{f^{\prime}}))-\mathrm{deg}_{\mathrm{BKW}}(T_{\vec{\lambda}})\\
&=\mathrm{deg}_{\mathrm{BKW}}(\iota(u_f))-n_d+2n_d+\mathrm{deg}_{\mathrm{BKW}}(\iota(v_{f^{\prime}}))-n_d\\
&=\mathrm{deg}_{\mathrm{BKW}}(\iota(u_f))+\mathrm{deg}_{\mathrm{BKW}}(\iota(v_{f^{\prime}}))=\mathrm{deg}_{\mathrm{wt}}(u_{f})+\mathrm{deg}_{\mathrm{wt}}(v_{f^{\prime}})
\end{align*}
which proves the statement.
\end{proof}
\begin{lem}\label{lem-involution}
Let the foam
\[
\mathcal F^{\vec{\lambda}}_{\iota(u_f),\iota(v_{f^{\prime}})}=\mathcal F_{u_{f}}e(\vec{\lambda})d(\vec{\lambda})\mathcal F_{v_{f^{\prime}}}^*\colon u\to v
\]
be as in Definition~\ref{defn-growthfoam} and let ${}^*$ denote the involution on the foam space. Then
\[
(\mathcal F^{\vec{\lambda}}_{\iota(u_f),\iota(v_{f^{\prime}})})^*=\mathcal F^{\vec{\lambda}}_{\iota(v_{f^{\prime}}),\iota(u_f)}\colon v\to u.
\]
\end{lem}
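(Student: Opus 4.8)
The plan is to reduce the statement to the formal properties of the involution ${}^{*}$ on the foam space, namely: (i) ${}^{*}$ is a $\bC$-linear, \emph{involutive} anti-automorphism of the foam space, i.e.\ it squares to the identity and reverses the order of foam composition; and (ii) $(e(\vec{\lambda})d(\vec{\lambda}))^{*}=e(\vec{\lambda})d(\vec{\lambda})$, which is exactly Lemma~\ref{lem-welldefdots}. Property (ii) is already available, so the only thing to say about (i) is that it follows at once from the description of ${}^{*}$ in~\eqref{eq-invo} as ``reflect the foam in the $xy$-plane and reorient the edges afterwards'': reflecting in the $xy$-plane interchanges the bottom web with the top web, hence turns a vertical stack of foams upside down, which is precisely anti-multiplicativity; and doing it twice returns the original foam, which is the involutivity already asserted in the Remark containing~\eqref{eq-invo}.

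Granting (i) and (ii), I would simply apply ${}^{*}$ to the defining expression $\mathcal F^{\vec{\lambda}}_{\iota(u_f),\iota(v_{f^{\prime}})}=\mathcal F_{u_{f}}\,e(\vec{\lambda})d(\vec{\lambda})\,\mathcal F_{v_{f^{\prime}}}^{*}$ of Definition~\ref{defn-growthfoam}. Anti-multiplicativity, together with $(\mathcal F_{v_{f^{\prime}}}^{*})^{*}=\mathcal F_{v_{f^{\prime}}}$ (involutivity) and (ii), gives
\[
(\mathcal F^{\vec{\lambda}}_{\iota(u_f),\iota(v_{f^{\prime}})})^{*}
=(\mathcal F_{v_{f^{\prime}}}^{*})^{*}\,(e(\vec{\lambda})d(\vec{\lambda}))^{*}\,\mathcal F_{u_f}^{*}
=\mathcal F_{v_{f^{\prime}}}\,e(\vec{\lambda})d(\vec{\lambda})\,\mathcal F_{u_f}^{*}.
\]
I would then observe that the right-hand side is verbatim the expression obtained from Definition~\ref{defn-growthfoam} by interchanging the roles of $u_f$ and $v_{f^{\prime}}$, i.e.\ it equals $\mathcal F^{\vec{\lambda}}_{\iota(v_{f^{\prime}}),\iota(u_f)}$. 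A short bookkeeping remark would confirm the source and target: since $\mathcal F_{u_f}\colon u\to w(\vec{\lambda})$ and $\mathcal F_{v_{f^{\prime}}}\colon v\to w(\vec{\lambda})$ by Definition~\ref{defn-foamLT2}, the reflected foams run $\mathcal F_{v_{f^{\prime}}}\colon v\to w(\vec{\lambda})$ and $\mathcal F_{u_f}^{*}\colon w(\vec{\lambda})\to u$, so the composite is indeed a foam $v\to u$, as claimed.

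I do not expect a genuine obstacle here; the content is entirely in already-proven facts ((i) from~\eqref{eq-invo}, (ii) from Lemma~\ref{lem-welldefdots}, and, if wanted, $e(\vec{\lambda})^{*}=e(\vec{\lambda})$ from Lemma~\ref{lem-welldefidem}). The one point deserving a sentence rather than being left silent is that the reorientations of edges produced by ${}^{*}$ on the two halves $\mathcal F_{u_f}$, $\mathcal F_{v_{f^{\prime}}}$ are precisely those already built into $\mathcal F_{u_f}^{*}$, $\mathcal F_{v_{f^{\prime}}}^{*}$, so no stray signs appear; this is automatic because ${}^{*}$ is a globally well-defined anti-automorphism of the whole foam space, applied termwise. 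If a referee asked for more, I would add the pictorial check that ${}^{*}$ sends each basic generator $\mathcal F(\tau_i(a_i,b_i))$ of Definition~\ref{defn-foamLT} to the expected one (a zip to an unzip, a digon removal to a digon creation, a shift to a shift), but this is not needed for the equality of foams as stated.
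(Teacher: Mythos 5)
Your proposal is correct and follows essentially the same route as the paper: apply ${}^{*}$ to the defining expression, use anti-multiplicativity and involutivity of ${}^{*}$ together with the ${}^{*}$-invariance of $e(\vec{\lambda})d(\vec{\lambda})$ (Lemma~\ref{lem-welldefdots}, respectively Lemma~\ref{lem-welldefidem}), and recognize the result as $\mathcal F^{\vec{\lambda}}_{\iota(v_{f^{\prime}}),\iota(u_f)}$. Your extra remarks on source/target and reorientations are harmless additions to what the paper does in three lines.
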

\begin{proof}
This is a direct consequence of Lemma~\ref{lem-welldefidem}, because we have
\begin{align*}
(\mathcal F^{\vec{\lambda}}_{\iota(u_f),\iota(v_{f^{\prime}})})^* &=(\mathcal F_{u_{f}}e(\vec{\lambda})d(\vec{\lambda})\mathcal F_{v_{f^{\prime}}}^*)^*\\
&=((\mathcal F_{v_{f^{\prime}}})^*)^*(e(\vec{\lambda})d(\vec{\lambda}))^*\mathcal F_{u_{f}}^*\\
&=\mathcal F_{v_{f^{\prime}}}e(\vec{\lambda})d(\vec{\lambda})\mathcal F_{u_{f}}^*,
\end{align*}
where the second equality is due to the definition of the involution ${}^*$.
\end{proof}
The next proposition shows that the growth algorithm for foams is a ``foamy'' version of Hu and Mathas graded cellular basis~\cite{hm}. 
\begin{prop}\label{prop-hmfoam}
Let $S$ be any sign string and let
\[
J_S=\{J\mid (S,J)\text{ is a pair of a sign and state string for some web }w\in B_S\}
\]
and let
\[
B_S^J=\{w_f\mid w\in B_S,\;f\text{ is a flow with boundary }J\text{ extending to }w\}.
\]
Then $(S,J,u_{f},v_{f^{\prime}})$ with $J\in J_S$ and $u_{f},v_{f^{\prime}}\in B_S^J$ gives rise to a foam $\mathcal F^{\vec{\lambda}}_{\iota(u_f),\iota(v_{f^{\prime}})}\in K_S$.
\vspace*{0.2cm}

This foam is obtained from the application of the foamation functor to the HM-basis element with the datum (the multipartition $\vec{\lambda}$ is the one associated to $(S,J)$)
\[
(\vec{\lambda},\iota(u_f)^{\prime}\in\mathrm{Std}(\vec{\lambda}),\iota(v_{f^{\prime}})^{\prime}\in\mathrm{Std}(\vec{\lambda}))
\]
and removing the internal faces using digon removals or theta removals.
\end{prop}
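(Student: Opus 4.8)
The plan is to realise $\mathcal F^{\vec{\lambda}}_{\iota(u_f),\iota(v_{f^{\prime}})}$ as the image, under the foamation $2$-functor $\Psi$ followed by the dot-free digon and theta removals, of the Hu--Mathas generator attached to the datum $(\vec{\lambda},\iota(u_f)^{\prime},\iota(v_{f^{\prime}})^{\prime})$, and then to observe that the resulting foam lies in $K_S$ for free. First I would recall the explicit shape of the HM-basis element from~\cite{hm}: for a fixed $3$-multipartition $\vec{\lambda}$ it has the form $\psi_{\vec{T}_1,\vec{T}_2}=\psi_{d(\vec{T}_1)}^{*}\,e(\mathbf{i}_{\vec{\lambda}})\,y_{\vec{\lambda}}\,\psi_{d(\vec{T}_2)}$ in the cyclotomic KL-R algebra $R_{\Lambda}$, where $\mathbf{i}_{\vec{\lambda}}=r(\vec{\lambda})$ is the residue sequence of $T_{\vec{\lambda}}$, where $y_{\vec{\lambda}}=\prod_k y_k^{m_k}$ with $m_k=|\mathsf{A}^{k\succ N}(T_{\vec{\lambda}^k})|$ (the same count of addable nodes used by Brundan, Kleshchev and Wang~\cite{bkw} and reappearing in Definition~\ref{defn-idemdots}), and where $\psi_{d(\vec{T})}$ is a product of KL-R crossing generators along a reduced word of the unique permutation $d(\vec{T})$ with $T_{\vec{\lambda}}\cdot d(\vec{T})=\vec{T}$. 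Since $\iota(u_f)$ and $\iota(v_{f^{\prime}})$ may carry repeated entries coming from divided powers, one first passes to the honest standard fillings $\iota(u_f)^{\prime},\iota(v_{f^{\prime}})^{\prime}\in\mathrm{Std}(\vec{\lambda})$ as in~\eqref{eq-divtab}, so that $\psi_{\iota(u_f)^{\prime},\iota(v_{f^{\prime}})^{\prime}}$ is a genuine HM-basis element of $R_{\Lambda}$.

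Next I would push this element through $\Psi$. By the remark following Proposition~\ref{prop-actwelldef} (see also~\cite{cala},~\cite{lqr}) the foamation $2$-functor factors through the cyclotomic quotient, so $\Psi(\psi_{\iota(u_f)^{\prime},\iota(v_{f^{\prime}})^{\prime}})$ is a well-defined foam, and since $\Psi$ is a strong $2$-representation it is multiplicative. Matching generators with the conventions of Definition~\ref{defn-foamation}: the idempotent $e(\mathbf{i}_{\vec{\lambda}})$ goes to the identity foam on the web $w(\vec{\lambda})=\mathrm{LT}(\vec{\lambda})v_{3^{\ell}}$, i.e. to $e(\vec{\lambda})$ (Definition~\ref{defn-idem}, Lemma~\ref{lem-welldefidem}); each dot generator $y_k$ goes to a dot on the middle facet of the corresponding ladder-foam, so $y_{\vec{\lambda}}$ goes to $d(\vec{\lambda})$ (Definition~\ref{defn-idemdots}, with Lemma~\ref{lem-welldefdots} guaranteeing that the middle facet survives); and each crossing generator $\psi_r$ goes to one of the elementary foams $\mathcal F(\tau_i(a_i,b_i))$ of Definition~\ref{defn-foamLT}, which by Lemma~\ref{lem-zipping} is a zip or unzip when the two residues differ by one, a double digon removal when they agree, and a shift otherwise. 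Hence $\Psi(\psi_{d(\vec{T})})=\mathcal F_{\sigma}$ for the associated permutation, and here Lemma~\ref{lem-actweldef} is precisely what guarantees that the foam-side and tableau-side descriptions of $\mathcal F_{\sigma}$ agree, so that the $\sigma$ appearing in Definition~\ref{defn-foamLT2} is the one induced by $d(\vec{T})$. Reading the KL-R diagrams from right to left and the foams from bottom to top (and noting that the sign factors carried by $\Psi$ on crossings and cups/caps cancel in matching pairs), this yields
\[
\Psi(\psi_{\iota(u_f)^{\prime},\iota(v_{f^{\prime}})^{\prime}})=\mathcal F_{\sigma}(\iota(u_f)^{\prime},T_{\vec{\lambda}})\,e(\vec{\lambda})d(\vec{\lambda})\,\mathcal F_{\sigma}(\iota(v_{f^{\prime}})^{\prime},T_{\vec{\lambda}})^{*},
\]
a foam between $g(\iota(u_f)^{\prime})$ and $g(\iota(v_{f^{\prime}})^{\prime})$.

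It then remains to undo the divided powers. Unpriming $\iota(u_f)^{\prime}$ recovers $\iota(u_f)$, whose extended growth algorithm produces the non-elliptic web $u$, and the web $g(\iota(u_f)^{\prime})$ differs from $u$ only by the internal digons (an $F^{(2)}$ replaced by $F^2$) and internal theta-foams (an $F^{(3)}$ replaced by $F^3$) recorded just before Definition~\ref{defn-degpart1}; likewise on the $v$-side. Composing with the dot-free digon and theta removals is exactly the passage from $\mathcal F_{\sigma}(\iota(u_f)^{\prime},T_{\vec{\lambda}})$ to $\mathcal F_{u_f}=\mathcal F_{\mathrm{R}}\circ\mathcal F_{\sigma}(\iota(u_f)^{\prime},T_{\vec{\lambda}})$ of Definition~\ref{defn-foamLT2}, so after these removals one is left with $\mathcal F_{u_f}\,e(\vec{\lambda})d(\vec{\lambda})\,\mathcal F_{v_{f^{\prime}}}^{*}=\mathcal F^{\vec{\lambda}}_{\iota(u_f),\iota(v_{f^{\prime}})}$ of Definition~\ref{defn-growthfoam}, a foam $u\to v$. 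Finally, since $u,v\in B_S$, Lemma~\ref{lem:webalgaltern} identifies $\foamt(u,v)$ with ${}_{u}K_{v}\subset K_S$, so this foam indeed lies in $K_S$, which is what was to be shown. (The degree statement $\mathrm{deg}_q=\mathrm{deg}_{\mathrm{BKW}}$ compatible with this picture is not needed here, being already settled by Lemma~\ref{lem-degfoam}.)

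The step I expect to be the main obstacle is the bookkeeping inside the middle two paragraphs: aligning the Hu--Mathas conventions (the choice of reduced word for $d(\vec{T})$, the placement of the $y_{\vec{\lambda}}$-dots, the primed versus unprimed multitableaux) with the web/foam conventions, and in particular verifying that the sign factors introduced by $\Psi$ on crossings and cups/caps cancel in the product $\mathcal F_{u_f}e(\vec{\lambda})d(\vec{\lambda})\mathcal F_{v_{f^{\prime}}}^{*}$, and that the internal faces created by using single-$F$ rather than divided-power generators are exactly the digons and theta-foams removed by $\mathcal F_{\mathrm{R}}$. Everything else is a routine comparison of definitions, invoking Lemmata~\ref{lem-welldefidem},~\ref{lem-welldefdots} and~\ref{lem-actweldef} together with the functoriality of $\Psi$ (Proposition~\ref{prop-actwelldef}).
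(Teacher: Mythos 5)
Your proposal is correct and follows essentially the same route as the paper: both translate the Hu--Mathas datum through the foamation functor, matching the (dotted) idempotent with $e(\vec{\lambda})d(\vec{\lambda})$ via Lemma~\ref{lem-welldefdots}, the crossing parts with the elementary foams $\mathcal F(\tau_i(a_i,b_i))$ via Lemmata~\ref{lem-actweldef} and~\ref{lem-zipping}, and accounting for the divided powers by the internal digon and theta removals when passing from $\iota(u_f)^{\prime}$ to $\iota(u_f)$. Your extra explicitness about the form $\psi_{d(\vec{T}_1)}^{*}e(\mathbf{i}_{\vec{\lambda}})y_{\vec{\lambda}}\psi_{d(\vec{T}_2)}$ and the factorization through the cyclotomic quotient is only a presentational difference, not a different argument.
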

\begin{proof}
The first statement is just a collection of the results of the sections before, i.e. that a pair $(S,J)$ gives rise to a $3$-multitableau $\vec{\lambda}$ is proven in Theorem~\ref{thm-fine}, that the webs with flows $u_{f},v_{f^{\prime}}$ give rise to a unique filling $\iota(u_f),\iota(v_{f^{\prime}})\in\mathrm{Std}(\vec{\lambda})$ of the $3$-multitableau $\vec{\lambda}$ is proven in Lemma~\ref{lem-welldef} and finally the definition of the foam $\mathcal F^{\vec{\lambda}}_{\iota(u_f),\iota(v_{f^{\prime}})}$ is given in Definition~\ref{defn-growthfoam}.
\vspace*{0.25cm}

To see that the second statement is true, we observe that the Section~\ref{sec-compare} and $q$-skew Howe duality imply that we can read the HM-basis elements as foams in the following way. Given a $3$-multipartition $\vec{\lambda}$ with residue sequence $r(\vec{\lambda})=(r_1,r_2,\dots)$, we can read it as a $2$-morphism in $\Ucat$ as indicated below
\[
\xy
(0,0)*{\includegraphics[scale=1.05]{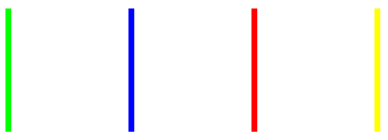}};
(-25,-1)*{\cdots};
(25,-1)*{v_{3^{\ell}}};
(20,-9)*{r_1};
(7.5,-9)*{r_2};
(-6,-9)*{r_3};
(-19,-9)*{r_4};
(20,8)*{r_1};
(7.5,8)*{r_2};
(-6,8)*{r_3};
(-19,8)*{r_4};
(0,-13)*{w=\dots F_{r_4}F_{r_3}F_{r_2}F_{r_1}v_{3^{\ell}}};
(0,12)*{w=\dots F_{r_4}F_{r_3}F_{r_2}F_{r_1}v_{3^{\ell}}};
\endxy
\]
reading from right to left and all arrows pointing downwards. This can be translated to a web for the top and bottom string of residues by applying the corresponding sequence of $F$ to the highest weight vector $v_{3^{\ell}}$ as explained in the Section~\ref{sec-compare} and reading the residues $r_i$ as $F_{r_i}$ as explained in and around the Definition~\ref{defn-exgrowth}.
\vspace*{0.25cm}

Then the second statement follows now from the sequence of Lemmata~\ref{lem-welldefdots},~\ref{lem-actweldef} and~\ref{lem-zipping}, i.e. the Lemma~\ref{lem-welldefdots} shows that the dotted HM-idempotent, after applying the foamation functor, is essentially the same as the dotted idempotent from Definition~\ref{defn-idemdots}, since in both pictures the idempotent $e(\vec{\lambda})$ is obtained from the residue sequence $r(\vec{\lambda})$ and the dots are obtained from the set of addable nodes explained in Definition~\ref{defn-idemdots}. Moreover, the Lemmata~\ref{lem-actweldef} and~\ref{lem-zipping} show that the HM-element $\psi_{\iota(u_f)^{\prime}}$, after applying the foamation functor, is the same as the foam $\mathcal F_{u_{f}}$ (plus the extra internal face removals while passing from $\iota(u_f)^{\prime}$ to $\iota(u_f)$), since we have the correspondence
\[
\xy
(-60,0)*{\includegraphics[scale=1]{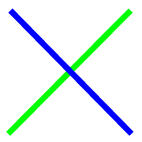}};
(-64.5,-10)*{F_{a_i}};
(-53.5,-10)*{F_{b_i}};
(-45,0)*{\rightsquigarrow};
(-30,0)*{\mathcal F(\tau_{i}(a_{i},b_{i}))};
(-30,-5)*{|a_i-b_i|=1};
(0,0)*{\includegraphics[scale=1]{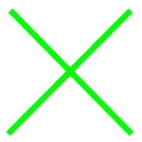}};
(-5,-10)*{F_{a_i}};
(7,-10)*{F_{a_i}};
(10,0)*{\rightsquigarrow};
(25,0)*{\mathcal F(\tau_{i}(a_{i},b_{i}))};
(25,-5)*{|a_i-b_i|=0};
(55,0)*{\includegraphics[scale=1]{res/figs/basis/HM-strings1.eps}};
(50,-10)*{F_{a_i}};
(63,-10)*{F_{b_i}};
(65,0)*{\rightsquigarrow};
(80,0)*{\mathcal F(\tau_{i}(a_{i},b_{i}))};
(80,-5)*{|a_i-b_i|>1};
\endxy
\]
by Lemma~\ref{lem-zipping} and the different local pictures are spread in essentially the same way in both cases by Lemma~\ref{lem-actweldef}. Hence, we conclude that the HM-basis element with the datum
\[
(\vec{\lambda},\iota(u_f)^{\prime}\in\mathrm{Std}(\vec{\lambda}),\iota(v_{f^{\prime}})^{\prime}\in\mathrm{Std}(\vec{\lambda}))
\]
corresponds to the foam $\mathcal F^{\vec{\lambda}}_{\iota(u_f),\iota(v_{f^{\prime}})}\in K_S$ (plus the relevant internal face removals).
\end{proof}
\vspace*{0.15cm}
 
We are now able to proof that the growth algorithm for foams given in Definition~\ref{defn-growthfoam} gives a basis of the $\mathfrak{sl}_3$-web algebra $K_S$.
\begin{thm}\label{thm-foambasis}
The growth algorithm for foams gives a homogeneous basis of $K_S$.
\end{thm}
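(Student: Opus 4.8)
The plan is to show that the homogeneous elements produced by the growth algorithm are exactly the right number to be a basis, and then to establish that they span $K_S$; by the count, spanning (equivalently, linear independence) is all that is missing.

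First I would record homogeneity, which is immediate from Lemma~\ref{lem-degfoam}: the foam $\mathcal F^{\vec\lambda}_{\iota(u_f),\iota(v_{f'})}$ has $q$-degree $\mathrm{deg}_{\mathrm{wt}}(u_f)+\mathrm{deg}_{\mathrm{wt}}(v_{f'})$, hence is homogeneous, and it is nonzero since $e(\vec\lambda)\neq 0$ (Lemma~\ref{lem-welldefidem}) and $\mathcal F_{u_f},\mathcal F_{v_{f'}}$ are built out of zips, unzips, digon removals and shifts (Lemma~\ref{lem-zipping}), none of which can annihilate the idempotent. Next I would do the cardinality count. Fix $u,v\in B_S$. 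By Remark~\ref{rem-dim} the graded dimension of ${}_uK_v=\mathcal F(u^*v)\{n\}$ is $q^n\langle u^*v\rangle$, and by the state-sum description of the $\mathfrak{sl}_3$-invariant (Khovanov--Kuperberg, Section~4 of~\cite{kk}) the Kuperberg bracket $\langle u^*v\rangle$ is the generating function of flows on the closed web $u^*v$ weighted by their weight. A flow on $u^*v$ is the datum of a state string $J$ on the cut line together with a flow $f$ on $u$ and a flow $f'$ on $v^*$ (equivalently on $v$) extending $J$, and the weight is additive under this decomposition. On the other hand the foams $\mathcal F^{\vec\lambda}_{\iota(u_f),\iota(v_{f'})}$ landing in ${}_uK_v$ are indexed by precisely these triples $(J,f,f')$ with $J\in J_S$ and $u_f,v_{f'}\in B_S^J$, where $\vec\lambda$ is the $3$-multipartition attached to $(S,J)$ by Theorem~\ref{thm-fine}. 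Matching the grading shift $\{n\}$ against Lemma~\ref{lem-degfoam} shows that the graded number of these foams equals $q^n\langle u^*v\rangle=\dim_q{}_uK_v$; summing over $u,v\in B_S$, the growth algorithm produces exactly $\dim_q K_S$ homogeneous elements.

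It therefore remains to prove spanning. By Proposition~\ref{prop-hmfoam}, each $\mathcal F^{\vec\lambda}_{\iota(u_f),\iota(v_{f'})}$ is the image under the foamation functor $\Psi$ of the Hu--Mathas basis element $\psi_{\iota(u_f)',\iota(v_{f'})'}$ of $R_\Lambda$, post-composed with the internal face removals $\mathcal F_{\mathrm R}$; the latter are dot-free digon removals and theta-to-identity foams, which are split epimorphisms on the relevant morphism spaces and hence do not shrink the span. Since the HM-basis is a basis of $R_\Lambda$ (Hu--Mathas~\cite{hm}), the elements $\Psi(\psi_{\vec T,\vec T'})$ span $\Psi(R_\Lambda)$, and by the Morita equivalence of~\cite{mpt} realised through $\Psi$, the appropriate idempotent truncation (by $\sum_{u\in B_S}1_u$) of $\Psi(R_\Lambda)$ is $K_S$. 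The point needing care is that a general $\Psi(\psi_{\vec T,\vec T'})$ is a foam between the possibly elliptic webs $\mathrm g(\vec T),\mathrm g(\vec T')$; applying the Kuperberg relations~\eqref{eq:digon},~\eqref{eq:square} on the two boundary webs together with the foam relations~\eqref{eq:dr},~\eqref{eq:sqr} in the interior rewrites such a foam as a $\bC$-linear combination of foams between non-elliptic webs. One then checks, by induction on the total length $\ell_{\mathrm t}$ (parallel to Theorem~\ref{thm-fine} and Lemma~\ref{lem-welldef}, using $\mathrm g\circ\iota=\mathrm{id}_{B^J_S}$), that each reduced summand is one of the $\mathcal F^{\vec\lambda}_{\iota(u_f),\iota(v_{f'})}$ up to a nonzero scalar. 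Hence these foams span $K_S$, and together with the count of the previous paragraph they form a homogeneous basis.

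The main obstacle is exactly this last spanning step: identifying the idempotent truncation of $\Psi(R_\Lambda)$ with $K_S$ and, above all, controlling the elliptic reductions so that $\Psi$ of an arbitrary HM-basis element stays inside the span of the growth-algorithm foams. This is the ``non-trivial translation from the cyclotomic Hecke algebra to the foam framework'' flagged in the introduction, and it is also the place where the ``face removing algorithm'' of Remark~\ref{rem-basis} enters as an alternative bookkeeping device (induction on the number of faces). A complementary route to the same conclusion, which I would also keep in mind, is to bypass spanning and prove linear independence directly via the bilinear pairing on foams: cupping a foam off against its ``dual'' gives $\pm1$ and against all others $0$ (cf. Example~\ref{ex-bigfoam}), so the Gram matrix of the growth-algorithm foams is unitriangular with $\pm1$ on the diagonal in an order refining the dominance order on $3$-multipartitions; this reproves the count and at the same time anticipates the cellular structure established in Section~\ref{sec-cell}.
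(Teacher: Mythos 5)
Your homogeneity step and your graded count are exactly the ones the paper uses (Lemma~\ref{lem-degfoam}, Remark~\ref{rem-dim}, Theorem~\ref{thm-fine}), and given that count it is indeed enough to prove either spanning or linear independence. The gap is in how you propose to get spanning. The claim that ``by the Morita equivalence of~\cite{mpt} realised through $\Psi$, the idempotent truncation of $\Psi(R_{\Lambda})$ by $\sum_{u\in B_S}1_u$ is $K_S$'' is not available: Morita equivalence gives no surjection of algebras (the paper stresses precisely that bases and structure transfer badly under Morita equivalence), and, worse, the statement does not even parse as written, because $\Psi(R_{\Lambda})$ consists of foams between the \emph{elliptic} webs $w(\vec{\lambda})$ built from undivided $F$'s, so the idempotents $1_u$ for non-elliptic $u\in B_S$ are not idempotents of that image -- reaching the webs of $B_S$ requires divided powers, i.e.\ thick-calculus idempotents which the paper deliberately avoids. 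Likewise ``the face removals are split epimorphisms and hence do not shrink the span'' only says that your composites span the image of a composition map, not ${}_uK_v$, and the closing ``one then checks by induction that each reduced summand is a growth-algorithm foam up to a nonzero scalar'' is exactly the hard content, asserted rather than proved. So the spanning half of your argument is circular at its key point.

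The paper goes the other way with the same count: it proves \emph{linear independence}. Via Proposition~\ref{prop-hmfoam} the set is pulled back to the Hu--Mathas basis elements (before face removal), which are linearly independent in $R_{\Lambda}$; then one argues no new relations can be created on this set, using the result of Lauda--Queffelec--Rose (4.3.3 of~\cite{lqr}) that all foam relations come from $\Ucat$-relations plus weight-$3$ bubble relations -- which the restriction to $3$-multitableaux avoids -- together with the cyclotomic relation, and finally Khovanov's Propositions 7 and 8 of~\cite{kh3}, which say that the digon and theta removals are isomorphisms of graded vector spaces, so composing with $\mathcal F_{\mathrm R}$ preserves independence. Your alternative suggestion of proving independence through the pairing (a unitriangular Gram matrix with $\pm1$ on the diagonal) is much closer in spirit to the ``by hand'' argument sketched in Remarks~\ref{rem-basis} and~\ref{rem-foambasis}, but as written it is also only an assertion: the unitriangularity would have to be established by the same kind of case-by-case analysis the paper defers. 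To repair your proof with the least change, replace the Morita/truncation paragraph by the pullback-to-$R_{\Lambda}$ independence argument above (or carry out the Gram-matrix computation in full).
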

\begin{proof}
We will show that the growth algorithm gives a linear independent set of foams denoted by
\[
\mathfrak{F}=\{\mathcal F^{\vec{\lambda}}_{\iota(u_f),\iota(v_{f^{\prime}})}\in K_S\mid\,(S,J,u_{f},v_{f^{\prime}}),\,J\in J_S,\,u_{f},v_{f^{\prime}}\in B_S^J\}.
\]
By a counting argument we see that this set forms a basis, since we know that every triple
\[
(\vec{\lambda},\iota(u_f)\in\mathrm{Std}(\vec{\lambda}),\iota(v_{f^{\prime}})\in\mathrm{Std}(\vec{\lambda}))
\]
corresponds exactly to one closed web $w=u^*v$ with a fixed flow by Theorem~\ref{thm-fine}. Thus, because $K_S$ has a basis indexed by closed webs with flows as we have recalled in Remark~\ref{rem-dim}, we conclude that the linear independence of $\mathfrak F$ suffices to show that the set $\mathfrak F$ forms a basis.
\vspace*{0.25cm}

To see linear independence we use Proposition~\ref{prop-hmfoam} and pull $\mathfrak{F}$ back to the cyclotomic Hecke algebra. We denote the pullback (by not removing the internal faces) restricted to the datum
\[
(\vec{\lambda},\iota(u_f)\in\mathrm{Std}(\vec{\lambda}),\iota(v_{f^{\prime}})\in\mathrm{Std}(\vec{\lambda}))
\]
by $\Psi^{-1}(\mathfrak{F})$.

Since Hu and Mathas showed that their definition gives rise to a basis, we conclude that the corresponding set $\Psi^{-1}(\mathfrak{F})$ is linear independent.
\vspace*{0.25cm}

In 4.3.3. of~\cite{lqr} Lauda, Queffelec and Rose showed that all the foam relations in $\F(w)$ follow from certain relations in $\Ucat$. Moreover, they argue that the $\mathfrak{sl}_3$-foam category can be obtained from $\Ucat$ by modding out certain relations involving bubbles of weight $3$. But our convention to only use $3$-multitableaux ensures that the elements of $\Psi^{-1}(\mathfrak{F})$ do not contain any of these relations, while all the other relevant relations are true independent of $n$. Hence, a possible linear independence of $\mathfrak{F}$ can not be created by the foamation functor $\Psi$ (which is an additive functor) nor by modding out by the cyclotomic relation~\ref{eq-cyclorel}.

Thus, the set $\mathfrak F$ must be linear independent, since, as Khovanov showed in Proposition 7 and 8 of~\cite{kh3}, the internal face removals give rise to a isomorphism of graded vector spaces and the foams without the internal face removals form a linear independent set for a version of the $\mathfrak{sl}_3$-web algebra between the webs without the face removals.
\vspace*{0.25cm}

That $\mathfrak{F}$ is homogeneous follows from Lemma~\ref{lem-degfoam}.
\end{proof}
\vspace*{0.05cm}
\begin{rem}\label{rem-foambasis}
Hu and Mathas proof that their set is linear independent relies on the connection to another basis that is known for the cyclotomic Hecke algebra, the so-called \textit{Dipper, James and Mathas standard basis} which can be seen as a ``higher version'' of the classical construction of the basis for Specht modules. The proof that this standard basis is in fact a basis is non-trivial, see Dipper, James and Mathas~\cite{djm}.
\vspace*{0.25cm}

In our framework, since everything is inductively build from small cases, it is possible to proof most statements by an inductive case-by-case check. In fact, there is such an alternative proof of Theorem~\ref{thm-foambasis} in our framework.

The alternative ``by hand'' proof of the statement~\ref{thm-foambasis} is roughly as follows. One does induction on the number of faces of $u^*v$ to conclude that the restriction of $\mathfrak{F}$ to ${}_uK_v$ gives a basis of latter. The small cases can all be verified easily and the induction step is similar to the induction that shows that the basis from Remark~\ref{rem-basis} is in fact a basis. To be more precise, all the three basic moves, called circle removal, digon removal and square removal, have a local inverse. One then verifies ``by hand'' all possible cases how a face in $u^*v$ with flow can look like. If one does so, then one realizes that the growth algorithm for foams has a summand that looks like one of these face removals and all the other summands are killed by the inverse. To see this one does a case-by-case check. It should be noted that Lemma~\ref{lem-zipping} ensures that the growth algorithm ``almost'' remove faces anyway, since the face removing foams are just local zips. One just have to be careful that sometimes not the right edges will be unziped. But one can always use the relations~\ref{eq:dr}, ~\ref{eq:sqr} and~\ref{eq:dotm} to show that at least one of the summands is exactly the desired face removal (and the other is killed by the inverse).

Hence, one can conclude that the set $\mathfrak{F}$ is related by a non-singular change of base matrix to the basis from Remark~\ref{rem-basis}. But since there are a lot of cases to check, we do not do it here.
\end{rem}
We can reprove Brundan and Kleshchev's graded dimension formula, i.e. Theorem 4.20 in~\cite{bk2}, as a direct consequence of Theorem~\ref{thm-foambasis} in a very simple fashion for our framework. Note that we see ${}_{g(T_{\vec{\lambda}})}K{}_{g(T_{\vec{\mu}})}$ in the following theorem by abuse of notation as an analogue of the $\mathfrak{sl}_3$-web algebra between the possible non-elliptic $\mathfrak{sl}_3$-web given by the extended growth algorithm from Definition~\ref{defn-exgrowth}.
\begin{thm}(\textbf{Brundan and Kleshchev: Graded dimension formula})\label{thm-gdf}
For $3$-multipartitions $\vec{\lambda},\vec{\mu}$ of $c(S)$, we have (where $\{n\}$ is the shift by the number of strands)
\[
\mathrm{dim}_q\;e(\vec{\lambda})R_{c(S)}e(\vec{\mu})\{n\}=\mathrm{dim}_q\; {}_{g(T_{\vec{\lambda}})}K{}_{g(T_{\vec{\mu}})}\{n\}=\sum_{\substack{\vec{T}_1\in \mathrm{Std}(\vec{\lambda})\\ \vec{T}_2\in \mathrm{Std}(\vec{\mu})}} q^{\mathrm{deg}_{BKW}(\vec{T}_1)+\mathrm{deg}_{BKW}(\vec{T}_2)}.
\]
\end{thm}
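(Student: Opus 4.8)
The statement is really a corollary of the foam basis theorem, so the plan is to exhibit both graded dimensions as the same generating function over pairs of standard $3$-multitableaux rather than to compute anything genuinely new. First I would establish the right-hand equality, $\mathrm{dim}_q\, {}_{g(T_{\vec\lambda})}K{}_{g(T_{\vec\mu})}\{n\}=\sum q^{\mathrm{deg}_{BKW}(\vec T_1)+\mathrm{deg}_{BKW}(\vec T_2)}$, by appealing to Theorem~\ref{thm-foambasis}: the growth algorithm for foams gives a homogeneous basis, and the basis foams whose target and source are $g(T_{\vec\lambda})$ and $g(T_{\vec\mu})$ are, by Definition~\ref{defn-growthfoam} together with the bijectivity statements of Theorem~\ref{thm-fine} (in particular $\mathrm g\circ\iota=\mathrm{id}_{B^J_S}$), in bijection with $\mathrm{Std}(\vec\lambda)\times\mathrm{Std}(\vec\mu)$; here one uses the extended growth algorithm $\mathrm g$ both to make sense of the possibly elliptic webs $g(T_{\vec\lambda}),g(T_{\vec\mu})$ and to account for the fillings not lying in the image of $\iota$. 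Lemma~\ref{lem-degfoam} then identifies the $q$-degree of the foam indexed by $(\vec T_1,\vec T_2)$ as exactly $\mathrm{deg}_{BKW}(\vec T_1)+\mathrm{deg}_{BKW}(\vec T_2)$, and summing over all such pairs gives the formula; the shift $\{n\}$ occurs on both sides and may be ignored.

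For the left-hand equality, $\mathrm{dim}_q\, e(\vec\lambda)R_{c(S)}e(\vec\mu)\{n\}=\mathrm{dim}_q\, {}_{g(T_{\vec\lambda})}K{}_{g(T_{\vec\mu})}\{n\}$, I would use Proposition~\ref{prop-hmfoam}: the foamation functor $\Psi$ sends the Hu--Mathas basis element $\psi_{\vec T_1,\vec T_2}$ (with $\vec T_1\in\mathrm{Std}(\vec\lambda)$ and $\vec T_2\in\mathrm{Std}(\vec\mu)$) to the foam $\mathcal F^{\vec\lambda}_{\iota(u_f),\iota(v_{f'})}$ up to the internal digon and theta removals, which are isomorphisms of graded vector spaces by Propositions~7 and~8 of~\cite{kh3}, and Lemma~\ref{lem-degfoam} shows the Hu--Mathas degree equals the foam $q$-degree; hence $\Psi$ descends to a degree-preserving linear isomorphism between the two spaces, which forces equality of their graded dimensions. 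Equivalently, since the Hu--Mathas basis is a graded cellular basis with $\mathrm{deg}_{BKW}(\psi_{\vec T_1,\vec T_2})=\mathrm{deg}_{BKW}(\vec T_1)+\mathrm{deg}_{BKW}(\vec T_2)$, the left-hand graded dimension is already the generating function above, which — combined with the first paragraph — is exactly Theorem~4.20 of~\cite{bk2} recovered from the foam side.

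The main obstacle is purely bookkeeping about the two indexing sets. On the cyclotomic side one must check that the Hu--Mathas basis elements surviving multiplication by $e(\vec\lambda)$ on the left and $e(\vec\mu)$ on the right are precisely those of shapes $\vec\lambda$ and $\vec\mu$, i.e.\ that within the range of $3$-multipartitions coming from webs with boundary $S$ a residue sequence determines the multipartition. On the foam side one must check that the growth algorithm indexes the basis of ${}_{g(T_{\vec\lambda})}K{}_{g(T_{\vec\mu})}$ by \emph{all} of $\mathrm{Std}(\vec\lambda)\times\mathrm{Std}(\vec\mu)$ and not a proper subset — this is where the ``analogue between possibly elliptic webs'' reading of ${}_{g(T_{\vec\lambda})}K{}_{g(T_{\vec\mu})}$ does real work, since for genuine non-elliptic $u,v\in B_S$ only the fillings in the image of $\iota$ occur. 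One also has to confirm that the grading shifts $\{n\}$ appearing in ${}_uK_v=\mathcal F(u^*v)\{n\}$ and in the statement match. Once these points are settled, all three quantities are equal term by term and nothing further is needed.
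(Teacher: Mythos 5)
Your plan for the two equalities is close to the paper's: the left-hand one via Proposition~\ref{prop-hmfoam} together with Theorem~\ref{thm-foambasis}, and the right-hand one by counting homogeneous basis foams with Lemma~\ref{lem-degfoam} (the paper instead gets the right-hand equality more directly from Remark~\ref{rem-dim}, graded dimension $=$ Kuperberg bracket, combined with Proposition~\ref{prop-degree}). However, the point you set aside as ``bookkeeping about the two indexing sets'' is not bookkeeping, and as you formulate it the check fails. The idempotent $e(\vec{\lambda})$ only remembers the residue sequence $r(\vec{\lambda})$, and a residue sequence does not determine the multipartition: the Hu--Mathas elements $\psi_{\vec{S},\vec{T}}$ surviving in $e(\vec{\lambda})R_{c(S)}e(\vec{\mu})$ are exactly those in which $\vec{S}$ and $\vec{T}$ are standard tableaux of one \emph{common} shape $\vec{\nu}$ (which varies) with $r(\vec{S})=r(\vec{\lambda})$ and $r(\vec{T})=r(\vec{\mu})$; they are not ``precisely those of shapes $\vec{\lambda}$ and $\vec{\mu}$'' (if $\vec{\lambda}\neq\vec{\mu}$ there are no cellular elements whose two tableaux have those two different shapes at all). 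The same issue occurs on the foam side: by Theorem~\ref{thm-foambasis} the basis of ${}_{g(T_{\vec{\lambda}})}K_{g(T_{\vec{\mu}})}$ is indexed by pairs of flows on the two webs extending a \emph{common} boundary state $J$, i.e.\ again by pairs of same-shape tableaux with the two prescribed residue sequences, not by the independent product $\mathrm{Std}(\vec{\lambda})\times\mathrm{Std}(\vec{\mu})$, and Theorem~\ref{thm-fine} does not give the bijection you invoke.

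That this is a genuine discrepancy and not a notational one can be seen in size already for $\vec{\lambda}=\vec{\mu}=(\emptyset,(1),(2))$, the situation of Example~\ref{ex-bigfoam}: there $g(T_{\vec{\lambda}})$ is the half-theta web with an extra digon, so $g(T_{\vec{\lambda}})^*g(T_{\vec{\mu}})$ is a theta web with two extra digons and the middle term has ungraded dimension $[2]^{3}[3]\big|_{q=1}=24$ (matching the Brundan--Kleshchev count over same-shape pairs with residue sequence $r(\vec{\lambda})=(1,1,2)$, six shapes contributing four pairs each), whereas the independent product over fillings of the fixed shapes is strictly smaller (at most $16$, even with the paper's repeated-entry convention). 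So your claimed bijection, your ``residue sequence determines the multipartition'' verification, and the shortcut ``the left-hand graded dimension is already the generating function'' cannot be carried out as stated. A correct argument -- and the one implicit in the paper's terse proof -- must index everything by flows with matching boundary states (equivalently, by pairs $\vec{T}_1,\vec{T}_2$ of a common shape $\vec{\nu}$ with $r(\vec{T}_1)=r(\vec{\lambda})$, $r(\vec{T}_2)=r(\vec{\mu})$, as in Theorem~4.20 of~\cite{bk2}), either via the Kuperberg bracket of the closed web plus Proposition~\ref{prop-degree}, or via the image of the Hu--Mathas basis under foamation as in Proposition~\ref{prop-hmfoam}.
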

\begin{proof}
The second equality follows from the fact that Brundan, Kleshchev and Wang's degree is given by minus the weight of the corresponding flow, i.e. Proposition~\ref{prop-degree}, and the fact that the $q$-degree of the $\mathfrak{sl}_3$-web algebra is given by the Kuperberg bracket (up to the shift $\{n\}$) - even if the underlying web space consists of non-elliptic $\mathfrak{sl}_3$-webs.

The first equality follows from Proposition~\ref{prop-hmfoam} and Theorem~\ref{thm-foambasis} if we regard ${}_{g(T_{\vec{\lambda}})}K{}_{g(T_{\vec{\mu}})}$ as an analogue of the usual $K_S$ with a possible different underlying $\mathfrak{sl}_3$-web space. The details of this slightly modified version of the $\mathfrak{sl}_3$-web algebra can be verified as in the usual case demonstrated in~\cite{mpt}.
\end{proof}
\subsection{Cellularity}\label{sec-cell}
We are able to show now that $\mathfrak F$ is a cellular basis of $K_S$. First we recall the definition of a graded cellular algebra due to Graham and Lehrer~\cite{grle} in the ungraded setting and Hu and Mathas~\cite{hm} in the graded setting. Note that \textit{graded} always means $\bZ$-graded.
\begin{defn}(\textbf{Graham-Lehrer, Hu-Mathas})\label{defn-cellular}
Suppose $A$ is a graded free algebra over some field $K$ of finite rank. A \textit{graded cell datum} is an ordered quintuple $(\mathfrak{P},\mathcal T,C,\mathrm{i},\mathrm{deg})$, where $(\mathfrak P,\rhd)$ is the \textit{weight poset}, $\mathcal T(\lambda)$ is a finite set for all $\lambda \in\mathfrak P$, $\mathrm{i}$ is an \textit{involution} of $A$ and $C$ is an injection
\[
C\colon\coprod_{\lambda\in\mathfrak P}\mathcal T(\lambda)\times \mathcal T(\lambda)\to A,\;(s,t)\mapsto c^{\lambda}_{st}.
\]
Moreover, the \textit{degree function} deg is given by
\[
\mathrm{deg}\colon\coprod_{\lambda\in\mathfrak P}\mathcal T(\lambda)\to\bZ.
\] 
The whole data should be such that the $c^{\lambda}_{st}$ form a homogeneous $K$-basis of $A$ with $\mathrm{i}(c^{\lambda}_{st})=c^{\lambda}_{ts}$ and $\mathrm{deg}(c^{\lambda}_{st})=\mathrm{deg}(s)+\mathrm{deg}(t)$ for all $\lambda\in\mathfrak{P}$ and $s,t\in\mathcal T(\lambda)$. Moreover, for all $a\in A$
\begin{align}\label{eq-cell1}
ac^{\lambda}_{st}=\sum_{u\in \mathcal T(\lambda)}r_{a}(s,u)c^{\lambda}_{ut}\;(\mathrm{mod}\;A^{\rhd\lambda}).
\end{align}
Here $A^{\rhd\lambda}$ is the $K$-submodule of $A$ spanned by the set $\{c^{\mu}_{st}\mid \mu\rhd\lambda\text{ and }s,t\in\mathcal T(\mu)\}$.

An algebra $A$ with such a quintuple is called a \textit{graded cellular algebra} and the $c^{\lambda}_{st}$ are called a \textit{graded cellular basis} of $A$ (with respect to the involution $\mathrm{i}$).
\end{defn}
We are now able to show that
\[
\mathfrak{F}=\{\mathcal F^{\vec{\lambda}}_{\iota(u_f),\iota(v_{f^{\prime}})}\in K_S\mid\,(S,J,u_{f},v_{f^{\prime}}),\,J\in J_S,\,u_{f},v_{f^{\prime}}\in B_S^J\}.
\]
is a graded cellular basis.
\begin{thm}\label{thm-cellular}(\textbf{Graded cellular basis}) The algebra $K_S$ is a graded cellular algebra in the sense of Definition~\ref{defn-cellular} with the cell datum
\[
(\mathfrak{P}_{c(S)}^3,\iota(B_S^J),\mathfrak F,{}^*,\mathrm{deg}_{\mathrm{BKW}}),
\]
where $\mathfrak{P}_{c(S)}^3$ is the set of all multipartitions of $c(S)$ with at most three non-zero entries that all are gathered in the last three entries ordered by the dominance order $\vartriangleright$ from Definition~\ref{defn-dominnance}, $\iota(B_S^J)$ is the image of the set of all webs given by the LT-algorithm together with flows, the involution ${}^*$ on the foam space and the degree $\mathrm{deg}_{\mathrm{BKW}}$ on the tableaux in $\iota(B_S^J)$\footnote{Note that the formulation can be done alternatively using $\mathfrak{sl}_3$-web notions as described in the sections of~\ref{sec-uncat}.}.
\end{thm}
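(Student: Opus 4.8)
The plan is to check the five axioms of Definition~\ref{defn-cellular} in turn; everything except the straightening relation~\eqref{eq-cell1} is already packaged in the lemmas of Sections~\ref{sec-uncat}--\ref{sec-basis}. For $\vec{\lambda}\in\mathfrak{P}_{c(S)}^3$ let $J=J(\vec{\lambda})$ be the state string attached to $\vec{\lambda}$ by Proposition~\ref{prop-tableauxflows2}, put $\mathcal{T}(\vec{\lambda})=\iota(B_S^J)\subseteq\mathrm{Std}(\vec{\lambda})$ and $C(\iota(u_f),\iota(v_{f^{\prime}}))=\mathcal{F}^{\vec{\lambda}}_{\iota(u_f),\iota(v_{f^{\prime}})}$. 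By Theorem~\ref{thm-foambasis} these foams form a homogeneous $\bC$-basis of $K_S$, so $C$ is injective and each $\mathcal{T}(\vec{\lambda})$ is finite; moreover $\mathcal{T}(\vec{\lambda})\neq\emptyset$ because $T_{\vec{\lambda}}$ is a web-with-flow filling by Theorem~\ref{thm-fine}. The involution axiom $(\mathcal{F}^{\vec{\lambda}}_{\iota(u_f),\iota(v_{f^{\prime}})})^{*}=\mathcal{F}^{\vec{\lambda}}_{\iota(v_{f^{\prime}}),\iota(u_f)}$ is exactly Lemma~\ref{lem-involution}, and the degree axiom $\mathrm{deg}_q(\mathcal{F}^{\vec{\lambda}}_{\iota(u_f),\iota(v_{f^{\prime}})})=\mathrm{deg}_{\mathrm{BKW}}(\iota(u_f))+\mathrm{deg}_{\mathrm{BKW}}(\iota(v_{f^{\prime}}))$ is Lemma~\ref{lem-degfoam} combined with $\mathrm{deg}_{\mathrm{BKW}}\circ\iota=\mathrm{deg}_{\mathrm{wt}}$ from Proposition~\ref{prop-degree} and the fact that $\mathrm{deg}_q$ is the grading on $K_S$.

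The remaining, and only substantial, point is~\eqref{eq-cell1} together with the identification of $K_S^{\rhd\vec{\lambda}}$. The plan is to transport Hu and Mathas' graded cell datum for the cyclotomic KL-R algebra $R_{c(S)}$ along the composite of the foamation $2$-functor $\Psi$ with the internal digon/theta-removal isomorphisms of Khovanov~\cite{kh3}. By~\cite{mpt} and~\cite{lqr} this composite is a surjective, graded, ${}^*$-equivariant algebra homomorphism $\pi\colon R_{c(S)}\to K_S$, and Proposition~\ref{prop-hmfoam} says that $\pi$ sends the HM-basis element indexed by $(\vec{\lambda},\iota(u_f)^{\prime},\iota(v_{f^{\prime}})^{\prime})$ to $\mathcal{F}^{\vec{\lambda}}_{\iota(u_f),\iota(v_{f^{\prime}})}$. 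Set $K_S^{\rhd\vec{\lambda}}=\pi(R_{c(S)}^{\rhd\vec{\lambda}})$, which is automatically a two-sided ideal; applying $\pi$ to the Hu--Mathas straightening relation for a lift $\tilde a$ of $a\in K_S$ gives $a\,\mathcal{F}^{\vec{\lambda}}_{st}\equiv\sum_{\vec u\in\mathrm{Std}(\vec{\lambda})} r_{\tilde a}(\iota(u_f)^{\prime},\vec u)\,\pi(\psi_{\vec u,\iota(v_{f^{\prime}})^{\prime}}) \pmod{K_S^{\rhd\vec{\lambda}}}$, and one then rewrites each $\pi(\psi_{\vec u,\iota(v_{f^{\prime}})^{\prime}})$, modulo $K_S^{\rhd\vec{\lambda}}$, as a $\bC$-combination of the web-shaped generators $\mathcal{F}^{\vec{\lambda}}_{wt}$; this last step is forced by $\pi$ being surjective and $\mathfrak{F}$ being a basis (Theorem~\ref{thm-foambasis}), together with the rank count of Theorem~\ref{thm-gdf}, which also yields $K_S^{\rhd\vec{\lambda}}=\mathrm{span}\{\mathcal{F}^{\vec{\mu}}_{st}\mid\vec{\mu}\rhd\vec{\lambda}\}$ with $\rhd$ the dominance order of Definition~\ref{defn-dominnance}. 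Equivalently, one may realise the Morita equivalence of~\cite{mpt} by a full idempotent $e=e^{*}$ in (a block of) $R_{c(S)}$, so that $K_S\cong eR_{c(S)}e$; the standard idempotent-truncation property of (graded) cellular algebras (Graham--Lehrer~\cite{grle}, Hu--Mathas~\cite{hm}) then endows $eR_{c(S)}e$ with a graded cell datum, and Proposition~\ref{prop-hmfoam} with Theorems~\ref{thm-foambasis} and~\ref{thm-fine} identify the surviving multipartitions with all of $\mathfrak{P}_{c(S)}^3$, the surviving leaves over $\vec{\lambda}$ with $\iota(B_S^J)$, and the transported basis with $\mathfrak{F}$.

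I expect the main obstacle to be the bookkeeping around the internal face removals: Khovanov's Propositions~7 and~8 of~\cite{kh3} are stated as isomorphisms of graded vector spaces $\F(w)\to\F(w^{\prime})$, and one must confirm that they assemble into a graded, ${}^*$-compatible algebra isomorphism between $K_S$ and its ``unreduced'' cousin so that the cell structure really does descend; a second delicate point is checking that the truncated (or pulled-back) poset is precisely the dominance poset $\mathfrak{P}_{c(S)}^3$ and not a coarsening, which is guaranteed by the non-vanishing of every cell module after truncation, i.e. by $\iota(T_{\vec{\lambda}})\in\mathcal{T}(\vec{\lambda})$ from Theorem~\ref{thm-fine}. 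If one prefers to avoid the cyclotomic side, there is a self-contained route, indicated already in Remark~\ref{rem-foambasis}: with $\mathfrak{F}$ known to be a basis, write $\mathcal{F}^{\vec{\lambda}}_{\iota(u_f),\iota(v_{f^{\prime}})}=\mathcal{F}_{u_f}e(\vec{\lambda})d(\vec{\lambda})\mathcal{F}_{v_{f^{\prime}}}^{*}$ as in Definition~\ref{defn-growthfoam}, observe that left multiplication by a generator of $K_S$ only alters the factor $\mathcal{F}_{u_f}e(\vec{\lambda})d(\vec{\lambda})$, and resolve the resulting foam using the relations~\eqref{eq:dr},~\eqref{eq:sqr},~\eqref{eq:dotm} and Lemma~\ref{lem-zipping} (whose zips ``almost'' perform face removals): one obtains a summand $\sum_{w} r(u_f,w)\,\mathcal{F}_{w}e(\vec{\lambda})d(\vec{\lambda})$ plus terms factoring through webs built from strictly dominating multipartitions, hence lying in $K_S^{\rhd\vec{\lambda}}$, and multiplying on the right by $\mathcal{F}_{v_{f^{\prime}}}^{*}$ gives~\eqref{eq-cell1}. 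Either way the remaining verifications are a routine, if lengthy, face-by-face check, and $K_S^{\rhd\vec{\lambda}}$ is a two-sided ideal by~\eqref{eq-cell1} and its $\,{}^*$-analogue.
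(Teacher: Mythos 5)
Your treatment of axioms (a)--(c) coincides with the paper's (Lemma~\ref{lem-degfoam}, Theorem~\ref{thm-foambasis}, Lemma~\ref{lem-involution}), and your overall strategy for the straightening relation -- transport Hu--Mathas cellularity through foamation plus internal face removals, with the ``by hand'' resolution of Remark~\ref{rem-byhand} as a fallback -- is the same as the paper's. However, your main route rests on an unestablished (and, as stated, almost certainly unavailable) ingredient: a surjective, graded, ${}^*$-equivariant \emph{algebra homomorphism} $\pi\colon R_{c(S)}\to K_S$. What foamation actually provides is a homomorphism from $R_{c(S)}$ onto the endomorphism algebra of the direct sum of \emph{ladder} webs $w(\vec{\lambda})$; $K_S$ is an idempotent truncation of that ``unreduced'' algebra, not a quotient of it, and the digon/theta removals of~\cite{kh3} are isomorphisms of graded vector spaces on individual Hom-spaces which do not assemble into an algebra map to $K_S$ -- the failure happens exactly at the interface $\mathcal F_{\tilde{v}_{\tilde f'}}^*\mathcal F_{u_f}$ where removed faces from the two factors interact. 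The paper avoids positing $\pi$ altogether: it works basis element by basis element via Proposition~\ref{prop-hmfoam} and then runs an induction on the number $n(\mathrm{face})$ of extra face removals created in the middle of the product, using the commutation of digon removals. Without that (or without fully executing your alternative $e R_{c(S)} e$ truncation, which would require producing a homogeneous ${}^*$-invariant full idempotent and a graded idempotent-truncation theorem, neither of which is in the paper), the step ``apply $\pi$ to the Hu--Mathas relation'' is not justified.

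A second, related gap is your disposal of the sum over all of $\mathrm{Std}(\vec{\lambda})$. Saying the rewriting of $\pi(\psi_{\vec u,\iota(v_{f'})'})$ into the generators $\mathcal F^{\vec{\lambda}}_{w,t}$ is ``forced by surjectivity, the basis property and the rank count'' does not give what cellularity actually demands, namely that the resulting coefficients are independent of the right-hand index $\iota(v_{f'})$ and that the discrepancy lies in $K_S^{\rhd\vec{\lambda}}$. The paper proves this by a specific argument: every $\vec T\in\mathrm{Std}(\vec{\lambda})$ still corresponds, via the extended growth algorithm (Theorem~\ref{thm-fine}), to \emph{some} web with flow with boundary $(S,J)$, and the multiplication convention ${}_{\tilde u}K_{\tilde v}\otimes{}_uK_v\to{}_{\tilde u}K_v$ forces the surviving terms to regroup on the web $\tilde u$, so the coefficients $r_{u_f,\vec T}$ collapse to coefficients indexed by $B_S^J$ that are inherited from the Hu--Mathas constants (hence independent of $v_{f'}$). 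Your proposal needs either this argument or a genuine proof of the truncation statement; as written, the crux of condition (d) is asserted rather than proved.
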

\begin{proof}
We have to prove four statements. To shorten our notation in the following equations, we write $u_f=\iota(u_f)$, $v_{f^{\prime}}=\iota(v_{f^{\prime}})$ and $B_S^J=\iota(B_S^J)$ as a short hand and hope the reader does not get confused since the left hand side is a flow on a web and the right hand side is a multitableau. Moreover, the scalars below should all only depend on the element on the left side of the multiplication.
\begin{itemize}
\item[(a)] Each $\mathcal F^{\vec{\lambda}}_{u_f,v_{f^{\prime}}}$ is homogeneous of degree
\[
\mathrm{deg}_q(\mathcal F^{\vec{\lambda}}_{u_f,v_{f^{\prime}}})=\mathrm{deg}_{\mathrm{BKW}}(u_f)+\mathrm{deg}_{\mathrm{BKW}}(v_{f^{\prime}}).
\]
\item[(b)] The set $\mathfrak F$ is a basis of the graded $\bC$-algebra $K_S$.
\item[(c)] The involution satisfies
\[
(\mathcal F^{\vec{\lambda}}_{u_f,v_{f^{\prime}}})^*=\mathcal F^{\vec{\lambda}}_{v_{f^{\prime}},u_f}.
\]
\item[(d)] Given $\vec{\lambda},\vec{\mu}\in\mathfrak{P}_{c(S)}^3$, $u_f,v_{f^{\prime}}\in \mathrm{Std}(\vec{\lambda})$ and $\tilde{u}_{\tilde{f}},\tilde{v}_{\tilde{f}^{\prime}}\in \mathrm{Std}(\vec{\mu})$, then there exist scalars $r_{u_f,w_{f^{\prime\prime}}}$ which do not depend on $v_{f^{\prime}}$, such that
\begin{align}\label{eq-cell2}
\mathcal F^{\vec{\mu}}_{\tilde{u}_{\tilde{f}},\tilde{v}_{\tilde{f}^{\prime}}}\mathcal F^{\vec{\lambda}}_{u_f,v_{f^{\prime}}}=\sum_{w_{f^{\prime\prime}}\in B_S^J}r_{u_f,w_{f^{\prime\prime}}}\mathcal F^{\vec{\lambda}}_{w_{f^{\prime\prime}},v_{f^{\prime}}}\;(\mathrm{mod}\;K^{\vartriangleright\lambda}_S).
\end{align}
\end{itemize}
In fact most of the statements follow directly from the work we have already done in the sections before. To be more precise, points (a)+(b) are true because of Lemma~\ref{lem-degfoam} and Theorem~\ref{thm-foambasis}, while the statement (c) is just Lemma~\ref{lem-involution}. Hence, we only have to verify (d), i.e. Equation~\ref{eq-cell2} (it is worth noting that, by linearity, it is sufficient to show Equation~\ref{eq-cell2} to get Equation~\ref{eq-cell1}).
\vspace*{0.25cm}

We want to use the foamation functor $\Psi$ again together with Proposition~\ref{prop-hmfoam} and the fact that Hu and Mathas proved~\cite{hm} that their basis is cellular with almost the same poset.

In fact, the difference between the poset Hu and Mathas use is that they consider all multipartitions of $c(S)$ instead of only the $3$-multipartitions. But our convention how we see the $3$-multipartitions $\vec{\lambda}$ as such multipartitions, i.e. embed into the last three entries, ensures that all multipartitions $\vec{\nu}$ with a node left to the last three entries are bigger in the dominance order, i.e. $\vec{\nu}\vartriangleright\vec{\lambda}$, than all $3$-multipartitions. On the same side the foamation functor and our conventions how to place dots kills all those multipartitions $\vec{\nu}$, since the first (that is leftmost) node in $T_{\vec{\nu}}$ will have three or more addable nodes of the same residue. Thus, the corresponding foam $e(\vec{\nu})d(\vec{\nu})$ will have a face with three or more dots and is therefore killed anyway. Hence, we do not need to consider them in order to prove Equation~\ref{eq-cell2}.
\vspace*{0.25cm}

Another difference is the fact that we have to use additional face removals, because otherwise, as explained before, we would not have the Kuperberg basis as an underlying web basis. But since the corresponding removals are just digon and theta foam removals, it is quite easy to handle them and the crucial case (where we need the underlying combinatorics of the cyclotomic Hecke framework) is in fact the cases $\iota(u_f)=\iota(u_f)^{\prime}$ and $\iota(\tilde{v}_{\tilde{f}^{\prime}})=\iota(\tilde{v}_{\tilde{f}^{\prime}})^{\prime}$ and we can prove the rest by induction on the number $n(\mathrm{face})$ of extra needed face removals in the multiplication of the middle part $\mathcal F_{\tilde{v}_{\tilde{f}^{\prime}}}^*\mathcal F_{u_f}$. Moreover, from now on we consider the case $u=\tilde{v}$, since the other case is, by our multiplication convention, zero anyway.
\vspace*{0.25cm}

\textbf{Case $n(\mathrm{face})=0$:} So given the two elements $\mathcal F^{\vec{\mu}}_{\tilde{u}_{\tilde{f}},\tilde{v}_{\tilde{f}^{\prime}}},\mathcal F^{\vec{\lambda}}_{u_f,v_{f^{\prime}}}\in\mathfrak{F}$, we can pull them back to the HM-basis. We denote the pullbacks by $\psi^{\vec{\mu}}_{\tilde{u}_{\tilde{f}},\tilde{v}_{\tilde{f}^{\prime}}}$ and $\psi^{\vec{\lambda}}_{u_f,v_{f^{\prime}}}$ (and in a similar way all the other HM-basis elements). By Hu and Mathas results we almost obtain the result we want, i.e.
\begin{align}\label{eq-cell3}
\psi^{\vec{\mu}}_{\tilde{u}_{\tilde{f}},\tilde{v}_{\tilde{f}^{\prime}}}\psi^{\vec{\lambda}}_{u_f,v_{f^{\prime}}}=\sum_{\vec{T}\in \mathrm{Std}(\vec{\lambda})}r_{u_f,\vec{T}}\psi^{\vec{\lambda}}_{\vec{T},v_{f^{\prime}}}\;(\mathrm{mod}\;R_{c(S)}^{\vartriangleright\vec{\lambda}}).
\end{align}

Moreover, by Proposition~\ref{prop-tableauxflows2}, we can be sure that all possible $3$-multipartitions $\vec{\lambda}$ have an interpretation as some web $w$ with boundary $S$ together with some flow $f$. Those, by Lemma~\ref{lem-welldefall} and Theorem~\ref{thm-foambasis}, give always rise to a non-zero foam in our basis $\mathfrak F$ given by applying the foamation to a HM-basis element with the same $\vec{\lambda}$ by Proposition~\ref{prop-hmfoam}. That is, there can not be any gaps in the partial ordering $\vartriangleright$ on the side of the foams, i.e. we can restrict our attention to the part of Equation~\ref{eq-cell2} and~\ref{eq-cell3} with $\vec{\lambda}$ as the underlying $3$-multipartition.
\vspace*{0.25cm}

In fact the only problem is the appearance of the full set $\mathrm{Std}(\vec{\lambda})$, because not all of them give rise to a web $w_f\in B_S^J$, since there are in general much more fillings for such tableaux than flows on webs. But it turns out, by locality of the construction, that this can only happen on intermediate layers of the foams (where this is allowed by construction), but not at the top or bottom. To be more precise, after translating Equation~\ref{eq-cell3} to the foam picture, we obtain almost Equation~\ref{eq-cell2}, i.e. we have
\begin{align}\label{eq-cell4}
\mathcal F^{\vec{\mu}}_{\tilde{u}_{\tilde{f}},\tilde{v}_{\tilde{f}^{\prime}}}\mathcal F^{\vec{\lambda}}_{u_f,v_{f^{\prime}}}=\sum_{\vec{T}\in \mathrm{Std}(\vec{\lambda})}r_{u_f,\vec{T}}\mathcal F^{\vec{\lambda}}_{\vec{T},v_{f^{\prime}}}\;(\mathrm{mod}\;K^{\vartriangleright\lambda}_S).
\end{align}
But the $\vec{T}\in \mathrm{Std}(\vec{\lambda})$, which in general does not correspond to a flow on a web in the LT-basis, certainly, by the extended growth algorithm from Definition~\ref{defn-exgrowth} and the result of Theorem~\ref{thm-fine}, corresponds to a web with flow $w_f$ and boundary pair $(S,J)$. But, due to the definition of the multiplication of $K_S$, this web has to be $\tilde u$, since we have
\[
{}_{\tilde u}K_{\tilde v}\otimes {}_{u}K_{v} \to 
{}_{\tilde u}K_{v}
\]
and ${}_{\tilde u}K_{v}$ is closed under addition. Hence, we have
\[
r_{u_f,\vec{T}}=\begin{cases}\tilde{r}_{u_f}, &\text{ if }g(\vec{T})=\tilde{u}_{\tilde{f}},\\0,&\text{ if }g(\vec{T})=f_{w}\text{ with }w\neq \tilde u,\end{cases}
\]
where the scalar $\tilde{r}_{u_f}$ is a certain (we have no further control here) sum of scalars 
$r_{u_f,\tilde{w}_{f^{\prime\prime}}}$ for some webs $\tilde{w}$, but, and that is the crucial observation, all the webs $\tilde w$ are non-elliptic.
\vspace*{0.25cm}

To summarise, we see that Equation~\ref{eq-cell4} can be restricted to
\[
\mathcal F^{\vec{\mu}}_{\tilde{u}_{\tilde{f}},\tilde{v}_{\tilde{f}^{\prime}}}\mathcal F^{\vec{\lambda}}_{u_f,v_{f^{\prime}}}=\sum_{w_{f^{\prime\prime}}\in B_S^J}r_{u_f,w_{f^{\prime\prime}}}\mathcal F^{\vec{\lambda}}_{w_{f^{\prime\prime}},v_{f^{\prime}}}\;(\mathrm{mod}\;K_S^{\vartriangleright\lambda}),
\]
which proves the point (d).

\textbf{Case $n(\mathrm{face})\neq 0$:} Since internal theta removals are completely independent from the rest, we can freely ignore them, since they do not affect point (d) at all.

First note that the digon removal all commute with each other because they exists as digons in the web $\tilde v=u$ and are therefore not neighbors) as we see below.
\begin{align}\label{eq:digcom}
\xy
(0,0)*{\includegraphics[scale=0.5]{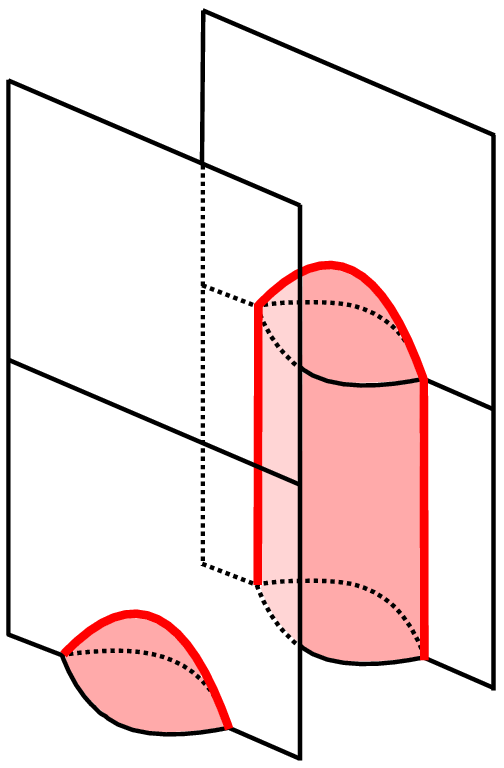}};
\endxy=\xy
(0,0)*{\includegraphics[scale=0.5]{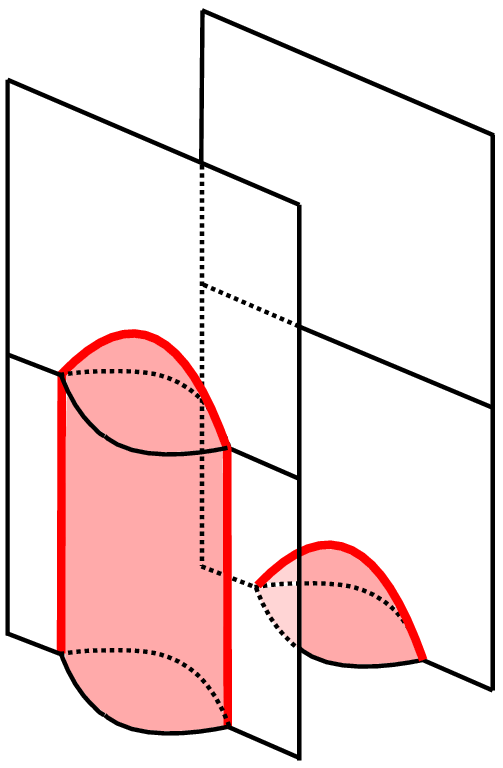}};
\endxy
\end{align}

Note that the case where a certain digon removal on the left side corresponds to a mirror digon removal on the right side follows easy from induction since we would get a local picture as pictured in Equation~\ref{eq-digonremoval}.

With the observation diplayed in~\ref{eq:digcom} above, we can then slide such a pair to the right side (or the left) such that all other digon removals are left to the fixed pair. This local pair can then be regarded as a legal move for some other flow on the middle web $\tilde v=u$ by Lemma~\ref{lem-zipping}. Therefore, we get the requirement (d) without difficulties by induction.

Note that, by construction, all digon removals are always mirrored on the other side, since we assume that $\tilde v=u$. Thus, we obtain the statement by induction.

It should be noted that the scalars are in general, due to our case-by-case argumentation from above, not the same for the cyclotomic Hecke algebra and $K_S$. 
\end{proof}
\begin{rem}\label{rem-byhand}
It is worth noting that Theorem~\ref{thm-cellular} can also be verified (at least in theory) ``by hand'' again, i.e. without using the fact that HM-basis is cellular. Latter relies on the highly non-trivial fact that the standard basis of Dipper, James and Mathas~\cite{djm} is cellular. Thus, on ``higher'' Specht theory.

The cellularity proof ``by hand'' can be done in a similar vein as Brundan and Stroppel did for the $\mathfrak{sl}_2$ case~\cite{bs1}. Of course the number of case one has to check is bigger in our context.

To be slightly more precise, one can check how the local behaviour of the composition
\[
\mathcal F^{\vec{\mu}}_{\tilde{u}_{\tilde{f}},\tilde{v}_{\tilde{f}^{\prime}}}\mathcal F^{\vec{\lambda}}_{u_f,v_{f^{\prime}}}
\]
changes the $3$-multipartition $\vec{\lambda}$. Due to the locality of the construction, we only have to consider compositions of the five cases from Lemma~\ref{lem-zipping} and see that the either increase the number of dots (which corresponds to a higher or equal order in the dominance order $\lhd$). Hence, one only has to check that the one of the same order do not depend on $\tilde{u}_{\tilde{f}}$. But this follows again from the locality of the construction, since we can pull the middle, i.e. $\mathcal F_{\tilde{v}_{\tilde{f}^{\prime}}}^*\mathcal F_{u_f}$, of
\[
\mathcal F^{\vec{\mu}}_{\tilde{u}_{\tilde{f}},\tilde{v}_{\tilde{f}^{\prime}}}\mathcal F^{\vec{\lambda}}_{u_f,v_{f^{\prime}}}=\mathcal F_{\tilde u_{\tilde f}}e(\vec{\mu})d(\vec{\mu})\mathcal F_{\tilde{v}_{\tilde{f}^{\prime}}}^*\mathcal F_{u_f}e(\vec{\lambda})d(\vec{\lambda})\mathcal F_{v_{f^{\prime}}}^*
\]
by shifting it to the left side using
\[
\mathcal F(\tau_{i_k}(a_{i_k},b_{i_k}))e(\vec{\lambda})=e(\tau_{i_k}\cdot\vec{\lambda})\mathcal F(\tau_{i_k}(a_{i_k},b_{i_k})).
\]
This will, by construction, in the end correspond to a linear combination of foams as in~\ref{eq-cell4}, since we do not affect the right side at all and due to the fact that each each such summand will correspond as a consequence of Theorem~\ref{thm-fine} to a basis element.
\end{rem}
As a direct consequence of the cellularity Theorem~\ref{thm-cellular} we construct a complete set of pairwise non-isomorphic, graded, simple $K_S$-modules and a complete set of pairwise non-isomorphic, graded, projective indecomposable $K_S$-modules.

Let us denote the \textit{graded cell modules}\footnote{A definition can be found in~\cite{hm}. We only note that the action is given by the scalars $r_{u_f,w_{f^{\prime\prime}}}$ from Equation~\ref{eq-cell2}.} (also called \textit{Specht modules} in our context), whose existence is guaranteed by Theorem~\ref{thm-cellular}, for $\vec{\lambda}\in\mathfrak{P}_{c(S)}^3$ by $S^{\vec{\lambda}}$ and their \textit{heads or tops} by
\[
D^{\vec{\lambda}}=S^{\vec{\lambda}}/\mathrm{rad}(S^{\vec{\lambda}})
\]
and the \textit{projective cover} of the $D^{\vec{\lambda}}$ by $P^{\vec{\lambda}}$.
Moreover, define $\mathfrak{P}_0^3=\{\vec{\lambda}\in\mathfrak{P}_{c(S)}^3\mid D^{\vec{\lambda}}\neq 0\}$.

By an abstract theorem about graded cellular algebras, due to Graham and Lehrer~\cite{grle} in the ungraded stetting and Hu and Mathas~\cite{hm} is the graded setting, we get the following theorem.
\begin{thm}\label{thm-cellular2}
The sets
\[
\mathcal D=\{D^{\vec{\lambda}}\{k\}\mid \vec{\lambda}\in\mathfrak{P}_0^3,\, k\in\bZ\}\text{  and  }\mathcal P=\{P^{\vec{\lambda}}\{k\}\mid \vec{\lambda}\in\mathfrak{P}_0^3,\, k\in\bZ\}
\]
form a complete set of pairwise non-isomorphic, graded, simple $K_S$-modules and pairwise non-isomorphic, graded, projective indecomposable $K_S$-modules respectively.\qed
\end{thm}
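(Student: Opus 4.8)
The plan is to deduce this statement directly from the general representation theory of graded cellular algebras, using the cell datum established in Theorem~\ref{thm-cellular}. First I would record the only input specific to our situation: $K_S$ is a finite-dimensional $\bC$-algebra — it is a finite direct sum of the foam homology spaces ${}_{u}K_{v}$, each of finite $q$-rank by Remark~\ref{rem-dim} — and by Theorem~\ref{thm-cellular} the quintuple $(\mathfrak{P}_{c(S)}^3,\iota(B_S^J),\mathfrak{F},{}^*,\mathrm{deg}_{\mathrm{BKW}})$ is a graded cell datum for it in the sense of Definition~\ref{defn-cellular}. Everything else is formal.

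Next I would invoke the standard machinery (Graham--Lehrer in the ungraded case, Hu--Mathas in the graded case). From the cellular basis $\mathfrak{F}$ one constructs, for each $\vec{\lambda}\in\mathfrak{P}_{c(S)}^3$, the graded cell module $S^{\vec{\lambda}}$ together with its homogeneous symmetric bilinear form $\langle\,\cdot\,,\,\cdot\,\rangle_{\vec{\lambda}}$; the form is defined via the structure constants $r_{u_f,w_{f^{\prime\prime}}}$ of Equation~\ref{eq-cell2}, and its radical is a graded submodule, so that $D^{\vec{\lambda}}=S^{\vec{\lambda}}/\mathrm{rad}(S^{\vec{\lambda}})$ is a graded module. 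Setting $\mathfrak{P}_0^3=\{\vec{\lambda}\mid D^{\vec{\lambda}}\neq 0\}$, the abstract theorem gives: each $D^{\vec{\lambda}}$ with $\vec{\lambda}\in\mathfrak{P}_0^3$ is graded simple, the $D^{\vec{\lambda}}$ for distinct $\vec{\lambda}\in\mathfrak{P}_0^3$ are pairwise non-isomorphic even after grading shift, and every graded simple $K_S$-module is isomorphic to exactly one $D^{\vec{\lambda}}\{k\}$. This yields the assertion about $\mathcal D$. For $\mathcal P$ I would then use that $K_S$, being a finite-dimensional graded algebra, satisfies a graded Krull--Schmidt theorem: the graded projective cover $P^{\vec{\lambda}}$ of $D^{\vec{\lambda}}$ exists for each $\vec{\lambda}\in\mathfrak{P}_0^3$, is graded indecomposable, and $\vec{\lambda}\mapsto P^{\vec{\lambda}}$ induces a bijection between $\mathfrak{P}_0^3$ and the isomorphism classes of graded projective indecomposable $K_S$-modules modulo grading shift; hence $\mathcal P=\{P^{\vec{\lambda}}\{k\}\}$ is the complete, irredundant list.

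The main obstacle is, in fact, already behind us: it is the verification of graded cellularity in Theorem~\ref{thm-cellular}, where the foamation functor $\Psi$, Proposition~\ref{prop-hmfoam}, and the comparison with the Hu--Mathas basis do the real work. Given that, the present statement is essentially a black-box application, and the only thing requiring any care is the bookkeeping with grading shifts — namely that ``complete set of pairwise non-isomorphic graded simple (resp.\ projective indecomposable) modules'' is to be understood modulo $\{k\}$, $k\in\bZ$, which is exactly how $\mathcal D$ and $\mathcal P$ are written; and the (routine) check that the degree function $\mathrm{deg}_{\mathrm{BKW}}$ and the involution ${}^*$ entering the cell datum are precisely those demanded by the graded theory, which is what Definition~\ref{defn-cellular} packages and Theorem~\ref{thm-cellular} confirms.
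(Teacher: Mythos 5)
Your proposal is correct and follows exactly the route the paper takes: Theorem~\ref{thm-cellular} supplies the graded cell datum, and then the abstract classification theorems of Graham--Lehrer and Hu--Mathas for (graded) cellular algebras yield $\mathcal D$ and $\mathcal P$ as stated, with only the grading-shift bookkeeping to note. The paper treats this as an immediate black-box application, just as you do.
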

It is worth noting that a direct consequence of Theorem~\ref{thm-cellular2} is that the set $\mathfrak{P}_0^3$ is noting else than the set of all semi-standard tableau $\mathrm{Std}^s(3^{\ell})$ by Corollary 2.11 in~\cite{hm}, since it is well-known that the elements of $B_S$, which can be identified with the elements of $\mathrm{Std}^s(3^{\ell})$, enumerate the set $\underline{\mathcal P}$, i.e. forgetting the grading. These multipartitions are sometimes called \textit{Kleshchev multipartitions} and it is in general highly non-trivial to give an explicit characterization which multipartitions are Kleshchev multipartitions, see~\cite{hm}.
\vspace*{0.15cm}
\begin{rem}\label{rem-basisgrothen}
We note that one can closely follow the approach indicated in~\cite{mack1}, i.e. using the strong $2$-representation (in the sense of Cautis and Lauda~\cite{cala}) on the 
abelian category $\mathcal{W}_{(3^{\ell})}$ (which can be restricted to the additive category 
$\mathcal{W}_{(3^{\ell})}^p$) defined in~\cite{mpt}, where $\mathcal{W}_{(3^{\ell})}$ and $\mathcal{W}_{(3^{\ell})}^p$ are the categories of all finite dimensional, graded, unital $K_{3^{\ell}}$-modules and of all finite dimensional, graded, unital, projective $K_{3^{\ell}}$-modules respectively, to show that the bases $\mathcal D$ and $\mathcal P$ (note that $K_{3^{\ell}}$, as a direct sum of graded cellular algebras, is itself a graded cellular algebra) of the (split) Grothendieck groups
\[
K^{(\oplus)}_0(\mathcal{W}_{(3^{\ell})}^{(p)})_{\bQ(q)}=K^{(\oplus)}_0(\mathcal{W}_{(3^{\ell})}^{(p)})\otimes_{\bZ[q,q^{-1}]}\bQ(q)
\]
corresponds to the canonical basis of the co-standard form of the $\mathfrak{sl}_3$-web space $W^*_{(3^{\ell})}$ and to the dual canonical basis of the $\mathfrak{sl}_3$-web space $W_{(3^{\ell})}$ respectively (up to shifts as explained by Mackaay in~\cite{mack1}). This can be verified almost similar to the approach of Mackaay in~\cite{mack1}, since his arguments are based on Brundan and Kleshchev~\cite{bk2} proof about the graded Specht modules $S_{T}$ of
\[
\mathcal V_{\Lambda}=R_{\Lambda}\text{-}\mathrm{\textbf{Mod}}_{\mathrm{gr}}\text{  and  }\mathcal V^p_{\Lambda}=R_{\Lambda}\text{-}\mathrm{p\textbf{Mod}}_{\mathrm{gr}},
\]
where $R_{\Lambda}$ is the cyclotomic KL-R algebra from Definition~\ref{defn-cyclKLR}, which can be identified with the corresponding ones coming from Hu and Mathas graded cellular basis, due to Corollary 5.10 in~\cite{hm}.
\end{rem}

\end{document}